 \definecolor{lightgray}{gray}{.75}
\newtheorem{thm}{Theorem}[section]
\newtheorem{theorem}[thm]{Theorem}
\newtheorem{definition}[thm]{Definition}
\newtheorem{proposition}[thm]{Proposition}
\newtheorem{lemma}[thm]{Lemma}
\newtheorem{claim}[thm]{Claim}
\newtheorem{corollary}[thm]{Corollary}
\newtheorem{conjecture}[thm]{Conjecture}
\newtheorem{remark}[thm]{Remark}
\numberwithin{equation}{section}
\newcommand{\CP}{\mathbb{CP}}
\newcommand{\D}{\mathbb{D}}
\newcommand{\C}{\mathbb{C}}
\newcommand{\Z}{\mathbb{Z}}
\newcommand{\R}{\mathbb{R}}
\newcommand{\M}{\mathcal{M}}
\newcommand{\X}{\mathcal{X}}
\newcommand{\Lag}{\mathcal{L}}
\newcommand{\LB}{\mathfrak{L}}
\newcommand{\x}{\tilde{x}}
\newcommand{\y}{\tilde{y}}
\newcommand{\z}{\tilde{z}}
\newcommand{\w}{\tilde{w}}
\newcommand{\del}{\partial}
\newcommand{\ot}{O(t^{1/2})}
\begin{document}
\thispagestyle{empty}
\begin{LARGE}
\begin{center}
\textbf{On Exotic Lagrangian Tori in $\CP ^2$}
\end{center}

 \end{LARGE}

\begin{large}

\begin{center}
\textbf{ Renato Vianna} 
\end{center}
\end{large}

\begin{center}\abstract We construct an exotic monotone Lagrangian torus in
$\CP^2$ using techniques motivated by mirror symmetry. We show that it bounds 10
families of Maslov index 2 holomorphic discs, and it follows that this exotic
torus is not Hamiltonian isotopic to the known Clifford and Chekanov tori.
\end{center}

\tableofcontents

\begin{center}
\section{Introduction}
\end{center}

Using Darboux's theorem, it is very easy to find Lagrangian tori inside a
symplectic manifold, because any open subset of $\C^n$ contains many. Therefore it
has been of interest in symplectic topology to understand Lagrangian
submanifolds satisfying some global property, such as monotonicity (for
definition of monotone Lagrangian submanifold, see section \ref{MonTor}). On the
other hand, for a long time, the only known monotone Lagrangian tori in $\C^n$
(up to Hamiltonian isotopy) were the products $(S^1(r))^n \subset \C^n$, the so
called Clifford tori. Only in 1995, Chekanov introduced in his paper \cite{CHE}
the first examples of monotone Lagrangian tori not Hamiltonian isotopic to these. 

The Clifford torus can be symplectically embedded into the complex projective
space $\CP^n$ and the product of spheres $\times_n \CP^1$, giving monotone tori.
Each one of these is also known as a Clifford torus. Chekanov's monotone tori were
also known to give rise to exotic monotone Lagrangian tori in these spaces. But
it was only much later that Chekanov and Schlenk, in \cite{CHESCH}, described in
detail their family of exotic monotone Lagrangian tori in these spaces, where by exotic we mean
not Hamiltonian isotopic to the Clifford torus.

In \cite{DA07}, Auroux studied the SYZ mirror dual (a ``Landau-Ginzburg model")
of a singular special Lagrangian torus fibration given on the complement of an
anticanonical divisor in $\CP^2$. This fibration interpolates between the
Clifford torus and a slightly modified version of the Chekanov torus described
by Eliashberg and Polterovich in \cite{EP}. This construction explains how the
count of holomorphic Maslov index 2 discs, described by the superpotential of
the Landau-Ginzburg model, changes from the Clifford torus to the Chekanov
torus. The key phenomenon that arises is {\it 
wall-crossing}: in the presence of the singular fiber, some other
fibers bound Maslov index 0 discs. These fibers form a ``wall" on the base of the
fibration, separating the Clifford type torus fibers and the Chekanov type
torus fibers, and accounting for differences in the count of Maslov index 2
discs between the two sides of the wall.

In this paper, we reinterpret Auroux's construction using almost toric
fibrations as defined by Symington in \cite{MS}; see also \cite{NLMS}. The base of
the relevant almost toric fibration can be represented by a {\it base diagram}
that resembles the base of the moment map of the
standard torus action on $\CP^2$, except that it has a marked point called {\it
node} in the interior, representing the singular fiber, and a cut that encodes
the monodromy around the singular fiber; see Figure \ref{figIntro}, where 
nodes are represented by $\times$'s and cuts by dotted lines. Modifying
the almost toric fibration of a four dimensional symplectic manifold by
replacing a corner (zero dimensional fiber) by a singular fiber in the interior
with a cut is called {\it nodal trade}, also referred in this paper as
`smoothing the corner', and lengthening or shortening the cut is called {\it
nodal slide}. Both operations are known to preserve the four-manifold up to
symplectomorphism; see \cite{MS, NLMS}.

The Clifford torus lies over the center of the
standard moment map picture of $\CP^2$, and the small cut introduced by a nodal
trade points towards it. We can lengthen the cut to
pass through the Clifford torus, which develops a singularity and then becomes
the Chekanov torus. This is illustrated on the first three base diagrams of Figure
\ref{figIntro}. 

We can continue further and introduce another cut by performing a nodal trade in
one of the remaining corners and lengthening it to pass by the Chekanov torus,
giving rise to another monotone torus, as illustrated in Figure \ref{figIntro}.
This particular torus is the main focus of this paper. However, we also note
that we can further perform a nodal trade on the remaining corner and pass it
through the central fiber. Not only that, we can then shorten the other cuts to
pass again through the central fiber, giving rise to an infinite range of
monotone tori, that we conjecture not to be Hamiltonian isotopic to each other.

To perform these modifications in a more orderly way, it is convenient to redraw
the almost toric base, after crossing the central fiber. This is done by fully
cutting the almost toric base in two, following the considered cut, then
applying the monodromy associated with the cut to one of the two components and
gluing again with the other component. This move straightens the edges that
intersected the original cut, while creating a new cut in the same direction as
the original one but on the other side of the node. Each one of the pictures at the bottom of 
Figure \ref{figIntro} is related with the one right above it via this cut and glue process. 
Figure \ref{Amt} illustrates more 
the case with only one cut: after we switch the cut to the other side, we end up
with an almost toric fibration on $\CP^2$ with a base that resembles the
polytope of the weighted projective space $\CP(1,1,4)$, but with a cut and node
replacing the corner that corresponds to the orbifold singularity, and having
the Chekanov torus as its central fiber. Following the isotopies generated by
shortening the cut in the re-glued picture to a limit situation where the node
hits the corner illustrates a degeneration of $\CP^2$ into $\CP(1,1,4)$.

More generally, the projective plane admits degenerations to weighted projective spaces
$\CP(a^2,b^2,c^2)$, where $(a,b,c)$ is a Markov triple, i. e., satisfies the
Markov equation: 
$$a^2 + b^2 + c^2 =3abc.$$

All Markov triples are obtained from
(1,1,1) by a sequence of `mutations' of the form 
\begin{equation*}
  (a , b , c) \rightarrow (a , b , c' =3ab - c) 
  \end{equation*}
  
\begin{figure}[h!]
  \begin{center}
\scalebox{0.89}{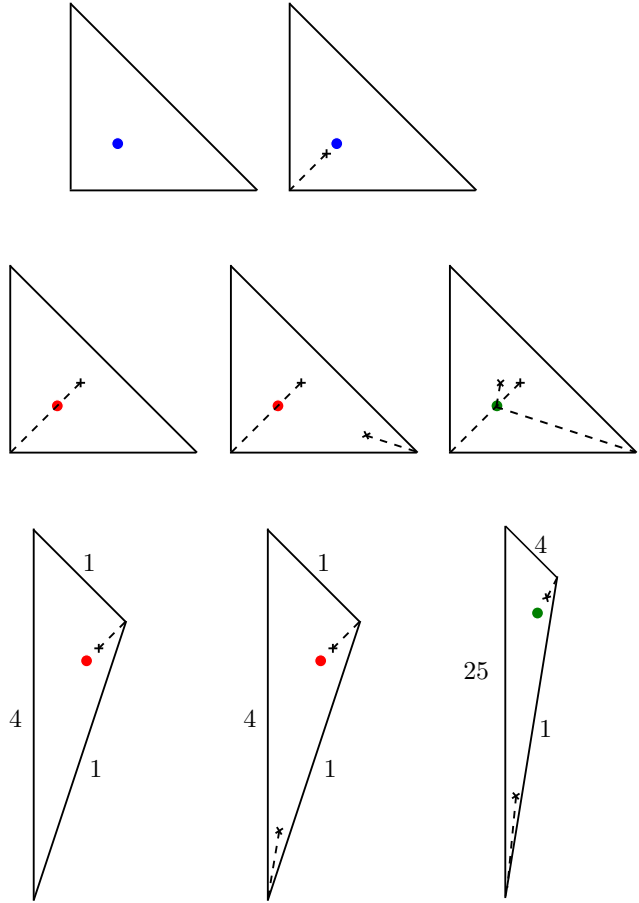}
 \caption{The procedure for going from the Clifford
 torus on the top left base diagram, to the Chekanov torus (third base 
 diagram) and to the $T(1,4,25)$ torus (fifth base diagram) by applying nodal
trades and nodal slides, where dots represent the image of the monotone tori in the base diagrams. 
Each of the bottom diagrams is equivalent to the one right
 above it since they are related by the cut and glue process described above and illustrated in Figure 
 \ref{Amt}. Affine lenghts of the edges are measured relative to the respective primitive vector.
  For detailed explanation of pictures see section \ref{atm}. }
\label{figIntro}
\end{center}
\end{figure}  
  
These degenerations of $\CP^2$ to other wighted projective spaces potentially
give an infinite range of exotic monotone Lagrangian tori in $\CP^2$, since they
are expected to bound different number of Maslov index 2 holomorphic discs. This was
conjectured by Galkin-Usnich in \cite{GalU}, where they also explain how to
predict the superpotential related to each one of the conjectured tori by
applying successive `mutations' to the superpotential \eqref{exWCLIF}.
  
A degeneration from $\CP^2$ to $\CP(a^2,b^2,c^2)$ can be illustrated by almost
toric pictures by introducing cuts in all corners of the standard polytope of
$\CP^2$ via nodal trades and then performing cut and glue operations as
described above, according to the sequence of mutations that links $(1,1,1)$ to
$(a, b, c)$. In view of this we call $B(a^2,b^2,c^2)$ the base of an almost toric
fibration on $\CP^2$ that is about to degenerate to the toric fibration of $\CP(a^2,b^2,c^2)$,
i.e., whose picture resembles a moment polytope of $\CP(a^2,b^2,c^2)$ but with
appropriate cuts, not passing through the center, joining each corner to a
node. We also call $T(a^2,b^2,c^2)$ the central fiber of $B(a^2,b^2,c^2)$, so
$T(1,1,1)$ is the Clifford torus and $T(1,1,4)$ is the Chekanov torus. Recalling
that walls of Maslov index 0 discs divide the base of a singular Lagrangian
fibration into chambers, we say that a fiber is of $T(a^2,b^2,c^2)$ type if it
belongs to a chamber that (continuously deforms to a chamber that) contains the
monotone $T(a^2,b^2,c^2)$ torus as a fiber, and hence bounds the same number
of regular Maslov index 2 $J$-holomorphic discs as $T(a^2,b^2,c^2)$.

The aim of this paper is to study $T(1,4,25)$. First we predict the number and
relative homotopy classes of regular Maslov index 2 $J$-holomorphic discs $T(1,4,25)$
bounds using wall-crossing formulas. Even though these formulas are believed to
hold for the almost toric case, they are not yet completely proven rigorously,
and neither is the relation between $J$-holomorphic discs and tropical curves
upon degeneration to a `large limit' almost complex structure. 

Therefore, after that we proceed to give, purely in the language of symplectic
topology, a complete self-contained proof of:\\
  
\begin{theorem}\label{mainthm}
  There exists a monotone Lagrangian torus in $\CP^2$ endowed with the standard Fubini-Study 
  form bounding 10 families of Maslov index 2 holomorphic discs, that is not 
  Hamiltonian isotopic to the Clifford and Chekanov tori.
\end{theorem}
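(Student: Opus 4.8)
\emph{Proof strategy.} The plan is to (i) realize $T(1,4,25)$ as an explicit monotone Lagrangian torus $L\subset(\CP^2,\omega_{FS})$, (ii) prove that $L$ bounds exactly $10$ families of Maslov index $2$ holomorphic discs, and (iii) conclude, using that this count is invariant under Hamiltonian isotopy and equals $3$ for the Clifford torus and $5$ for the Chekanov torus.

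For (i) I would begin with the standard Delzant polytope of $\CP^2$, perform a nodal trade at each of its three corners, and then run the nodal slides realizing the chain of mutations $(1,1,1)\to(1,1,2)\to(1,2,5)$, arriving at the base diagram $B(1,4,25)$. Since nodal trades and nodal slides preserve the symplectomorphism type of the total space \cite{MS,NLMS}, this is an almost toric fibration of $(\CP^2,\omega_{FS})$, and $L:=T(1,4,25)$ is the fiber over its balanced interior point. Monotonicity is then checked directly: the balancing condition says that the affine distances from this point to the three edges of $B(1,4,25)$, measured against the primitive edge vectors as in Figure \ref{figIntro}, are all equal, and this is precisely the statement that the basic Maslov index $2$ discs have equal area, i.e. that $L$ is monotone. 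Equivalently one may describe $L$ together with a Weinstein neighbourhood very concretely and reassemble $(\CP^2,\omega_{FS})$ around it by symplectic cut--and--paste.

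For (ii) I would fix an anticanonical divisor $D\subset\CP^2$ disjoint from $L$ --- e.g. a line together with a conic, as in Auroux's set-up \cite{DA07} --- and an almost complex structure $J$ making $D$ holomorphic and $L$ regular. Monotonicity forces every non-constant $J$-holomorphic disc with boundary on $L$ to have positive Maslov index, and forces a Maslov index $2$ disc to have a fixed symplectic area and, by positivity of intersection, to meet $D$ exactly once. Combining this area constraint with the description of $\pi_2(\CP^2,L)\cong\Z^2$ read off from $B(1,4,25)$ reduces the set of possible relative homotopy classes to a finite explicit list; for each class on the list I would exhibit a $1$-parameter family of such discs sweeping out $L$ and check Fredholm regularity, so that the class contributes one family. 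Carrying this out yields exactly $10$ families. Where convenient the enumeration can be imported from the toric model of $\CP(1,4,25)$ to which $B(1,4,25)$ is about to degenerate, by a Cho--Oh type computation, arguing via Gromov compactness that the count is unchanged under the deformation back to $\CP^2$.

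For (iii): for a monotone Lagrangian torus every Maslov index $2$ disc has the same area, so the number of such families through a generic point of the torus, counted with sign and multiplicity, is well defined, independent of the generic $J$, and invariant under Hamiltonian isotopy (indeed under any symplectomorphism of $\CP^2$). This number is $3$ for the Clifford torus and $5$ for the Chekanov torus (\cite{CHE,CHESCH,DA07}) but $10$ for $L$; hence $L$ is Hamiltonian isotopic to neither, which is Theorem \ref{mainthm}.

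The principal obstacle is step (ii): one must rule out ``unexpected'' Maslov index $2$ discs and control every degeneration that can occur in a Gromov limit --- sphere bubbles and, above all, the breaking off of Maslov index $0$ discs that produces the wall-crossing phenomenon --- so that the enumeration is complete and the count is honest; proving Fredholm regularity for the chosen $J$ is part of the same difficulty. A secondary but essential point is that all of this must be done by hand, without appealing to the not-yet-proven almost-toric wall-crossing formulas or the tropical-disc correspondence, so the concrete computation with a tailored $J$ must replace the conjectural mirror-symmetry count.
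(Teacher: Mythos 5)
Your skeleton --- construct the torus, restrict the admissible relative classes, exhibit and count the discs, prove regularity, and conclude via the invariance of the Maslov index $2$ disc count for monotone Lagrangians --- matches the paper's, and your step (iii) is essentially its section \ref{MonTor} (modulo two slips: $\pi_2(\CP^2,L)\cong\Z^3$, not $\Z^2$, and the Chekanov torus bounds $4$ families of total algebraic count $5$). But both places where you defer the hard work contain genuine gaps. For the enumeration of classes: monotonicity plus intersection number $1$ with the anticanonical divisor only forces the class to be of the form $\beta+l(H-3\beta)+k\alpha$, which is an \emph{infinite} list (the area constraint is equivalent to the Maslov constraint, so it adds nothing). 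The paper's Lemma \ref{lemHomClass} gets finiteness from positivity of intersection with several auxiliary holomorphic curves disjoint from the torus --- the conic $D_2$, the line $\{y=0\}$, and above all a carefully chosen quintic $D_5$ degenerating to $(\x\z-c\y^5)^2=0$ --- and the existence and disjointness of these curves depend on an explicit complex-geometric model of $L$, which your abstract almost-toric construction does not provide. This also undercuts your step (i): the nodal-trade/nodal-slide construction and the "balanced point" criterion for monotonicity live on the almost-toric side, while the disc computation needs an integrable $J$ and explicit divisors; bridging the two is nontrivial, and the paper instead builds one explicit (non-monotone) torus $T^c_{r,0}$, computes its discs, and only afterwards achieves monotonicity by K\"ahler inflation along $D_5$ (Proposition \ref{monform}) and Moser's theorem --- indeed it shows the naive K\"ahler form \emph{never} makes $T^c_{r,0}$ monotone, so monotonicity is not free.

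The more serious gap is the count itself. Importing the enumeration ``from the toric model of $\CP(1,4,25)$ by a Cho--Oh type computation, arguing via Gromov compactness'' would fail: $\CP(1,4,25)$ is an orbifold with two singular points; the degeneration is a degeneration of the complex surface, not a deformation of $J$ on a fixed symplectic manifold, so Gromov compactness does not apply directly; and in that limit most of the $10$ families become orbifold discs through the singular points, whose relation to honest discs in the smoothing is precisely the unproven Galkin--Usnich-type statement the paper is at pains to avoid. The paper's route is to degenerate only to $\CP(1,1,4)$ (a single $\Z/2$ point): the four families in the classes $H-2\beta+m\alpha$ are found directly in $\CP^2$ by an explicit Blaschke-product analysis (Theorem \ref{3fam}), and the limit $t\to0$ is used only for the six families in the classes $2H-5\beta+k\alpha$, after Lemma \ref{asymp} and Proposition \ref{limDiscs} show those discs stay uniformly away from the orbifold point, with Lemma \ref{x0xt} (a regularity/cobordism argument in the total space of the degeneration) transferring the count back to small $t>0$. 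Without an argument of this kind, step (ii) of your proposal is a restatement of the theorem rather than a proof of it.
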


For that we modify Auroux's example described in \cite{DA07}, by considering a
singular Lagrangian fibration that should interpolate between Chekanov type tori
and $T(1,4,25)$ type tori.    

More specifically, the rest of this paper is organized as follows.

In section \ref{MMS}, we review mirror symmetry in 
the complement of an anticanonical divisor, Landau Ginzburg models, 
wall-crossing phenomena and Auroux's example we mentioned above, following the 
approach in \cite{DA07, DA09}. 

In section \ref{atm}, we review almost toric fibrations and in section
\ref{Poten} we explain the relationship between $J$-holomorphic discs and
tropical discs in almost toric fibrations, working it out for the Example in
section \ref{Ex}. Even though the approach is not totally rigorous, in section
\ref{Pred} we use tropical discs and wall-crossing formulas for an almost toric
fibration to predict the existence of the $T(1,4,25)$ torus and the number of
Maslov index 2 discs it bounds, by computing the superpotential in an informal
manner.

In section \ref{ExoTor}, we use a explicit degeneration of $\CP^2$ into
$\CP(1,1,4)$ to define $T(1,4,25)$ type Lagrangian tori and set the conditions
for computing the Maslov index 2 holomorphic discs it bounds.

In section \ref{CHD}, we compute first the relative homotopy classes allowed to
have Maslov index 2 holomorphic discs and then the actual Maslov index 2
holomorphic discs a $T(1,4,25)$ type torus bounds. We also prove regularity
and orient the moduli space of holomorphic discs in each of the classes in order
to determine the correct signed count for the superpotential.

In section \ref{MonTor}, we modify the symplectic structure to construct the
monotone $T(1,4,25)$ torus and prove that it is not symplectomorphic to the
known Clifford and Chekanov tori. Finally, in section \ref{TorCP1xCP1}, we
repeat the techniques of sections \ref{Pred} and \ref{ExoTor} to conjecture the
existence of an exotic monotone torus in $\CP^1\times\CP^1$, bounding 9 families
of Maslov index 2 holomorphic discs.

\textbf{Acknowledgments.} I want to thank Denis Auroux for the invaluable
discussions and amazing support, Sergey Galkin for useful discussions, Xin Jin for 
pointing out a small technical mistake on the preprint and 
the referee for useful suggestions that improved the exposition. This work was
supported by The Capes Foundation, Ministry of Education of Brazil. Cx postal
365, Bras\'ilia DF 70359-970, Brazil; the CNPq - Conselho Nacional de
Desenvolvimento Cient\'ifico e Tecnol\'ogico, Ministry of Science, Technology
and Innovation, Brazil; The Fulbright Foundation, Institute of International
Education; the Department of Mathematics of University of California at
Berkeley; and the National Science Foundation grant number DMS-1007177.\\

\section{Motivation: Mirror symmetry}\label{MMS}

This section is a summary of the introduction to mirror symmetry in the
complement of a anti-canonical divisor explained in \cite{DA07, DA09}.
Mirror symmetry has been extended beyond the Calabi-Yau setting by considering
Landau-Ginzburg models. More precisely, it is conjectured that the Mirror of a
K\"{a}hler manifold $(X, \omega, J) $, with respect to a effective anticanonical
divisor $D$, is a Landau-Ginzburg model $(X^{\vee}, W)$, where $X^{\vee}$ is a
mirror of the almost Calabi-Yau $X \backslash D$ in the SYZ sense, i.e. a
(corrected and completed) moduli space of special Lagrangian tori in $X
\backslash D$ equipped with rank 1 unitary local systems ($U(1)$ flat
connections on the Lagrangian), and the superpotential $W: X^{\vee} \rightarrow
\mathbb{C}$ given by Fukaya-Oh-Ohta-Ono's $m_0$ obstruction to Floer homology,
which is a holomorphic function defined by a count of Maslov index 2 holomorphic
discs with boundary on the Lagrangian; see \cite{DA07, DA09}.
Kontsevich's homological mirror symmetry conjecture predicts that the Fukaya
category of $X$ is equivalent to the derived category of singularities of the
mirror Landau-Ginzburg model $(X^{\vee}, W)$.

In order to apply the SYZ construction to $X \backslash D$, we have to represent
it as a (special) Lagrangian fibration over some base. Also, to ensure that the
count of Maslov index 2 holomorphic discs is well defined, one asks $L$ to
satisfy some assumptions. More precisely, we require:

\begin{itemize}
\item[(1)] there are no non-constant holomorphic discs of Maslov index 0 in $(X,L)$;
\item[(2)] holomorphic discs of Maslov index 2 in $(X,L)$ are regular;
\item[(3)] there are no non-constant holomorphic spheres in $X$ with $c_1(TX)\cdot [S^2] \leq 0$.
\end{itemize}

In this case one defines the superpotential $ W = m_0 : X^{\vee}  \rightarrow \mathbb{C}$ by

\begin{definition}
\begin{equation} \label{m0}
m_0 (L , \nabla) = \sum_{\beta, \mu(\beta) = 2} n_{\beta}(L) \text{exp} ( -\int_{\beta} \omega) 
\text{hol}_{\nabla}(\del \beta)
\end{equation}
\end{definition}

where $\nabla$ is a $U(1)$ flat connection on $L$, $\text{hol}_{\nabla}(\del \beta)$
is the holonomy around the boundary of $\beta$ and $n_{\beta}(L)$ is the
(algebraic) count of holomorphic discs in the class $\beta $ whose boundary
passes though a generic point $p \in L$. More precisely, considering
$\mathcal{M}(L, \beta)$ the oriented (after a choice of spin structure for $L$)
moduli space of holomorphic discs with boundary in $L$ representing the class
$\beta$, $n_{\beta}(L)$ is the degree of its push forward under the evaluation
map at a boundary marked point as a multiple of fundamental class $[L]$, i.e.,
$ev_{\ast}[\mathcal{M}(L, \beta)] = n_{\beta}(L) [L]$.

 In principle one does not know if the series \eqref{m0} converge. Thus, it is
 preferable to replace the exponential by a formal parameter and the
 superpotential then takes values in the Novikov field. Nevertheless, all the
 superpotentials computed in this paper are given by a finite sums, and we use
 the exponential for consistency with \cite{DA07}.
 
 For each $\beta \in H_2(X, L, \Z)$, with $\del\beta \neq 0 \in H_1(L,\Z)$, we can define a
 holomorphic function $z_\beta : X^{\vee} \rightarrow \C^*$ by 
 \begin{equation}
 z_{\beta}(L,\nabla) = \text{exp}(-\int_\beta \omega) \text{hol}_\nabla (\del\beta); 
 \end{equation}
 see Lemma 2.7 in \cite{DA07}. 

\begin{remark} Actually, the function $z_\beta$ is only defined locally, for we have to keep track
of the relative class $\beta$ under deformations of $L$. In the presence of non-trivial monodromy,
which appears when we allow the fibration to have singular fibers, the function becomes multivalued.
\end{remark}

 In some cases, including the Lagrangian fibrations considered in this paper,
 the map $H_1(L) \rightarrow H_1(X)$ induced by inclusion is trivial, and then
 we can get a set of holomorphic coordinates $z_j = z_{\beta_j}$ by considering
 relative classes $\beta_j$ so that $\del\beta_j$ forms a basis of $H_1(L)$.
 Then our superpotential can be written as a Laurent series in terms of such
 holomorphic coordinates.

 In many cases we consider Lagrangian fibrations with singular fibers, and some
 of the Lagrangian fibers bound Maslov index 0 holomorphic discs, passing
 through the singular point. The projection of such Lagrangians forms ``walls"
 in the base, dividing it into chambers. The count of Maslov index 2
 holomorphic discs bounded by Lagragian fibers can vary for different chambers.
  This is called ``wall-crossing phenomenon"; see section \ref{WCross} and section 3
 of \cite{DA07}. Nevertheless, one can still construct the mirror by gluing the
 various chambers of the base using instanton corrections; see Proposition 3.9
 and Conjecture 3.10 in \cite{DA07}. 
 
The example below not only illustrates wall-crossing, but also
serves as the main model for the rest of the paper. For a more detailed
account, see section 5 of \cite{DA07} or section 3 of \cite{DA09}.

\subsection{A motivating example} \label{Ex} 

The following example is taken from \cite{DA07}, section 5. We will describe it 
in detail because our main construction, given in section \ref{ExoTor}, can be 
thought as a further development of the same ideas. 

Consider $\CP^2$, equipped with the standard Fubini-Study K\"{a}hler form,
and the anticanonical divisor $ D = \{ (x : y : z); (xy - cz^2)z = 0 \} $, for
some $c \neq 0$. We will construct a family of Lagrangian tori in the complement
of the divisor $D$. For this we look at the pencil of conics defined by the
rational map $f : (x : y : z) \mapsto (xy : z^2)$. We will mostly work with $f$
in the affine coordinate given by $z = 1$, as a map from $\mathbb{C}^2$ to
$\mathbb{C}$, $f(x,y) = xy$. The fiber of $f$ over any non-zero complex number
is then a smooth conic, while the fiber over 0 is the union of two lines, and
the fiber over $\infty$ is a double line.

There is a $S^1$ action on each fiber of $f$ given by $(x,y) \mapsto
(e^{i\theta}x, e^{-i\theta}y)$. Recall that the symplectic fibration $f$ carries
a natural connection induced by the symplectic form, whose horizontal
distribution is the symplectic orthogonal to the fiber. Our family of tori will
consist then of parallel transports of each $S^1$ orbit, along circles in the
base of the fibration, centered at $c \in \mathbb{C}$. We say that the height of
an $S^1$ orbit is the value of $\mu(x,y) = \frac{1}{2}\frac{|x|^2 - |y|^2}{1 +
|x|^2 + |y|^2}$, which is the negative of the moment map of the $S^1$ action.
Let $V_\theta$ be the vector field generated by the $S^1$ action. Since $d\mu = 
- \omega(V_\theta, \cdot)$ and $V_\theta$ is contained in the tangent space of the
fibers, we see that the moment map remains invariant under parallel transport.
Therefore we get that our family of Lagrangian tori is given by

\begin{definition}
Given $r >0$, and a real number $\lambda \in \mathbb{R}$, set
\begin{eqnarray}
T^c_{r,\lambda}   & = &  \left\{ (x:y:z) ;\left|f(x:y:z) - c \right| = r ; \mu(x:y:z) = \lambda \right\}   
\nonumber \\     & = &  \left\{ (x,y) ;\left|xy - c \right| = r ; |x|^2 - |y|^2 = 2\lambda(1 + |x|^2 + |y|^2) \right\}   
\end{eqnarray}
\end{definition}

\begin{figure}[h]
\setlength{\unitlength}{6.5mm}
\begin{center}
\begin{picture}(13.5,5)(0,0.2)
\psset{unit=\unitlength}
\newgray{ltgray}{0.85}
\psellipse[linewidth=0.5pt,fillstyle=solid,fillcolor=ltgray](8.6,4.5)(2.98,0.5)
\psellipse[linewidth=0.5pt,fillstyle=solid,fillcolor=white](8.4,4.5)(2.02,0.2)
\put(0,0){\line(1,0){11.5}}
\put(0,0){\line(1,1){2}}
\put(2,2){\line(1,0){11.5}}
\put(11.5,0){\line(1,1){2}}
\put(3.5,1){\makebox(0,0)[cc]{\tiny$\times$}}
\put(3.75,0.9){\small $0$}
\put(8.5,1){\makebox(0,0)[cc]{\tiny$\times$}}
\put(8.75,0.9){\small $c$}
\put(13.5,1){\makebox(0,0)[cc]{\tiny$\times$}}
\put(13.75,0.9){\small $\infty$}
\psline{<->}(6.2,1)(8.2,1)
\put(7.1,1.1){\tiny $r$}
\psellipse(8.5,1)(2.5,0.5)
\psline[linearc=0](2.7,3)(3.5,4)(2.7,5)
\psline[linearc=0](4.3,3)(3.5,4)(4.3,5)
\psline[linearc=0.7](5.2,3)(6,4)(5.2,5)
\psline[linearc=0.7](6.8,3)(6,4)(6.8,5)
\psline[linearc=1.7](10.2,3)(11,4)(10.2,5)
\psline[linearc=1.7](11.8,3)(11,4)(11.8,5)
\put(3.5,4){\circle*{0.15}}
\psellipse[linestyle=dotted,dotsep=0.6pt](6,4)(0.2,0.07)
\psellipse[linestyle=dotted,dotsep=0.6pt](11,4)(0.5,0.12)
\psellipse(11,4.5)(0.58,0.14)
\psellipse(6,4.5)(0.38,0.1)
\psline{<->}(11.7,3.95)(11.7,4.55)
\put(11.9,4.15){\small $\lambda$}
\psline(13.45,5)(13.45,3)
\psline(13.55,5)(13.55,3)
\psline{->}(3.5,2.8)(3.5,1.3)
\psline{->}(6,2.8)(6,1.3)
\psline{->}(11,2.8)(11,1.3)
\psline{->}(13.5,2.8)(13.5,1.3)
\put(0.5,1.3){\makebox(0,0)[cc]{\small $\C$}}
\put(0.5,4.1){\makebox(0,0)[cc]{\small $\C^2$\!\!}}
\put(0.65,2.7){\small $f$}
\psline{->}(0.5,3.6)(0.5,1.8)
\put(8.5,3.5){\small $T^c_{r,\lambda}$}
\end{picture}
\end{center}
\caption{The special Lagrangian torus $T^{c}_{r,\lambda}$ in $\C^2\setminus D$ (from \cite{DA07})}
\end{figure}
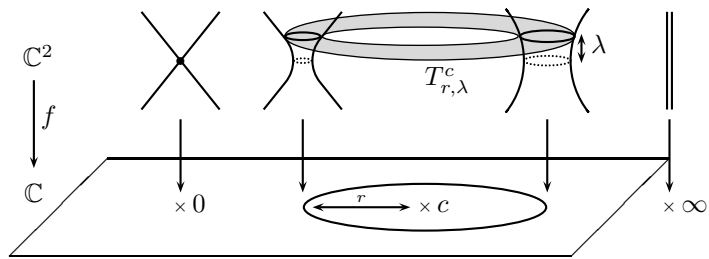

\begin{remark}
  All the pairs consisting of a symplectic fibration together with a map from the symplectic
  manifold to $\R$ (real data) used to define the Lagrangian fibrations considered in this paper
  form pseudotoric structures as defined by Tyurin in \cite{Ty}. 
\end{remark}

Note that actually $T^c_{|c|, 0}$ is a singular torus, pinched at $(0,0)$, so
varying $r$ and $\lambda$ give us a singular toric fibration. If $r > |c|$, we
say that $T^c_{r,\lambda} $ is of Clifford type, and if $r < |c|$ , of Chekanov
type. The motivation for this terminology is that in the first case we can
deform the circle centered at $c$ with radius $r$ in the base to a circle
centered at the origin, without crossing it, and with it we obtain a Lagrangian
isotopy from $T^c_{r,0}$ to a Clifford torus $S^1(\sqrt{r})\times
S^1(\sqrt{r})$. Not crossing the origin implies that no torus in the deformation
bounds Maslov index 0 discs, hence the count of Maslov index 2 discs remains the
same; see section 5.2 of \cite{DA07}. On the other hand, for $r < |c|$,
$T^c_{r,0}$ is the Eliashberg-Polterovich version of the so-called Chekanov torus;
see \cite{EP}.

To compute the Maslov index of discs in terms of their algebraic intersection number 
with the divisor $D$, one can prove that these Lagrangian tori are special with
respect to the holomorphic 2-form $\Omega(x,y) = (xy - c)^{-1}dx\wedge dy$. In
general, we can associate to an anticanonical divisor $D$ a nonvanishing holomorphic
n-form $\Omega$ on the complement $X \setminus D$ given by the inverse of a section of
the anticanonical bundle that defines $D$. Recall the following definition: 

\begin{definition} \label{SLag} 
A Lagrangian submanifold L is said to be 
special Lagrangian, with respect to $\Omega$ and with phase $\phi$, 
if $Im(e^{-i\phi}\Omega)_{|L} = 0$.
\end{definition}

For a proof that $T^c_{r,\lambda} $ are special Lagrangian with respect to
$\Omega$, see proposition 5.2 of \cite{DA07}. The following is Lemma 3.1 of \cite{DA07}.

\begin{lemma}\label{MI} 
If $ L \subset X \backslash D$ is special Lagrangian,
then for any relative homotopy class $\beta \in \pi_2(X,L)$ the Maslov index of
$\beta$, $\mu(\beta)$, is equal to twice the algebraic intersection number
$\beta \cdot [D]$. 
\end{lemma}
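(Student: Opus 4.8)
\textbf{Proof proposal for Lemma \ref{MI}.}

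The plan is to show that the Maslov index functional $\mu \colon \pi_2(X,L) \to \Z$ and the intersection functional $\beta \mapsto 2\,\beta\cdot[D]$ agree as homomorphisms, by exhibiting a holomorphic (and hence trivial on $X\setminus D$) section of an appropriate bundle whose zero divisor is exactly $D$, and reading off both quantities from its behaviour along $\partial\beta$. First I would recall that the Maslov index of $\beta$ is computed from a choice of nowhere-vanishing section of $\Lambda^n T^*X$ near $L$: since $L$ is Lagrangian, $TL$ determines a loop in the Lagrangian Grassmannian, and pairing $\Lambda^n(TL)_{\C}$ against such a reference section of the canonical bundle $K_X$ gives a loop in $\C^*$ whose winding number (suitably normalized) is $\mu(\beta)$. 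The key point is that on $X\setminus D$ we have a \emph{global} nowhere-vanishing holomorphic $n$-form $\Omega$, the inverse of the defining section $s \in H^0(X, K_X^{-1})$ of $D$; so on the complement of $D$ this $\Omega$ is a legitimate reference section, and it will be the hinge of the argument.

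The main steps, in order: (i) By the special Lagrangian condition $\mathrm{Im}(e^{-i\phi}\Omega)|_L = 0$, the restriction $\Omega|_L$ is a real nonvanishing $n$-form on $L$ times $e^{i\phi}$; concretely, choosing an oriented volume form on $L$, the function $L \to \C^*$ measuring $\Omega$ against $\Lambda^n(TL)_\C$ is \emph{constant} (equal to a phase). Hence relative to the reference section $\Omega$, the loop $\partial\beta$ in the Lagrangian Grassmannian has winding number $0$, i.e.\ the ``Maslov index of $\beta$ computed with respect to $\Omega$'' vanishes. (ii) Now I would compare $\Omega$ with an actual reference section of $K_X$ defined on a neighbourhood of the disc $u\colon (\D,\partial\D) \to (X,L)$ representing $\beta$. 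Since $s = \Omega^{-1}$ vanishes to order one along $D$, the change of trivialization between $\Omega$ and a local holomorphic frame of $K_X$ differs, as a section along the disc, precisely by the meromorphic function with a pole of order one at each intersection point of $u$ with $D$. (iii) The Maslov index transforms under change of reference section by (twice) the winding of this transition function around $\partial\D$, and by the argument principle that winding equals $-(\text{number of zeros} - \text{number of poles})$ of the meromorphic function inside $\D$, which is $+\,\beta\cdot[D]$ (zeros of the transition correspond to zeros of $s\circ u$, i.e.\ to intersections with $D$, counted with sign/multiplicity via positivity of intersection in the holomorphic category). Combining (i)–(iii), $\mu(\beta) = 0 + 2\,\beta\cdot[D]$, which is the claim. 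For the homomorphism property and to handle $\beta$ not represented by a disc, I would note both sides are homomorphisms $\pi_2(X,L)\to\Z$ and it suffices to check equality on a generating set, or alternatively run the winding-number bookkeeping directly for an arbitrary smooth representative, using that $\beta\cdot[D]$ is a homotopy invariant and that $D$ can be perturbed to meet the representative transversally.

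The step I expect to be the main obstacle is (iii): making the bookkeeping of orientations and signs precise — in particular verifying that the winding number of the transition function around $\partial\D$ contributes with the correct sign so that the genuinely \emph{positive} geometric intersection $\beta\cdot[D]$ (for $J$-holomorphic discs) appears with a $+$ rather than a $-$, and that the factor of $2$ is exactly the standard normalization of the Maslov index (Maslov index of $\C^n$ with the loop $t\mapsto e^{2\pi i t}\R^n$ being $2$). A clean way to sidestep sign worries is to first establish the identity for the Clifford/toric model — where $L$ is a torus fiber of the moment map, $D$ is a toric anticanonical divisor, and both $\mu(\beta)$ and $2\beta\cdot[D]$ are computed explicitly from the fan — and then argue that the difference $\mu(\beta) - 2\beta\cdot[D]$ is a homomorphism $\pi_2(X,L)\to\Z$ that is \emph{local near $D$} (depends only on the germ of the configuration near each intersection point, via the order of vanishing of $s$), hence vanishes in general because it vanishes in the local model. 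I would present the direct winding-number proof as the main line and mention the model-comparison as a consistency check.
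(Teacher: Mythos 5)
Your proposal is correct and takes essentially the same route as the source: the paper gives no proof of this statement, quoting it verbatim as Lemma 3.1 of \cite{DA07}, and Auroux's proof there is exactly your trivialization-comparison argument --- use $\Omega = \sigma^{-1}$ as a reference section of the canonical bundle on $X\setminus D$, observe that the special Lagrangian condition makes the winding of $\det^2(TL)$ relative to $\Omega$ vanish along $\partial\beta$, and account for the change to an honest holomorphic frame near $D$ by the order of vanishing of $\sigma$, yielding $2\,\beta\cdot[D]$. Your points (i)--(iii), including the reduction to a transverse smooth representative via homotopy invariance of both sides, are precisely the standard bookkeeping.
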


It can also be shown that $T^c_{r,\lambda} $ bounds Maslov index 0 holomorphic discs if and
only if $ r = |c|$. So we see that $r = |c|$ creates a wall in the base of our
Lagrangian fibration given by pairs $(r , \lambda)$. Then we need to treat the
cases $r > |c|$ and $r < |c|$ separately. 

For $ r > |c|$, we argue that $T^c_{r, \lambda}$ is Lagrangian isotopic to a
product torus $S^1(r_1)\times S^1(r_2)$, without altering the disc count
throughout the deformation. Denote by $z_1$ and $z_2$ respectively the
holomorphic coordinates on the mirror associated to the relative homotopy
classes $\beta_1$ and $\beta_2$ of discs parallel to the $x$ and $y$ coordinate
axes in $(\mathbb{C}^2,S^1(r_1)\times S^1(r_2) )$. Namely, $z_i = \text{exp}
(-\int_{\beta_i} \omega) \text{hol}_{\nabla}(\del \beta_i)$. We get from
Proposition 4.3 of \cite{DA07} that the superpotential recording the counts of
Maslov index 2 holomorphic discs bounded by $T^c_{r, \lambda}$ for $r > |c|$
is given by

\begin{equation}\label{exWCLIF}
W = z_1 + z_2 + \frac{e^{-\Lambda}}{z_1z_2},
\end{equation}

where $\Lambda = \int_{[\CP^1]} \omega$. The term $\frac{e^{-\Lambda}}{z_1z_2}$
corresponds to discs that project via $f$ to a double cover of $\C^2 \setminus
\Delta$ branched at infinity lying in the class $[\CP^1] - \beta_1 - \beta_2 \in
\pi_2( \CP^2,T^c_{r,0})$. The other terms $z_1$ and $z_2$ of the superpotential
correspond to sections of $f$ over the disc $\Delta$ centered at $c$ with radius
$r$, intersecting respectively the components $\{ x=0 \}$ and $\{ y =0 \}$ of
the fiber $f^{-1}(0)$.

Now we look at the case $r < |c|$, and consider the special case $\lambda = 0$,
the Chekanov torus considered by Eliashberg-Polterovich in
\cite{EP}. One family of Maslov index 2 holomorphic discs lies over the disc $\Delta$ centered at $c$
with radius $r$, given by the intersection of $f^{-1}(\Delta)$ with the lines $x =
e^{i\theta}y$. We denote by $\beta$ their relative class in $\pi_2(\CP^2,
T^c_{r,0})$. The other discs are harder to construct. Consider the
class $\alpha$ of the Lefschetz thimble associated with the critical point of
$f$ at the origin and the vanishing path $[0, c - re^{\text{arg}(c)i}]$. One can see that
$\alpha$, $\beta$ and $H =[\CP^1]$ form a basis of $\pi_2(\CP^2, T^c_{r,0})$.
The following Lemma and Proposition, due to Chekanov-Schlenk \cite{CHESCH}, have
their proofs sketched in \cite{DA07}. 

\begin{lemma}[Chekanov-Schlenk \cite{CHESCH}] 
The only classes in $\pi_2(\CP^2,T^c_{r,0})$ 
which may contain Maslov index 2 holomorphic discs are $\beta$ and
$H - 2\beta + k\alpha$ for $k \in \{-1,0,1\}$. 
\end{lemma}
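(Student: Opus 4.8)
The plan is to combine the Maslov-index formula of Lemma \ref{MI} with positivity of intersection between holomorphic curves. The starting observation is that, because $r<|c|$, the torus $T=T^{c}_{r,0}$ is disjoint from each of the four complex curves $C=\{xy=cz^{2}\}=f^{-1}(c)$, $L_\infty=\{z=0\}$, $\{x=0\}$ and $\{y=0\}$; here $C$ and $L_\infty$ are the two components of $D$, while $\{x=0\}\cup\{y=0\}=f^{-1}(0)$. Indeed $T$ lies in the affine chart $z\neq 0$, so it misses $L_\infty$; on $\{x=0\}$ and $\{y=0\}$ one has $f=0$ and $|0-c|=|c|>r$, so $T$ misses them; and $T$ lies over the circle $\partial\Delta$ in the base of $f$ while $C=f^{-1}(c)$ lies over the interior point $c$, so $T$ misses $C$. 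A non-constant holomorphic disc $u$ with $\partial u\subset T$ cannot have its image contained in any of these curves, since otherwise $u(\partial\D)$ would be a nonempty subset of the curve lying in $T$. Hence $u^{-1}$ of each of the four curves is a finite set of interior points, each contributing $+1$ to the corresponding intersection number, so $[u]\cdot[C]$, $[u]\cdot[L_\infty]$, $[u]\cdot[\{x=0\}]$ and $[u]\cdot[\{y=0\}]$ are all $\ge 0$.

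Next I would record these four intersection numbers on the basis $\alpha,\beta,H$ of $\pi_2(\CP^{2},T)$, listed in the order $([C],[L_\infty],[\{x=0\}],[\{y=0\}])$. For $H=[\CP^{1}]$ they are $(2,1,1,1)$ by B\'ezout. For $\beta$, represented by a component of $f^{-1}(\overline{\Delta})\cap\{x=e^{i\theta}y\}$, they are $(1,0,0,0)$: this disc is compact in the affine chart, meets $C=f^{-1}(c)$ only where $y^{2}=ce^{-i\theta}$, and meets neither $\{x=0\}$ nor $\{y=0\}$, since those would force $f=0\notin\Delta$. For the Lefschetz thimble $\alpha$ over the vanishing path $[0,\,c-re^{i\arg c}]$, the path avoids $c$ and the thimble is compact in the affine chart, so $\alpha\cdot[C]=\alpha\cdot[L_\infty]=0$; the only delicate entries are $\alpha\cdot[\{x=0\}]$ and $\alpha\cdot[\{y=0\}]$, because the thimble passes through the base point $(0,0)$ of the pencil $f$, which lies on both lines. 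Near the origin the thimble is a totally real plane (in fact $\{y=e^{i\arg c}\bar x\}$), meeting each of the complex lines $\{x=0\}$, $\{y=0\}$ transversally at the origin. Since the projective automorphism $(x:y:z)\mapsto(y:x:z)$ preserves the orientation of $\CP^{2}$, sends $T$ to itself, interchanges $\{x=0\}$ and $\{y=0\}$ preserving complex orientations, and sends the thimble to itself reversing its orientation, we get $\alpha\cdot[\{x=0\}]=-\alpha\cdot[\{y=0\}]$; as each is $\pm1$, fixing the orientation of $\alpha$ gives $(0,0,-1,1)$.

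With this table the lemma becomes linear algebra. Write $\gamma=q\alpha+p\beta+kH$. By Lemma \ref{MI}, $\mu(\gamma)=2\big(\gamma\cdot[C]+\gamma\cdot[L_\infty]\big)=2(p+3k)$, so $\mu(\gamma)=2$ is equivalent to $p+3k=1$; together with $\gamma\cdot[L_\infty]=k\ge 0$ and $\gamma\cdot[C]=p+2k\ge 0$ this leaves only $(p,k)=(1,0)$ and $(p,k)=(-2,1)$. In the first case $\gamma=\beta+q\alpha$, and $\gamma\cdot[\{x=0\}]=-q\ge0$, $\gamma\cdot[\{y=0\}]=q\ge0$ force $q=0$, so $\gamma=\beta$. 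In the second case $\gamma=H-2\beta+q\alpha$, and $\gamma\cdot[\{x=0\}]=1-q\ge0$, $\gamma\cdot[\{y=0\}]=1+q\ge0$ force $q\in\{-1,0,1\}$. These are precisely the classes in the statement.

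The step I expect to be the main obstacle is the computation of $\alpha\cdot[\{x=0\}]$ and $\alpha\cdot[\{y=0\}]$ in the second paragraph: one must pin down the local model of the Lefschetz thimble at the base point of the pencil and keep track of orientations carefully. Everything else is positivity of intersection between holomorphic curves and elementary arithmetic over $\Z$.
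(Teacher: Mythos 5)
Your proof is correct and follows essentially the same strategy that the paper itself uses for the analogous Lemma \ref{lemHomClass} (via the intersection table of Lemma \ref{HomTab}), and that the cited Chekanov--Schlenk/Auroux sketch uses for this statement: positivity of interior intersections of holomorphic discs with complex curves disjoint from the torus, combined with the Maslov index formula of Lemma \ref{MI}. The one delicate point, that $\alpha\cdot[\{x=0\}]=-\alpha\cdot[\{y=0\}]=\mp 1$ via the local model $\{y=e^{i\arg c}\bar x\}$ of the thimble at the base point, is handled correctly (and matches the entries $\alpha\cdot\{z=0\}=1$, $\alpha\cdot D_2=-1$ in the paper's own table).
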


\begin{proposition}[Chekanov-Schlenk \cite{CHESCH}] 
The torus $T^c_{r,0}$ bounds a unique $S^1$ family of holomorphic
discs in each of the classes $\beta$ and $H- 2\beta + k\alpha$ 
for $k \in \{-1,0,1\}$. These discs are regular, and the
corresponding algebraic count is 2 for $H - 2\beta$ and 1 for the other classes.
\end{proposition}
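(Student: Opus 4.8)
The plan is to prove the Proposition by explicitly constructing the holomorphic discs in each of the four admissible classes $\beta$ and $H-2\beta+k\alpha$ for $k\in\{-1,0,1\}$, verifying there is exactly one $S^1$ family in each, checking regularity, and computing the signed count. Throughout I work with the affine model $f(x,y)=xy$ on $\C^2$, with the torus $T^c_{r,0}=\{|xy-c|=r,\ |x|=|y|\}$, and I use Lemma \ref{MI}: since $T^c_{r,0}$ is special Lagrangian with respect to $\Omega=(xy-c)^{-1}dx\wedge dy$, the Maslov index of a class is twice its intersection number with the anticanonical divisor $D=\{(xy-cz^2)z=0\}$. Thus a Maslov index 2 disc meets $D$ once, and the preceding Lemma of Chekanov-Schlenk has already pinned down the only possible classes, so I need only produce and count the discs.

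First I would treat the class $\beta$. These discs lie over the disc $\Delta$ of radius $r$ centered at $c$ in the $f$-base: parametrizing $w=xy$ as a holomorphic map $D^2\to\Delta$ hitting $w=0$ once (to account for the single intersection with the line component $\{xy=0\}$ of $D$), and using the constraint $x=e^{i\theta}y$ from the $S^1$-action restricted to the real locus $|x|=|y|$, one obtains an honest holomorphic disc. The family is parametrized by $\theta\in S^1$, giving a single $S^1$ family; a short computation shows the boundary sweeps $\del\beta$ once over $T^c_{r,0}$, so the algebraic count is $1$. Next, for the classes $H-2\beta+k\alpha$ I would pass to the branched-double-cover picture: such discs project via $f$ to the twice-punctured base and lift through the double cover $(x,y)\mapsto xy$, branched at the origin and at infinity. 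The Lefschetz thimble class $\alpha$ associated to the critical point of $f$ at $(0,0)$ and the vanishing path to the boundary of $\Delta$ governs how the lift wraps; the three values $k\in\{-1,0,1\}$ correspond to the three distinct ways the disc can be completed past the critical fiber consistent with meeting $D$ exactly once at infinity. In each case I would write down the explicit rational parametrization, check that its boundary lies on $T^c_{r,0}$ with $|x|=|y|$, and confirm it is a single $S^1$ family.

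For the multiplicity, the key point is that the class $H-2\beta$ (the case $k=0$) is distinguished: the evaluation map from its moduli space wraps \emph{twice} over a generic boundary point, whereas $\beta$, $H-2\beta+\alpha$, and $H-2\beta-\alpha$ each wrap once. I would make this precise by computing the degree of the evaluation map $ev_\ast[\M(L,\beta')]=n_{\beta'}(L)[L]$ directly from the explicit parametrizations, tracking how many preimages a generic point on $T^c_{r,0}$ has and with what sign. The factor of $2$ for $H-2\beta$ can be seen from the quadratic nature of the double cover: the two sheets meeting the generic fiber both contribute, and after orienting the moduli spaces (using a fixed spin structure on $T^c_{r,0}$, as in the definition of $m_0$) these contributions are of the same sign and add to $2$. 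Regularity in each class follows because the discs are, after the branched-cover substitution, honest holomorphic maps into $\C^2$ (resp.\ $\CP^2$) for the standard complex structure, whose linearized $\bar\del$-operator is surjective; the Maslov index $2$ and the dimension count $\dim\M(L,\beta')=n+\mu(\beta')-2=2$ (with $n=2$) match the actual dimension of the $S^1$ family plus the choice of a boundary marked point, confirming transversality.

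The main obstacle I expect is the careful construction and counting of the discs in the classes $H-2\beta+k\alpha$, rather than $\beta$: unlike the sections over $\Delta$, these discs must thread through the singular fiber and wrap around the branch point at the origin, so writing an explicit global parametrization and verifying that the boundary lands exactly on $\{|x|=|y|\}$ is delicate. In particular, distinguishing the three classes $k\in\{-1,0,1\}$ and correctly obtaining the multiplicity $2$ precisely for $k=0$ requires keeping meticulous track of the monodromy of $\alpha$ around the Lefschetz critical point and of the orientations induced by the spin structure. Since this Proposition is attributed to Chekanov-Schlenk with proof sketched in \cite{DA07}, I would follow that sketch for the hardest steps, filling in the branched-cover parametrizations and the orientation/sign computations, and treat the construction of the $\beta$-discs and the regularity verification as the routine parts.
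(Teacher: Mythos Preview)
The paper does not actually prove this Proposition: it attributes the result to Chekanov--Schlenk and refers to \cite{DA07} for a sketch, so there is no proof in the paper against which to compare your approach. The paper's own detailed computations in section~\ref{CHD} for the $T(1,4,25)$ torus (Proposition~\ref{bdisc}, Theorem~\ref{3fam}) are in the same spirit as your outline---explicit parametrizations of $f\circ u$ via Blaschke products, followed by regularity and orientation arguments---but they concern a different torus and a different fibration.

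That said, your treatment of $\beta$ contains a concrete error. You claim the $\beta$-discs hit $w=0$ once ``to account for the single intersection with the line component $\{xy=0\}$ of $D$,'' but $\{xy=0\}$ is \emph{not} a component of $D=\{(xy-cz^2)z=0\}$: the components are the conic $\{xy=cz^2\}$ (affinely $\{xy=c\}$) and the line at infinity $\{z=0\}$. Since $r<c$, the base disc $\Delta=\{|w-c|\le r\}$ does not contain $0$ at all; the $\beta$-discs are the lines $x=e^{i\theta}y$ restricted to $f^{-1}(\Delta)$, they project biholomorphically to $\Delta$, and they meet $D$ once at the point over the \emph{center} $w=c$, on the conic component. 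This misidentification also muddles your account of the classes $H-2\beta+k\alpha$: those discs meet $D$ on the line at infinity $\{z=0\}$, so $f\circ u$ acquires a pole, and what distinguishes the three values of $k$ is how the disc meets the two branches $\{x=0\}$ and $\{y=0\}$ of $f^{-1}(0)$ (intersection numbers $1+k$ and $1-k$ respectively), not any ``wrapping around the branch point at the origin.'' Until the divisor geometry is corrected, the explicit parametrizations and the multiplicity-$2$ count for $k=0$ cannot be carried out along the lines you indicate.
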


Since deforming $\lambda$ to $0$ yields a Lagrangian isotopy from
$T^c_{r,\lambda}$ to $T^c_{r,0}$ in the complement of $f^{-1}(0)$, so without encountering any
Maslov index $0$ holomorphic discs, the disc count remains the same and
we have that for $r < |c|$ the superpotential is given by

\begin{equation} \label{exWCHE}
W = u +  \frac{e^{-\Lambda}}{u^2w} +  2\frac{e^{-\Lambda}}{u^2} +  \frac{e^{-\Lambda}w}{u^2} =
 u +  \frac{e^{-\Lambda}(1 + w)^2}{w u^2}
\end{equation}

where $u$ and $w$ are the holomorphic coordinates on the mirror associated to 
the class $\beta$ and $\alpha$.

\begin{figure}[htb]
\begin{center}
\scalebox{0.5}{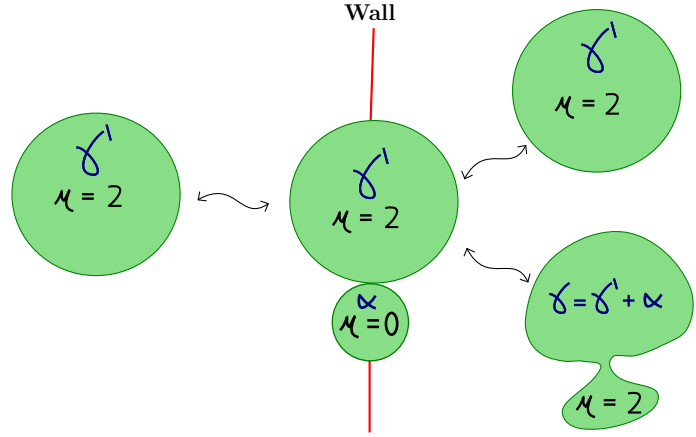}
 \end{center} 
\caption{Wall-crossing: Following a Maslov index 2 holomorphic disc 
through a Lagrangian deformation of the
 fibers crossing a wall consisting of fibers bounding Maslov index 0 discs (see Figure
\ref{figIlust} for the tropical picture).} 
\label{fig:wallcross}
\end{figure} 

\subsection{Wall-crossing} \label{WCross}

In this section we explain the wall-crossing phenomenon. Then we see how it 
happens in Example \ref{Ex} and explain the relation between
the two formulas for the superpotential in terms of the wall-crossing at $r =
|c|$, still following section 5 of \cite{DA07}.

Let us follow a Maslov index 2
 holomorphic disc in a class $\gamma'$ through a Lagrangian deformation of the
 fibers crossing a wall (formed by projection of fibers bounding Maslov index 0 discs).
 Assume that the given disc continues to exist
 throughout the deformation. The following phenomenon typically happens: if the
 boundary of such a disc intersects that of a Maslov index 0 holomorphic disc in
 a class $\alpha$ while on the wall, they can be glued into another Maslov index
 2 disc, in the class $\gamma = \gamma' + \alpha$, on the other side of the wall,
 besides the deformation that passes through, in the ``same" class $\gamma'$,
 without attaching the Maslov index 0 disc. Conversely, a Maslov index 2
 holomorphic disc in a class $\gamma$ can split into a Maslov index 2 holomorphic
 disc in a class $\gamma'$ and a Maslov index 0 holomorphic disc in a class
 $\gamma$, while on the wall, and then disappear after the Lagrangian passes
 through; see Figure \ref{fig:wallcross}. 
 
 We see how this phenomenon appears in the Example \ref{Ex}. Begin considering
 the case where $\lambda > 0$, so $T^c_{r,\lambda}$ lies in the region where
 $|x| > |y|$. Then when $r = |c|$ the torus intersects $\{y =0\}$ in a circle
 bounding a Maslov index $0$ disc, $u_0$. This disc represents the class
 $\alpha$, on the Chekanov side, and $\beta_1 - \beta_2$, on the Clifford side.
 As $r$ decreases through $|c|$, the family of holomorphic discs in the class
 $\beta_2$ on the Clifford side become the family of discs on the class $\beta$
 on the Chekanov side, and the discs in the class $H - \beta_1 - \beta_2$ on the
 Clifford side becomes the discs in the class $H -2\beta - \alpha$ on the
 Chekanov side. 
 
 Since a disc in the class $H -2\beta - \alpha$, bounded by a torus over the
 wall $r = |c|$, intersects $u_0$ in $[H -2\beta - \alpha]\cdot[\alpha] = 2$
 points, new discs in the classes $H- 2\beta$ and $H - 2\beta + \alpha$ arise
 from attaching $u_0$ to a disc in the class $H - \beta_1 - \beta_2 = H -2\beta
 - \alpha$ at one or both points where their boundaries intersect. On the other
 hand, a discs in the class $\beta$, at the wall, intersects the Maslov index 0
 disc $u_0$ at one point. When crossed to the Clifford side, a disc in the class
 $\beta_1 = \beta_2 + \alpha$ arrises from attaching $u_0$ to a disc in the
 class $\beta_2 = \beta$; see Figures \ref{figClif}, \ref{figChe},
 \ref{figIlust} (in these figures, discs are represented tropically). Conversely,
 one can think that a holomorphic disc in the class $\beta_1$ on the Clifford
 side, when deformed towards the wall, breaks into a holomorphic disc in the
 class $\beta = \beta_1 - \alpha$ and the Maslov index 0 disc $u_0$ and then
 disappears on the Chekanov side.

For $\lambda < 0$, when $r = |c|$ the torus intersects $\{x =0\}$ in a circle
bounding a Maslov index $0$ disc in the class $\beta_2 - \beta_1 = -\alpha$. As
$r$ decreases through $|c|$, the families of holomorphic discs that survive the
deformation through the wall are in the classes $\beta_2$ and $H - \beta_1 -
\beta_2$ on the Clifford side, becoming $\beta$ and $H -2\beta - \alpha$ on the
Chekanov side. As before, two new families of discs are created in the classes $H- 2\beta$ and $H -
 2\beta + \alpha$, while discs in the classes $\beta_1$ disappear, after wall-crossing. 

The difference between the ``naive" gluing formulas, which for $\lambda > 0$
matches $\beta $ $\leftrightarrows$ $\beta_2$ and for $\lambda < 0$ matches
$\beta$ $\leftrightarrows$ $\beta_1$, is due to the monodromy of the Lagrangian
fibers $T^{c}_{r ,\lambda}$ around the nodal fiber $T^{c}_{|c| ,0}$, which is
explained in the next section. However, one can perform wall-crossing
corrections to take care of this discrepancy and yield a single consistent
gluing for both halves of the wall. 

A holomorphic disc in the class $\beta$ on the Chekanov side is thought to
correspond to both discs in the classes $\beta_1$ and $\beta_2$, taking into
account the attachment of the holomorphic disc $u_0$ in the class $\alpha$. 
In terms of the coordinates $z_1$, $z_2$ on the Clifford side, associated
with $\beta_1$ and $\beta_2$, and coordinates $u$ and $w$ on the Chekanov
side, associated with $\beta$ and $\alpha$, the gluing becomes $u \leftrightarrows z_1 + z_2$.

For $\lambda > 0$, one can think that the ``naive" formula $u =
z_2$ is modified by a multiplicative factor of $1 + w$, i. e. , $u = (1 + w)z_2
= z_1 + z_2$, as predicted in Proposition 3.9 of \cite{DA07}. For $\lambda < 0$,
the correct change of coordinates is $ u = z_1(1 + w^{-1})
= z_1 + z_2$, $w^{-1} = z_2/z_1$, which is the same as for $\lambda > 0 $. 

Taking the wall-crossing into account the correct change of
 coordinates in the mirror is given as follows:

\begin{center}
\begin{tabular}{ c | c } 
    Homology Classes   & Coordinates  \\ 
   \hline \\
     $\ \ \ \ \ \ \, \alpha$ $\leftrightarrows$ $ \beta_1 - \beta_2$ & $w \leftrightarrows
\frac{z_1}{z_2}$ \\ \\
    $\hspace{1cm} \beta $ $\leftrightarrows$ $ \{ \beta_1, \beta_2\}$ &  $ \hspace{1cm}  
u \leftrightarrows z_1 + z_2$   \\   \\
$H -2\beta + \{ -1, 0 , 1\} \alpha$  $\leftrightarrows$ $   H - \beta_1 - \beta_2$ & 
$ \frac{e^{-\Lambda}(1 + w)^2}{u^2w}  \leftrightarrows \frac{e^{-\Lambda}}{z_1z_2}$   \\   
  \end{tabular}
\end{center}

It is then easy to check that the formulas \eqref{exWCLIF} and \eqref{exWCHE}
for the superpotential, and this corrected coordinate change, do match up.

\subsection{Almost toric manifolds} \label{atm}

The aim of this section is to explain the geometry of almost toric fibrations
and use it for a better understanding of the singular Lagrangian fibration
in the previous example. Most importantly, we can use it to construct 
other fibrations and predict the superpotential on each of the chambers divided 
by the walls. This way we can predict existence of exotic Lagrangian tori in almost toric manifolds,
and in particular the torus in $\CP^2$ that appears in Theorem \ref{mainthm}.
For a more detailed explanation of almost toric fibrations, see \cite{MS, NLMS}.

The following is definition 2.2 of \cite{NLMS}:
\begin{definition} 
 An \emph{almost toric fibration} of a symplectic four manifold $(M,\omega)$ is 
 a Lagrangian fibration $\pi: (M, \omega) \rightarrow B$ such that any point of 
 $(M, \omega)$ has a Darboux neighborhood (with symplectic form $dx_1\wedge dy_1 +
 dx_2\wedge dy_2$) in which the map $\pi$ has one of the following forms:

\begin{eqnarray*}
 \pi(x,y) & = & (x_1, x_2),
\hspace{5,7cm} \text{regular point}, \\
 \pi(x,y) & = & (x_1, x_2^2 + y_2^2),
\hspace{4,85cm} \text{elliptic, corank one}, \\ 
 \pi(x,y) & = & (x_1^2 + x_2^2, x_2^2 + y_2^2),
\hspace{4cm} \text{elliptic, corank two}, \\
 \pi(x,y) & = & (x_1y_1 + x_2y_2, x_1y_2 - x_2y_1),
\hspace{2,55cm} \text{nodal or focus-focus},  
\end{eqnarray*}
with respect to some choice of coordinates near the image point in $B$. An 
\emph{almost toric manifold} is a symplectic manifold equipped with an almost 
toric fibration. A \emph{toric fibration} is a Lagrangian fibration induced by 
an effective Hamiltonian torus action.
\end{definition}
%Note that the
%nodal singularities are precisely the complex hyperbolic singularities appearing
%in Lefschetz fibrations, as in sections \ref{Ex}, \ref{ExoTor}. 

We call the image of each nodal singularity a node.

%\begin{remark} 
%The singular Lagrangian fibration in Example \ref{Ex} does not
%strictly give an almost toric fibration. However the functions $r^2 = |xy -
%c|^2$ and $\lambda = \frac{1}{2}\frac{|x|^2 - |y|^2}{1 +|x|^2 + |y|^2}$ form an
%integrable system with non-degenerate singularity (definite Hessian at the
%singular point), whose quadratic part, up to a change of coordinates, does give
%an integrable system with focus-focus singularity by a theorem of Williamson; see
%\cite{Zu}. This is enough to understand the symplectic topology of the singular fibration.
%Hence we intuitively think of the almost toric fibration described by the 
%leftmost diagram of Figure \ref{Amt} and the singular Lagrangian fibration of 
%Example \ref{Ex} (for small $c$) as being the same. 
%\end{remark}

Recall that a Lagrangian fibration yields an integral affine structure, called
symplectic, on the complement of the singular values on the base, i.e. each
tangent space contains a distinguished lattice. These lattices are defined in terms of
the isotropy subgroups of a natural action of $T^* B$ on $M$ given by the
time-one flow of a vector field associated with each covector of $T^* B$. More
precisely, take $\xi \in T^* B$ and consider the vector field $V_\xi$ defined by
$\omega(.,V_\xi) = \pi^* \xi$. Set $\xi \cdot x = \phi_\xi(x)$, where $\phi_\xi$
is the time-one flow of $V_\xi$. Call $\Lambda^*$ the isotropy subgroup of the
action, which is a lattice such that $(T^*B / \Lambda^* , d\alpha_{can})$ and
$(M, \omega)$ are locally fiberwise symplectomorphic (here, $\alpha_{can}$ is
induced by the canonical 1-form of $T^*B$). This induces two other lattices, the
dual lattice, $\Lambda$ given by $\Lambda_b = \{ u \in T_bB\ ; \ v^* u \in
\mathbb{Z} , \forall \ v^* \in \Lambda^*_b \}$, inside $TB$, and the vertical
lattice, $\Lambda^{vert} = \{V_\xi \ ; \ \xi \in \Lambda^*\}$, inside the
vertical bundle in $TM$. We call the pair $(B, \Lambda)$ a \emph{almost toric
base}.

For an almost toric four manifold, the affine structure defined by the
lattice above, completely determines $M$ up to symplectomorphism, at least when the
base is either non-compact or compact with non-empty boundary; see Corollary
5.4 in \cite{MS}. Also, since $(T^*B / \Lambda^* , d\alpha_{can})$ and $(M,
\omega)$ are locally fiberwise symplectomorphic, a basis of the lattice is in
correspondence with a basis of the first homology of the fiber over
a regular point, $H_1(F_b)$. Therefore, the topological monodromy around each
node is equivalent to the integral affine monodromy. The neighborhood of a nodal fiber is
symplectomorphic to a standard model (see section 4.2 in \cite{MS}) and the
monodromy around a singular fiber (of rank 1) is given by a Dehn twist. In
suitable coordinates the Dehn twist can be represented by the matrix : 

$$ A_{(1,0)} =
\begin{pmatrix} 1 & 1 \\ 0 & 1 \end{pmatrix}$$

A change of basis of $H_1(F_b)$ gives a conjugate of $A_{(1,0)}$, which is, in
terms of its eigenvector $(a,b)$: 

$$ A_{(a,b)} = \begin{pmatrix} 1 -ab & a^2 \\
-b^2 & 1 + ab \end{pmatrix}$$

Due to the monodromy, one cannot find an affine embedding of the base of an
almost toric fibration with nodes into $\mathbb{R}^2$ equipped with its standard
affine structure $\Lambda_0$. However, after removing a set of \emph{branch curves} in the
base $B$, i.e., a collection of disjoint properly embedded curves connecting each node
to a point in $\del\bar{B}$, it may be possible to define such an embedding. 

\begin{figure}[h!]
  \begin{center}
\scalebox{1}{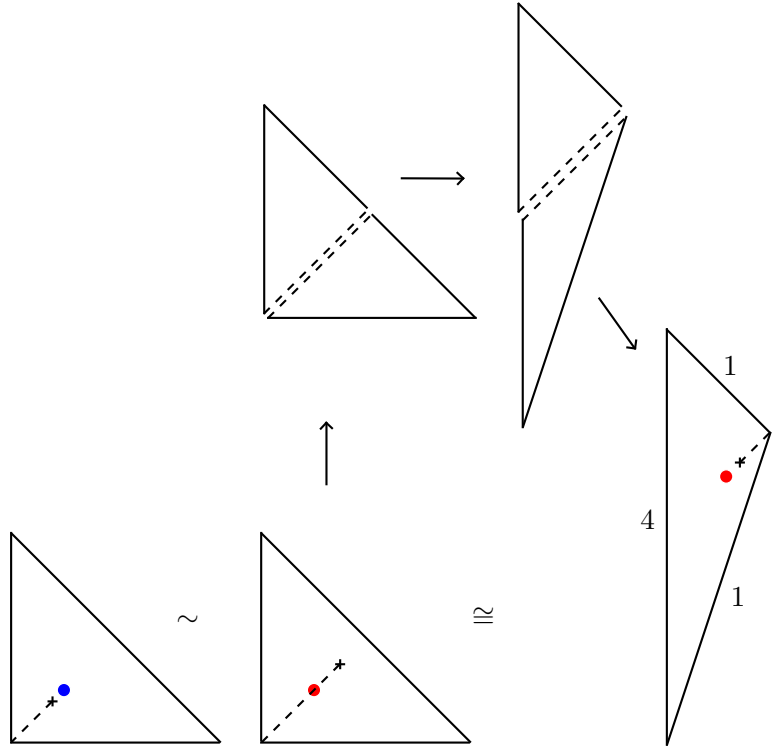}
 \caption{The leftmost picture is a base diagram of the almost toric fibration of $\CP^2$
 related to the singular Lagrangian fibration  
 given in Example \ref{Ex} (for small $c$), having the Clifford torus as its central fiber.
 The next base diagram is obtained by applying a
 nodal slide passing through the Clifford torus, so that the central fiber 
 becomes the Chekanov torus. Following the arrows we first cut the previous 
 picture in the direction of $(1,1)$, then we apply the monodromy $A_{(1,1)}$ to the 
 bottom part and finally we re-glue the parts to obtain a base diagram 
 representing the same almost toric fibration.} 
\label{Amt}
\end{center}
\end{figure}

\begin{definition}[3.2 of \cite{NLMS}]
  Suppose we have an integral affine embedding $\Phi: (B - R, \Lambda) \to (\mathbb{R}^2, \Lambda_0)$,
  where $(B, \Lambda)$ is an almost toric base and $R$ is a set of branch curves. A \emph{base
  diagram} of $(B, \Lambda)$ with respect to $R$ and $\Phi$ is the image of $\Phi$ decorated with 
  the following data:
  
  \begin{itemize}
  \item[-] an \emph{x} marking the location of each node and
  \item[-] dashed lines indicating the portion of $\del\overline{\Phi(B - R)}$ that 
  corresponds to $R$.
  \end{itemize}    
\end{definition}

\begin{remark} The presence of monodromy in the affine structure on $B$ implies
the existence of monodromy in the affine structure induced on the mirror
$X^{\vee}$. This explains the discrepancy between the uncorrected coordinate
changes across the two halves of the wall in Example \ref{Ex}. See remark 5.11 in \cite{DA07}.
\end{remark}

The affine direction(s) of the image of such a branch curve in $\R^2$ determine
the monodromy around the corresponding node. If the image is contained in a
line with direction $(a,b)$, the monodromy is given by $A_{(a,b)}$; for a more
detailed account of base diagrams, see section 5.2 of \cite{MS}. For instance,
the leftmost picture of Figure \ref{Amt} represents the Lagrangian fibration
seen in Example \ref{Ex}. The ray represented by dashed lines in the direction
$(1,1)$ is an eigenvector of the monodromy, which hence is given by $A_{(1,1)}$. 

Two almost toric surgery operations are of importance for us. They change the
almost toric fibration into another almost toric fibration of the same 
symplectic four manifold and are defined as follows:

\begin{definition}[4.1 of \cite{NLMS}]
  Let $(B,\Lambda_i)$ be two almost toric bases, $i = 1, 2$. We say that 
  $(B,\Lambda_1)$ and $(B,\Lambda_2)$ are related by a \emph{nodal slide} 
  if there is a curve $\gamma$ in $B$ such that 
  \begin{itemize}
    \item[-]$(B - \gamma,\Lambda_1)$ and $(B - \gamma,\Lambda_2)$
  are isomorphic,
    \item[-]$\gamma$ contains one node of $(B,\Lambda_i)$ for each $i$ and 
    \item[-]$\gamma$ is contained in the eigenline (line preserved by the monodromy)
     through that node.
\end{itemize}   
\end{definition}

\begin{definition}[4.2 of \cite{NLMS}]
  Let $(B_i,\Lambda_i)$ be two almost toric bases, $i = 1, 2$. We say that 
  $(B_1,\Lambda_1)$ and $(B_2,\Lambda_2)$ differ by a \emph{nodal trade} 
  if each contains a curve $\gamma_i$ starting at $\del B_i$ such that
  $(B_1 - \gamma_1,\Lambda_1)$ and $(B_2 - \gamma_2,\Lambda_2)$
  are isomorphic, and $(B_1, \Lambda_1)$ has one less vertex than $(B_2,\Lambda_2)$.
\end{definition}

\begin{remark}
  The rightmost picture of Figure \ref{Amt} is not considered to differ by a 
  nodal trade from the moment polytope of $\CP(1,1,4)$, because the latter, being the base
  of an \emph{orbifold} toric Lagrangian fibration, is not considered to be an almost toric 
  base.  
\end{remark}

In Figure \ref{Amt}, the leftmost base diagram is obtained by applying a nodal
trade to a corner of the moment polytope of $\CP^2$, which is the base diagram
for the standard toric fibration of $\CP^2$. The following picture is then
obtained by a nodal slide. As explained in the introduction, once the singular
fiber passes trough the Clifford torus, the central fiber develops a singularity and
then becomes the Chekanov torus. The rightmost picture is a $B(1,1,4)$
base diagram representing the same almost toric fibration. Shortening the cut to
a limit situation where it hits the corner describes a degeneration of $\CP^2$
into $\CP(1,1,4)$.

\subsection{Holomorphic discs viewed from almost toric fibrations and wall-crossing} \label{Poten}

 In this section we use almost toric pictures to describe a limit affine
 structure of the fibration for which holomorphic curves converge to tropical
 curves. We illustrate the Maslov index 2 tropical discs given in this limit
 affine structure for the almost toric fibration considered of Example \ref{Ex}.
 This section is not intended to contain a rigorous approach to the
 correspondence between tropical curves and holomorphic discs in an almost toric
 setting.

Assume one has an almost toric fibration with special Lagrangian fibers with
respect to $\Omega$, a holomorphic 2-form with poles on the divisor $D$ that
projects to the boundary of the base $B$. Then the interior of $B$ carries a
second affine structure, sometimes called complex. The lattice which describes
this affine structure, which we denote by $\Lambda^c$, is given by identifying
$T_bB \simeq H^1(L_b, \R)$, via the flux of the imaginary part of $\Omega$ and
via Poincar\`e duality with $ H_1(L_b, \R) \supset H_1(L_b, \Z)$. More
precisely, for each vector $v \in T_bB$ one gets the element of $H^1(L_b, \R)$
given by the homomorphism 

\begin{equation*}
[\gamma] \in H_1(L_b,\R) \mapsto \left. \frac{d}{dt} \right| _{t=0} \int_{\Gamma_t} 
Im(\Omega),
\end{equation*}

\noindent where $\Gamma_t$ is given by any parallel transport of $\gamma$ over a curve
$c(t)$ on the base, with $c(0) = b$ , $c'(0) = v$. Since $Im(\Omega)$ is a
closed form, vanishing on the fibers, the above is independent of $c(t)$ and
$\Gamma_t$, and hence well defined. A fiber over the
boundary of $B$ is infinitely far from a given fiber over an interior point, since
$\Omega$ has a pole on the divisor $D$.

\begin{figure}[h] 
\begin{center}
\setlength{\unitlength}{7mm}
\begin{picture}(13.5,5)(0,0.2)
\psset{unit=\unitlength}
\newgray{ltgray}{0.85}

\psellipse[linecolor=red](4.7,4)(0.6,1.2)
\psline[linearc=0.7](4.7,3.3)(5,4)(4.7,4.8)
\psline[linearc=0.7](4.8,3.5)(4,4)(4.8,4.6)
\psellipse(4.7,4)(1,2)

\psellipse[linecolor=red](9.7,4)(0.6,1.2)
\psline[linearc=0.7](9.7,3.3)(10,4)(9.7,4.8)
\psline[linearc=0.7](9.8,3.5)(9,4)(9.8,4.6)
\psellipse(9.7,4)(1,2)

\psline[linecolor=red](4.7,5.2)(9.7,5.2)
\psline[linecolor=red](4.7,2.8)(9.7,2.8)

\psline[linecolor=blue](3.2,1)(11.5,1)
\psline[linecolor=blue](2.5,2)(3.2,1)
\psline[linecolor=blue](1.4,0.1)(3.2,1)

\psline[linearc=1,linecolor=red](1.4,0.1)(3.4,0.8)(11.5,1)
\psline[linearc=0.5,linecolor=red](1.4,0.1)(2.5,1.1)(2.5,2)
\psline[linearc=1,linecolor=red](2.5,2)(3.4,1.2)(11.5,1)

\put(0,0){\line(1,0){10.5}}
\put(0,0){\line(1,1){2}}
\put(2,2){\line(1,0){10.5}}
\put(10.5,0){\line(1,1){2}}
\end{picture}
\end{center}
\caption{After a deformation of the almost complex structure, $J$-holomorphic discs 
project to amoebas eventually converging to tropical curves in the large complex structure limit.}
\label{amoeba}
\end{figure}
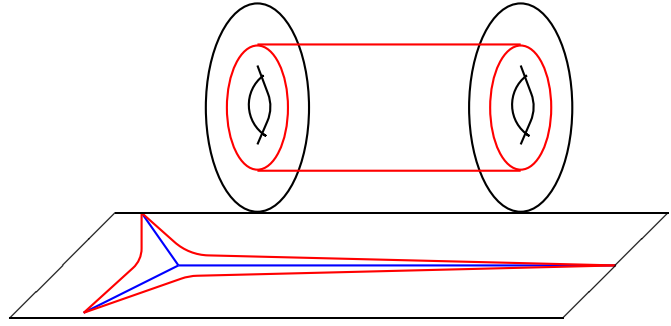

In general, the projections to $B$ of holomorphic curves, called amoebas, can be
fairly complicated. However, it is expected that under a suitable deformation of
the almost complex structure $J$ towards a `large limit' (where the base
directions are stretched), the amoebas converge to tropical curves; see Figure
\ref{amoeba}. Also, the wall generated by the singular fiber converges to a
straight line with respect to this affine structure, since it is the projection
of a holomorphic curve containing Maslov index zero discs bounded by the fibers.
Moreover, since the boundary of such a disc represents the vanishing cycle in
the neighborhood of the nodal fiber, its homology class is fixed by the
monodromy. Hence the straight line corresponding to the wall is in the direction
of the eigenvector of the affine monodromy. In a neighborhood of a fiber away
from the singular ones the almost toric fibration are expected to approach $TB/\epsilon \Lambda^c$
with $\epsilon \to 0$ at the limit. This way, the change of coordinates and 
monodromy for this `large limit' complex affine structure is given by the 
transpose inverse of the symplectic affine structure defined in section 
\ref{atm}, where the neighborhood of a regular fiber is isomorphic to
a neighborhood of $T^*B/\Lambda^*$. Also, our `limit lattice' at a point
$b$ on the base is identified with $H_1(L_b, \Z)$.

This principle is illustrated for Example \ref{Ex} in Figures \ref{figClif} 
and
\ref{figChe}. In these two figures:
\begin{itemize}
  \item[-] The Lagrangian torus under consideration is
the fiber over the thick point. 
  \item[-] The dashed lines represent the walls (long
dashes) and the cuts (short dashes), and `x' represents the node
(singular fiber). 
 \item[-] A {\it tropical disc} is a tree whose edges are straight lines with
rational slope in $B$, starting at the torus and ending on the nodes or
perpendicular to the boundary at infinity. The internal vertices satisfy the
balancing condition that the primitive integer vectors entering each vertex of
the tree, counted with multiplicity, must sum to 0. 
\item[-] The Maslov index of the disc equals twice the number of intersection with 
the boundary at infinity, i.e., the divisor.
\item[-] The multiplicity of each
edge is depicted by the numbers of lines on Figure \ref{figChe}, but on some other
figures the multiplicities are represented by the thickness of the line, for
visual purposes (they can be computed taking into account the balancing
condition). 
\item[-] The vanishing cycle is represented by $(-1 , 1)$ on the lattice $H_1(L_b, \Z)$.

\end{itemize}

 \begin{figure}[h!] 
\begin{minipage}[b]{0.5\linewidth}
\begin{center}
\scalebox{0.5}{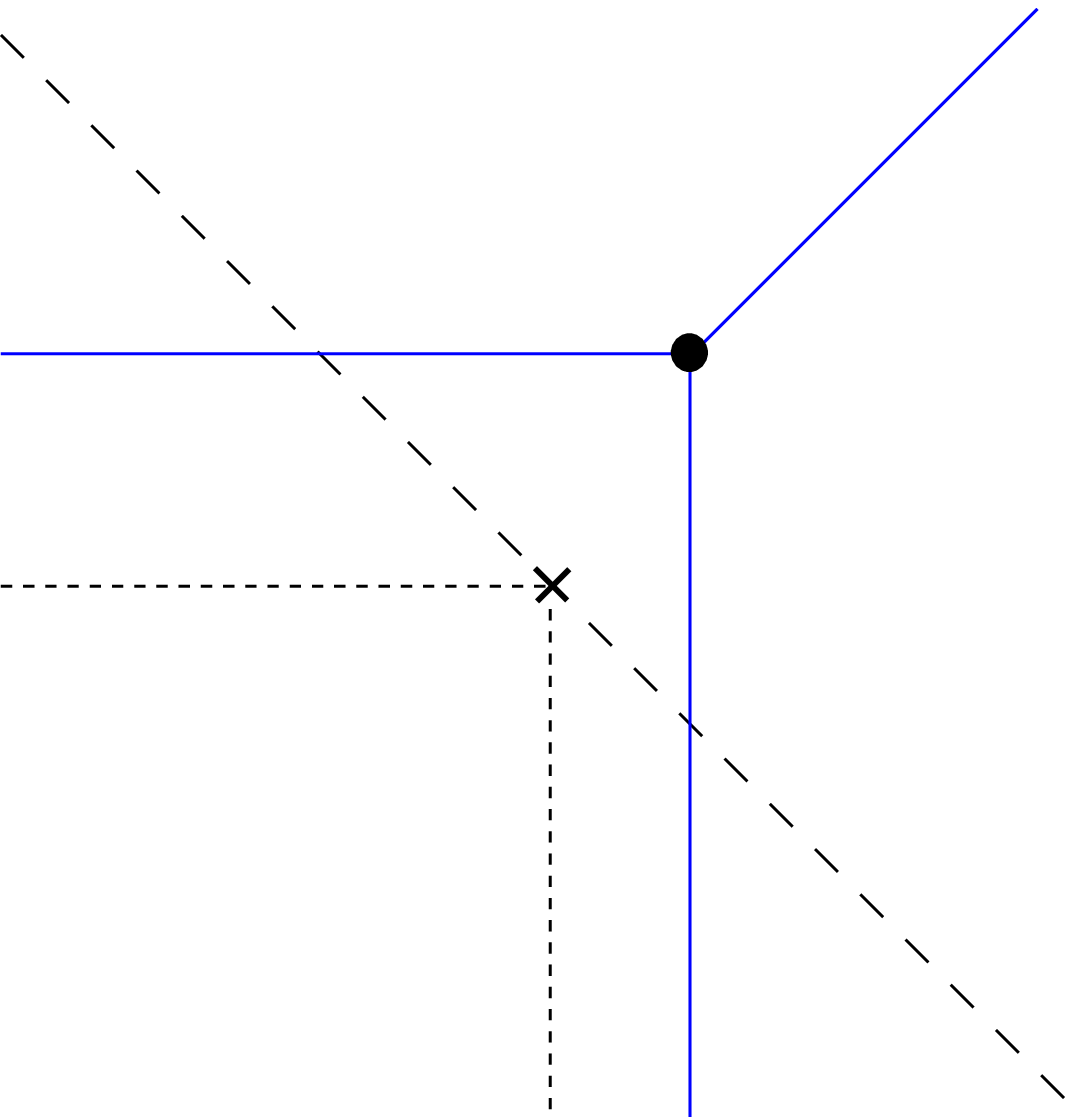}
 \end{center}
 \caption{Clifford type torus. \newline  $W = z_1 + z_2 + \frac  {e^{-\Lambda }}{z_1z_2}$.}
\label{figClif}
\end{minipage}
\hfill
\begin{minipage}[b]{0.5\linewidth}
\begin{center}
\scalebox{0.5}{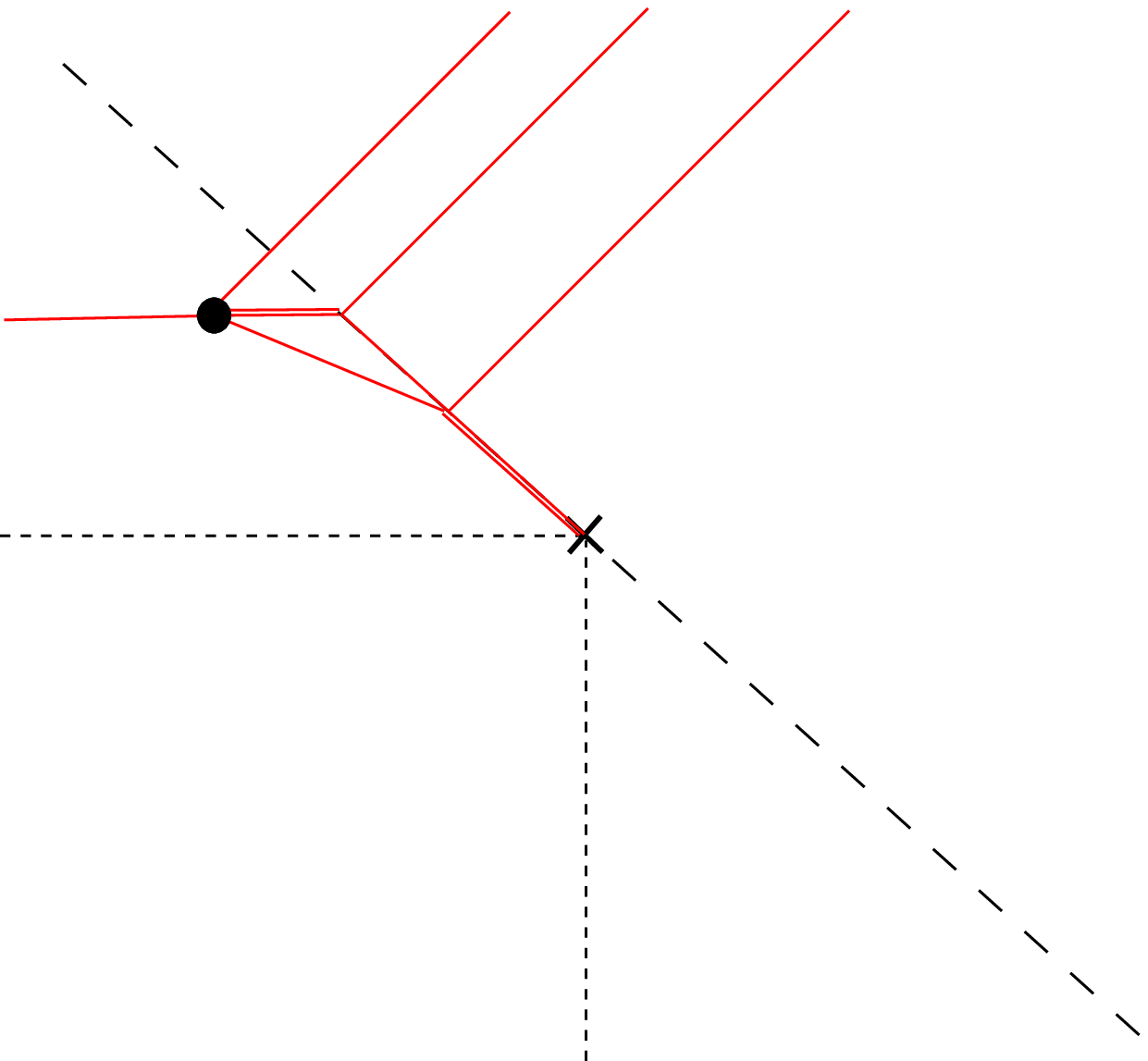}
 \end{center}
 \caption{Chekanov type torus. $W = u + \frac  {e^{-\Lambda }}{u^2w} + 2\frac  {e^{-\Lambda }}{u^2} 
 + \frac  {e^{-\Lambda }w}{u^2} = u + \frac  {e^{-\Lambda }(1 + w)^2}{u^2}$.}
\label{figChe}
\end{minipage}
\end{figure}

 \begin{figure}[h!] 
\begin{minipage}[b]{0.3\linewidth}
\begin{center}
\scalebox{0.4}{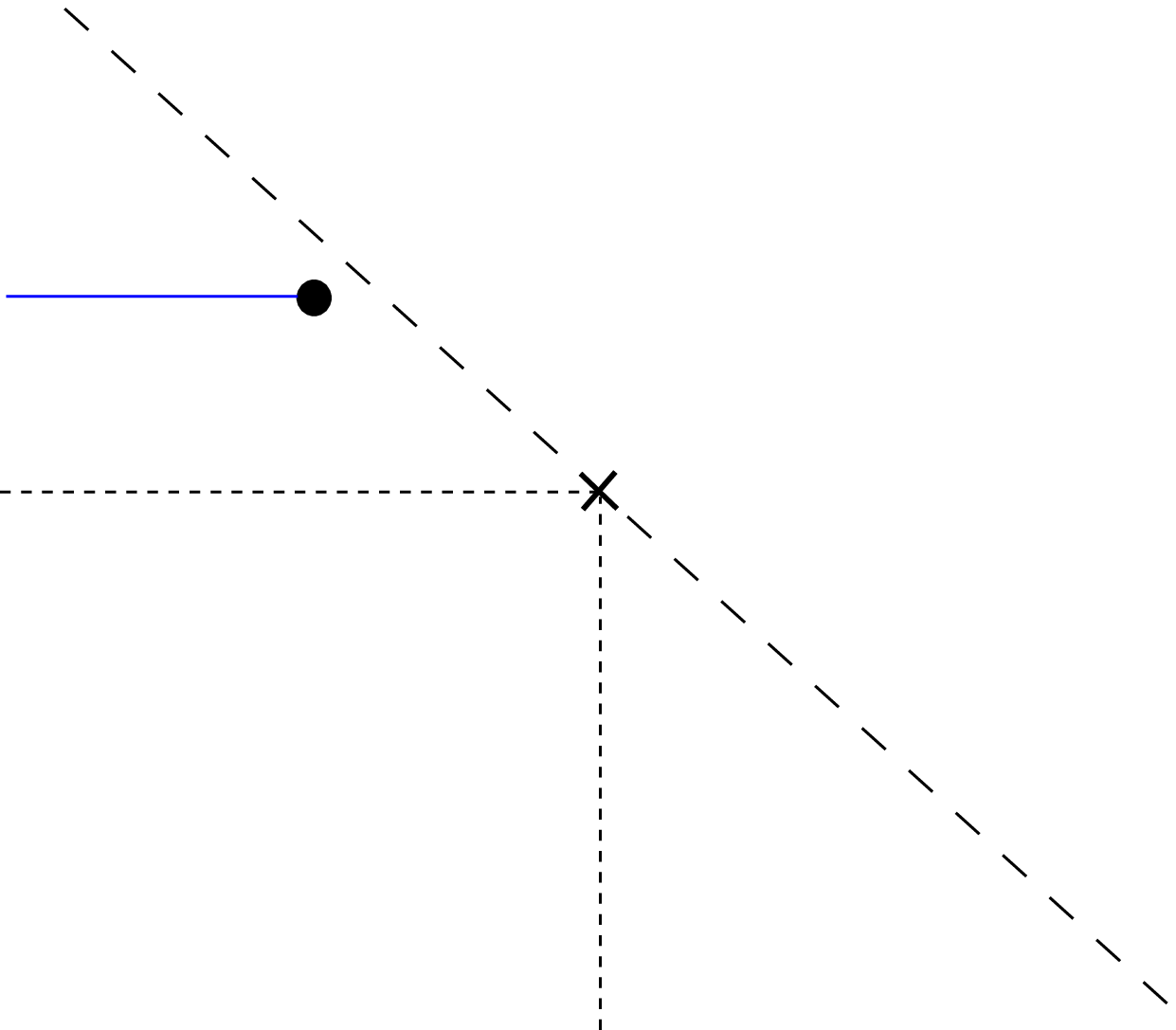}
 \end{center}

\end{minipage}
\hfill
\begin{minipage}[b]{0.3\linewidth}
\begin{center}
\scalebox{0.4}{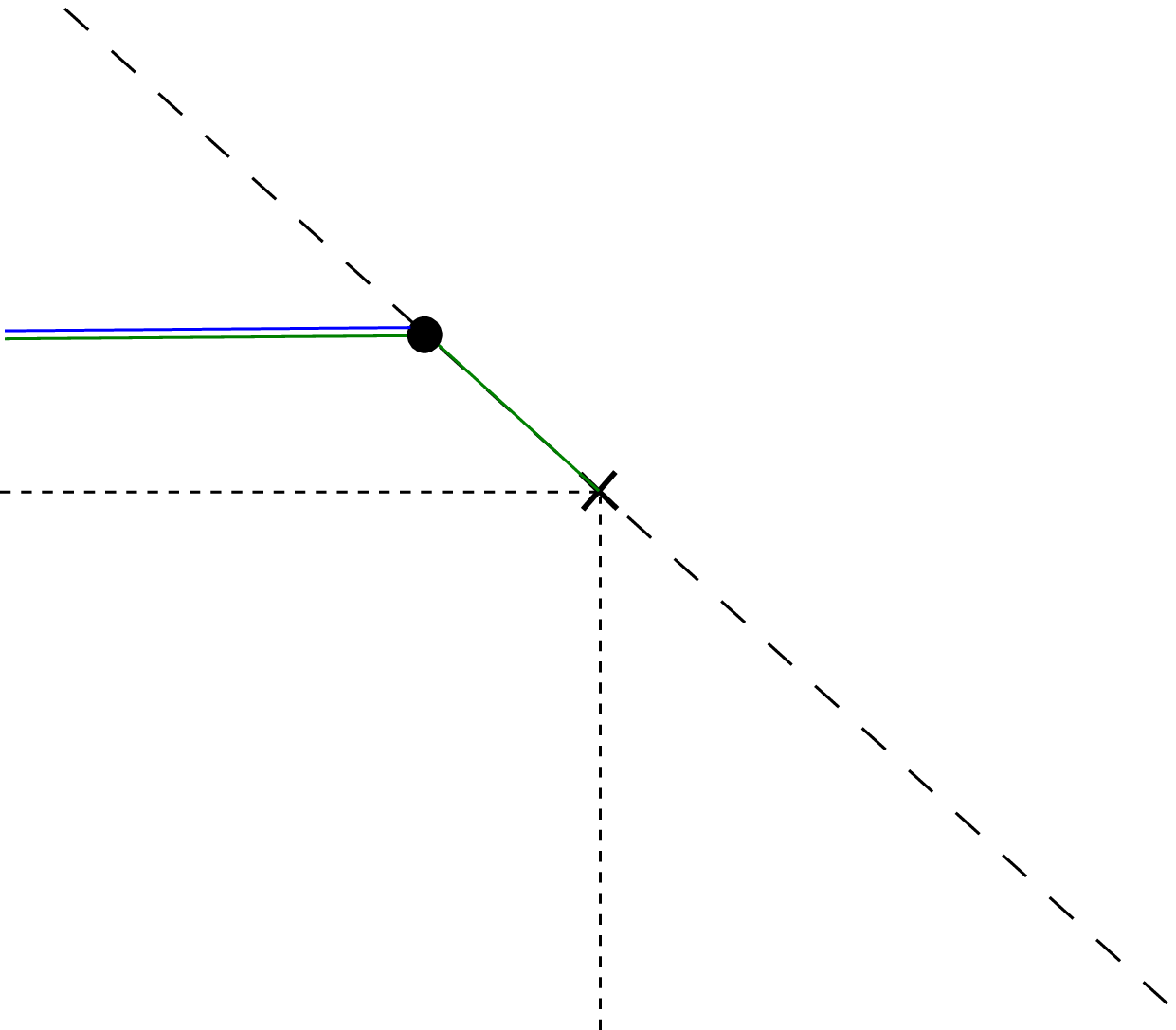}
 \end{center}
 
\end{minipage}
\hfill
\begin{minipage}[b]{0.3\linewidth}
\begin{center}
\scalebox{0.4}{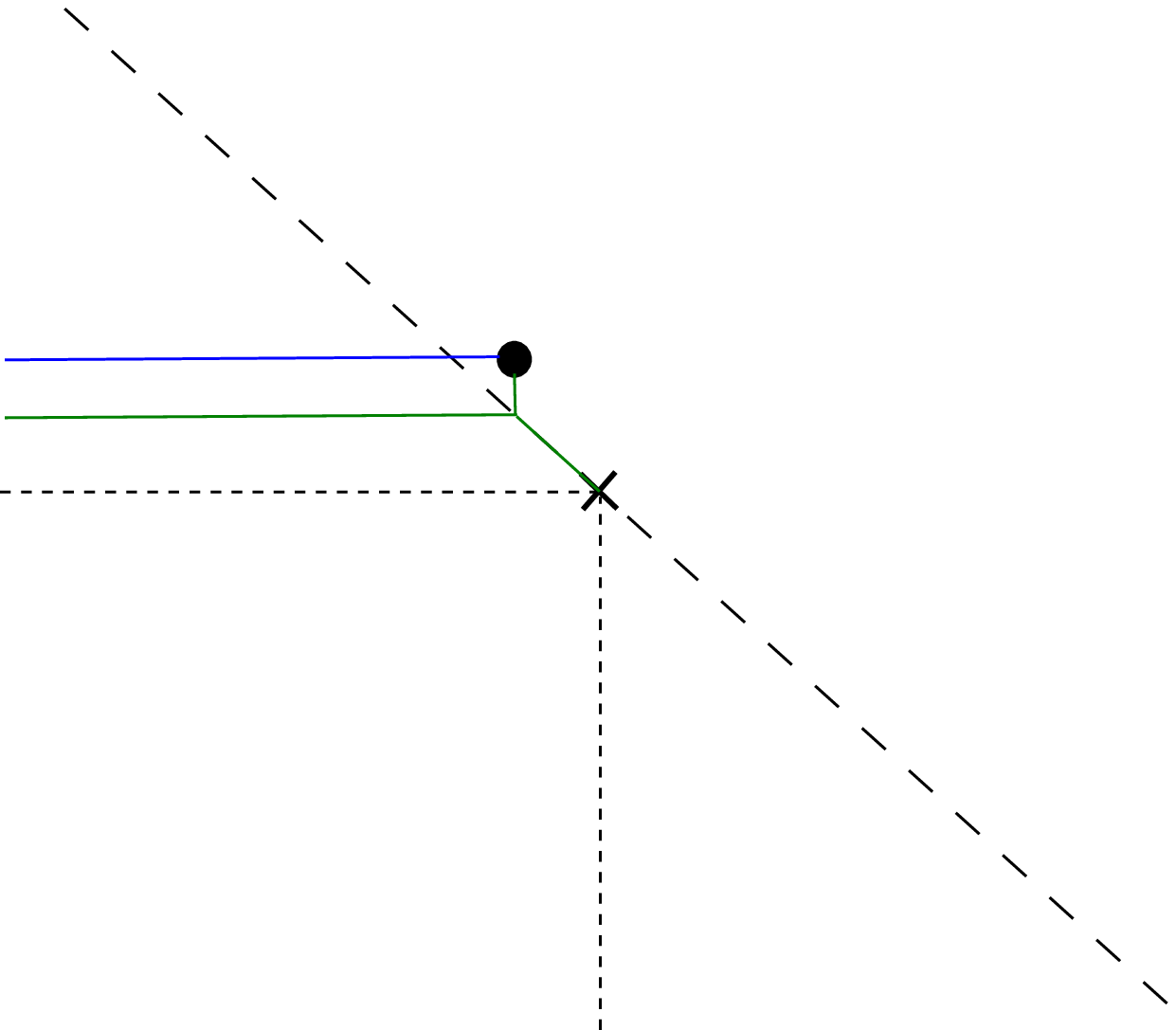}
 \end{center}

\end{minipage}

\caption{As in Example \ref{Ex} (see also Figure \ref{fig:wallcross}), a disc in the class $\beta _2$ breaks into a disc in the class $\beta _1$ and 
 the exceptional disc $u_0$, and disappears after crossing the wall, for $\lambda <
0$.}
\label{figIlust}
\end{figure}

 The relation between these pictures and the formulas in section \ref{Ex} is as
 follows: $z_1$, $z_2$ are coordinates on the Clifford side associated with
 the vectors $(1,0)$ and $(0,1)$, respectively, and $u$, $w$ are coordinates on
 the Chekanov side associated with the vectors $(1,0)$ and $(-1,1)$,
 respectively, for the top part of the Chekanov side (when $\lambda < 0$).
 The direction of the edge leaving the torus can be read off from
 the superpotential and is the negative of the vector representing the
 exponents of the corresponding monomial. For instance, the disc associated with
 the monomial $\frac {e^{-\Lambda }}{z_1z_2}$ in \eqref{exWCLIF} leaves the torus
 with tangent vector $(1,1)$ in Figure \ref{figClif}, while the disc associated
 with the term $\frac {e^{-\Lambda }}{u^2} $ in \eqref{exWCHE} has tangent vector
 $(2,0)$ (multiplicity 2) in Figure \ref{figChe}. We call this vector the
 ``class" of the tropical disc.
 
 %Thus, the tropical discs in Figure \ref{figChe} can be used to determined heuristically that the 
 %equation \eqref{exWCHE} should contain monomials with exponents $u$, $u^{-2}$, $u^{-2}w$ 
 %and $u^{-2}w^{-1}$. The exact coefficient can be determined by applying the wall-crossing
 %formula to the Clifford-side superpotential.

%When passing through the wall, the tropical disc in the class
 %$(1,1)$ breaks into three other tropical discs, while the tropical disc
%in the class $(0,-1)$ disappears, exactly as shown for the respective
 %holomorphic discs in Example \ref{Ex} and illustrated in Figure \ref{ilust}.

The formulas for the two superpotentials are related by a {\it wall-crossing}
{\it transformation} (or {\it mutation}). We describe it now for the case of a
two dimensional base. In what follows, when referring to a particular fiber
$L_b$, let $\beta_1$, $\beta_2$ be relative homotopy classes of discs with
boundary on $L_b$ such that $\del \beta_1$ , $\del \beta_2$ are associated with
$(1,0)$ and $(0,1)$ seen as elements of $H_1(L_b, \Z)$, respectively. Moving the
point $b$ on the base, we keep denoting by $\beta_1$, $\beta_2$ the continuous
deformations of this relative classes. Consider a wall in $B$ coming from the
projection of a family of Maslov index zero discs propagating out of a node in
the base along the affine direction $(m,n) \in \Z^2$. (Here we only consider the
part of the wall that lies on the positive half of the eigenline generated by
$(m,n)$). Set $\mathcal{W}^+ = \{ v \in \R^2 | \{v, (m,n)\} \ \text{is
positively oriented} \}$ and $\mathcal{W}^- = \{ v \in \R^2 | \{v, (m,n)\} \
\text{is negatively oriented} \}$. Denote by $z_1$, $z_2$ (respectively $u_1$,
$u_2$), the coordinates associated with $\beta_1$, $\beta_2$ for the fibers
$L_b$ with $b$ in the chamber contained in $\mathcal{W}^+$ (respectively
$\mathcal{W}^-$). The class of the primitive Maslov index zero discs bounded by
the fibers along the wall is of the form $m\beta_1 + n\beta_2 + k[\CP^1]$ and
hence is represented by the monomial $w = e^{-k \Lambda} z_1^m z_2^n = e^{-k
\Lambda} u_1^m u_2^n$. The coordinates $z_1$, $z_2$ and $u_1$, $u_2$ are then related
by the:

\textbf{Wall-crossing formula}
\begin{equation}\label{WCForm}
   u_1^r u_2^s \rightleftharpoons z_1^r z_2^s(1 + w)^{nr - ms} 
\end{equation}

That way, knowing the superpotential on one side of the wall, it is expected
that one can compute the superpotential of the other side by applying the above
wall-crossing formula. Note that the absolute value of the exponent $|nr - ms|$
is the intersection number between a disc represented by $z_1^r z_2^s$ and the
Maslov index zero disc at a fiber over the wall.

\section{Predicting the number and relative 
homotopy classes the $T(1,4,25)$ torus bounds} \label{Pred}

In this section we apply the same ideas as in the previous section to another
almost toric fibration, any of the ones shown on Figure \ref{AmtC}, to predict the superpotential
of the $T(1,4,25)$ {\it type} torus, obtained from the previous Chekanov torus after
another wall-crossing. 
 
Figure \ref{AmtC} represents almost toric fibrations on $\CP^2$ containing two
singular fibers of rank one. The middle diagram arises by applying a nodal trade
to one corner of the right-most diagram in Figure \ref{Amt}, after redrawing the
diagram via an element of $AGL(2,\Z)$, which is, up to translation,
$\begin{pmatrix} 1 & 0 \\ -3 & 1 \end{pmatrix}$. Lengthening the new cut to pass
trough the central fiber we end up with an almost toric fibration having the
monotone $T(1,4,25)$ torus as a fiber, as illustrated by the right-most diagram
in Figure \ref{AmtC}.

We assume the Lagrangian fibers are special with respect to some 2-form $\Omega$
with poles on the divisor, and that in a `large limit' almost complex structure,
pseudo-holomorphic curves project to tropical curves. We will start the
description of the superpotential in the chambers where the fibers are of
Clifford type and successively cross two walls in order to arrive at a tentative
formula for the superpotential in the chamber where the fibers are $T(1,4,25)$
type tori. In section \ref{ExoTor}, we construct a singular Lagrangian fibration
interpolating between Chekanov type tori and $T(1,4,25)$ type tori.
For the later, the count of Maslov index 2 holomorphic discs is verified 
rigorously in section \ref{CHD} using only symplectic geometry techniques.

\begin{figure}[h!] 
\begin{center}
\scalebox{1}{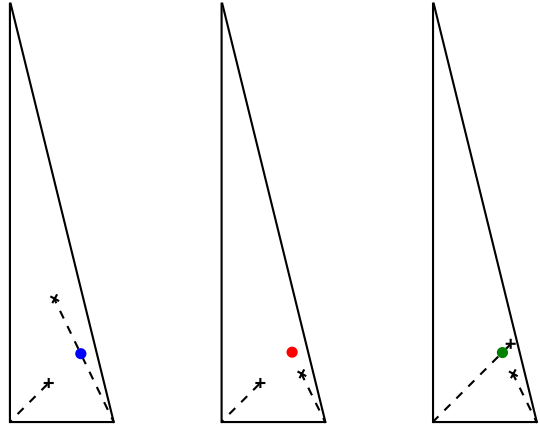}
 \end{center}
 \caption{Almost toric base diagrams of $\CP^2$ having, respectively, the monotone 
 Clifford, Chekanov and $T(1,4,25)$ tori as the a fiber.}
\label{AmtC}
\end{figure}

Now we focus on the chamber containing the Clifford type tori illustrated on the
top right part of Figure \ref{figClif1}. Since we applied $\begin{pmatrix} 1 & 0
\\ -3 & 1 \end{pmatrix}$ to the right-most diagram of Figure \ref{Amt}, we need
to `dually' change the coordinates used on Figure \ref{figClif} by applying the
transpose inverse $\begin{pmatrix} 1 & 3 \\ 0 & 1\end{pmatrix}$. Calling these
new coordinates $\hat{z_1}$, $\hat{z_2}$, associated with $(1,0)$ and $(0,1)$
on the top right part of Figure \ref{figClif1}, they are described in terms of $z_1$,
$z_2$ by $z_1 = \hat{z}_1 $, $z_2 = \hat{z}^3_1\hat{z}_2$. This way a tropical
disc in the class $(p,q)$ on the top right part of Figure \ref{figClif1}
corresponds to a monomial with exponents $\hat{z}_1^p\hat{z}_2^q$. After this
change of coordinates, the invariant direction at the singularity is $(2,1)$ and
the monodromy is given by $A_{(2,1)}$; see Figure \ref{figClif1}. Therefore at the top
part of the chamber corresponding to the Clifford type tori the superpotential
is given by 

\begin{equation} \label{WClif1}
W_{Clif} = \hat{z}_1 + \hat{z}^3_1\hat{z}_2 + \frac
{e^{-\Lambda }}{\hat{z}^4_1z_2}
\end{equation}

As the vanishing vector of the first wall is $(2,1)$, the vanishing class is
represented by the coordinate $\w = \hat{z}_1^2\hat{z}_2 = \z_1^2\z_2$, where
$\z_1$, $\z_2$ are the coordinates corresponding to the standard basis on the
Chekanov side. Applying the wall crossing formula \ref{WCForm} to
\eqref{WClif1}, the superpotential in the Chekanov region is 

\begin{equation} \label{WChe1}
W_{Che} = \z_1 + e^{-\Lambda}\frac{(1 + \w)^2}{\z_1^4\z_2} = \z_1 +
e^{-\Lambda }\z_2 + 2\frac  {e^{-\Lambda }}{\z_1^2} + \frac  {e^{-\Lambda}}{\z_1^4\z_2}.
\end{equation}

\begin{figure}[h!] 
\begin{minipage}[b]{0.5\linewidth}
\begin{center}
\scalebox{0.4}{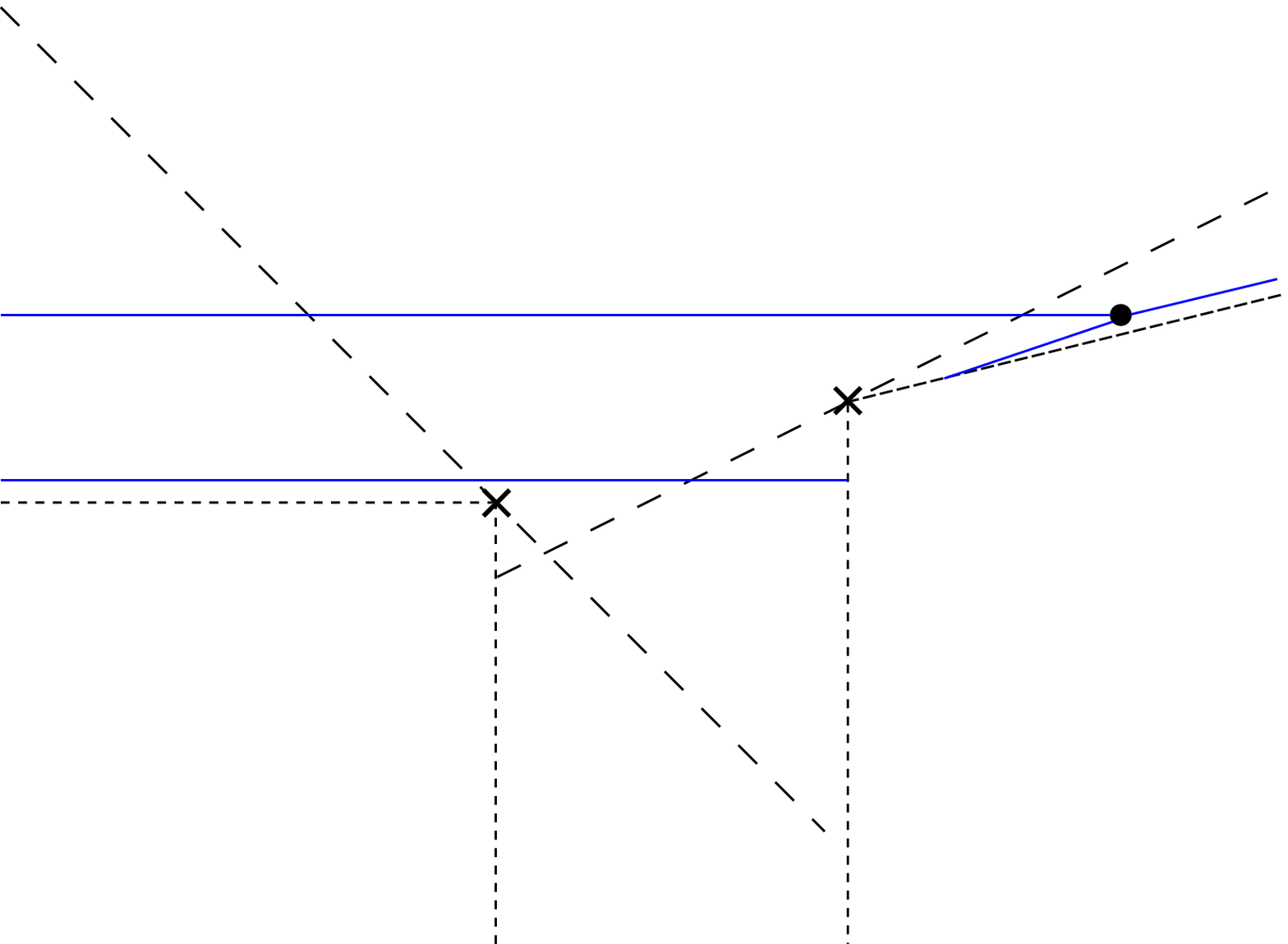}
 \end{center}
 \caption{Clifford type torus.\newline$W_{Clif} = \hat{z}_1 + \hat{z}^3_1\hat{z}_2 + 
 \frac {e^{-\Lambda }}{\hat{z}^4_1z_2}$.}
\label{figClif1}
\end{minipage}
\hspace{0.2cm}
\begin{minipage}[b]{0.5\linewidth}
\begin{center}
\scalebox{0.4}{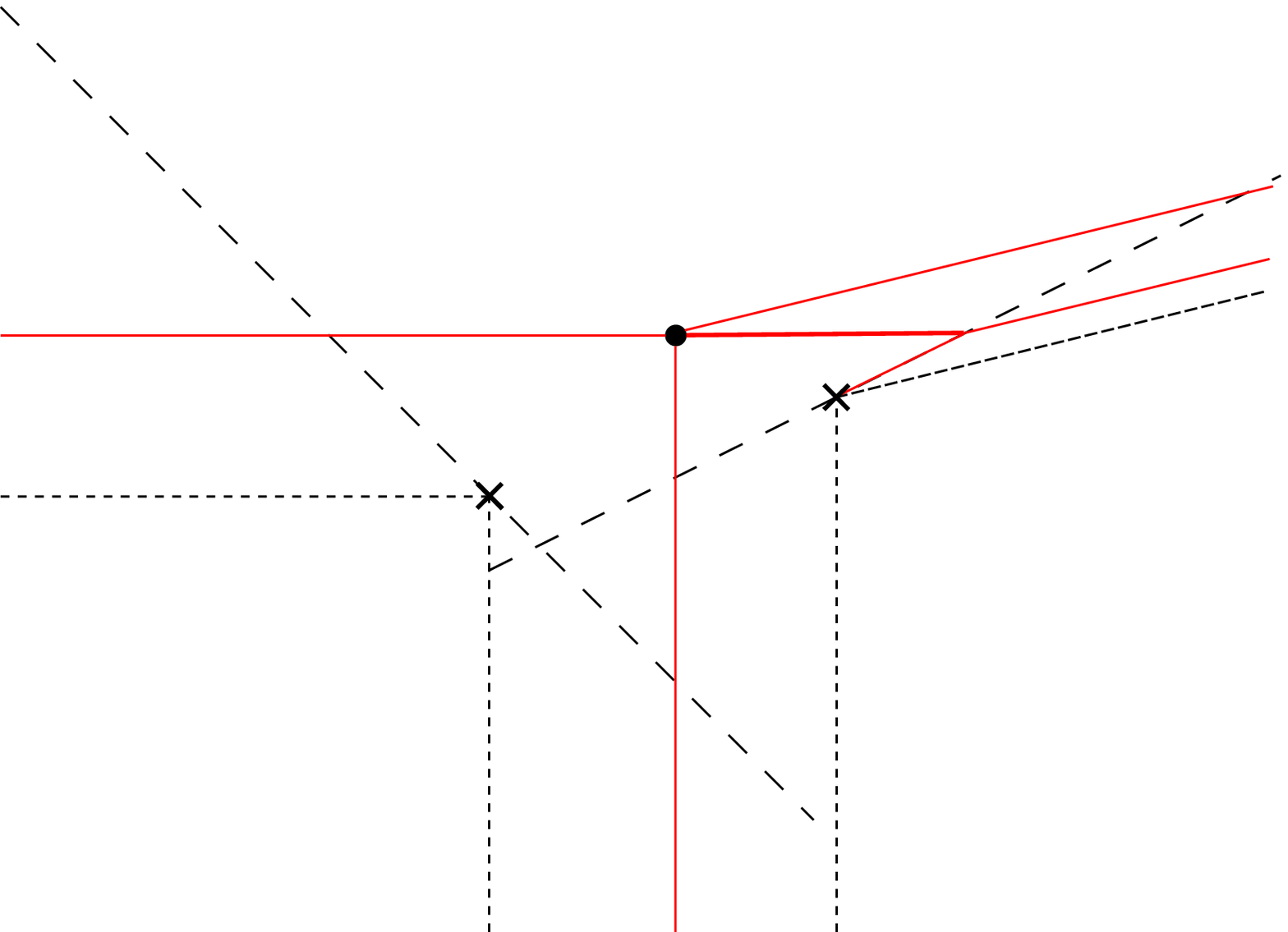}
 \end{center}
 \caption{Chekanov type torus. $W_{Che} = \z_1 +e^{-\Lambda}\z_2 + 
 2\frac{e^{-\Lambda }}{\z_1^2} + \frac{e^{-\Lambda }}{\z_1^4\z_2}$.}
\label{figChe1}
\end{minipage}
\end{figure}

We now cross the second wall towards a $T(1,4,25)$ type torus. The second wall
has vanishing vector $(-1,1)$ and the monomial corresponding to the vanishing
class is $w = e^{-\Lambda}\z_2 \z_1^{-1} = e^{-\Lambda}u_2 u_1^{-1}$, where
$u_1$, $u_2$ are are the coordinates corresponding to the standard basis on the
$T(1,4,25)$ side and the factor $e^{-\Lambda}$ is present because the class of
the Maslov index 0 disc is $-\beta_1 + \beta_2 + [\CP^1] \in \pi_2(\CP^2 ,L)$,
where $\beta_1$ and $\beta_2$ are the classes associated with the coordinates,
$\z_1$, $\z_2$. Indeed, knowing the boundary of $w$ represents the class
$(-1,1)$, we get the first two coefficients of $\beta_1$ and $\beta_2$. To
obtain the coefficient of $[\CP^1]$ we compute the Maslov index. We have that
$\mu([\CP^1]) = 6$ (see Lemma \ref{MI}) and $\z_1 $ and $e^{-\Lambda}\z_2 $ are
terms in $W_{Che}$, hence $\mu(\beta_1) = 2 $ and
$\mu(\beta_2) = -4$. In order to have Maslov index 0, the coefficient of
$[\CP^1]$ must be 1. Finally, applying the wall-crossing formula \ref{WCForm} to
\eqref{WChe1} we get that

\begin{eqnarray}\label{WChe2}
W_{T(1,4,25)} & = & u_1 + 2\frac{e^{-\Lambda }}{u_1^2}(1 + w)^2 + \frac{e^{-\Lambda }}{u_1^4u_2}(1 + w)^5 \nonumber \\
                 & = & u_1 + 2\frac{e^{-\Lambda }}{u_1^2} + 4\frac{e^{-\Lambda }u_2}{u_1^3} + 
 2\frac{e^{-\Lambda}u_2^2}{u_1^4} + \frac{e^{-\Lambda }}{u_1^4u_2} + 5\frac{e^{-2\Lambda}}{u_1^5}  \nonumber \\
                 &  & + 10\frac{e^{-3\Lambda}u_2}{u_1^6}  + 10\frac{e^{-4\Lambda}u_2^2}{u_1^7} + 
 5 \frac{e^{-5\Lambda}u_2^3}{u_1^8}  + \frac{e^{-6\Lambda}u_2^4}{u_1^9} \nonumber \\
                 & = & u + 2\frac{e^{-\Lambda }}{u^2}(1 + w)^2 + \frac{e^{-2\Lambda }}{u^5w}(1 + 
                 w)^5.
\end{eqnarray}

The last formula is a more simplified expression in terms of the coordinates $u = u_1$ and
$w$. The expanded version in coordinates $u_1$, $u_2$ makes it easier to visualize the
class of each disc. Figure \ref{fig10fam} illustrates a $T(1,4,25)$ type torus,
predicted to bound 10 different families of holomorphic discs, corresponding to 
the 10 terms in this expression.

\begin{figure}[h!]
\begin{center}
\scalebox{0.7}{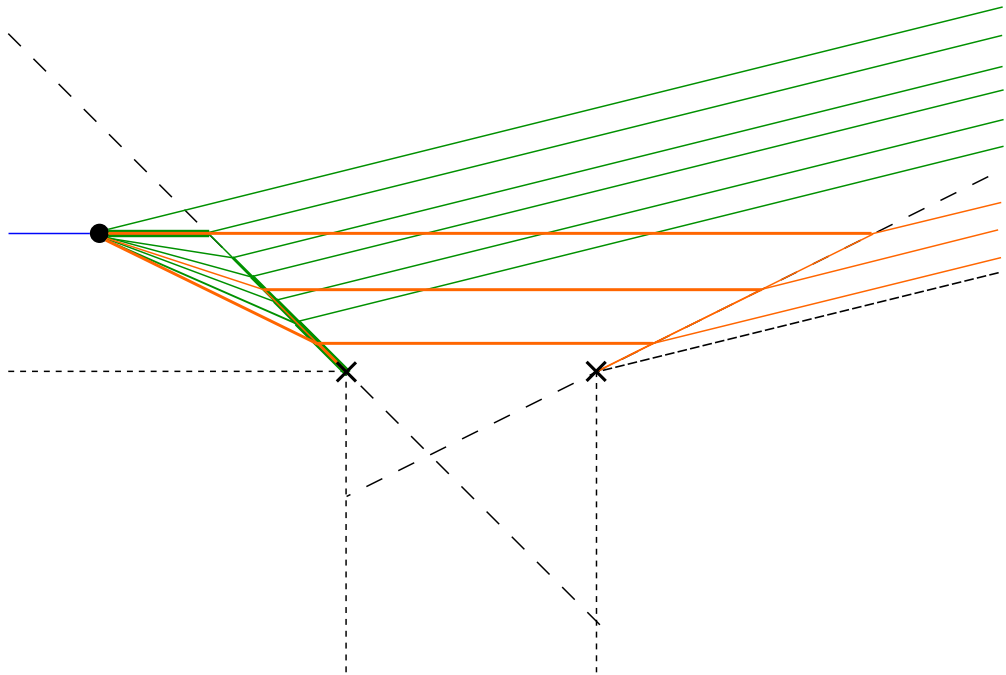}
 \end{center}
 \caption{A $T(1,4,25)$ type torus bounding 10 families of Maslov index 2 holomorphic discs. 
 The superpotential is given by $ W_{T(1,4,25)} = u + 2\frac{e^{-\Lambda }}{u^2}(1 + w)^2 + 
 \frac{e^{-2\Lambda }}{u^5w}(1 + w)^5$.}
\label{fig10fam}
\end{figure}

Even though our approach in this section was not completely rigorous, it points
toward the existence of such an exotic torus bounding 41 discs, if we count with
multiplicity (sum the coefficients of each monomial). The theory for proving the
correspondence between tropical curves on the base and holomorphic curves on
the total space is not fully developed yet, so the actual proof in section \ref{CHD}
will use a different approach.\\

\begin{remark} 
The bottom most region on Figures \ref{figClif1}-\ref{fig10fam} is known to have infinitely many
walls, since it can have Maslov index 0 discs ending in both nodes with
different multiplicities. This can be detected by the need for consistency 
of the changes of coordinates due to wall-crossing when we go around the 
point where the walls intersect. This phenomenon is called scattering,
first described by M. Kontsevich and Y.Soibelman in \cite{KS}. See also
M. Gross \cite{GS}.
\end{remark}

\section{The exotic torus} \label{ExoTor}

This section is devoted to the actual construction of the exotic torus.
Heuristically, we try to mimic the following procedure: first perform a nodal
trade in a smooth corner of the moment polytope of $\CP(1,1,4)$ to get an
``orbifold almost toric fibration"; then we smooth the orbifold singularity
and trade it for an interior node, obtaining the almost toric fibration described by the middle diagram of
Figure \ref{AmtC}. This way the analogue of a Chekanov type torus in
$\CP(1,1,4)$ deforms to a $T(1,4,25)$ type torus. For the first step, we
consider a symplectic fibration on $\CP(1,1,4)$ given by $f_0(\x: 1: \z) = \x
\z$, with fibers preserved by a circle action $e^{i\theta}\cdot(\x: 1: \z) =
(e^{-i\theta}\x : 1 :e^{i\theta}\z)$. The parallel transport of an orbit along a
circle centered at $c \in \R_{>0}$ with radius $r < c$ is then a `Chekanov type
torus in $\CP(1,1,4)$'. The second part is carried out using a degeneration from
$\CP^2$ to $\CP(1,1,4)$ parametrized by a real parameter $t$. We then consider a
family of symplectic fibrations $f_t$ on $\CP^2 \setminus \{y = 0\}$ converging
to $f_0$, compatible with a circle action, so that, for each $t > 0$, the
parallel transport of an orbit along the same circle at the base is a
$T(1,4,25)$ type torus which converges to a `Chekanov type torus in $\CP(1,1,4)$ 
as $t \to 0$. One technical issue that arrises is that we need to equip $\CP^2$
with a non-standard K\"ahler form (symplectomorphic to the standard one) in order to
be able to give explicit descriptions of these tori.

As mentioned in the introduction, the projective plane degenerates to weighted projective spaces
$\CP(a^2,b^2,c^2)$, where $(a,b,c)$ is a Markov triple. For $c' =3ab - c$, a deformation
from $\CP(a^2,b^2,c^2)$ to $\CP(a^2,b^2,c'^2)$ can be seen
explicitly inside $\CP(a^2,b^2,c, c')$ via the equation , $z_0z_1 -
(1-t)z_2^{c'} - tz_3^c = 0$.

We are going to work only with $\CP^2 = \CP(1,1,1)$ and $\CP(1,1,4)$ inside
$\CP(1,1,1,2)$. For $t \in [0,1]$, let $X_t$ be the surface $z_0z_1 - (1-t)z_2^{2} - tz_3 = 0$.
Explicit embeddings are

\begin{eqnarray} \label{embCP2}
 \CP(1,1,1) &\longrightarrow& \CP(1,1,1,2)     \nonumber \\
   (x :y: z) &\mapsto& (x :y : z : \frac{xy - (1-t)z^2}{t}) \ \ \  \ \ \ \ \text{for}  \ \ t \neq 0, 
\end{eqnarray}

\begin{eqnarray}
  \CP(1,1,4) &\longrightarrow& \CP(1,1,1,2)     \nonumber \\
   (\x :\y: \z) &\mapsto& (\x^2 : \y^2 :\x\y : \z) \ \ \  \ \ \ \ \text{ for}  \ \ t = 0. 
\end{eqnarray}

Set $\xi =\frac{xy - (1-t)z^2}{t}$. We now consider a fibration given by $F =
\frac{z_2z_3}{z_1^3}$ from
$\CP(1,1,1,2)$ (minus two lines) to $\CP^1$, coinciding with $f_0$ on $X_0
\cong \CP(1:1:4)$. We restrict $F$ to $X_t$, for $t > 0$, obtaining:

$$ 
f_t : X_t \setminus \{(1:0:0:0)\} \simeq \CP^2 \setminus
\{(1:0:0)\} \rightarrow \CP^1 
$$

\begin{equation}
f_t(x:y:z) =   \frac{z\xi}{y^3} 
\end{equation}

Also consider the divisor $D = f_t^{-1}(c)$, where we take $c$ to be a positive
real number, thought of as a smoothing of $f_t^{-1}(0) = \{z\xi = 0\}$. We can
define a circle action on $\CP^2 \setminus \{y = 0\}$, given, using coordinates
$(z, \xi =\frac{x - (1-t)z^2}{t})$, by $e^{i\theta}\cdot(z, \xi) =
(e^{-i\theta}z,e^{i\theta}\xi)$. This action does not extend to all of $\CP^2$,
and it does not preserve the Fubini-Study K\"ahler form. However, we can modify
the K\"ahler form to make it $S^1$-invariant in a open subset; see below. As in
Example \ref{Ex}, we can consider Lagrangian $T(1,4,25)$ type tori, built up as
the parallel transport of orbits along the circle centered at $c$ with radius $r
< c$. The other parameter of this fibration of Lagrangian tori is given by the
moment map of the circle action (with respect to the modified K\"ahler form). 
In order to make everything explicit and be able to actually compute the
Maslov index 2 holomorphic discs bounded by these tori, we will construct a K\"{a}hler
form $\omega$, for which the moment map is given by:  

\begin{equation}
  \mu_{\omega}(x:1:z) = 2\frac{|z|^2 - |\xi|^2}{1 + |z|^2 + |\xi|^2},
\end{equation}
on an open set contained in the inverse image with respect to $f_t$ of an open disc of radius $R > 2c$
centered at $0$.

For that, on the region described above, we take $\omega$ to be equal to
$\frac{i}{4}\del\bar{\del}\log( 1 + |z|^2 + |\xi|^2)$, in the coordinate chart
$y=1$. In homogeneous coordinates this form is given by

\begin{equation} 
\tilde{\omega} =
\frac{i}{4}\del\bar{\del}\text{log}\left( 1 +\left|\frac{z}{y}\right|^2 +
\left|\frac{\xi}{y^2}\right|^2\right) = \frac{i}{4}\del\bar{\del}\text{log}( |y|^4 +
|z|^2|y|^2 + |\xi|^2) 
\end{equation}

The second expression is well defined on $\CP^2 \setminus (1:0:0)$ , and equal
to the first one since $\del\bar{\del}\log( |y|^4) = 0$. A calculation in the
affine chart $x =1$ shows that, along the complex line $y = 0$, it becomes
$dy\wedge d\bar{y}/ \left|\frac{(1 - t)z}{t}\right|^2$. So we see that
$\tilde{\omega}$ is well defined and nondegenerate away from $y=0$, but it is
degenerate along the line $y = 0$, and also is singular at $(1:0:0)$. In order
to define a nearby symplectic form, set $\rho = \left|\frac{z\xi}{y^3}\right|$
and $\ell = \frac{\left|\frac{z}{y}\right|^2 - \left|\frac{\xi}{y^2}\right|^2}{1
+ \left|\frac{z}{y}\right|^2 + \left|\frac{\xi}{y^2}\right|^2}$ for $y \neq 0$,
and consider a cut off function $\eta$ that is zero for $(x:y:z) \in \{\rho < R;
|\ell| < \lambda_0\}$ and one for $(x:y:z) \in \CP^2 \setminus \{\rho \leq 2R;
|\ell| \leq 2\lambda_0\}$. The parameters $R$ and $\lambda_0$ are chosen so that
$c + r < 2c < R$ and $0 < \lambda_0 < 1/2$, this way $\CP^2 \setminus \{\rho
\leq 2R; |\ell| \leq 2\lambda_0\}$ is an open neighborhood of $\{y = 0\}$.
Define

\begin{equation}\label{sympform}
 \omega = \frac{i}{4}\del\bar{\del}\log( |y|^4 + |z|^2|y|^2 + |\xi|^2 + 
 s^2\eta(\rho, \ell)(|x|^2+|y|^2+|z|^2)^2)
\end{equation} 
where $s$ is a very small constant. We see that $\omega$ is well defined
 in the whole $\CP^2$ since it is an interpolation between
$\tilde{\omega}$ and the K\"ahler form $\omega_s =
\frac{i}{4}\del\bar{\del}log(|y|^4 + |z|^2|y|^2 + |\xi|^2 +
s^2(|x|^2+|y|^2+|z|^2)^2)$ which is 1/2 of the pullback of the Fubini-Study form
on $\CP^{11}$ via the embedding,

\begin{eqnarray}
  \iota : \CP^2 &\longrightarrow& \CP^{11}     \nonumber \\
   (x: y :z) &\mapsto& (y^2 : zy : \xi : sx^2 : sy^2 : sz^2 : sxy: sxy : syz : syz : szx : szx)  \nonumber
\end{eqnarray}

\begin{proposition}\label{Sform} 
  For $s >0$ sufficiently small, keeping fixed
the other parameters $c$, $r$, $t$, $0 < \lambda_0 < 1/2$ and $R$, $\omega$ is a well defined nondegenerate
K\"ahler form. Moreover, $\omega$ lies in the same cohomology class as the
Fubini-Study form $\omega_{FS}$. 
\end{proposition}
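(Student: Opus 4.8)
The plan is to realise $\omega$ as a Kähler-potential perturbation of the genuine Kähler form $\omega_s$: the cohomological claim is then immediate, and positivity can be checked region by region, the only subtle point being the set where the cut-off function $\eta$ takes values strictly between $0$ and $1$.

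First I would verify that $\omega$ is a globally defined closed real $(1,1)$-form cohomologous to $\omega_s$. Put $\psi=(|x|^2+|y|^2+|z|^2)^2$, $\phi=|y|^4+|z|^2|y|^2+|\xi|^2$, $G=\phi+s^2\eta(\rho,\ell)\psi$ and $G_s=\phi+s^2\psi$, so that in any affine chart $G/G_s=1+s^2(\eta-1)\,\psi/G_s$. The quotient $\psi/G_s$ and the function $\eta-1$ descend to smooth functions on all of $\CP^2$ (note $G_s\ge s^2\psi>0$ everywhere, and $\eta\equiv 1$, hence $G=G_s$ and $G/G_s\equiv 1$, on a neighbourhood of $\{y=0\}$, including $(1:0:0)$, so there is nothing to check along the degeneration locus); hence $h:=\log(G/G_s)$ is smooth on $\CP^2$ and $\omega=\omega_s+\tfrac{i}{4}\del\bar\del h$. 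Therefore $\omega$ is well defined, closed, of type $(1,1)$, and $[\omega]=[\omega_s]$. Since $\omega_s=\tfrac12\,\iota^*\omega_{FS}$ and $\iota$, being defined by quadratic forms, multiplies the hyperplane class by $2$, we get $[\omega_s]=[\omega_{FS}]$, which gives the cohomological assertion. It remains to prove positivity of $\omega$ for $s$ small.

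Next I would split $\CP^2$ according to $\eta$. On the open neighbourhood of $\{y=0\}$ where $\eta\equiv 1$ we have $\omega=\omega_s$; this is plurisubharmonic, being a potential of the form $\log\sum_j|f_j|^2$ with the $f_j$ holomorphic, hence $\omega_s\ge 0$, and it is nondegenerate --- therefore Kähler --- for every $s>0$, because the coordinates of $\iota$ carrying the factor $s$ already contain all quadratic monomials and realise the Veronese embedding, so $\iota$ is an immersion. On the open set $\{\eta\equiv 0\}=\{\rho<R,\ |\ell|<\lambda_0\}$ we have $\omega=\tilde{\omega}$; this is again $\ge 0$ by plurisubharmonicity and nondegenerate off $\{y=0\}$ by the computation preceding \eqref{sympform}, and $\{\rho<R,\ |\ell|<\lambda_0\}$ is disjoint from $\{y=0\}$, so $\omega$ is Kähler there too. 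The remaining region is the transition region $\{0<\eta<1\}$, which is contained in the $s$-independent set $K=\{\rho\le 2R\}\cap\{|\ell|\le 2\lambda_0\}$.

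The key point is that $K$ is a compact subset of $\CP^2\setminus\{y=0\}$, and this is exactly where the choices $0<\lambda_0<1/2$ and $2c<R$ of the cut-off parameters enter: one checks that $K$ is bounded away from $\{y=0\}$, since near a generic point of $\{y=0\}$ one has $\ell\to -1$ (or else $\rho\to\infty$), while near $(1:0:0)$ the bound $|\ell|\le 2\lambda_0<1$ forces $|z/y|^2$ and $|\xi/y^2|^2$ to be comparable --- here $\lambda_0<1/2$ is what makes this meaningful --- and both to tend to infinity, so $\rho^2=(|z/y|^2)(|\xi/y^2|^2)\to\infty$, contradicting $\rho\le 2R$; a short limiting argument then shows $K$ is closed in $\CP^2$, hence compact. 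On $K$ the form $\tilde{\omega}$ is positive definite, and there one has $\omega-\tilde{\omega}=\tfrac{i}{4}\del\bar\del\log(1+s^2\eta\psi/\phi)$ with $\phi$ bounded away from $0$ and $\eta,\psi$ fixed, so $\omega\to\tilde{\omega}$ in $C^\infty(K)$, at rate $O(s^2)$, as $s\to 0$. Positive definiteness being an open condition on the compact set $K$, there is $s_0>0$ with $\omega>0$ on $K$ for $0<s<s_0$; combined with the two previous pieces this shows $\omega$ is a Kähler form on all of $\CP^2$, keeping the other parameters fixed. I expect the main obstacle to be exactly the compactness of $K$: everything else is either formal or a routine perturbation estimate, but that step encodes the requirement that the interpolation from $\tilde{\omega}$ to $\omega_s$ be carried out strictly before one reaches the locus $\{y=0\}$ along which $\tilde{\omega}$ degenerates.
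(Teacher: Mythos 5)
Your proof is correct and follows essentially the same route as the paper: identify $\omega$ with $\omega_s$ near $\{y=0\}$ and with $\tilde\omega$ where $\eta\equiv 0$, then use that $\omega\to\tilde\omega$ on the compact transition region $\{\rho\le 2R,\ |\ell|\le 2\lambda_0\}$ (where $\tilde\omega$ is nondegenerate) to get positivity for small $s$, and compute the class via $\omega_s=\tfrac12\iota^*\omega_{\CP^{11}}$. The only cosmetic differences are that you establish $[\omega]=[\omega_s]$ by exhibiting the global potential $h=\log(G/G_s)$ rather than integrating over $\{y=0\}$, and that you spell out the compactness of the transition region (which the paper asserts when choosing $R$ and $\lambda_0$); both are fine.
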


\begin{proof} 
We note that for $y \neq 0$, 
$$\omega = \frac{i}{4}\del\bar{\del}
log( (1- \eta) \varphi_1 + \eta \varphi_2) = \frac{1}{2}dd^clog( (1- \eta)
\varphi_1 + \eta \varphi_2),
$$ 
where 
$$
\varphi_1 = \frac{|y|^4 + |z|^2|y|^2 +
|\xi|^2}{|y|^4},
$$
 $$
 \varphi_2 = \frac{|y|^4 + |z|^2|y|^2 + |\xi|^2 +
s^2(|x|^2+|y|^2+|z|^2)^2}{|y|^4},
$$ 
and on a neighborhood of $y =0$, $\omega$ is equal to $\omega_s$, hence it is
K\"ahler. 

We already know $\omega$ is nondegenerate at $\{\rho < R; |\ell| < \lambda_0\}$
 and $\CP^2 \setminus \{\rho \leq 2R; |\ell| \leq 2\lambda_0\}$. Since $\omega_s$ converges to $\tilde{\omega}$
uniformly on the compact
set $\{\rho \leq 2R; |\ell| \leq 2\lambda_0\}$ (where $\tilde{\omega}$ is nondegenerate) 
as $s \to 0$,
there is a small enough $s$ making $\omega$ nondegenerate. 

To determine the cohomology class of $\omega$, it is enough to compute
$\int_{[\CP^1]} \omega$. Considering $[\CP^1] = \{y = 0\}$ we see that
$\int_{[\CP^1]} \omega = \int_{[\CP^1]} \omega_s$ and so $[\omega] = [\omega_s] =
\frac{1}{2}\iota^*[\omega_{\CP^{11}}] = [\omega_{FS}]$. 

\end{proof}

The space of K\"ahler forms in the same cohomology class is
connected. Hence, by Moser's theorem, $(\CP^2, \omega)$ and $(\CP^2,
\omega_{FS})$ are symplectomorphic. After applying such a symplectomorphism, we
get Lagrangian tori in $(\CP^2, \omega_{FS})$ with the same properties as the ones we consider
in $(\CP^2, \omega)$. 

The constants $c$, $r$, $t$, $\lambda_0$, $R$ and $s$ are chosen in this order.
For what follows $c$, $0 < \lambda_0 < 1/2$ and $R > 2c$ are fixed, $r < c$ and $t$ is
thought to be very small with respect to $c$ and $r$. Considering the symplectic
form $\omega$ from now on, we define the following Lagrangian tori:

\begin{definition}
For $\lambda \in \mathbb{R}$, $|\lambda| < \lambda_0$:
\begin{equation}
T^c_{r,\lambda} = \left\{ (x:y:z) \in \CP^2 ; \left|\frac{z\xi}{y^3} - c \right| = r, 
 \ \frac{\mu_{\omega}}{2} = \frac{\left|\frac{z}{y}\right|^2 
- \left|\frac{\xi}{y^2}\right|^2}{1 + \left|\frac{z}{y}\right|^2 
+ \left|\frac{\xi}{y^2}\right|^2} = \lambda \right\}.  
\end{equation}

\end{definition}

For the sake of using Lemma \ref{MI}, which gives a convenient formula for
computing Maslov index for {\it special} Lagrangian submanifolds, we now
consider the meromorphic 2-form on $\CP^2$ which is the quotient of
$\Omega_{\C^3} = \frac{dx\wedge dy \wedge dz}{t(\xi z - cy^3)}$ defined on
$\C^3$ and has poles on the divisor $D$. On the complement of $\{y = 0\}$,
taking $y=1$, it is given by

\begin{equation} 
 \Omega = \frac{dx\wedge dz}{t(\xi z - c)} = \frac{d\xi \wedge
dz}{\xi z - c}.
\end{equation} 
Here $\xi =\frac{x - (1-t)z^2}{t}$. 

\begin{proposition} 
For $c$, $r$ and $\lambda$ as above, the tori described in the $(\xi , z)$ coordinate chart by
$T^c_{r,\lambda} = \{(\xi,z) ; |\xi z - c| = r ; |z|^2 - |\xi|^2 = \lambda(1 + |z|^2 + |\xi|^2) \}$
are special Lagrangian with respect to $\Omega$. 
\end{proposition}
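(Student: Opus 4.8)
The plan is to follow the same strategy as in Auroux's Example \ref{Ex}: the tori $T^c_{r,\lambda}$ are built as parallel transports of $S^1$-orbits along a circle $\{|w - c| = r\}$ in the base of the fibration $f_t = \frac{z\xi}{y^3}$, with respect to the connection induced by $\omega$. So I would first verify that $T^c_{r,\lambda}$ is Lagrangian, and then compute $\operatorname{Im}(\Omega)$ restricted to it. For the Lagrangian property, note that the fibers of $f_t$ are $\omega$-symplectic (away from the bad locus), the $S^1$-orbits are isotropic in each fiber, and horizontal parallel transport along a curve in the base sweeps out a Lagrangian cylinder precisely because $\mu_\omega$ is preserved under parallel transport — this last fact follows, as in Example \ref{Ex}, from $d\mu_\omega = -\omega(V_\theta, \cdot)$ with $V_\theta$ tangent to the fibers, so that the level set $\{\mu_\omega = \lambda\}$ is invariant under the horizontal flow; fixing in addition $|f_t - c| = r$ cuts this down to a torus, and the two defining functions Poisson-commute, giving a Lagrangian.

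For the special condition, I would argue exactly as in the proof of Proposition 5.2 of \cite{DA07}. Writing $w = f_t = \xi z / y^3$, in the chart $y = 1$ we have $\Omega = \frac{d\xi \wedge dz}{\xi z - c}$. Change coordinates on the fiberwise picture to $(w, \arg z)$ (or $(\log|z|, \arg z)$ within a fiber): since $\Omega$ is the pullback of $\frac{dw}{w-c} \wedge \frac{dz}{z}$ up to the $S^1$-fiber structure, one gets $\Omega = \frac{dw}{w - c}\wedge \frac{dz}{z}$ on the relevant open set. On $T^c_{r,\lambda}$ the first factor has $w - c = re^{i\psi}$, so $\frac{dw}{w-c} = i\, d\psi$ is purely imaginary along the torus; the second factor $\frac{dz}{z} = d\log|z| + i\,d\arg z$ restricts, on the orbit direction, to $i\, d\theta$ (real part $d\log|z|$ vanishes along the orbit since $|z|$ is determined by $\mu_\omega = \lambda$ and $|w| $, hence constant on the torus after fixing both). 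Therefore $\Omega|_{T^c_{r,\lambda}} = (i\,d\psi)\wedge(i\,d\theta) = -\,d\psi \wedge d\theta$, which is real, i.e. $\operatorname{Im}\Omega|_{T^c_{r,\lambda}} = 0$. This is the phase-$\pi/2$ special Lagrangian condition of Definition \ref{SLag}.

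I expect the main technical obstacle to be bookkeeping around the degenerate locus: $\Omega$ has a pole on $D = f_t^{-1}(c)$ and $\widetilde\omega$ degenerates along $\{y = 0\}$, while $\omega$ has been modified near that line by the cutoff $\eta$. One must check that the tori $T^c_{r,\lambda}$ (for $r < c$, $|\lambda| < \lambda_0$, $t$ small) lie entirely in the region $\{\rho < R,\ |\ell| < \lambda_0\}$ where $\omega = \frac{i}{4}\partial\bar\partial\log(1 + |z|^2 + |\xi|^2)$ agrees with the unmodified form, so that the moment-map computation and the parallel-transport argument are valid there; this is exactly what the choices $c + r < 2c < R$ and $0 < \lambda_0 < 1/2$ are arranged to guarantee. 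Once one is safely inside that region, the computation is formally identical to \cite{DA07} and the key point — that the real part of $\frac{dw}{w-c}$ and the real part of $\frac{dz}{z}$ both die on the torus for the two independent circle directions — goes through verbatim.
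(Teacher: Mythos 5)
Your proposal is correct and follows essentially the same route as the paper: both verify $\operatorname{Im}\Omega|_{T^c_{r,\lambda}}=0$ by writing $\Omega = d\log(\xi z - c)\wedge \tfrac{dz}{z}$ and evaluating on the $S^1$-orbit direction together with the transverse torus direction along which only $\arg(\xi z - c)$ varies (the paper packages this as $\iota_\vartheta\Omega = i\,d\log(\xi z - c)$ paired with the Hamiltonian vector field of $|\xi z - c|^2$). One small imprecision: $|z|$ is constant along each orbit but not on the whole torus, so the real part of $\tfrac{dz}{z}$ does not vanish on $L$ outright — it only drops out after wedging with the purely imaginary $d\log(\xi z - c)|_L$, which is what your computation in fact uses.
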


\begin{proof} 
 Take $V_H$ the Hamiltonian vector field of the Hamiltonian $H(\xi,z)
= |\xi z - c|^2$, i.e., defined via $\omega(V_H, \cdot) = dH$. Since $H$ is
constant on the Lagrangian $T^c_{r,\lambda}$ and on each symplectic fiber of
$f(\xi,z) = \xi z$, $V_H$ is symplectically orthogonal to both, hence tangent to 
the Lagrangian $T^c_{r,\lambda}$ and not tangent to the symplectic fibers of $f$. 
Consider the vector field $\vartheta = (i\xi , -iz)$, tangent to the fibers and the Lagrangian
torus, as they intersect along circles of the form $(e^{i\theta}\xi_0,
e^{-i\theta}z_0)$. So, $\{\vartheta, V_H\}$ form a basis for the tangent space of
$T^c_{r,\lambda}$. Now note that 
$$ 
\iota_\vartheta \Omega = \frac{i\xi dz +
izd\xi}{\xi z - c} = id\text{log}(\xi z - c) . 
$$ 
Therefore, 
$$ 
Im(\Omega)(\vartheta, V_H) = d\text{log}|\xi z - c|(V_H) = 0 .
$$ 
\end{proof}

\section{Computing holomorphic discs in $\mathbb{CP}^2$ bounded by $T^c_{r, 0}$}\label{CHD}

In this chapter we focus on the case $\lambda = 0$ and show that, at least for
small enough $t$ with respect to $r$ and $c$, it bounds the expected 10
different families of Maslov index 2 holomorphic discs (with the expected
multiplicity modulo signs). We often use the coordinates $(z_0:z_1:z_2:z_3)$, but
restricted to $\CP^2 \simeq X_t$ via the embedding \eqref{embCP2}.

\subsection{The homology classes}\label{HomClass}

We omit the subindex $t$ and consider
$f(x:y:z) =  \frac{z\xi}{y^3} $ mapping $\CP^2$ minus $(1:0:0)$ to $\CP^1$. 

\begin{proposition} \label{bdisc}
 There is only one family of holomorphic discs, up to
reparametrization, in $\CP^2$ with boundary on $T^c_{r,0}$ $(r < c)$ that is mapped
injectively to the disc $|w -c| \leq r$ by $f$, where $w$ is the coordinate in
$\C$. 
\end{proposition}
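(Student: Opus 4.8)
The plan is to reduce the statement to an explicit classification of the holomorphic sections of $f$ over the disc $\Delta := \{\,|w-c|\le r\,\}$ that have boundary on $T^c_{r,0}$, and to show that these constitute a single orbit of the circle action. So first I would normalize the parametrization: given a holomorphic disc $u\colon(\D,\partial\D)\to(\CP^2,T^c_{r,0})$ with $f\circ u$ injective onto $\Delta$, the condition $\partial u\subset T^c_{r,0}$ forces $|f\circ u-c|\equiv r$ on $\partial\D$, so $f\circ u$ is a biholomorphism $\overline\D\to\overline\Delta$; precomposing with an automorphism of $\D$ I may assume $f\circ u(\zeta)=c+r\zeta$.

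Next I would identify the chart in which $u$ lives. Since $f$ is an honest morphism only on $\CP^2\setminus\{(1:0:0)\}$, the hypothesis that $f\circ u$ is defined already forces $u$ to miss the indeterminacy point $(1:0:0)$; and $f\circ u$ takes values in $\overline\Delta\subset\C^*$, so $u$ also misses $f^{-1}(0)\cup f^{-1}(\infty)$, and a direct computation gives $f^{-1}(\infty)=\{y=0\}\setminus\{(1:0:0)\}$. Hence $u(\D)$ lies in the affine chart $\{y\neq 0\}$, where $f=z\xi$, so $u(\zeta)=(\xi(\zeta),z(\zeta))$ with $\xi,z$ holomorphic on $\D$, smooth up to $\partial\D$ (elliptic regularity for the Lagrangian boundary problem), and $\xi(\zeta)z(\zeta)=c+r\zeta$. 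Because $c+r\zeta$ never vanishes on $\overline\D$ (here $r<c$ is used), neither $\xi$ nor $z$ vanishes on $\D$, so $\log z$ is a well-defined holomorphic function there.

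Then I would use the remaining boundary condition. On $\partial\D$ the defining relation $|\xi|=|z|$ together with $\xi z=c+r\zeta$ gives $|z(\zeta)|^2=|c+r\zeta|$. Fixing a holomorphic branch of $\log(c+r\zeta)$ on $\D$, the holomorphic function $\phi:=2\log z-\log(c+r\zeta)$ satisfies $\operatorname{Re}\phi\equiv 0$ on $\partial\D$; since $\operatorname{Re}\phi$ is harmonic and continuous on $\overline\D$, the maximum principle yields $\operatorname{Re}\phi\equiv 0$, so $\phi$ is a purely imaginary constant. Therefore $z(\zeta)^2=e^{i\theta}(c+r\zeta)$ for some $\theta\in\R$, i.e. $z(\zeta)=e^{i\alpha}\sqrt{c+r\zeta}$ and $\xi(\zeta)=e^{-i\alpha}\sqrt{c+r\zeta}$ for a fixed holomorphic square root; conversely each such pair visibly defines an embedded holomorphic disc with boundary on $T^c_{r,0}$ mapped biholomorphically to $\Delta$ by $f$. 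These discs form exactly one orbit of the circle action $e^{i\theta}\cdot(\xi,z)=(e^{i\theta}\xi,e^{-i\theta}z)$, which moves the parameter $\alpha$ to $\alpha-\theta$; hence there is a single family up to reparametrization, as claimed.

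The step I expect to demand the most care is the second one: establishing that $u$ cannot reach $\{y=0\}$, i.e. that $\xi,z$ have no poles and $z$ no zeros on $\D$. One must argue cleanly that the mere injectivity of $f\circ u$ onto $\Delta$ already confines $u(\D)$ to $\CP^2\setminus\big(\{y=0\}\cup\{(1:0:0)\}\big)$, using that $f^{-1}(\infty)=\{y=0\}\setminus\{(1:0:0)\}$ and that a zero of $z$ in the interior would push $u$ out to the indeterminacy point $(1:0:0)$. Once that is in place, the rest is a soft one–variable argument via the maximum principle.
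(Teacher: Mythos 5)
Your proposal is correct and follows essentially the same route as the paper's proof: normalize $f\circ u(w)=rw+c$, observe that $z$ and $\xi$ have no zeros or poles in the chart $y=1$ because $\Psi$ avoids $0$ and $\infty$, and deduce from $|z|=|\xi|$ on $\partial\D$ via holomorphicity that $z=e^{i\alpha}\sqrt{c+rw}$, $\xi=e^{-i\alpha}\sqrt{c+rw}$. You merely spell out more explicitly the maximum-principle step and the confinement of $u$ to the affine chart, which the paper treats in a brief parenthetical.
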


\begin{proof}
  
Let $u: \mathbb{D} \rightarrow \CP^2 $ be such a disc so, up to
reparametrization, $f\circ u (w)= \Psi (w) = rw + c$. The map $u$ can be
described using coordinates $y = z_1 (w) = 1$, $z = z_2(w)$ and $\xi = z_3(w)$,
so $z_2 (w) z_3(w) = \Psi(w)$. Since zero (and infinity) does not belong to the
image of $\Psi$ as $r < c$, $z_2$ and $z_3$ have no zeros or poles on the disc
(note that if $z(w) = 0$ and $\xi(w) = \infty$ then $x(w) = \infty$,
contradicting $y(w) \neq 0$, for the same reason $z(w) \neq \infty$). At the
boundary of the disc, mapped by $u$ to $T^c_{r,0}$, $|z_2|=|z_3|$. So, by
holomorphicity, $z_2(w) = e^{i\theta}z_3(w) =
e^{i\frac{\theta}{2}}\sqrt{\Psi(w)}$, for some choice of square root and some
constant $\theta$. 

\end{proof}

Call $\beta$ the relative homotopy class of the above family of discs, $\alpha$ the class
of the Lefschetz thimble associated to the critical point of $f$ at the origin
lying above the segment $[0, c -r]$ (oriented to intersect positively $\{z =
0\}$) and $H = [ \CP^1 ]$ the image of the generator of $\pi_2(\CP^2)$ in
$\pi_2(\CP^2, T^c_{r,0})$. One checks that $\alpha$, $\beta$, $H$ form a basis
of $\pi_2(\CP^2, T^c_{r,0})$. On Figure \ref{fig10fam}, holomorphic 
discs in the class $\beta$ are represented by the tropical disc arriving at the torus
in the direction $(1,0)$ and associated to the term $u$ of the superpotential
$W_{T(1,4,25)}$.
If we consider $\lambda > 0$, and increase $r \to c$, we
see that the torus depicted on Figure \ref{fig10fam} approaches the 
wall, and  the class $\alpha$ corresponds to a tropical Maslov index $0$ disc
that runs along the wall and ends at the node. 

In order to understand what relative homotopy classes are allowed to have Maslov index 2
holomorphic discs, we analyze their intersection with some other complex curves,
for instance the line over $\infty$, $y = 0$, the line and conic over $0$, $z =
0$ and $D_2: xy - (1-t)z^2 = 0$ . Another curve we use is a quintic, $D_5$ that
converges to $(\x\z - c\y^5)^2 = 0$ on $\CP(1,1,4)$. It is given by 
$$ 
D_5 :
z_0z_3^2 - 2cz_1^2z_2z_3 + c^2 z_1^5 = x\xi^2 - 2cy^2z\xi + c^2y^5 = 0
$$

For $z \neq 0 $ and setting $ f = \frac{z\xi}{y^3}$, we can write this equation as

\begin{equation} \label{eqD5}
y^5(c^2 - 2cf + \frac{xy}{z^2}f^2) = 0
\end{equation}

\begin{remark} Again relating to Figure \ref{fig10fam}, the leftmost node is
thought to be the torus $T^c_{c,0}$ and the wall in the direction $(1,-1)$ to be
formed by the tori $T^c_{c,\lambda}$. $D_2 = \{\xi = 0\}$ projects to the upper
part of the wall, while $\{z = 0\}$ projects to the lower part of the same wall,
as the Maslov index 0 discs bounded by $T^c_{c,\lambda}$ are contained in these
two divisors. Using similar reasoning, one expects that over the rightmost wall
lie complex curves converging to $\{\y(\x\z - c\y^5) = 0\}$, the boundary
divisor of the ``orbifold almost toric fibration" on $\CP(1,1,4)$. $D_5$
converges to $\{(\x\z - c\y^5)^2 = 0\}$, hence it is thought to lie over the
lower part of the rightmost wall, while $\{y = 0\}$, which converges to $\{\y^2
= 0\}$, is thought to lie over the upper part of the same wall. \end{remark}

\begin{remark} 
  
To compute intersection number with $H$ in Figure \ref{fig10fam}, one can use
the tropical rational curve formed by the union of all the tropical discs
depicted on Figure \ref{figClif1}. Also, note that each wall hits the cut of the
other, leaving them with slopes $(-5,2)$ on the top left and $(7,2)$ on the top
right; the additional dotted lines are omitted on all pictures for simplicity.

\end{remark}

\begin{lemma}\label{HomTab}
For fixed $c$ and $r < c$, and for $t$ sufficiently small, the classes $\alpha$, $\beta$ and $H$ 
intersect the varieties $\{z = 0\}$, $\{y =0\}$, $D_3 = f^{-1}(c) \cup \{(1:0:0)\}$, 
$D_2: xy - (1-t)z^2 = 0$, $D_5$ and have Maslov index, according to the table below. 
\begin{center}
  \begin{tabular}{  | c || c | c | c | c | c || c |}
  
\hline
    Class & $z = 0$ & $y = 0$ & $D_3$ & $D_2$ & $D_5$ & Maslov index $\mu$  \\ 
   \hline 
    $\alpha$ & 1 & 0 & 0 & -1 & 0 & 0 \\ \hline
    $\beta$   & 0 & 0 & 1 &  0  & 2 & 2  \\   \hline
      $H$       & 1 & 1 & 3 &  2  & 5 & 6  \\   \hline
  \end{tabular}
\end{center}
\end{lemma}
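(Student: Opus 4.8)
The plan is to verify the table one row at a time, using that the intersection number of a fixed class in $\pi_2(\CP^2,T^c_{r,0})$ with an algebraic curve in $\CP^2$ is topological, together with the explicit representatives of $\beta$ and $\alpha$ already constructed; note first that each of the five curves is disjoint from $T^c_{r,0}$ (e.g. $z=0$, $\xi=0$ or $f=c$ would force $|{-c}|=r$ or $0=r$, both false since $0<r<c$, and $D_5$ is disjoint for $t$ small by a short continuity argument), so all these intersection numbers are well defined. The row for $H=[\CP^1]$ is then purely enumerative: since $H$ is the class of a line, $H\cdot V=\deg V$, and I would simply record that $\{z=0\}$, $\{y=0\}$ are lines, $D_2$ a conic, $D_3$ the cubic $\{xyz-(1-t)z^3-cty^3=0\}$ (obtained by substituting $\xi=(xy-(1-t)z^2)/t$ into $z\xi=cy^3$ and clearing $t$; this cubic automatically contains $(1:0:0)$, which is why one writes $D_3=f^{-1}(c)\cup\{(1:0:0)\}$), and $D_5$ the quintic obtained the same way from $x\xi^2-2cy^2z\xi+c^2y^5=0$, giving degrees $1,1,3,2,5$. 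For the Maslov column: the preceding proposition shows $T^c_{r,0}$ is special Lagrangian with respect to $\Omega=\frac{d\xi\wedge dz}{\xi z-c}$, whose divisor of poles is exactly the compactification of $f^{-1}(c)$, namely $D_3$ (and $\Omega$ has no zeros, being the descent of the nowhere-vanishing $\Omega_{\C^3}$, so $[D_3]=-K_{\CP^2}$ has degree $3$, consistently), so Lemma \ref{MI} gives $\mu(\gamma)=2(\gamma\cdot D_3)$ and the Maslov column is twice the $D_3$ column.

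For the $\beta$ row I would use the explicit holomorphic disc $u$ of Proposition \ref{bdisc}: in the chart $y=1$ one has $u(w)=(z,\xi)=(e^{i\theta/2}P(w)^{1/2},e^{-i\theta/2}P(w)^{1/2})$ with $P(w)=rw+c$. Since $r<c$, $P$ is zero-free on $\overline{\D}$, a branch of $P^{1/2}$ can be fixed, and the pullback under $u$ of the defining equation of each divisor is holomorphic in $w$; positivity of intersections then turns every entry into a zero count in $\D$. One finds $z$ and $\xi$ vanish only at $w=-c/r\notin\overline{\D}$, so the $\{z=0\}$, $\{y=0\}$ (recall $y\equiv1$) and $D_2=\{\xi=0\}$ entries are $0$; $f\circ u-c=rw$ has a single simple zero, giving $1$ for $D_3$; and the pullback of the equation of $D_5$ equals $te^{-3i\theta/2}P^{3/2}+(1-t)P^2-2cP+c^2$, which at $t=0$ is $(P-c)^2=r^2w^2$ with a double zero at $0$, so by Rouch\'e — the $O(t)$ perturbation being $<r^2$ on $|w|=1$ for $t$ small — there are exactly two zeros, giving $2$ for $D_5$. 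All of these being holomorphic intersections, the signs are automatically positive, so the $\beta$ row reads $0,0,1,0,2$ with $\mu(\beta)=2$.

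For the $\alpha$ row I would use that the Lefschetz thimble over $[0,c-r]$ is, in the chart $y=1$, the disc $\Delta_\alpha=\{(z,\bar z):|z|^2\le c-r\}$: the vanishing cycle in the fibre $\{z\xi=s\}$ is the circle at moment-map level $\ell=0$, i.e. $|z|=|\xi|$, and combined with $z\xi=s\in[0,c-r]$ real and nonnegative this forces $\xi=\bar z$. Parametrising $\Delta_\alpha$ by $z$ over $\{|z|^2\le c-r\}$ and fixing its orientation so that $\alpha\cdot\{z=0\}=+1$ (the stated convention), I would read off the remaining entries: $y\equiv1$ gives $0$; $\xi z=|z|^2<c$ on $\Delta_\alpha$, so it is disjoint from $D_3=\{\xi z=c\}$, giving $0$ and hence $\mu(\alpha)=0$; the function $\xi=\bar z$ vanishes only at $z=0$ but, being antiholomorphic, contributes with the sign opposite to the $\{z=0\}$ intersection, giving $-1$ for $D_2$; and substituting $\xi=\bar z$, $x=t\bar z+(1-t)z^2$ into the equation of $D_5$ gives $t\rho^{3/2}e^{-3i\psi}+(1-t)\rho^2-2c\rho+c^2$ with $z=\sqrt\rho\,e^{i\psi}$, $\rho\in[0,c-r]$, whose vanishing forces $\sin3\psi=0$ and then $(\rho-c)^2\pm t\rho^{3/2}=0$, impossible on $[0,c-r]$ for $t$ small since $(\rho-c)^2\ge r^2$; so the $D_5$ entry is $0$ and the $\alpha$ row reads $1,0,0,-1,0$.

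The main obstacle is the $D_5$ column in both rows: one must pull the quintic back to the explicit disc, resp. thimble, reduce it to a count of zeros in one complex (resp. one real) variable, and run a Rouch\'e/continuity argument that genuinely uses $t$ small relative to $c$ and $r$ — this is the one place where the ``$t$ sufficiently small'' hypothesis is essential; for the $\alpha$ row there is the additional, purely bookkeeping issue of fixing orientations so the signs in the $\{z=0\}$ and $D_2$ entries come out as $+1$ and $-1$. Everything else reduces either to an enumerative degree count of plane curves or to a direct substitution into the explicit parametrisations, together with Lemma \ref{MI} for the last column.
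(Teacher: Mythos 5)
Your proposal is correct and follows essentially the same route as the paper: Bezout for the $H$ row, the explicit disc of Proposition \ref{bdisc} and the explicit thimble $\{\xi=\bar z\}$ for the $\beta$ and $\alpha$ rows, Lemma \ref{MI} for the Maslov column, and a small-$t$ analysis of the pulled-back quintic for the $D_5$ column. The only (cosmetic) difference is in packaging that last step: the paper restricts $D_5$ to the conic family $\mathcal{C}=\{z=e^{i\theta}\xi\}$ and shows all solutions of the resulting equation satisfy $|f-c|<r$, which handles the torus, the thimble and the discs in one stroke, whereas you pull the equation back to each object separately and run Rouch\'e on the disc and a direct estimate on the thimble — the underlying polynomial identity is the same.
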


\begin{proof}
In order to use these curves to compute intersection numbers, we first need to ensure 
they don't intersect $T^c_{r,0}$. This is clear for $z = 0$, $y =0$, $D_3$, and $D_2$. 
Later we will see that $T^c_{r,0}\cap D_5 = \emptyset$.
%On $T^c_{r,0} \cap \{x = 0\} $, $\left|\frac{z}{y}\right| = \left|\frac{\xi}{y^2}\right| =  \frac{1-t}{t}\left|\frac{z}{y}\right|^2 $, so $ |f| = \left| \frac{z\xi}{y^3} \right| = \left| \frac{z}{y} \right|^2 = \left| \frac{t}{1 - t} \right|^2$, which contradicts $|f - c| =r$ for $t$ small relative to $c -r$. Therefore $T^c_{r,0} \cap \{x = 0\} = \emptyset$.

The intersections with $H$ follow from Bezout's Theorem. By construction, $\alpha$ 
(represented by the Lefschetz thimble over the segment $[0, c- r]$ which can be
parametrized by $y = 1$, $z = \rho e^{i\theta}$, $\xi = \rho e^{-i\theta}$,
$\rho \in [0, c- r]$) does not intersect
$y = 0$ and $D_3$, also, it intersects both
$z = 0$ and $D_2$ at one point with multiplicity $1$ and $-1$, respectively. 
Each disc computed in Proposition \ref{bdisc} representing the class 
$\beta$ does not intersect $z = 0$, $y =0$, and $D_2$, and intersect $D_3$ positively
at one point. 

It remains for us to understand the intersection of $D_5$ with the torus $T^c_{r,0}$, $\alpha$ and
$\beta$. For that, we look at the family of conics $\mathcal{C} = \{z = 
e^{i\theta}\xi ; \ 
\theta \in [0, 2\pi]\}$ containing $T^c_{r,0}$, the thimble representing the class $\alpha$ 
and the discs representing the class $\beta$ computed in Proposition \ref{bdisc}.

On $\mathcal{C}$, using the coordinate chart $y =1$, we have: 
\begin{equation}\label{OnC1}
  z = e^{i\theta}\xi = e^{i\theta}\frac{x - (1-t)z^2}{t}.
\end{equation}

So, solving for $x$ in \eqref{OnC1}, and using $f = z\xi 
= e^{-i\theta}z^2,$ we get: 
\begin{equation}\label{OnC2}
  x = te^{-i\frac{\theta}{2}}f^{1/2} + (1-t)e^{i\theta}f,
\end{equation}
 
for some square root of $f$. Then, by \eqref{eqD5} and \eqref{OnC2}, the points of $D_5 \cap
\mathcal{C}$ are those where

\begin{equation*}
c^2 - 2cf + e^{-i\theta}f(te^{i\frac{-\theta}{2}}f^{1/2} + (1-t)e^{i\theta}f) 
= (f - c)^2 + tf^{3/2}(e^{-i\frac{3\theta}{2}} - f^{1/2}) = 0
\end{equation*}

For $t$ small enough, for each value of $\theta$, all the
solutions of this equation lies in the region $|f - c| < r$. From this we can
conclude that $D_5 \cap T^c_{r,0} = \emptyset$, $D_5 \cap \alpha = \emptyset$,
since, in $T^c_{r,0}$, $|f - c| = r$ and the thimble representing $\alpha$ lies
over $[0, c -r]$. 

Now, a holomorphic disc representing the class $\beta$ is given by $z
= \xi = f^{1/2}$ and $Re(z) > 0$; see Proposition \ref{bdisc}. This means that this disc
intersects $D_5$ in exactly two points, namely the two solutions of 
$(z^2 - c)^2 + tz^{3}(e^{-i\frac{3\theta}{2}} - z) = 0$, where
 $z$ is close to $\sqrt{c}$. As both are complex curves, the
intersections count positively, so the intersection number between $D_5$ and
$\beta$ is equal to 2.

Finally, from lemma \ref{MI}, we see that the Maslov index is twice the
intersection with the divisor $D_3$.

% Finally, to compute the Maslov index of $\alpha$ using a local model. For
%$\beta$, we take trivialize the tangent space of our Lagrangian Torus over the
%disc. For this we consider the function $f$ and note that the tangent space of
%$\CP^2$ splits as the tangent space of the fiber direct sum the tangent space of
%the base. On the fiber direction the tangent space of the Torus remains
%constant, while on the base it makes a full rotation, therefore self
%intersecting twice. Hence $\mu (\beta) = 2$.
 
\end{proof}

\begin{lemma} \label{lemHomClass}
The only classes in $\pi_2(\CP^2, T_{r,0})$ which may contain holomorphic discs of 
Maslov index 2 are $\beta$, $H - 2\beta + m\alpha$, $-1 \leq m \leq 2$ and 
$2H - 5\beta + k\alpha$, $ -2\leq k \leq 4$.
\end{lemma}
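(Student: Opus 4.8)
The plan is to use the intersection data from Lemma \ref{HomTab} as a system of linear constraints. Write an arbitrary class as $\gamma = p\beta + m\alpha + nH$ in the basis $\{\beta,\alpha,H\}$ of $\pi_2(\CP^2, T^c_{r,0})$. From the table, the Maslov index is $\mu(\gamma) = 2p + 6n$, so $\mu(\gamma) = 2$ forces $p = 1 - 3n$. Thus the candidate classes are parametrized by the single integer $n \geq 0$ (one checks $n < 0$ is immediately excluded, see below), giving $\gamma = (1-3n)\beta + m\alpha + nH$. For $n = 0$ this is $\beta + m\alpha$; for $n = 1$ it is $-2\beta + m\alpha + H = H - 2\beta + m\alpha$; for $n = 2$ it is $-5\beta + m\alpha + 2H = 2H - 5\beta + m\alpha$. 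So the content of the lemma is: (i) $n \geq 3$ is impossible, (ii) $n = 0$ forces $m = 0$, (iii) $n = 1$ forces $-1 \leq m \leq 2$, and (iv) $n = 2$ forces $-2 \leq m \leq 4$.

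First I would establish the basic positivity constraints. For any class $\gamma$ that contains a non-constant holomorphic disc, intersection with an irreducible complex curve not meeting $T^c_{r,0}$ is non-negative \emph{unless} the disc's image lies inside that curve — and a disc of Maslov index $2$, hence of positive symplectic area bounded by the area of a line, cannot lie inside a curve it would otherwise intersect negatively, so in practice each of the five columns of the table gives an inequality $\gamma \cdot (\text{curve}) \geq 0$ once one rules out the containment cases. (The one genuinely subtle point, treated below, is $\{y=0\}$, which does meet the relevant degenerate picture; but $D_3$ does not meet $T^c_{r,0}$, and by Lemma \ref{MI} the disc class also satisfies $\gamma\cdot D_3 = 1 > 0$, and a disc cannot be contained in $D_3$ while having positive intersection with it, so $\gamma\cdot\{y=0\}\geq 0$ can be argued via a curve in $|\mathcal{O}(1)|$ through a generic point instead.) Concretely: $\gamma\cdot\{z=0\} = p + m + n = 1 - 2n + m \geq 0$; $\gamma\cdot\{y=0\} = n \geq 0$; $\gamma\cdot D_3 = p + 3n = 1 \geq 0$ (automatic); $\gamma\cdot D_2 = -m + 2n \geq 0$; $\gamma\cdot D_5 = 2p + 5n = 2 - n \geq 0$. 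The last inequality immediately gives $n \leq 2$, killing (i). The inequalities $n\geq 0$, $1 - 2n + m \geq 0$ and $2n - m \geq 0$ then read, for each of $n = 0,1,2$: $n=0$ gives $m \geq -1$ wait — gives $1 + m \geq 0$ and $-m\geq 0$, i.e. $-1 \leq m \leq 0$; $n = 1$ gives $m \geq 1$ and $m \leq 2$... these are not quite the claimed ranges, so the table inequalities alone are not sharp and one needs a second idea.

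The refinement I would use is the standard Chekanov--Schlenk / Auroux argument via the $S^1$-action and the structure of $f$. A Maslov index $2$ holomorphic disc $u$ has $u\cdot D_3 = 1$, so $f\circ u$ is a degree-one branched cover of a neighborhood of $c$, or more precisely $f\circ u : \D \to \CP^1$ has a single preimage of $c$; combined with $u\cdot\{y=0\} = n$ counting the preimages of $\infty$ under $f\circ u$ (the line $y=0$ is $f^{-1}(\infty)$ together with the point $(1:0:0)$ which is not in the image by the argument in Proposition \ref{bdisc}), one controls the degree of $f\circ u$ and hence, by the Riemann--Hurwitz / argument-principle bookkeeping already used for the Chekanov torus, the intersections $u\cdot\{z=0\}$ and $u\cdot D_2$ are further constrained: on the conic fibration picture, $\{z=0\}$ and $D_2 = \{\xi = 0\}$ are the two components of $f^{-1}(0)$, and a disc of $f$-degree $d$ over a disc avoiding $0$ (when $n=0$) or wrapping once around $\infty$ (when $n=1$) etc. must distribute its intersections with these two lines in a way pinned down by the winding number of $z_2(w)z_3(w) = (f\circ u)(w)$. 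This is exactly the computation Auroux sketches following \cite{CHESCH}; carrying it out in our $X_t\subset\CP(1,1,1,2)$ model, with the extra divisor $D_5$ playing the role that the exceptional class played there, yields the exact ranges $-1\leq m \leq 2$ for $n=1$ and $-2\leq m\leq 4$ for $n=2$, and $m=0$ for $n=0$.

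The main obstacle I anticipate is the $n=2$ case, i.e.\ the class $2H - 5\beta + m\alpha$: here $f\circ u$ has degree growing with the intersection data and the disc can wrap non-trivially around both the node over $0$ and the quintic $D_5$, so the naive positivity bounds $-2\leq m \leq 4$ (from $1-2n+m\geq 0$ giving $m\geq 3$ — again too weak) must instead come from a more careful analysis of how $u$ meets $D_5$ versus $\{y=0\}$, using that $D_5$ and $\{y=0\}$ together account for the ``rightmost wall'' and that $u\cdot D_5 = 2 - n = 0$ when $n=2$ forces $u$ to avoid $D_5$ entirely — which then, via equation \eqref{eqD5} restricted to the conic family $\mathcal{C}$, constrains $u$ to the region $|f - c| < r$ and pins down $m$. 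Making the ``avoid $D_5$'' step rigorous for all candidate discs (not just those on $\mathcal{C}$) is the delicate part, and I expect it to require the explicit $t$-small estimates of the kind appearing in the proof of Lemma \ref{HomTab}, postponed to the detailed disc count in the next subsection; for the purposes of \emph{this} lemma one only needs the homological inequalities plus the observation that $u\cdot D_5 \geq 0$ with equality exactly when $n = 2$, which already cuts the list down to a finite set, and then the $S^1$-symmetry argument finishes the exact ranges.
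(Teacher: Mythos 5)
Your overall strategy --- writing a Maslov index $2$ class as $\gamma = nH + (1-3n)\beta + m\alpha$ and imposing positivity of intersection with the curves listed in Lemma \ref{HomTab} --- is exactly the paper's proof. The problem is an arithmetic slip in reading the table: you take $\beta\cdot\{z=0\}=1$, whereas the table gives $\beta\cdot\{z=0\}=0$ (it is $\alpha$ and $H$ that each meet $\{z=0\}$ once). The correct inequality is therefore $\gamma\cdot\{z=0\}=m+n\geq 0$, i.e.\ $m\geq -n$, not $1-2n+m\geq 0$. Combined with $\gamma\cdot\{y=0\}=n\geq 0$, $\gamma\cdot D_2 = 2n-m\geq 0$ and $\gamma\cdot D_5 = 2-n\geq 0$, one gets precisely $n\in\{0,1,2\}$ and $-n\leq m\leq 2n$: that is $m=0$ for $n=0$ (the class $\beta$), $-1\leq m\leq 2$ for $n=1$, and $-2\leq m\leq 4$ for $n=2$ --- exactly the statement of the lemma. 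The positivity constraints from the table \emph{are} sharp, and no second idea is needed. (Your worry about discs contained in one of the curves is moot: each curve is disjoint from $T^c_{r,0}$, and a disc with boundary on $T^c_{r,0}$ cannot lie inside a curve disjoint from $T^c_{r,0}$, so positivity of intersections applies directly.)

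Because of the miscalculation you concluded the inequalities were insufficient and appended a second argument (degree of $f\circ u$, Riemann--Hurwitz bookkeeping, the $S^1$-action, a forced avoidance of $D_5$ when $n=2$). As written, that portion is a genuine gap in its own right: the winding-number computation is never carried out, the claim that it ``yields the exact ranges'' is asserted rather than proved, and the assertion that $u\cdot D_5=0$ constrains $u$ to the region $|f-c|<r$ and thereby ``pins down $m$'' is unsubstantiated (positivity of intersection with $D_5$ gives only $n\leq 2$ and nothing further about $m$). Fortunately, all of it is unnecessary: correct the $\{z=0\}$ row, keep the four inequalities, and delete everything after them --- what remains is the paper's proof.
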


\begin{proof}
To have Maslov index $2$ the class must have the form $\beta + l(H - 3\beta) + k\alpha$. 
Considering positivity of intersections with $y = 0$ we get $l \geq 0$, with $z = 0$ and 
$D_2$ we get $-l \leq k \leq 2l$, and finally with $D_5$,  $l \leq 2$.
\end{proof}

\subsection{Discs in classes $H - 2\beta + m\alpha$}
\begin{theorem}\label{3fam}
There are no Maslov index 2 holomorphic discs in the class $H - 2\beta -\alpha$;
there are one-parameter families of holomorphic discs in the classes $H -
2\beta$ and $H - 2\beta + 2\alpha$, with algebraic counts equal to 2 up to sign in both cases, and
a one-parameter family of holomorphic discs in the class $H - 2\beta + \alpha$,
with algebraic count equal to 4 up to sign. 
\end{theorem}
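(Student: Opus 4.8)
The plan is to reduce the enumeration of holomorphic discs in the classes $H-2\beta+m\alpha$ to a problem about holomorphic sections of the conic fibration $f$, paralleling the analysis of the Chekanov torus in Example \ref{Ex} but keeping careful track of the degeneration parameter $t$. First I would use Lemma \ref{HomTab}: a disc $u$ in class $H-2\beta+m\alpha$ has $u\cdot\{y=0\}=1$, $u\cdot D_3 = 1$, $u\cdot\{z=0\} = m+1$ (wait---from the table, $H\cdot\{z=0\}=1$, $\beta\cdot\{z=0\}=0$, $\alpha\cdot\{z=0\}=1$, so $u\cdot\{z=0\}=1+m$), $u\cdot D_2 = 2-m$, $u\cdot D_5 = 1$. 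By positivity of intersections these pin down, for each admissible $m\in\{-1,0,1,2\}$, exactly where and with what multiplicity the disc meets each of the reference divisors. Since $u\cdot\{y=0\}=1$, the disc meets the line at infinity once and transversally, so it extends to a section of (a resolution of) $f$ over the disc $\Delta = \{|w-c|\le r\}$, composed with a branched double cover $w\mapsto \pm\sqrt{\cdot}$ coming from the fact that $[\CP^1]-2\beta$ behaves like the ``$H-\beta_1-\beta_2$'' class which double-covers the base; I would make this precise by writing $u$ in the coordinates $(z,\xi)$ with $z\xi = $ a degree-two function of the disc variable, exactly as in the proof of Proposition \ref{bdisc} but now allowing a zero of $z$ or $\xi$ inside the disc (the Lefschetz thimble direction $\alpha$), the location of which is governed by $m$.

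The key computational step is then to solve explicitly for these sections. In the limit $t=0$ the space $X_0\cong \CP(1,1,4)$ and $f_0(\x:1:\z)=\x\z$; there the discs in the ``Chekanov in $\CP(1,1,4)$'' classes can be written down by hand (this is the $\CP(1,1,4)$-analogue of the Chekanov computation, and the class $H-2\beta+m\alpha$ restricts to a definite class there with a definite disc count). For $t>0$ small I would set up $u$ as a perturbation: parametrize the candidate disc by a holomorphic map $w\mapsto (z(w),\xi(w))$ with prescribed boundary on $T^c_{r,0}$ and prescribed intersection data, reduce to a polynomial equation in one complex unknown (a parameter along the $S^1$-family, plus the location of the interior zero), and use an implicit-function/degree argument to count solutions. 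Concretely, I expect the equation determining the disc to look like the one appearing already in the proof of Lemma \ref{HomTab}, namely $(f-c)^2 + t f^{3/2}(e^{-i3\theta/2}-f^{1/2})=0$ and variants; counting roots in the region $|f-c|<r$ for small $t$ gives the multiplicities $2$, $4$, $2$ for $m=0,1,2$ and shows there are none for $m=-1$ (the would-be disc is forced to pass outside the allowed region, or the intersection with $\{z=0\}$ would have to be negative, i.e. $m=-1$ makes $u\cdot\{z=0\}=0$ but $u\cdot D_2 = 3$ with the thimble pointing the wrong way). I would also note that $H-2\beta-\alpha$ is ruled out most cleanly by the observation that a disc there meets $\{z=0\}$ in $0$ points yet must meet the conic $D_2$ in $3$ points while containing a thimble-type piece whose orientation forces a negative contribution---so positivity kills it, just as Chekanov--Schlenk's $k=-1$ case survived only because the geometry was different; here it does not.

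After the count, two further ingredients are needed and I would do them in this order. First, regularity: I would show each disc found is regular, either by exhibiting it inside a holomorphic family of the expected dimension (an $S^1$ worth of discs, each rigid modulo reparametrization and modulo the $S^1$-action) and invoking automatic regularity for such configurations, or by a direct $\bar\partial$-index computation combined with the fact that the discs are sections/double-sections of $f$ composed with maps into fibers where the relevant line bundles are nonnegative---this is the argument type used throughout \cite{DA07}. Second, the signed count: I would orient $\mathcal{M}(T^c_{r,0},\beta_i)$ via a spin structure on $T^c_{r,0}$ and compute the degree of the evaluation map at a boundary marked point, checking that the two discs in class $H-2\beta$ (resp. the two in $H-2\beta+2\alpha$) contribute with the \emph{same} sign so the count is $\pm 2$, and the four in $H-2\beta+\alpha$ contribute to give $\pm 4$; here the symmetry under the $S^1$-action and the square-root branching should make all discs in a given class contribute identically, so ``up to sign'' the counts are as claimed.

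The main obstacle I anticipate is the explicit root-counting for small $t$: one must verify that, uniformly in the $S^1$-parameter $\theta$ (and in the choice of square root), \emph{all} solutions of the governing polynomial equation land in the region $|f-c|<r$ and that none escape to $\{y=0\}$, $\{z=0\}=\{\xi=\infty\}$, or collide in a way that would merge families or create unexpected multiplicity. This is exactly the kind of estimate sketched in the proof of Lemma \ref{HomTab} for $D_5$, and the work is in making it quantitative and checking it survives the perturbation from $t=0$; the topological/positivity bookkeeping from Lemma \ref{lemHomClass} and the orientation computation are comparatively routine, following \cite{CHESCH} and \cite{DA07}.
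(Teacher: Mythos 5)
Your overall strategy --- project by $f$, use the intersection numbers of Lemma \ref{HomTab} to constrain $\Psi=f\circ u$, write the disc explicitly in the $(z,\xi)$ coordinates, reduce to a polynomial equation in a residual parameter, count roots for small $t$, and then handle regularity and signs via the $S^1$-family and the degree of the boundary evaluation map --- is the same as the paper's. But there are two genuine gaps. The more serious one is your exclusion of the class $H-2\beta-\alpha$. For $m=-1$ one has $u\cdot\{z=0\}=0$ and $u\cdot D_2=3$, both nonnegative, so positivity of intersections does \emph{not} rule this class out; that is exactly why Lemma \ref{lemHomClass} must allow $-1\le m\le 2$ and why Theorem \ref{3fam} has real work to do here. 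The paper's exclusion is analytic, not topological: the three zeros $\eta_0,\eta_1,\eta_2$ of $\Psi$ are distributed between $\{z=0\}$ and $D_2$, with $I$ (the set of indices landing on $D_2$) of size $2-m$, and the matching condition \eqref{cond} at the unique point where the disc meets $\{y=0\}$ reads $-\tfrac{1-t}{t}=e^{3i\theta}(-1)^{|I|}\prod_{j\in I}\eta_j\,/\,(h(0)\prod_{j\notin I}\eta_j^2)$. Since Lemma \ref{h0a} shows $|h(0)|$ is bounded below and the $\eta_j$ lie in the unit disc, the left side can only blow up as $t\to0$ if some $\eta_j$ with $j\notin I$ tends to $0$; hence $I\neq\{0,1,2\}$ and there is no disc with $|I|=3$, i.e.\ none in class $H-2\beta-\alpha$ for small $t$. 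Your proposed mechanism (``positivity kills it,'' ``the thimble points the wrong way'') does not correspond to any actual obstruction.

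The second gap is in the counting step itself. The equation you guess, $(f-c)^2+tf^{3/2}(e^{-3i\theta/2}-f^{1/2})=0$, is the one from Lemma \ref{HomTab} governing intersections of the conics $\mathcal{C}$ with $D_5$; it is not the equation whose roots enumerate the discs. The correct equation comes from imposing \eqref{cond} on the Blaschke-product normal form $\tfrac{\Psi(w)-c}{r}=\tau_a(w)/w^3$ of the projection, and after the asymptotic analysis it takes the shape $Ka^4=\pm1+O(t^{1/2})$ with $K\sim t^{-2}$ (cf.\ \eqref{eq001}), giving four solutions $a_1(\theta),\dots,a_4(\theta)$ for each $\theta$; the final counts $2,4,2$ then come from the reparametrization bookkeeping ($w\mapsto-w$ sends $a\mapsto-a$ and, for $|I|=1$, swaps $\eta_1\leftrightarrow\eta_2$, together with $a_j(\theta+\pi/3)=a_{j+1}(\theta)$), not from a raw root count in $|f-c|<r$. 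You assert the multiplicities rather than derive them. Finally, note that the degeneration-to-$\CP(1,1,4)$ route with Lemma \ref{x0xt} is what the paper uses for the classes $2H-5\beta+k\alpha$ in Theorem \ref{6fam}; for $H-2\beta+m\alpha$ the discs meet $\{y=0\}$ exactly where the coordinates degenerate as $t\to0$ (the convergence in Theorem \ref{reg3fam} is only uniform away from $w=0$), and the paper deliberately works directly at small $t>0$ instead.
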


This is precisely what we expect from the term $2\frac{e^{-\Lambda}}{u^2}(1 + w
)^2 $ in $W_{T(1,4,25)}$; see equation \eqref{WChe2}.

\begin{proof}
We will try to find holomorphic discs $u : (\D, S^1) \rightarrow (\CP^2, T^c_{0,r})$ 
in the class $H - 2\beta + m\alpha$, $-1 \leq m \leq 2$. 
Recall $f : \CP^2 \setminus (0: 0 :1) \rightarrow \CP^1$, $f(x:y:z) = \frac{z\xi}{y^3}$, 
and set $\Psi = f \circ u : \D \rightarrow \CP^1$.
 Since $u$ has Maslov index 2 it doesn't go through $(1:0:0)$, where $D_3$ has a 
 self intersection, so $\Psi$ is well-defined. 

We look at $\frac{\Psi(w) - c}{r}$, which maps the unit circle to the unit
circle. Looking at the intersection numbers given in Lemma \ref{HomTab}, we see
that our disc must intersect $y = 0$ and the divisor $D_3$ at 1 point.
Therefore, as $\Psi = f \circ u$, $f^{-1}(\infty) = \{y^3 = 0\}$ and $D_3 = f^{-1}(c) 
\cup (1:0:0)$, the map  
$\frac{\Psi(w) - c}{r}$ has a pole of order 3 and a simple zero, so

\begin{equation} \label{Psi-c}
\frac{\Psi(w) - c}{r} = \frac{\tau_{w_0}(w)}{\tau_{w_1}^3(w)} e^{i\phi} , \ \ \ \ \text{where} 
\ \ \ \  \tau_\upsilon(w) = \frac{w - \upsilon}{1 - \bar{\upsilon}w},
\end{equation}

for some $w_0$, $w_1$ in $\D$ and $e^{i\phi} \in S^1$. We can use automorphisms
of the disc to assume $w_1 = 0$, $\phi = 0$ and write $w_0 = a$. Note that the
disc automorphism $w \mapsto e^{i\phi'}w$ amounts to $w_0 \mapsto e^{-i\phi'}w_0$ and
$ \phi \mapsto \phi - 2\phi'$ in \eqref{Psi-c}. So, $\phi' = \pi$
keeps $e^{i\phi}$ invariant, therefore we need to keep in mind that $\pm a$ gives the same
holomorphic disc modulo reparametrization.\\

\begin{figure}[htb]
\begin{center}
\scalebox{0.5}{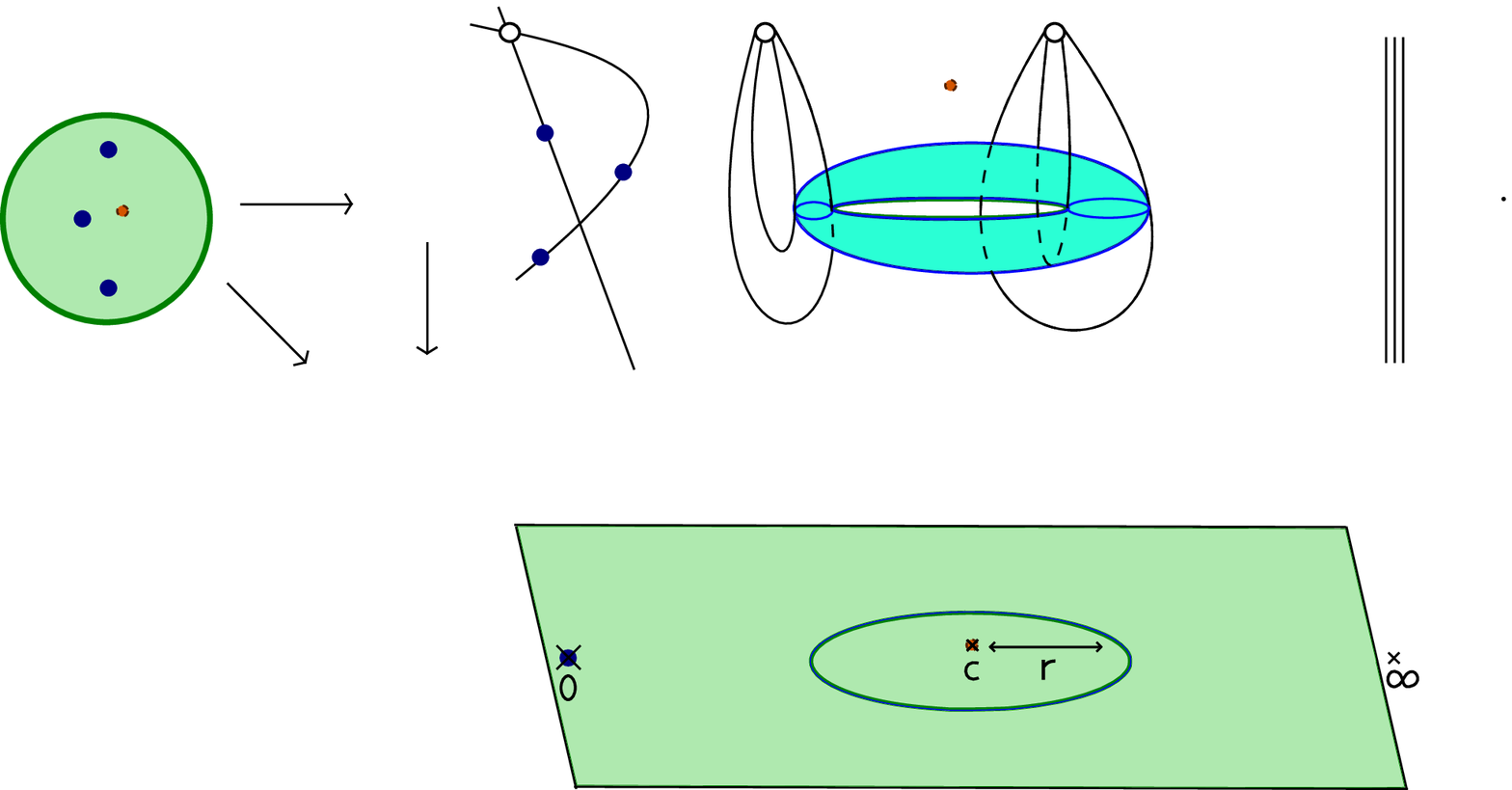}
 \end{center} 
\caption{Picture of $\Psi(w) = f \circ u (w) = \frac{z(w)\xi(w)}{y^3(w)} = \frac{z_2(w)z_3(w)}{z_1^3(w)}$
 (for the case $m = 0$; $|I| = 2$, i.e., $u$ intersects
the conic $D_2$ twice). $\{\eta_0, \eta_1, \eta_2\} = \Psi^{-1}(0)$ and $\{a\} = \Psi^{-1}(c)$
 behave as in Lemma \ref{h0a} for small $t$. Recall that the divisor $D_3$ 
is the closure of $f^{-1}(c) \ni u(a)$.} 
\label{fig:Thm3fam}
\end{figure} 

Since $r < c$, the image of $u$ intersects $f^{-1}(0)\subset D_2 \cup \{z = 0\}$ 
in three points $u(\eta_j)$'s,
j = 0, 1, 2, i.e., $\eta_j \in \D$ are so that $\Psi(\eta_j) = 0$. 
The integer $m$ in $H - 2\beta + m\alpha$ determines how many times the disc $u$ intersects
$D_2$, and we consider a set $I \subset \{0, 1, 2\}$ with that number of
elements. Writing $z_1 = y$, $z_2 = z$ and $z_3 = \xi$, and $\tau_j =
\tau_{\eta_j} $ we see that the map $u$ can be expressed in the form

\begin{equation}\label{discform}
 z_1(w) = w \ , \  z_2(w) = e^{-i\theta} h(w)\prod_{j \notin I}\tau_j(w)  \ , \ z_3(w) 
 = e^{i\theta} h(w)\prod_{j \in I}\tau_j(w), 
\end{equation}
where $h(w)$ is a nonvanishing holomorphic functions and
$e^{i\theta} \in S^1$. 

\begin{remark}
A suitable scaling of the homogeneous coordinates
eliminates the need for a multiplicative factor in the expression for
$z_1(w)$. In principle we know that $z_2(w) = e^{-i\theta} h_2(w)\prod_{j \notin I}\tau_j(w)$, $ z_3(w) 
 = e^{i\theta} h_3(w)\prod_{j \in I}\tau_j(w)$. But we see that on $\del \D$, $\left|\frac{z_2}{z_1}\right| =
\left|\frac{z_3}{z_1^2}\right|$ and $|z_1| = 1$, so $|h_2| = |h_3|$ on the unit
circle, therefore $h_3 = e^{i\theta'}h_2$ for some constant $\theta'$. Note that
we can absorb $-\theta' /2$ in $\theta$ and assume that $h_3 = h_2 = h$.
\end{remark}
 
 Since $\Psi(w) = \frac{z_2(w)z_3(w)}{z_1^3(w)} =
 \frac{h^2(w)\prod\tau_j(w)}{w^3}$, we get $h(w) =
 \left(\frac{\Psi(w)w^3}{\prod\tau_j(w)}\right)^{1/2}$, for some choice of
 square root. The other choice is equivalent to a translation by $\pi$ of the
 parameter $\theta$. Rewriting this last equation and using 
 $\frac{\Psi(w) - c}{r} = \frac{\tau_{a}(w)}{w^3}$, we get:

\begin{equation} \label{PrePol}
 w^3\Psi(w) = r\tau_a(w) + cw^3 = h^2(w)\tau_0(w)\tau_1(w)\tau_2(w).
\end{equation}

We expect one parameter family(ies) of solutions and we see that $u$ is
determined by the parameters $a$ and $\theta$. Therefore we want to understand
how many possible choices for $a$ there are, for any given $\theta$. Moreover,
understanding how these solutions vary with $\theta$, we can describe the moduli
space of holomorphic discs in the class $H - 2\beta + m\alpha$ bounded by
$T^c_{r,0}$, denoted by $\mathcal{M}(T^c_{r,0}, H - 2\beta + m\alpha)$, for each
$m$. 

The possible values of a are constrained by the following equation coming from the fact that
$\xi = -\frac{(1 - t)}{t} z^2$ when $y =0$:

\begin{equation}\label{cond}
\frac{z_3(0)}{z^2_2(0)} = -\frac{1-t}{t} = \frac{e^{3i\theta} \prod_{j \in I}\tau_j(0)}
{ h(0)\prod_{j \notin I}\tau^2_j(0)} = 
\frac{ e^{3i\theta}(-1)^{|I|}\prod_{j \in I}\eta_j}{ h(0)\prod_{j \notin I}\eta^2_j}. 
\end{equation}

Since \eqref{PrePol} implies that $-ar = -h^2(0)\eta_0\eta_1\eta_2$, \eqref{cond} can be rewritten as
\begin{equation}\label{cond2}
ar\left(\frac{1-t}{t}\right)^2\prod_{j \notin I}\eta^3_j = e^{6i\theta} \prod_{j \in 
I}\eta^3_j.
\end{equation}

Note that solving \eqref{cond2} for $a$ amounts to a solution of \eqref{cond}
for some choice of square root for $h(w) =
\left(\frac{\Psi(w)w^3}{\prod\tau_j(w)}\right)^{1/2}$. 

Understanding the behavior of the parameters $\eta_j$ and $a$ as $t \to 0$
will allow us to analyze the existence of $a$ solving this equation for small
values of $t$. Note that the right side of \eqref{cond2} is uniformly bounded for
all $t$. So we conclude that $a\prod_{j \notin I}\eta_j \to 0$, as $t \to 0$. 
Moreover, if we can show that $h(0)$ is bounded away
from zero, then we can conclude that $\eta_j \to 0$ for some $j \notin I$;
see \eqref{cond}.  

\begin{lemma} \label{h0a}
  $h(0)$ is bounded away from 0 and, after possibly relabeling the $\eta_j$'s,
  the following asymptotic hold as $t \to 0$: $$a = \ot, \ \eta_0
  = \ot, \ \eta_1 \to
  \sqrt{\frac{r}{c}}i, \ \eta_2 \to
  -\sqrt{\frac{r}{c}}i.$$ 
  
  Moreover, $0 \notin I$. Therefore, as $|I| < 3$ represents the number of 
  intersection with $D_2$, there is no holomorphic disc
  in the class $H - 2\beta -\alpha$ (i.e. for $m = -1$). 
  \end{lemma}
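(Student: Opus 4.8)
The plan is to run a compactness argument on the parametrisation \eqref{discform}. Suppose we are given a sequence $t_n\to0$ and, for each $n$, a Maslov index $2$ holomorphic disc in a class $H-2\beta+m\alpha$ written as in \eqref{discform}, with data $a_n$, $\theta_n\in S^1$, $\eta_{0,n},\eta_{1,n},\eta_{2,n}\in\D$ and $I_n\subseteq\{0,1,2\}$. Since these all live in compact sets, after passing to a subsequence I may assume $I_n\equiv I$, $a_n\to a_\infty\in\overline{\D}$, $\theta_n\to\theta_\infty$, $\eta_{j,n}\to\eta_{j,\infty}\in\overline{\D}$; the conclusions of the lemma are statements about this limit and, being forced, pass to the whole sequence. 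The first step is to bound $h(0)$ uniformly in $t$: on $\del\D$ each Blaschke factor $\tau_j$ has modulus $1$, so $|h|=|z_2|$ there, and the boundary condition $u(\del\D)\subset T^c_{r,0}$ together with $r<c$ forces $\sqrt{c-r}\le|z_2|\le\sqrt{c+r}$ on $\del\D$; since $h$ is holomorphic and nowhere zero on $\D$, applying the maximum principle to $h$ and to $1/h$ gives $\sqrt{c-r}\le|h(0)|\le\sqrt{c+r}$. Evaluating \eqref{PrePol} at $w=0$ then yields $ra=h^2(0)\,\eta_0\eta_1\eta_2$, so $|a|$ is comparable to $|\eta_0\eta_1\eta_2|$ with constants depending only on $c$ and $r$.

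Next I would locate the limits of the $\eta_j$'s. Clearing denominators in \eqref{Psi-c}, the $\eta_{j,n}$ are three of the four roots in $\overline{\D}$ of the polynomial $P_n(w)=-c\overline{a_n}\,w^4+cw^3+rw-ra_n$. Taking moduli in \eqref{cond2} and using $|\eta_j|<1$ gives $|a_n|\prod_{j\notin I}|\eta_{j,n}|^3\le r^{-1}\big(\tfrac{t_n}{1-t_n}\big)^2\to0$, which together with $|a_n|\gtrsim|\eta_{0,n}\eta_{1,n}\eta_{2,n}|$ forces some $\eta_{j,n}\to0$. If $a_\infty\neq0$, then $P_\infty(0)=-ra_\infty\neq0$, so no root of $P_\infty$ sits at the origin and no $\eta_{j,n}$ can converge to $0$ — contradiction. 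Hence $a_\infty=0$, $P_\infty(w)=w(cw^2+r)$, whose roots in $\D$ are $0$ and $\pm i\sqrt{r/c}$ (here $r<c$ is used), while the fourth root of $P_n$ escapes to infinity as the leading coefficient degenerates. Relabelling, $\eta_{0,n}\to0$, $\eta_{1,n}\to i\sqrt{r/c}$, $\eta_{2,n}\to-i\sqrt{r/c}$ and $a_n\to0$.

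Finally I would exclude $0\in I$ and read off the rates. If $0\in I$, then $\prod_{j\notin I}|\eta_{j,n}|$ stays bounded below (its factors, if any, are among $\eta_{1,n},\eta_{2,n}$), while $\prod_{j\in I}|\eta_{j,n}|^3\asymp|\eta_{0,n}|^3\to0$; feeding $|a_n|\asymp|\eta_{0,n}|$ into \eqref{cond2} then forces $\big(\tfrac{1-t_n}{t_n}\big)^2\lesssim|\eta_{0,n}|^2<1$, contradicting $t_n\to0$. Therefore $0\notin I$; as $0\in\{0,1,2\}$ this means $|I|\le2$, and since $|I|$ equals the intersection number of the disc with $D_2$ (Lemma \ref{HomTab}), the class $H-2\beta-\alpha$, where $|I|$ would have to be $3$, carries no holomorphic disc. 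With $0\notin I$ one has $\{0\}\subseteq\{j\notin I\}$ and $I\subseteq\{1,2\}$, so \eqref{cond2} simplifies to $|\eta_{0,n}|^4\big(\tfrac{1-t_n}{t_n}\big)^2\asymp1$, giving $|\eta_{0,n}|\asymp t_n^{1/2}$ and $|a_n|\asymp|\eta_{0,n}|\asymp t_n^{1/2}$, i.e.\ the claimed $a=\ot$, $\eta_0=\ot$ asymptotics.

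I expect the main obstacle to be the bookkeeping around $P_n$: tracking the escaping fourth root, justifying continuity of the remaining roots (including the degenerate case where two of the $\eta_{j,n}$ coincide and the corresponding factors in \eqref{discform} repeat), and keeping every comparability constant uniform in $t$. Once that is set up, the rest is the chain of elementary modulus estimates sketched above.
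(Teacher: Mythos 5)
Your argument is correct, and while it follows the same skeleton as the paper (the parametrization \eqref{discform}, the constraint \eqref{cond2}, and the quartic $\Xi(w)=r(w-a)+cw^3(1-\bar aw)$), it implements the two key estimates by genuinely different means. For the lower bound on $|h(0)|$, the paper compares coefficients of the factorization $\Xi(w)=-c\bar a(w-\zeta)\prod(w-\eta_j)$ to get $p\zeta=ra/(c\bar a)$ and $1=\bar a(\zeta+\sigma)$, and combines these with $|\sigma|\le 3\le 3|\zeta|$ to obtain $|h(0)|\ge\sqrt c/2$; you instead read off $|h|=|z_2|\in[\sqrt{c-r},\sqrt{c+r}]$ on $\del\D$ from the Lagrangian boundary condition and apply the maximum principle to $h$ and $1/h$, which is cleaner and gives a two-sided bound for free (the nonvanishing and boundary continuity of $h$ that this needs do hold, since $h^2=(r\tau_a+cw^3)/\prod\tau_j$ has its interior zeros exactly cancelled). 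For the asymptotics of $a$ and the $\eta_j$, the paper manipulates the symmetric functions $p,\sigma,\zeta$ directly to show $p=O(t^{1/2})$, $\zeta\to\infty$, $a=O(t^{1/2})$ and then reads the limits off $\Xi(w)=w(cw^2+r)+O(t^{1/2})$; you run a compactness/continuity-of-roots argument on $P_n$, forcing $a_\infty=0$ and identifying the limit roots of $w(cw^2+r)$, then extract the rate $|\eta_0|\asymp|a|\asymp t^{1/2}$ from \eqref{cond2} afterwards. Your route trades the paper's explicit coefficient bookkeeping for a soft limit argument plus Rouch\'e-type root tracking (which, as you note, must be done carefully to ensure exactly one $\eta_{j,n}$ lands at each limit root); the paper's version is more quantitative at each step but relies on the slightly ad hoc inequality $|\sigma|\le 3|\zeta|$. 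Both the exclusion of $0\in I$ and the final deduction for $m=-1$ match the paper's. The only cosmetic slip is calling the $\eta_{j,n}$ "three of the four roots in $\overline\D$" — the fourth root $\zeta_n$ lies outside $\overline\D$ — but nothing in the argument depends on this.
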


\begin{proof}
 Consider the polynomial: 

$$ 
\Xi(w) = w^3(1 - \bar{a}w) \Psi(w) = r(w - a) + cw^3(1 - \bar{a}w) = 
-c\bar{a}(w - \zeta)(w - \eta_0)(w - \eta_1)(w - \eta_2), 
$$

for some $\zeta$, with  $|\zeta| > 1$. 
Assume $|\eta_0| \leq |\eta_1|$, $|\eta_0|  \leq |\eta_2| $, and write

$$(w - \eta_0)(w - \eta_1)(w - \eta_2) = w^3 -\sigma w^2 + qw - p$$\\
 
 By comparing coefficients,  we get

\begin{equation}\label{eq1}
p\zeta = \frac{ra}{c\bar{a}}
\end{equation}

 %\begin{equation}\label{eq4}
%r = a(\zeta q + p)
%\end{equation}

 %\begin{equation}\label{eq3}
%0 = \zeta\sigma + q
%\end{equation}

\begin{equation}\label{eq2}
1 = \bar{a}(\zeta + \sigma)
\end{equation}

By \eqref{PrePol}, 

$$h^2(0) = \frac{ar}{\eta_0\eta_1\eta_2} = \frac{ar}{p}.$$

By 
equations \eqref{eq1}, \eqref{eq2} and noting that $|\sigma| \leq 3 \leq 3|\zeta|$, 

$$1 =
|a||\zeta + \sigma| \leq 4|a\zeta| = \frac{4|a|r}{c|p|} = \frac{4|h^2(0)|}{c}.$$ 

So, we get $|h(0)| \geq
\frac{\sqrt{c}}{2}$. Looking at equation \eqref{cond}, we get that at least one
$\eta_j$ must be in the denominator. More precisely $\prod_{j \notin I}\eta_j =
O(t^{1/2})$. As the other $\eta_j$'s lie in the unit
disc, $p = O(t^{1/2}) \to 0$ and by \eqref{eq1}, $\zeta \mapsto \infty$.
Also by \eqref{eq2} , as $\sigma$ is bounded, we get that $a \mapsto 0$, in fact
$a = O(t^{1/2})$. 

Therefore,  

$$ \Xi(w) = w(cw^2 + r) + w^4O(t^{1/2}) + O(t^{1/2}), $$ 
and we see that $\eta_0
= O(t^{1/2})$ and $\eta_1\eta_2 \mapsto \frac{r}{c}$, say $\eta_1 \mapsto +
\sqrt{\frac{r}{c}}i$, $ \eta_2 \mapsto - \sqrt{\frac{r}{c}}i$. In particular, we
conclude that $0 \notin I$. Also, $p\zeta = \zeta\eta_0\eta_1\eta_2 =
\zeta\eta_0(\frac{r}{c} + O(t^{1/2})) = \frac{ra}{c\bar{a}}$, hence $\zeta\eta_0
= \frac{a}{\bar{a}} + O(t^{1/2})$. Note that since $|I| < 3$, there are no
holomorphic discs for $m = -1$, and this finishes the proof of Lemma \ref{h0a}.
 
\end{proof}

Now we need to analyze the cases $I = \emptyset , \{1\}, \{2\}, \{1,2\}$.\\

\textbf{Case $I = \emptyset$, $m = 2$:}\\

By (\ref{cond2}), (\ref{eq1}): 

\begin{equation}\label{eq001}
(\bar{a}\zeta)^3 = a^4\left(\frac{1-t}{t}\right)^2\frac{e^{-6\theta i}r^4}{c^3}  = a^4K
\end{equation}

where  $K =\left(\frac{1-t}{t}\right)^2\frac{e^{-6\theta i}r^4}{c^3}$. 

\begin{proposition}\label{m=2}
  For small enough $t > 0$, equation \eqref{eq001} has four solutions for each 
  given parameter $\theta$. Moreover, naming  these solutions 
  $a_1(\theta)$, $a_2(\theta)$, $a_3(\theta)$, $a_4(\theta)$, as we vary continuously with 
  $\theta$, in counter-clockwise order, we have that $a_j(\theta + \pi/3) = a_{j+1}(\theta)$.
\end{proposition}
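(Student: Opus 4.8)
The idea is to read \eqref{eq001} as an explicit small perturbation of a holomorphic equation and to count its solutions by a degree (Rouch\'e-type) argument. The first step is to extract the structure of $\zeta = \zeta(a)$. Since $\Xi(w) = -c\bar a(w-\zeta)(w-\eta_0)(w-\eta_1)(w-\eta_2)$ with $\eta_0,\eta_1,\eta_2\in\D$, comparing the coefficient of $w^3$ (equivalently, rearranging \eqref{eq2}) gives $\zeta + \eta_0+\eta_1+\eta_2 = 1/\bar a$, hence $\bar a\zeta = 1 - \bar a(\eta_0+\eta_1+\eta_2)$ and $|\bar a\zeta - 1|<3|a|$, uniformly. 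Note also that $\Xi$ does not depend on $t$; for $|a|$ below a fixed $a_0>0$ it has exactly one root with $|w|>2$, namely $\zeta(a)\approx 1/\bar a$, and three roots that stay simple, inside $\D$ (using $r<c$), and separated from $\zeta$, so $\zeta(a)$ and $S(a):=\eta_0(a)+\eta_1(a)+\eta_2(a)$ are well-defined real-analytic functions on $\{|a|<a_0\}$ with $|S|<3$ and bounded first derivatives. By Lemma \ref{h0a} every solution of \eqref{eq001} arising from a disc has $a=O(t^{1/2})$, so for small $t$ it is enough to search inside $\{|a|<a_0\}$.

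Next I would rescale. Put $K=(\tfrac{1-t}{t})^2\tfrac{e^{-6\theta i}r^4}{c^3}$ and $r_t:=|K|^{-1/4}=\tfrac{t^{1/2}c^{3/4}}{(1-t)^{1/2}r}=O(t^{1/2})$, and substitute $a=r_t b$. Since $r_t\zeta(r_t b)=1/\bar b - r_t S(r_t b)$, we get $\bar a\zeta = \bar b\,r_t\zeta(r_t b)=1-r_t\bar b\,S(r_t b)=1+O(r_t)$, with $O(r_t)$ bounds also on the first $b$-derivatives on the annulus $A:=\{\tfrac12\le|b|\le 2\}$; and $a^4K=r_t^4b^4K=b^4e^{-6i\theta}$. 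Thus \eqref{eq001} becomes $\tilde F(b):=\bigl(1-r_t\bar b\,S(r_t b)\bigr)^3 - b^4e^{-6i\theta}=0$, an $O(t^{1/2})$-perturbation in $C^1(A)$ of the holomorphic map $\tilde F_0(b):=1-b^4e^{-6i\theta}$. The zeros of $\tilde F_0$ in $A$ are exactly the four points $b^0_k(\theta):=e^{i(3\theta/2+k\pi/2)}$, $k=1,2,3,4$, all on $|b|=1$, pairwise at distance $\ge\sqrt2$, and each nondegenerate (real Jacobian $16|b|^6\ne 0$); moreover $|\tilde F_0|\ge 15/16$ on $|b|=\tfrac12$ and $|\tilde F_0|\ge 15$ on $|b|=2$.

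From here the count is routine. For $t$ small, $\|\tilde F-\tilde F_0\|_{C^0(\partial A)}$ is below those lower bounds, so $\tilde F$ is nonvanishing on $\partial A$ and homotopic to $\tilde F_0$ there through nonvanishing maps, hence has degree $4$ on $A$; and the $C^1$-closeness near each $b^0_k(\theta)$, where $D\tilde F_0$ is invertible, lets one apply the inverse function theorem to conclude that $b^0_k$ perturbs to a unique nondegenerate zero $b_k(\theta)$ of $\tilde F$ with $|b_k-b^0_k|=O(t^{1/2})$, while a compactness bound $|\tilde F_0|\ge\delta>0$ away from the four points rules out any other zero in $A$. Undoing the scaling, $a_k(\theta):=r_t b_k(\theta)$ are precisely the four solutions of \eqref{eq001}; since $O(t^{1/2})\ll\sqrt 2$ they occupy four disjoint neighborhoods of the $b^0_k(\theta)$ and so inherit the counter-clockwise cyclic order of the latter, and nondegeneracy gives continuity (real-analyticity) of $a_k$ in $\theta$ by the implicit function theorem.

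Finally, for the monodromy: $\zeta$, $S$ are independent of $\theta$, and $K(\theta+\pi/3)=K(\theta)$ because $e^{-6i(\theta+\pi/3)}=e^{-6i\theta}$, so \eqref{eq001}, equivalently $\tilde F$, is exactly $\pi/3$-periodic in $\theta$ and its solution set at $\theta+\pi/3$ coincides with that at $\theta$. As $\theta$ runs over $[\theta_0,\theta_0+\pi/3]$, $b_k(\theta)$ stays within $O(t^{1/2})$ of $b^0_k(\theta)=e^{i(3\theta/2+k\pi/2)}$, which moves continuously from $b^0_k(\theta_0)$ to $b^0_k(\theta_0+\pi/3)=e^{i(3\theta_0/2+(k+1)\pi/2)}=b^0_{k+1}(\theta_0)$; since the four solutions at $\theta_0$ lie in four disjoint $O(t^{1/2})$-neighborhoods of the $b^0_j(\theta_0)$, the continuation $b_k(\theta_0+\pi/3)$ is forced to equal $b_{k+1}(\theta_0)$, and rescaling by $r_t$ yields $a_j(\theta+\pi/3)=a_{j+1}(\theta)$ (indices mod $4$). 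The step needing the most care is making the count exhaustive, i.e.\ combining Lemma \ref{h0a} (every solution has $a=O(t^{1/2})$) with the bound $|\bar a\zeta-1|<3|a|$ to confine every solution to the rescaled annulus $A$ where the perturbative picture is valid; once that and the regularity of $S$ are in place, the rest is the elementary degree computation above.
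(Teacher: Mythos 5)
Your argument is correct and follows essentially the same route as the paper: both treat \eqref{eq001} as a small perturbation of $a^4K=1$ using $(\bar a\zeta)^3=1+O(t^{1/2})$ and locate the four solutions near the fourth roots of $K^{-1}$, with the relabeling $a_j(\theta+\pi/3)=a_{j+1}(\theta)$ coming from the $\pi/3$-periodicity of $K$ and the $e^{i3\theta/2}$ rotation of those roots. Your write-up simply makes explicit (via the rescaling $a=r_tb$, the coefficient identity $\bar a\zeta=1-\bar a(\eta_0+\eta_1+\eta_2)$, and a Rouch\'e/degree plus inverse-function-theorem count on the annulus) the steps the paper compresses into ``$g(a)$ and $g'(a)$ are $O(t^{1/2})$,'' where the paper instead bounds $g'$ by Cauchy's differentiation formula.
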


\begin{proof}
By \eqref{eq2}

\begin{equation} \label{eqaz1}
(\bar{a}\zeta)^3 = 1 + O(t^{1/2})
\end{equation}

Combining \eqref{eq001} with \eqref{eqaz1} we see that for $g(a) = 1 - (\bar{a}\zeta)^3$

\begin{equation}
a^4K - 1 + g(a) = 0
\end{equation}

One sees that, for sufficiently small $t$, there are 4 solutions of such
equation since $g(a)$ and $g'(a)$ are $\ot$. (To see that $g'(a) = \ot$, we use
that $g(a) = \tilde{g}(a , \bar{a}) $, where $\tilde{g}(a, b)$ is a holomorphic
function, and using Cauchy's differentiation formula, we get that each partial
derivative is $\ot$.)

So for each $\theta$ there are four solutions for $a$, each of them close to a
fourth root of $K^{-1}$. Recall we named these solutions, as varying continuously with
$\theta$, in counter-clockwise order,
$a_1(\theta)$, $a_2(\theta)$, $a_3(\theta)$, $a_4(\theta)$. We see from \eqref{eq001}
that $a_j(\theta + \pi/3) = a_{j+1}(\theta)$. 
\end{proof}

Let $u^\theta_{a_1(\theta)}$ be the holomorphic disc given by 
\eqref{discform}, for a given value of $\theta$ and the other parameters
determined by $a_1(\theta)$. 

\begin{lemma}\label{modulim=2}
  The moduli
space of holomorphic discs in the class $H - 2\beta + 2\alpha$, $\mathcal{M}(T^c_{r,0}, H -
2\beta + 2\alpha)$, can be parametrized using only
holomorphic discs $u^\theta_{a_1(\theta)}$ for $\theta \in [0,2\pi]$. Also, 
 the algebraic count of holomorphic discs in $\mathcal{M}(T^c_{r,0}, H -
2\beta + 2\alpha)$, $n_{H - 2\beta + 2\alpha}(T^c_{r,0})$, is equal to $2$ up to sign.
\end{lemma}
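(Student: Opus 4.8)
The plan is to first pin down the moduli space $\mathcal{M}(T^c_{r,0},H-2\beta+2\alpha)$ as a single circle, and then read off the algebraic count as the degree of the evaluation map. I will use throughout that, by Proposition \ref{m=2} and Lemma \ref{h0a}, every Maslov index $2$ disc in this class is of the form \eqref{discform} with $I=\emptyset$, determined by a pair $(\theta,a)$ where $a$ is one of $a_1(\theta),\dots,a_4(\theta)$ solving \eqref{eq001}, subject moreover to the \emph{unsquared} constraint \eqref{cond}, not merely to its square \eqref{cond2}. I would first record two identifications. One: the disc automorphism $w\mapsto -w$, together with the rescaling of the homogeneous coordinates (recall $z_3$ has weight $2$), identifies $u^\theta_a$ with $u^\theta_{-a}$; combined with the symmetry $a\mapsto -a$ of \eqref{eq001} (which sends $\zeta\mapsto-\zeta$), this gives $u^\theta_{a_3(\theta)}\cong u^\theta_{a_1(\theta)}$, $u^\theta_{a_4(\theta)}\cong u^\theta_{a_2(\theta)}$, and forces $a_3(\theta)=-a_1(\theta)=a_1(\theta+2\pi/3)$, so that $a_1$ is $\tfrac{4\pi}{3}$-periodic. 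Two: passing from \eqref{cond} to \eqref{cond2} by squaring, the quantity $Q(\theta)=e^{3i\theta}/(h(0)p^2)$ along the branch $u^\theta_{a_1(\theta)}$ has constant square $((1-t)/t)^2$, hence is a constant $\pm(1-t)/t$; relabeling $a_1\leftrightarrow a_2$ if necessary we may take it to be $-(1-t)/t$, so $u^\theta_{a_1(\theta)}$ lies in $X_t$ for every $\theta$, whereas the $e^{3i(\pi/3)}=-1$ factor makes the corresponding quantity for $a_2$ have the opposite sign, so the $a_2$-discs lie in $X_t$ only after shifting $\theta$ by $\pi$, and there $u^{\theta+\pi}_{a_2(\theta)}=u^{\theta+\pi}_{a_1(\theta+\pi/3)}\cong u^{\theta+\pi}_{a_1(\theta+\pi)}$ is again of the form $u^s_{a_1(s)}$. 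Hence every disc in $\mathcal{M}(T^c_{r,0},H-2\beta+2\alpha)$ equals some $u^\theta_{a_1(\theta)}$ with $\theta\in[0,2\pi]$, and since $u^{2\pi}_{a_1(2\pi)}=u^0_{-a_1(0)}\cong u^0_{a_1(0)}$ this family closes up into a circle (using regularity, to be established, so that the moduli space is a smooth $1$-manifold and $\theta\mapsto u^\theta_{a_1(\theta)}$ is a bijection onto it).

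For the count I would then work with the moduli space carrying one boundary marked point, which is an $S^1$-bundle over this circle, hence a $2$-torus (the mapping torus of $w\mapsto -w$), and is closed: since $r<c$ the fibers $T^c_{r,0}$ bound no Maslov index $0$ disc, and $\CP^2$ has no sphere of non-positive Chern number, so Gromov compactness applies with no bubbling. Thus $n_{H-2\beta+2\alpha}(T^c_{r,0})=\deg(\mathrm{ev})$, which I compute on $H_1$ in the coordinates $(\arg(z\xi-c),\arg z)$ on $T^c_{r,0}$, where $z=z_2/z_1$ is the \emph{affine} coordinate. The boundary loop of a single disc $u^\theta_{a_1(\theta)}$ has winding $(-2,2)$ there — consistent with $-2\,\partial\beta+2\,\partial\alpha$ together with $\partial\beta=(1,0)$, $\partial\alpha=(0,1)$, $\partial H=0$, using $\Psi-c=r\tau_a(w)/w^3$ and the triple Blaschke product $P$ — while the closed loop $\theta\mapsto u^\theta_{a_1(\theta)}(w(\theta))$, $\theta\in[0,2\pi]$, $w(\theta)=e^{i\theta/2}w_0$, has winding $(-1,0)$, the half-turns of $w(\theta)$ in $\tau_a(w)/w^3$ and in $P/(e^{i\theta}z_1)$ combining with $\arg h\equiv 0$ to leave the first coordinate winding $\tfrac12-\tfrac32=-1$ and the second $-1+\tfrac32-\tfrac12=0$. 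Therefore $\deg(\mathrm{ev})=\det\begin{pmatrix}-2 & -1\\ 2 & 0\end{pmatrix}=2$, i.e. $n_{H-2\beta+2\alpha}(T^c_{r,0})=\pm 2$.

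The hard part is the bookkeeping in the first paragraph: distinguishing the honest solutions of the unsquared equation \eqref{cond} from the spurious ones produced by squaring, and tracking the square-root branch of $h$ and the weight-$2$ rescaling of $z_3$, so that the four solutions $a_j(\theta)$ collapse onto the single circle $\{u^\theta_{a_1(\theta)}\}$ rather than onto two circles; the winding-number computations are then routine once the correct affine coordinate is used. The sign of $\deg(\mathrm{ev})$ — hence whether this term enters the superpotential as $+2$ or $-2$ — is a separate matter of orienting $\mathcal{M}$ via the chosen spin structure, carried out afterwards.
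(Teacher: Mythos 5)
Your argument is correct and is essentially the paper's: the same two identifications (the residual automorphism $w\mapsto -w$ giving $a\sim -a$, and the sign ambiguity in the square root $h$ being absorbed into $\theta\mapsto\theta+\pi$) collapse the four branches $a_j(\theta)$ onto the single circle $\{u^\theta_{a_1(\theta)}:\theta\in[0,2\pi]\}$, and the count is then the degree of the evaluation map from the resulting torus, read off from its matrix on $H_1$ with fiber class $-2\,\partial\beta+2\,\partial\alpha$. The only divergence is in the second row of that matrix: the paper takes it to be $\partial\alpha$ (the $S^1$-orbit direction), whereas your winding computation for an honestly closed section (tracking $w(\theta)=e^{i\theta/2}w_0$) gives $-\partial\beta$; these two classes do not differ by a multiple of the fiber class, so at most one is the true section class, but both determinants have absolute value $2$, which is all the lemma asserts since the count is only claimed up to sign.
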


\begin{proof}
  Since $a_j(\theta + \pi/3) = a_{j+1}(\theta)$, we can parametrize the moduli
space of holomorphic discs in the class $H - 2\beta + 2\alpha$ using only
holomorphic discs $u^\theta_{a_1(\theta)}$ for $\theta \in [0,4\pi]$. But recall 
that solutions are counted twice as the
disc automorphism $w \mapsto -w$ amounts to $a \mapsto -a$ and $0 =\phi \mapsto
\phi - 2\pi = -2\pi$ in \eqref{Psi-c}. Hence we see that $u^{\theta+2\pi}_{a_1(\theta
+2\pi)} = u^\theta_{a_3(\theta)} = u^\theta_{-a_1(\theta)}$ is the same up to
reparametrization. Therefore the map $\theta \mapsto u^\theta_{a_1(\theta)}$,
from $S^1= [0,2\pi]/0\sim 2\pi$ to the moduli space $\mathcal{M}(T^c_{r,0}, H -
2\beta + 2\alpha)$ gives a diffeomorphism.

  In order to compute $ n_{H - 2\beta +
2\alpha}(T^c_{r,0})$ we need to look at $ev_{\ast}[\mathcal{M}(T^c_{r,0}, H -
2\beta + 2\alpha)] = n_{H - 2\beta + 2\alpha}(T^c_{r,0}) [T^c_{r,0}]$. The
boundary of each holomorphic disc lies in the class $2(\del\alpha - \del\beta)$,
and the parameter $\theta$ comes from the action $e^{i\theta}\cdot(\xi ,z) =
(e^{i\theta}\xi, e^{-i\theta}z)$, described in coordinates $(\xi ,z)$ for $y=1$,
whose orbits are in the class of the thimble, i.e., $\del\alpha$. Therefore,
$n_{H - 2\beta + 2\alpha}(T^c_{r,0}) = \pm2$.\\ 
\end{proof}

\textbf{Case} $I = \{2\}$ (similarly $ I = \{1\}$) , $m = 1$\textbf{:} \\

By \eqref{cond2}, \eqref{eq1}, setting now $K = \left(\frac{1 - t}{t}\right)^2re^{-6\theta i}$: 

\begin{equation}\label{eq101}
K\eta_0^3 = \frac{1}{a}\frac{\eta^3_2}{\eta^3_1} 
= \frac{1}{a}(-1 + O(t^{1/2}) ).
\end{equation}

Similarly to the previous case we have

\begin{proposition}\label{m=1}
  For small enough $t > 0$, equation \eqref{eq101} has four solutions for each 
  given parameter $\theta$. Moreover, naming  these solutions 
  $a_1(\theta)$, $a_2(\theta)$, $a_3(\theta)$, $a_4(\theta)$, as we vary continuously with 
  $\theta$, in counter-clockwise order, we have that $a_j(\theta + \pi/3) = a_{j+1}(\theta)$.
\end{proposition}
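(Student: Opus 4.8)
The plan is to run the argument of Proposition~\ref{m=2} with one additional ingredient: here \eqref{eq101} involves the extra unknowns $\eta_0,\eta_1,\eta_2$, so I must first eliminate them in favour of $a$ to obtain a scalar equation in $a$ alone. Since $a=\ot$ and $\sigma=\eta_0+\eta_1+\eta_2$ is bounded, \eqref{eq2} gives $\bar a\zeta = 1-\bar a\sigma = 1+\ot$, and the factorisation of $\Xi$ in the proof of Lemma~\ref{h0a} gives $\eta_1\eta_2 = r/c+\ot$; feeding these into \eqref{eq1}, which reads $\zeta\,\eta_0\eta_1\eta_2 = ra/(c\bar a)$, yields
\[
  \eta_0 = \frac{ra}{c\,\bar a\zeta\,\eta_1\eta_2} = a\,(1+\ot).
\]
Substituting $\eta_0^{3}=a^{3}(1+\ot)$ into \eqref{eq101} and multiplying by $a$ turns it into $Ka^{4}=-1+G(a,\bar a)$, with $K=\bigl(\tfrac{1-t}{t}\bigr)^{2}r\,e^{-6\theta i}$ and $G=\ot$ (the division by $1+\ot$ is harmless since $Ka^4=O(1)$, as $|K|=O(t^{-2})$ and $|a|=O(t^{1/2})$). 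Exactly as for the function $g$ in Proposition~\ref{m=2}, $G$ is real-analytic in $(a,\bar a)$ on the region under consideration — the roots $\zeta,\eta_j$ of $\Xi$ depend holomorphically on $(a,\bar a)$ while they stay simple — so Cauchy's estimate also gives $\partial_aG,\partial_{\bar a}G=\ot$; note that $G$ is independent of $\theta$, since $\Xi$ is.

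Next I would count the solutions of $Ka^{4}=-1+G(a,\bar a)$ by the Rouch\'e/argument-principle argument used in Proposition~\ref{m=2}. The zeros of $a\mapsto Ka^{4}+1$ are the four fourth roots of $-K^{-1}$, each of modulus of order $t^{1/2}$; on a fixed circle $|a|=\delta$ the quantity $|Ka^{4}+1|$ is of order $t^{-2}$, far larger than $|G|=\ot$, so there are exactly four solutions in $|a|<\delta$, and a local Rouch\'e estimate on circles of radius of order $t^{1/2}$ about the four roots places one solution $\ot$-close to each root of $-K^{-1}$. Being simple zeros they depend smoothly on $\theta$, and we label them $a_1(\theta),\dots,a_4(\theta)$ counter-clockwise.

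Finally, for the monodromy I would read off from $a^{4}=-K^{-1}(1+\ot)=-\bigl(\tfrac{t}{1-t}\bigr)^{2}r^{-1}e^{6\theta i}(1+\ot)$ that $-K^{-1}$ makes exactly one positive turn around the origin as $\theta$ increases by $\pi/3$ (returning to its starting value, since $K$ is $\tfrac{\pi}{3}$-periodic in $\theta$); following each of the four solutions continuously, it is therefore carried a quarter turn counter-clockwise, i.e.\ onto the position of the next one, so $a_j(\theta+\pi/3)=a_{j+1}(\theta)$ with indices mod $4$. I expect the only genuine technical point to be the uniformity of the bounds $G,\partial G=\ot$ over the whole region where Lemma~\ref{h0a} applies, but this is precisely the bookkeeping already carried out for Proposition~\ref{m=2}, so no new obstacle arises.
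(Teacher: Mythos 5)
Your proposal is correct and follows essentially the same route as the paper: eliminate $\eta_0$ using \eqref{eq1}, $\bar a\zeta = 1+O(t^{1/2})$ and $\eta_1\eta_2\to r/c$ (the paper packages this as $\zeta\eta_0=a/\bar a+O(t^{1/2})$) to reduce \eqref{eq101} to $Ka^4+O(t^{1/2})=-1$, then reuse the perturbative root count of Proposition \ref{m=2} and read the $\pi/3$-rotation off the $e^{-6i\theta}$ dependence of $K$. Your Rouch\'e/degree phrasing of the root count is just a slightly more explicit version of the paper's appeal to $g,g'=O(t^{1/2})$, so there is no substantive difference.
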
 

\begin{lemma}\label{modulim=1}
  The moduli
space of holomorphic discs in the class $H - 2\beta + \alpha$, $\mathcal{M}(T^c_{r,0}, H -
2\beta + \alpha)$, can be parametrized using only
holomorphic discs $u^\theta_{a_1(\theta)}$ for $\theta \in [0,4\pi]$. 
Moreover, the algebraic 
count of holomorphic discs in $\mathcal{M}(T^c_{r,0}, H -
2\beta + \alpha)$, $n_{H - 2\beta + \alpha}(T^c_{r,0})$, is equal to $4$ up to 
sign.
\end{lemma}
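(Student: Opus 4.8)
The plan is to run the argument of Lemma \ref{modulim=2} again, watching the one feature that is genuinely new when $m=1$: the index set $I$ now has two admissible values, $\{1\}$ and $\{2\}$ (recall $0\notin I$ by Lemma \ref{h0a}), whereas for $m=0$ and $m=2$ it is forced ($\{1,2\}$ when $m=0$, $\emptyset$ when $m=2$). By Proposition \ref{m=1}, for each $\theta$ and each of the two choices of $I$, the relevant equation (\eqref{eq101} for $I=\{2\}$, and the analogous equation with $\eta_1,\eta_2$ exchanged for $I=\{1\}$) has four solutions $a_1(\theta),\dots,a_4(\theta)$, cyclically permuted by $\theta\mapsto\theta+\pi/3$ and with $a_3(\theta)=-a_1(\theta)$; via \eqref{discform} these produce all the holomorphic discs in the class $H-2\beta+\alpha$, so the moduli space is covered by the discs $u^\theta_{a_1(\theta)}$ with $\theta$ and $I$ varying.

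First I would eliminate the choice of $I$. As in Lemma \ref{modulim=2} the residual automorphism $w\mapsto-w$ sends $a\mapsto-a$ and $\phi\mapsto\phi-2\pi$ in \eqref{Psi-c}; but it also sends each $\eta_j$ to $-\eta_j$, and since Lemma \ref{h0a} gives $\eta_1\to\sqrt{r/c}\,i$ and $\eta_2\to-\sqrt{r/c}\,i$, it interchanges $\eta_1$ and $\eta_2$ and hence carries an $I=\{1\}$ disc to an $I=\{2\}$ disc. So every disc in the class has, up to reparametrization, a unique representative of the form \eqref{discform} with $I=\{2\}$, and it suffices to run over that family. Then I would determine when $\theta\mapsto u^\theta_{a_1(\theta)}$ (with $I=\{2\}$) closes up: six applications of $a_j(\theta+\pi/3)=a_{j+1}(\theta)$ give $a_1(\theta+2\pi)=a_3(\theta)=-a_1(\theta)$, so $u^{\theta+2\pi}_{a_1(\theta+2\pi)}=u^\theta_{-a_1(\theta)}$. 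In the cases $m=0,2$, where $w\mapsto-w$ fixes $I$, this identifies $u^\theta_{-a_1(\theta)}$ with $u^\theta_{a_1(\theta)}$ and the family closes over $[0,2\pi]$; but for $m=1$, $w\mapsto-w$ sends the $I=\{2\}$ disc $u^\theta_{-a_1(\theta)}$ to an $I=\{1\}$ disc, so within the $I=\{2\}$ family it is not identified with $u^\theta_{a_1(\theta)}$, and the family returns to its starting value only after a further rotation, i.e. after $a_1(\theta+4\pi)=a_1(\theta)$ and $e^{\pm i(\theta+4\pi)}=e^{\pm i\theta}$. Hence $\theta\mapsto u^\theta_{a_1(\theta)}$ descends to a diffeomorphism from $[0,4\pi]/(0\sim4\pi)$ onto $\mathcal M(T^c_{r,0},H-2\beta+\alpha)$.

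For the algebraic count I would then argue as in Lemma \ref{modulim=2}: a disc in $H-2\beta+\alpha$ has boundary in the class $\del\alpha-2\del\beta\in H_1(T^c_{r,0})$, and the parameter $\theta$ comes from the $S^1$-action, whose orbits represent $\del\alpha$; the new point is that one loop around $\mathcal M(T^c_{r,0},H-2\beta+\alpha)$ now corresponds to $\theta$ sweeping $[0,4\pi]$, hence winds twice along $\del\alpha$. Evaluating at a boundary marked point, the degree of $ev$ onto $T^c_{r,0}$ is therefore $\bigl|\det\bigl(\begin{smallmatrix}1&-2\\2&0\end{smallmatrix}\bigr)\bigr|=4$ — to be contrasted with $\bigl|\det\bigl(\begin{smallmatrix}2&-2\\1&0\end{smallmatrix}\bigr)\bigr|=2$ in the $H-2\beta+2\alpha$ case — so $n_{H-2\beta+\alpha}(T^c_{r,0})=\pm4$. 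The hard part is the closing-up step: one must be careful that the $w\mapsto-w$ identification genuinely moves the $I=\{2\}$ family off itself so that the period is $4\pi$ rather than $2\pi$, since it is exactly this doubling (and not the size of any single coefficient in $\del(H-2\beta+\alpha)$) that produces the extra factor of $2$ in the count relative to $m=2$.
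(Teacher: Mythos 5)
Your proposal is correct and follows essentially the same route as the paper: four solutions $a_j(\theta)$ from the squared constraint, the observation that $w\mapsto -w$ sends $a\mapsto -a$ \emph{and} exchanges $\eta_1\leftrightarrow\eta_2$ (hence $I=\{2\}$ with $I=\{1\}$), so the $I=\{2\}$ family only closes up after $\theta$ runs over $[0,4\pi]$, giving the count $\pm 4$. Your explicit determinant computation for the degree of the evaluation map is just a spelled-out version of the argument the paper gives for the $m=2$ case and leaves implicit here.
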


\begin{proof} (\emph{of Proposition} \ref{m=1} \emph{and of Lemma} \ref{modulim=1})
  
Since $\zeta\eta_0 = \frac{a}{\bar{a}} + O(t^{1/2})$,  and  $a = 
O(t^{1/2})$:

\begin{equation*} 
(\bar{a}\zeta)^3 \frac{1}{a}(-1 + O(t^{1/2})) =
K(\bar{a}\zeta\eta_0)^3 = K[a(1 + O(t^{1/2}))]^3 = Ka^3(1 + O(t^{1/2})).
\end{equation*}

Using $(\bar{a}\zeta)^3 = 1 + O(t^{1/2})$, we get

\begin{equation}
Ka^4 + O(t^{1/2}) = -1.
\end{equation}
  
 Using the same argument as before we get four solutions for $a$, $a_1(\theta)$,
 $a_2(\theta)$, $a_3(\theta)$, $a_4(\theta)$, varying continuously with
 $\theta$, ordered in the counter-clockwise direction. Again $a_j(\theta +
 \pi/3) = a_{j+1}(\theta)$, but now the disc automorphism $w \mapsto -w$, not
 only switches $a \mapsto -a$ but also $\eta_1 \leftrightarrow \eta_2$, which
 accounts for the case $I = \{1\}$. Therefore the moduli space
 $\mathcal{M}(T^c_{r,0}, H - 2\beta + \alpha)$ is given by
 \{$u^\theta_{a_1(\theta)}$; $\theta \in [0,4\pi]$\}, and hence $n_{H - 2\beta +
 \alpha}(T^c_{r,0}) = \pm4$.
 \end{proof}

The case $I = \{1 , 2\}$ , $m = 0$, works in a totally analogous way, with 
$n_{H- 2\beta}(T^c_{r,0}) = \pm2$. This concludes the proof of Theorem \ref{3fam}. 
\end{proof}

\subsection{Discs in classes $2H - 5\beta + k\alpha$}

\begin{theorem}\label{6fam} 
There are no Maslov index 2 holomorphic discs in the
class $2H - 5\beta -2\alpha$, and one-parameter families of holomorphic discs in the
classes $2H - 5\beta + k\alpha$, $k = -1, 0, 1, 2, 3, 4$, with algebraic counts
equal to $1, 5 ,10 ,10 ,5 ,1$, up to sign, respectively. 
\end{theorem}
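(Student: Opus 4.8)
The plan is to mimic the structure of the proof of Theorem \ref{3fam}, but with the divisor $D_5$ (rather than $D_2$) playing the role of the hypersurface whose intersection multiplicity encodes the parameter $k$. First I would set up the normal form: given a holomorphic disc $u:(\D,S^1)\to(\CP^2,T^c_{r,0})$ in a class $2H-5\beta+k\alpha$, the composition $\Psi=f\circ u$ has, by Lemma \ref{HomTab}, intersection number $2$ with $D_3=f^{-1}(c)\cup\{(1{:}0{:}0)\}$ and $6$ with $\{y=0\}=f^{-1}(\infty)$ (since $2\cdot 1 + 5\cdot 0 + k\cdot 0 = 2$ and $2\cdot 1 + 5\cdot 0 = \dots$ — one recomputes these from the table). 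So $\frac{\Psi(w)-c}{r}$ is a ratio of Blaschke products with two zeros and a pole of order $6$; after using disc automorphisms to normalize, one writes $\frac{\Psi(w)-c}{r}=\tau_{a_1}(w)\tau_{a_2}(w)/w^6$ (or $\Psi-c = r(\text{degree-2 Blaschke})/\tau_0^6$). The disc $u$ then intersects $f^{-1}(0)\subset D_2\cup\{z=0\}$ in a controlled number of points, and I would factor $z_2(w)=z(w)$ and $z_3(w)=\xi(w)$ as $e^{\mp i\theta}h(w)\prod_j\tau_{\eta_j}(w)$ over appropriate index sets, exactly as in \eqref{discform}, with $h$ nonvanishing and determined up to a sign by $h^2 = w^6\Psi/\prod\tau_j$.

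Next I would impose the constraint analogous to \eqref{cond}: on $\{y=0\}$ the curve $X_t$ forces $\xi=-\frac{1-t}{t}z^2$, giving an algebraic equation relating $a_1,a_2$, the $\eta_j$, and $e^{i\theta}$. Then the key asymptotic analysis: as $t\to 0$, just as in Lemma \ref{h0a}, one shows $h(0)$ is bounded away from $0$, several of the parameters go to $0$ like $\ot$ while the rest converge to the roots of the limiting polynomial (here the relevant limit should involve the quintic $D_5$ degenerating to $(\x\z-c\y^5)^2$, consistent with the remark that $D_5$ lies over the lower part of the rightmost wall). The integer $k$, $-2\le k\le 4$, records how the $\eta_j$'s are distributed between the factors of $z$ and $\xi$ (equivalently, the intersection number with $D_5$), and one checks $k=-2$ is incompatible with the $t\to 0$ asymptotics, as in the $m=-1$ case. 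For each remaining $k$, the constraint reduces to an equation of the shape $Ka^N + O(t^{1/2}) = \text{const}$ with $K$ involving $e^{-c\theta i}$; by the same Cauchy-estimate/implicit-function argument used in Proposition \ref{m=2}, this has a fixed number of solutions $a_j(\theta)$ which rotate as $\theta$ increases, so that $a_j(\theta+\delta)=a_{j+1}(\theta)$ for an appropriate $\delta$.

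Then, as in Lemmas \ref{modulim=2} and \ref{modulim=1}, I would identify the moduli space $\mathcal{M}(T^c_{r,0}, 2H-5\beta+k\alpha)$ with a single circle's worth of discs $u^\theta_{a_1(\theta)}$, carefully accounting for the two-fold overcounting coming from the $w\mapsto-w$ automorphism and for the permutations of the $\eta_j$ induced by that automorphism (which may identify different index sets, as happened with $I=\{1\}\leftrightarrow I=\{2\}$). The algebraic count $n_{2H-5\beta+k\alpha}(T^c_{r,0})$ is then read off as $\pm(\text{number of branches after quotienting})$, and one must get $1,5,10,10,5,1$ for $k=-1,0,1,2,3,4$; the binomial pattern strongly suggests that the branches are organized by a choice of which of $5$ (or so) of the $\eta_j$'s land in $I$, i.e. a $\binom{5}{j}$ count, matching the expansion of $(1+w)^5$ in \eqref{WChe2}. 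As with Theorem \ref{3fam}, one should also confirm $T^c_{r,0}\cap D_5=\emptyset$ (already done in the proof of Lemma \ref{HomTab}) so that intersection numbers are well-defined, and that regularity and orientation of these moduli spaces follow from the $S^1$-family structure, the evaluation map wrapping the boundary circle class $(2\del\alpha - \dots)$ an appropriate number of times, exactly as computed there.

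I expect the main obstacle to be the bookkeeping in the $t\to0$ asymptotic analysis: with up to six $\eta_j$'s to track, one must correctly determine which subset degenerates to $0$ at rate $\ot$ and which converge to the roots of the limiting polynomial coming from $D_5$, and then verify that the resulting reduced equation for $a$ genuinely has the claimed number of solutions for every $k$ in the range — in particular ruling out $k=-2$ and confirming that the overcounting/identification by $w\mapsto-w$ produces exactly the symmetric binomial sequence $1,5,10,10,5,1$ rather than some other multiplicities. Getting the combinatorics of the index sets $I$ to line up with $\binom{5}{k+1}$ (after the automorphism quotient) is the delicate heart of the argument; everything else is a routine adaptation of the $H-2\beta+m\alpha$ case.
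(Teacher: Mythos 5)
Your outline captures the right general shape (normal form from intersection numbers, the constraint coming from $\xi=-\frac{1-t}{t}z^2$ on $X_t$, asymptotics as $t\to0$, exclusion of the extremal class via $0\notin I$, a binomial count over index sets $I$, then regularity), but two concrete errors would derail the execution. First, the normal form is wrong: by Lemma \ref{HomTab} a disc in $2H-5\beta+k\alpha$ meets $D_3$ in $2\cdot 3-5\cdot 1=1$ point and $\{y=0\}$ in $2$ points, so $\frac{\Psi(w)-c}{r}$ has a \emph{single simple zero} and \emph{two poles of order $3$ at distinct points} $w_1=0$ and $w_2=\nu$, i.e.\ $\frac{\Psi(w)-c}{r}=\tau_b(w)/\bigl(w^3\tau_\nu^3(w)\bigr)$, not $\tau_{a_1}\tau_{a_2}/w^6$. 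The extra unknown $\nu$ is precisely what the paper flags as making this case harder than Theorem \ref{3fam}: one gets a second constraint equation from evaluating $z_3/z_2^2=-\frac{1-t}{t}$ at $w=\nu$ as well as at $w=0$, and the bulk of the work (Claims \ref{hnubounded} and \ref{nuto1}) goes into showing $|\nu|$ stays away from $1$ and $\nu=O(t^{1/2})$ before one can read off where the $\eta_j$ go. Your setup with one pole of order $6$ erases this parameter and the coupled system it creates, so the "reduce to $Ka^N+O(t^{1/2})=\text{const}$" step has nothing to latch onto; indeed the paper does \emph{not} solve for the parameters directly at small $t$ here, but instead passes to the explicit limit discs in $\CP(1,1,4)$, counts there, and transfers the count back to $X_t$ via the cobordism/regularity argument of Lemma \ref{x0xt}.

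Second, the residual automorphism group is not $w\mapsto -w$: at $t=0$ the normal form $(\Psi-c)/r=1/w^5$ is preserved exactly by $w\mapsto e^{2\pi ik/5}w$, so the reparametrization quotient is by $\Z/5\Z$ acting on $\theta$ and on the index sets $I\subset\{1,\dots,5\}$. It is this $\Z/5\Z$-action (boundary sweeping $\pm5[T^c_{r,0}]$ for fixed $I$, with orbits of the $2^5$ subsets of sizes $\binom{5}{|I|}$) that produces $1,5,10,10,5,1$; a two-fold identification as in the $H-2\beta+m\alpha$ case would give the wrong multiplicities. So while your guess that the answer is organized by $\binom{5}{|I|}$ is correct, the mechanism you propose for extracting it does not work as stated.
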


This is precisely what we expect from the term $\frac{e^{-2\Lambda}}{u^5w}(1 + w
)^5$ in $W_{T(1,4,25)}$; see equation \eqref{WChe2}.

\begin{proof} 

We start approaching the problem following the same reasoning as in the previous
subsection. But we will get an extra parameter, since the intersection with $\{y
= 0\}$ is 2 for discs in the classes $2H - 5\beta + k\alpha, -2 \leq k \leq 4$.
This will make our computation harder. Nonetheless, looking at Figure
\ref{fig10fam}, for $t \to 0$ we expect the discs in these classes (for $k \geq -1$) to converge to holomorphic
discs in $\CP(1,1,4)$ that remain away from the orbifold point, since they don't touch the
singular fiber that collapses into the singular orbifold point when $t=0$. The
idea is then to understand the limits of such discs when $t \to 0$, `count' them
for $t = 0$ and use Lemma \ref{x0xt} to show that the count remains the same for
small $t > 0$.

Consider a holomorphic map $u : (\D, S^1) \rightarrow (\CP^2,
T^c_{0,r})$ in the class $2H - 5\beta + k\alpha, -2 \leq k \leq 4$, and $\Psi(w)
= f\circ u(w)$. Analyzing intersection numbers with divisors we get

\begin{equation}\label{Psi6}
\frac{\Psi(w) - c}{r} = \frac{\tau_{w_0}(w)}{\tau_{w_1}^3(w)\tau_{w_2}^3(w)} e^{i\phi}
\end{equation}

and denote by $\eta_0, \dots, \eta_5$ the zeros of
$\Psi(w)$. Again using automorphisms of the disc we can choose $w_1 =0$ and $\phi = 0$ and
also rename $w_2 = \nu$ and $w_0 = b$. Then, the holomorphic disc can be described by

\begin{equation} \label{disc6fam}
z_1(w) = w \tau_{\nu}(w) \ , \ z_2(w) = e^{-i\theta} h(w)
\prod_{j \notin I}\tau_j(w) \ , \ z_3(w) = e^{i\theta} h(w)\prod_{j \in I}\tau_j(w)
\end{equation}

where $h(w) = \left(\frac{\Psi(w)w^3\tau_{\nu}^3(w)}{\prod\tau_j(w)}\right)^{1/2}$,
and $I \subset \{0, 1, 2, 3, 4, 5\}$. Recall that $y = z_1$, $z = z_2$, $\xi 
= z_3$ and $\tau_j = \tau_{\eta_j}$.

In the same way as in the previous section, we get a pair of equations

\begin{equation}\label{cond61}
\frac{z_3(0)}{z^2_2(0)} = -\frac{1-t}{t} = \frac{ e^{3i\theta} \prod_{j \in I}\tau_j(0)}{ h(0)\prod_{j \notin I}
\tau^2_j(0)} = \frac{ e^{3i\theta} (-1)^{|I|}\prod_{j \in I}\eta_j}{ h(0)\prod_{j \notin I}\eta^2_j} 
\end{equation}

\begin{equation} \label{cond62}
\frac{z_3(\nu)}{z^2_2(\nu)} = -\frac{1-t}{t} = \frac{ e^{3i\theta} \prod_{j \in I}\tau_j(\nu)}{ h(\nu)
\prod_{j \notin I}\tau^2_j(\nu)} = \frac{ e^{3i\theta} \prod_{j \in I}q_j}{ h(\nu)\prod_{j \notin I}q^2_j} 
\end{equation}

where we write $q_j = \tau_j(\nu)$. Again we want to understand the asymptotic 
of the parameters $b$, $\nu$ and $\eta_j$ as $t \to 0$.

\begin{figure}[htb]
\begin{center}
\scalebox{0.5}{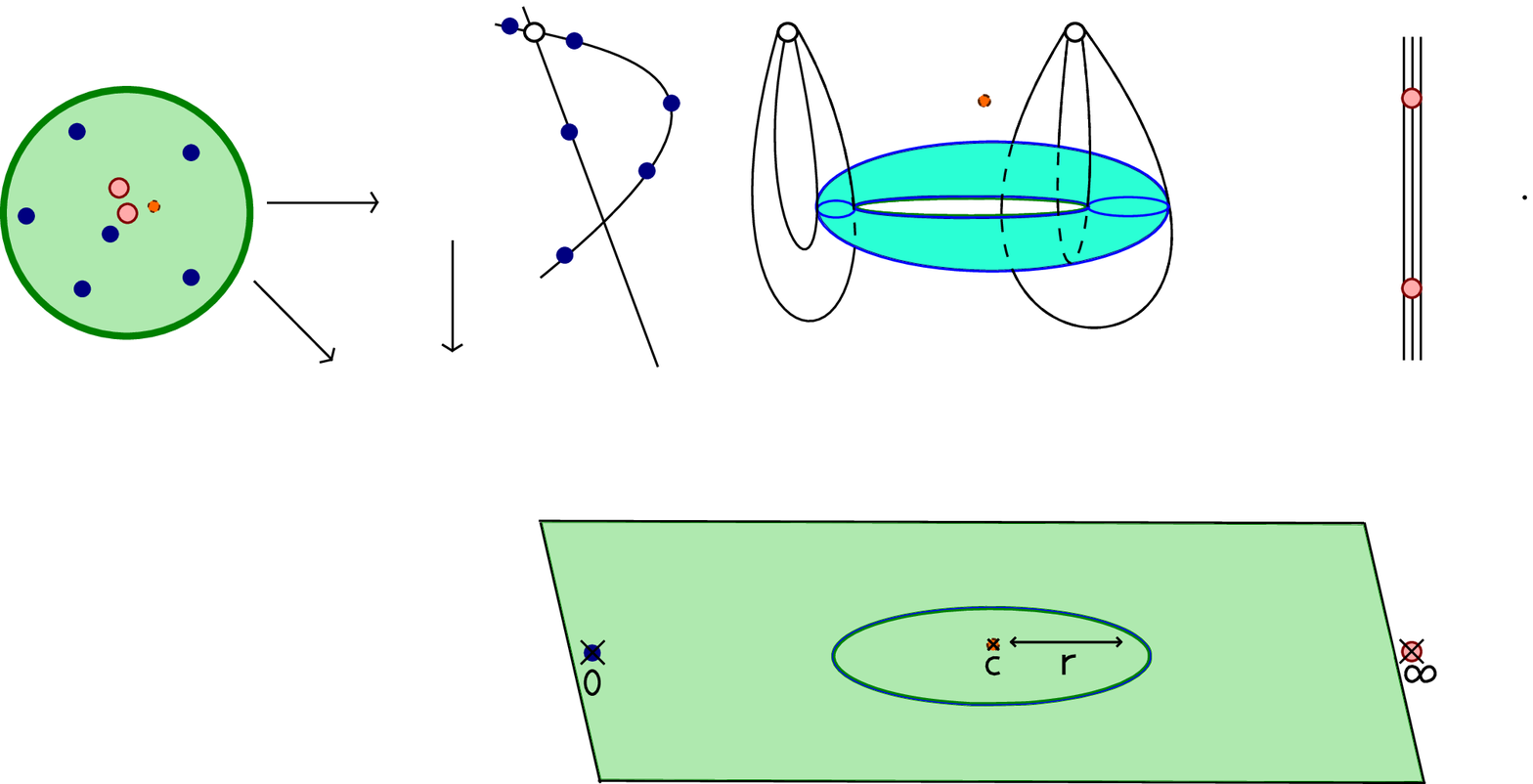}
 \end{center} 
\caption{Picture of $\Psi(w) = f \circ u(w) = \frac{z(w)\xi(w)}{y^3(w)} = \frac{z_2(w)z_3(w)}{z_1^3(w)}$
 (for the case $k = -1$; $|I| = 4$, i.e., $u$ intersects the conic
$D_2$ in five points). $\{\eta_0, \cdots, \eta_5\} = \Psi^{-1}(0)$, $\{0,\nu\}= \Psi^{-1}(\infty)$
 and $\{b\} = \Psi^{-1}(c)$
 behave as in Lemma \ref{asymp} for small $t$. Recall that the divisor $D_3$ 
is the closure of $f^{-1}(c) \ni u(b)$.} 
\label{fig:Thm6fam}
\end{figure} 

\begin{lemma} \label{asymp}
  $h(0)$ is bounded away from 0 and, after possibly relabeling the $\eta_j$'s,
  the following asymptotics hold as $t \to 0$: $$b = \ot, \ \nu = \ot, \ \eta_0
  = \ot \ \text{and for} \ j \neq 0, \ \eta_j \to
  -\frac{r}{c}^{\frac{1}{5}}e^{\frac{2\pi i}{5}j}.$$ 
  \end{lemma}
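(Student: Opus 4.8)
\section*{Proof proposal for Lemma \ref{asymp}}

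The plan is to follow the strategy of Lemma~\ref{h0a}, with the extra order‑three pole of $\Psi$ at $w=\nu$ accounting for one additional small parameter. First I would clear denominators in~\eqref{Psi6}: writing $\Psi(w)=c+r\,\tau_b(w)/(w^3\tau_\nu^3(w))$ and putting everything over the common denominator $(1-\bar b w)\,w^3(w-\nu)^3$, the numerator
$$
  \Xi(w)\;:=\;c\,(1-\bar b w)\,w^3(w-\nu)^3\;+\;r\,(w-b)(1-\bar\nu w)^3
$$
is a polynomial of degree $7$ with leading coefficient $-c\bar b$, and $\Psi=\Xi(w)/\bigl((1-\bar b w)w^3(w-\nu)^3\bigr)$, so the roots of $\Xi$ are exactly the seven zeros of $\Psi$. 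Since $\Psi(\del\D)$ is the circle of radius $r<c$ about $c$, which does not encircle $0$, the argument principle gives that $\Psi$ has equally many zeros and poles in $\D$; the poles in $\D$ are $0$ and $\nu$ (triple each), so the six zeros in $\D$ are precisely $\eta_0,\dots,\eta_5$ and the remaining root $\zeta$ of $\Xi$ satisfies $|\zeta|>1$. Comparing the coefficients of $w^7$, $w^6$, $w^0$ in $\Xi$ yields the Vieta relations $|\zeta|\,\bigl|\prod_j\eta_j\bigr|=r/c$ and $\bar b\,(\zeta+\sum_j\eta_j)=1+3\bar b\nu$.

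Second, I would control $h$ by the maximum principle rather than by hand. The function $h$ is holomorphic and nonvanishing on $\D$: the simple zeros of $\Psi$ at the $\eta_j$ cancel the zeros of $\prod_j\tau_j$, and the triple zeros of $z_1^3=w^3\tau_\nu^3$ at $0,\nu$ cancel the triple poles of $\Psi$ there. On $\del\D$ one has $|z_1|=1$ and $|z_2|=|z_3|=|h|$, hence $|h|^2=|z_2z_3/z_1^3|=|\Psi|\in[c-r,\,c+r]$; applying the maximum principle to $h$ and to $1/h$ gives $\sqrt{c-r}\le|h|\le\sqrt{c+r}$ on $\overline{\D}$, so $h(0)$ and $h(\nu)$ are bounded away from $0$ uniformly in $t$. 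Feeding this into the intersection constraints~\eqref{cond61} and~\eqref{cond62}, whose left‑hand sides have modulus of order $1/t$, and using $|\eta_j|,|q_j|\le 1$ (with $q_j=\tau_j(\nu)$), yields $\prod_{j\notin I}\eta_j=\ot$ and $\prod_{j\notin I}q_j=\ot$.

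The asymptotics then follow. From $\prod_{j\notin I}\eta_j=\ot$ we get $p:=\prod_j\eta_j=\ot$, so $|\zeta|=(r/c)/|p|\to\infty$; the Vieta sum relation then forces $|b|\le 10/|\zeta|=\ot$, hence $b=\ot$ and $\bar b\zeta=1+\ot$. For $\nu=\ot$ I would argue by contradiction: if $\nu\to\nu^\infty\ne 0$ along a subsequence, then $b\to 0$ makes $\Xi$ converge to $\Xi^\infty(w)=cw^3(w-\nu^\infty)^3+rw(1-\bar\nu^\infty w)^3$, a polynomial of degree $6$ with $\Xi^\infty(\nu^\infty)=r(1-|\nu^\infty|^2)^3\ne 0$, so $\nu^\infty$ is not a limit of roots of $\Xi$; but $\prod_{j\notin I}q_j\to 0$ with each $q_j\to\tau_j(\nu^\infty)$ forces some $\eta_{j''}\to\nu^\infty$ with $j''\notin I$ and $j''\ne 0$, a contradiction. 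Hence $\nu\to 0$, and with $b,\nu=\ot$ the polynomial degenerates: $\Xi(w)=w(cw^5+r)+\ot\cdot(\text{bounded polynomial})$, whose root near $0$ is $\eta_0$ with $\eta_0\approx b=\ot$ (since $\Xi'(0)=r(1+\ot)$ and $\Xi(0)=-rb$), while the other five roots satisfy $\eta_j\to-(r/c)^{1/5}e^{2\pi i j/5}$ after relabelling. Revisiting~\eqref{cond62} with $\nu\to 0$: the factors $q_j$ for $j\ne 0$ tend to $-\eta_j\ne 0$, so $q_0=\tau_0(\nu)=\ot$, i.e.\ $\nu-\eta_0=\ot$, which together with $\eta_0=\ot$ gives $\nu=\ot$; this also shows $0\notin I$, since $\eta_0$ is the unique $\eta_j$ tending to $0$.

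The step I expect to be the main obstacle is the simultaneous bookkeeping of the two parameters $b,\nu$ running off to the ``pole at infinity'' together with the vanishing parameter $\eta_0$ --- in particular the subsequence argument isolating $\nu\to 0$ (ruling out a collision of $\nu$ with a nonzero $\eta_{j''}$) and keeping careful track of which small factor in~\eqref{cond62} absorbs the $\ot$. The maximum‑principle bound on $h$, which is in fact already available in the setting of Lemma~\ref{h0a}, disposes cleanly of what was there the most delicate estimate.
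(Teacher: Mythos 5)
Your setup --- the degree-$7$ polynomial $\Xi$ with leading coefficient $-c\bar b$, the Vieta relations, and especially the maximum-principle bound $\sqrt{c-r}\le|h|\le\sqrt{c+r}$ on $\overline{\D}$ --- is correct, and the two-sided bound on $h$ is a genuine improvement over the paper, which only proves the lower bound $|h(0)|^2\ge c/10$ by hand from the Vieta relations and needs a separate ad hoc argument (Claim \ref{hnubounded}) to control $h(\nu)$. Granting $\nu\to 0$, your derivations of $b=\ot$, $\bar b\zeta\to 1$, the limits of the $\eta_j$, and $0\notin I$ all go through.

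The gap is in your contradiction argument for $\nu\to 0$. You assume $\nu\to\nu^\infty\ne 0$ along a subsequence and assert $\Xi^\infty(\nu^\infty)=r\nu^\infty(1-|\nu^\infty|^2)^3\ne 0$. But $\nu$ ranges over the open disc, so its subsequential limits lie in the \emph{closed} disc, and when $|\nu^\infty|=1$ this quantity vanishes: $\nu^\infty$ is then a triple root of $\Xi^\infty$ on $\del\D$, roots $\eta_j$ of $\Xi$ can perfectly well accumulate there, and no contradiction results. This boundary case is precisely the content of the paper's Claim \ref{nuto1}, which costs a page of delicate estimates (tracking the three roots colliding with $\nu_0$ via the quantities $\epsilon_j$ and the relation $(1-|\nu|^2)^3=K\epsilon_1\epsilon_2\epsilon_3$), so it is the hardest part of the lemma and cannot be waved away. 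Fortunately your own maximum principle closes the gap far more cheaply than either route: from $h^2(\nu)=r\,\tau_b(\nu)/\prod_j q_j$, the \emph{upper} bound $|h(\nu)|^2\le c+r$, and your already-established $\prod_j q_j=\ot$, you get $|\tau_b(\nu)|\le\frac{c+r}{r}\bigl|\prod_j q_j\bigr|=\ot$, hence $|\nu-b|=|\tau_b(\nu)|\,|1-\bar b\nu|\le 2|\tau_b(\nu)|=\ot$ and $\nu=b+\ot=\ot$ directly, with no case analysis. I would replace the $\Xi^\infty$ argument by this one; as written, the step ``$\Xi^\infty(\nu^\infty)\ne 0$'' is false in exactly the case that needs work.
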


Using \eqref{cond61} and the above Lemma \ref{asymp} we see that $0 \notin I$.
Therefore there is no holomorphic disc representing the class $2H - 5\beta +
k\alpha$, for $k = -2$. To prove the existence of such discs for $-1 \leq k \leq
4$ with the right count, we look at the limit $t=0$, i.e., in $\CP(1:1:4)$.
These six families of discs are expected to `survive' in the limit and not pass
through the orbifold point of $\CP(1,1,4)$; see Figure \ref{fig10fam}. We show
that this is the case for the limits of the above families of holomorphic discs 
and, assuming regularity (proven in section \ref{reg}), we
argue that the disc counts are the same for $X_0$ and $X_t$ for a sufficiently
small $t$. Lemma \ref{asymp} will
allow us to prove:

\begin{proposition} \label{limDiscs}
For all $I \subset \{1,2,3,4,5\}$ and $\theta \in \R$, the discs described by \eqref{disc6fam}
uniformly converge to discs contained in the complement of the orbifold point of 
$\CP(1,1,4)$, described in the coordinates $(\x : \y : \z)$ by

\begin{equation} \label{discP114}
  \x(w) = e^{-i\theta}\sqrt{rw^5 + c} \underset{j\neq0}{\prod_{j\notin I}}\tau_j(w); \ \y(w) = w; \ \z(w) =  
  e^{i\theta}\sqrt{rw^5 + c}\prod_{j\in I }\tau_j(w),
\end{equation}

where $\tau_j = \tau_{ -\frac{r}{c}^{\frac{1}{5}}e^{\frac{2\pi i}{5}j}}$. For
each $I$ and $\theta$, we denote this disc by $u_{I}^{\theta}$.
Moreover, the algebraic count of discs in the relative class $[u_I^\theta]$, for $|I|
= 0, 1, 2, 3 , 4, 5$ is
equal to $1, 5 ,10 ,10 ,5 ,1$, up to sign, respectively. 
\end{proposition}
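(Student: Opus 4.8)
The plan is to split the proof into the uniform-convergence statement and the disc count.

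For the convergence I would substitute the asymptotics of Lemma~\ref{asymp} directly into the normal form \eqref{disc6fam}. Since $b,\nu,\eta_0\to 0$ and $\eta_j\to\rho_j:=-(r/c)^{1/5}e^{2\pi i j/5}$ for $j\neq 0$, each Blaschke factor converges uniformly on $\overline{\D}$ (the zeros stay in the fixed compact set $\{|w|\le (r/c)^{1/5}\}\subset\D$, because $r<c$), and from $\tfrac{\Psi-c}{r}=\tfrac{\tau_b}{w^3\tau_\nu^3}$ one gets
$h^2=\dfrac{cw^3\tau_\nu^3+r\tau_b}{\prod_j\tau_{\eta_j}}\longrightarrow\dfrac{cw^5+r}{\prod_{j\neq0}\tau_{\rho_j}}=rw^5+c$,
using $\prod_{j\neq0}\tau_{\rho_j}(w)=\dfrac{cw^5+r}{rw^5+c}$. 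Since $rw^5+c$ has no zero on $\overline{\D}$ (again $r<c$), the square roots can be chosen consistently so that $h\to\sqrt{rw^5+c}$ uniformly. Rewriting \eqref{disc6fam} in the $\CP(1,1,4)$ coordinates $(\x:\y:\z)$ — using $\y=z_1^{1/2}$ (legitimate in the limit, where $z_1=w\tau_\nu\to w^2$), $\x=z_2/\y$, $\z=z_3$, and noting that the limiting $\tau_{\eta_0}$-factor cancels the extra factor of $w$ — produces exactly \eqref{discP114}; the constraints \eqref{cond61}--\eqref{cond62} are precisely what keeps $z_0=(tz_3+(1-t)z_2^2)/z_1$ holomorphic along the deformation. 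Finally, the limit disc misses the orbifold point $\{\x=\y=0\}$: $\y(w)=w$ vanishes only at $w=0$, and $\x(0)=e^{-i\theta}\sqrt{c}\prod_{j\notin I,\,j\neq0}(-\rho_j)\neq 0$ since each $\rho_j\neq0$ and $h_0(0)^2=c$.

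For the count, fix $m=|I|$; the relative class $[u_I^\theta]$ depends only on $m$, call it $\gamma_m$, with $\del\gamma_m=(4-m)\del\alpha-5\del\beta$ on the fibre. First I would re-run the intersection-number bookkeeping of subsection~\ref{HomClass} and Theorem~\ref{3fam} at $t=0$ to conclude that every Maslov-index-$2$ holomorphic disc bounded by the limiting torus in the class $\gamma_m$ is one of the $u_I^\theta$ with $|I|=m$: at $t=0$, equations \eqref{cond61}--\eqref{cond62} degenerate to $b=\nu=\eta_0=0$, $\eta_j=\rho_j$, leaving only the discrete datum $I$ and the angle $\theta$, and $\theta$ is itself the orbit parameter of the circle action (whose orbits represent $\del\alpha$), so $u_I^{\theta+\delta}$ is just the circle-action translate of $u_I^\theta$. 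A direct computation using $\tau_\rho(\omega w)=\omega\,\tau_{\omega^{-1}\rho}(w)$ and the weight-$(1,1,4)$ rescaling shows that the disc automorphism $w\mapsto e^{2\pi i/5}w$ sends $u_I^\theta$ to $u_{I-1}^{\theta+(m+1)2\pi/5}$, and that this is the only identification among the $u_I^\theta$. Grouping the $\binom5m$ subsets into $\Z/5$-orbits, each orbit contributes one circle to $\mathcal M(\gamma_m)$, and $\mathrm{ev}_\ast$ of the corresponding component of $\mathcal M^{\mathrm{mk}}(\gamma_m)$ has degree $\pm 5$ (the $\del\beta$-winding number of the boundary loop) times the fraction of the $\theta$-circle that survives the identification — which is $1$ for orbits of size $5$ and $\tfrac15$ for the two fixed subsets $\emptyset$ and $\{1,\dots,5\}$. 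Summing over orbits gives $\pm\binom5m$, i.e. $1,5,10,10,5,1$ for $m=0,\dots,5$.

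The main obstacle is the count, and within it the reparametrization bookkeeping: correctly pinning down the $\Z/5$ action on the $u_I^\theta$ together with its shift of $\theta$, and tracking orientations so that the contributions of the distinct $\Z/5$-orbits add rather than cancel. The convergence part is by contrast essentially mechanical once Lemma~\ref{asymp} is available; the real content of that lemma is that it fixes \emph{all} the continuous parameters in the limit, making the limiting moduli problem finite-dimensional, and this is also exactly what will later let one conclude — via the regularity of section~\ref{reg} and Lemma~\ref{x0xt} — that no Maslov-$2$ disc escapes to the orbifold point or bubbles as $t\to0$, so that the counts for $X_0$ and for $X_t$ with $t>0$ small agree.
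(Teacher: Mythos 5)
Your proof is correct and takes essentially the same route as the paper: substitute the asymptotics of Lemma \ref{asymp} into \eqref{disc6fam} to get uniform convergence of $\Psi$, the Blaschke products and $h^2\to rw^5+c$, rewrite in the weighted coordinates, check $\x(0)\neq 0$ to avoid the orbifold point, and then obtain the counts $\binom{5}{|I|}$ by quotienting the residual $\Z/5\Z$ reparametrization action (your formula $u_I^\theta\mapsto u_{I-1}^{\theta+(|I|+1)2\pi/5}$ agrees with the paper's table) and using that each $\theta$-circle evaluates with degree $\pm 5$ onto the torus. The orientation caveat you flag is handled, as in the paper, only later in the dedicated orientation subsection.
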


\begin{proof} (\emph{of Lemma} \ref{asymp})
  
Consider the polynomial: 
\begin{eqnarray} 
 \Xi(w) & = & w^3(1 - \bar{b}w)(w - \nu)^3 \Psi(w) \nonumber \\ 
           & = & r(w - b)(1 - \bar{\nu} w)^3 + cw^3(1 - \bar{b}w)(w - \nu)^3 \label{poly6fam} \\ 
           & = & -c\bar{b}(w - \zeta)\prod_j (w - \eta_j), \label{poly6fam2}
\end{eqnarray}

where $|\zeta| > 1$, and write

$$\prod_j (w - \eta_j) = w^6 -\sigma_1w^5 + \sigma_2w^4 - \sigma_3w^3 +
\sigma_4w^2 - \sigma_5w + p.$$

Comparing the coefficients of 1 and $w^6$, we get:

\begin{equation} \label{eq61}
  p\zeta = -\frac{rb}{c\bar{b}},
\end{equation}

\begin{equation} \label{eq62}
1 + 3\nu \bar{b} = \bar{b} (\zeta + \sigma_1).
\end{equation}

For the following we recall that $h^2(w) =\frac{\Psi(w)w^3\tau_{\nu}^3(w)}{\prod\tau_j(w)}$,
in particular, by \eqref{Psi6}, $h^2(0) = -\frac{rb}{p}$ and $h^2(\nu) = \frac{r\tau_b(\nu)}{\prod_j q_j }$.

By \eqref{eq61}, \eqref{eq62}, and noting that $|\sigma_1 -3\nu| \leq 
|\sigma_1| + 3|\nu| \leq 9 \leq 9|\zeta|$, we get 
that:

$$1 = |b||\zeta + \sigma_1 -3\nu| \leq 10|\bar{b}||\zeta| = 10\frac{|h(0)|^2}{c}.$$

So $|h(0)|^2 \geq \frac{c}{10}$, proving the first statement of Lemma
\ref{asymp}. We see from (\ref{cond61}) that $\prod_{j \notin I}\eta_j^2 = O(t)$
and hence $p = O(t^{\frac{1}{2}}) \to 0$ as $t \to 0$. Also $\zeta \to \infty$,
$b = \ot \to 0$ and $\bar{b}\zeta = 1 + O(t^{\frac{1}{2}}) \to
1$.

Now we basically need to show that $\nu = \ot$, since the asymptotic behavior of
the $\eta_j$'s described in Lemma \ref{asymp} follows from \eqref{poly6fam} and $\nu = \ot$, 
after we separete the terms that are $\ot$, more precisely, $\Xi(w) = w(r + cw^5) + w^7
O(t^{1/2}) + O(t^{1/2})$. So, let's look to $\Xi(\nu)$ using \eqref{poly6fam} 
and \eqref{poly6fam2} (recall that $q_j = \tau_j(\nu) = (\nu - \eta_j)/(1 - \bar{\eta_j}\nu)$: 

\begin{eqnarray} \label{eqnu}
\Xi(\nu) & = & r(1- |\nu|^2)^3(\nu - b) \hspace{.1cm} \nonumber \\ %= \hspace{.1cm}  r(1- |\nu|^2)^3(1 - \nu\bar{b})\tau_b(\nu)
& = & -c\bar{b}(\nu - \zeta)
\prod_j (\nu - \eta_j)  \hspace{.1cm} =  \hspace{.1cm} -c\bar{b}(\nu - \zeta)
\prod_j q_j (1 - \bar{\eta_j}\nu)
\end{eqnarray}

\begin{claim}\label{hnubounded}
  $h(\nu)$ is bounded away from $0$, and by \eqref{cond62}, $\prod_{j \notin I} q^2_j = 
  O(t)$, and therefore $\prod_{j} q_j = O(t^{1/2})$.
\end{claim}

\begin{proof}
  
Using that $\bar{b}\zeta = 1 + O(t^{1/2})$ and $\bar{b}\nu = O(t^{1/2})$ we see that
$$h^2(\nu) = \frac{r\tau_b(\nu)}{\prod_j q_j } = \frac{r(\nu - b)}{\prod_j q_j }\frac{1}{1
-\nu\bar{b}} = \frac{c\prod_j (1 - \bar{\eta_j}\nu)}{(1- |\nu|^2)^3(1 - \nu\bar{b})}(1 +
O(t^{1/2}))$$ 

is bounded away from zero. Indeed, if $\frac{c\prod_j (1 - \bar{\eta_j}\nu)}{(1-
|\nu|^2)^3(1 - \nu\bar{b})}$ approaches zero, then, since $c\bar{b}(\nu - \zeta)
\prod_j q_j$ is bounded, we get by \eqref{eqnu} that $b - \nu \to 0$ and hence
$\nu \to 0$. But in this case we see that $\prod_j (1 - \bar{\eta_j}\nu) \to 1$, not $0$.

Then we see from \eqref{cond62} that $\prod_{j \notin I} q^2_j = O(t)$, hence
$\prod_{j} q_j = O(t^{1/2})$. 
\end{proof}

We want to show that $b - \nu \to 0$ and hence $\nu \to 0$. For that to follow
from \eqref{eqnu} and Claim \ref{hnubounded}, we need to see that $|\nu|$ 
does not approach $1$ as $t \to 0$. 

\begin{claim} \label{nuto1}
  As $t \to 0$, $|\nu|$ is bounded by a constant strictly smaller than $1$.
\end{claim}

\begin{proof}
Let's look again to $\Xi(w)$, knowing 
that $b = O(t^{1/2})$:

\begin{equation}\label{eqnuto1}
  \Xi(w) = rw(1 - \bar{\nu}w)^3 + cw^3(w - \nu)^3 + w^7 O(t^{1/2}) + O(t^{1/2})
\end{equation}

Then we see that the roots $\eta_j$ lying inside the disc are very close to the 
solutions of $rw(1 - \bar{\nu}w)^3 + cw^3(w - \nu)^3 = 0$, for $t$ very small. 
The non-zero solutions satisfy:

\begin{equation}\label{eqnucontr}
  |w^2 \tau_{\nu}(w)^3| = \frac{r}{c} < 1.
\end{equation}

Now, assume there is a sequence of values of $t$ tending to $0$ and holomorphic
discs such that $\nu \to \nu_0$, $|\nu_0| =1$. From equation \eqref{eqnuto1}, we
conclude that three of the roots $\eta_j$ of $\Xi(w)$ converge to $\nu_0$, say
$\eta_1$, $\eta_2$, $\eta_3$, one converges to $0$, say $\eta_0$, and the other
two solutions, $\eta_4$ and $\eta_5$ converge to square roots of
$\frac{r}{c\nu^3_0}$, by \eqref{eqnucontr} (for values of $w$ lying outside a
neighborhood of $\nu_0$, $\tau_{\nu}(w) = \frac{w - \nu}{1 - \bar{\nu}w} =
-\nu\frac{1 - \nu^{-1}w}{1 - \bar{\nu}w} \to -\nu_0$). 
  
Let $\epsilon_j$ be such that $\eta_j = \nu - \epsilon_j$, for $j = 1 ,2 ,3$.
By \eqref{eqnu}, recalling that $b = O(t^{1/2})$, $\prod_{j} q_j = O(t^{1/2})$
and $\bar{b}\zeta = 1 + O(t^{1/2})$ we see that $(1 - |\nu|^2)^3 = 
K\epsilon_1\epsilon_2\epsilon_3$, where $$K = \frac{-c\bar{b}(\nu - \zeta)(\nu -\eta_0)}{r(\nu - b)}
(\nu -\eta_4)(\nu -\eta_5) \to \frac{c}{r}\nu_0
 - \frac{1}{\nu^4_0}$$ is bounded above and below.

Since $\prod_{j} q_j = O(t^{1/2})$, some $q_j \to 0$, for $j = 1 , 2, 3$. 
Up to relabeling, assume $q_1 \to 0$. We have that, for $j = 1 , 2, 3$:

\begin{equation}
  q_j = \tau_j(\nu) = \frac{\nu - \eta_j}{1 - \bar{\eta_j}\nu} =
   \frac{\epsilon_j}{1 - |\nu|^2 + \nu\bar{\epsilon_j}} .
\end{equation}
 
So, 

\begin{equation}\label{eps1}
 \frac{1}{q_1} = \left(\frac{1 - |\nu|^2}{\epsilon_1}\right) + 
 \nu\frac{\bar{\epsilon_1}}{\epsilon_1} = 
 \left(\frac{K\epsilon_2\epsilon_3}{\epsilon_1^{2}}\right)^{\frac{1}{3}} + 
 \nu\frac{\bar{\epsilon_1}}{\epsilon_1} \to \infty. 
\end{equation}

Passing to a subsequence if needed, we may assume that $|\epsilon_1| \leq
|\epsilon_2| \leq |\epsilon_3|$. So by \eqref{eps1}, $\frac{\epsilon_1}{\epsilon_3} \to 0$.
Therefore,  
  
 \begin{equation}\label{eps3}
 \left|\frac{1}{q_3}\right| = \left|\left(\frac{K\epsilon_1\epsilon_2}{\epsilon_3^{2}}\right)^{\frac{1}{3}} + 
 \nu\frac{\bar{\epsilon_3}}{\epsilon_3}\right| \to 
 \left|\nu_0\right| = 1. 
\end{equation} 
  
But, by \eqref{eqnucontr} and $|\tau_{\eta_j}(\nu)| = |\tau_{\nu}(\eta_j)|$,
$|\eta_3|^2|q_3|^3 \to \frac{r}{c}$, which gives a contradiction since $|\eta_3|
\to |\nu_0| = 1$ and $|q_3| \to 1$.
\end{proof}  

Using Claim \ref{nuto1}, equation \eqref{eqnu}, $b = O(t^{1/2})$ and
$\prod_{j} q_j = O(t^{1/2})$, we see that $\nu = O(t^{1/2})$.

 So $\Xi(w) = w(r + cw^5) + w^7 O(t^{1/2}) + O(t^{1/2})$ and, assuming $|\eta_0|
 \leq |\eta_j| \ \forall j $, we get that for $ j \neq 0$, $\eta_j \to
 -(\frac{r}{c})^{\frac{1}{5}}e^{\frac{j2 \pi i}{5}}$, while $\eta_0
 = \ot$. This finishes the proof of Lemma \ref{asymp}.
 \end{proof}

 %$q_j$ is close to $\eta_j$ and 

\begin{proof} (\emph{of Proposition} \ref{limDiscs})
 
 In the limit $t=0$, taking into account that $b = \ot$, $\nu = \ot$, $\eta_0 =
 \ot$ and $\eta_j \to -(\frac{r}{c})^{\frac{1}{5}}e^{\frac{j2 \pi i}{5}}$, we
 have that $\Psi(w)$, $\prod_{j=1}^5\tau_j(w)$, and $h^2(w)$, thought as maps
 form $\D$ to $\CP^1$, uniformly converge to

\begin{equation*}
 \Psi(w) = \frac{r + cw^5}{w^5}; \ \ \prod_{j=1}^5\tau_j(w) = \frac{cw^5 + r}{rw^5 + c}; 
  \ \ h^2(w) = rw^5 + c.
\end{equation*}

So, for instance, in the case $I = \{1,2,3,4,5\}$ we get that $z_1(w)$ , 
$z_2(w)$, $z_3(w)$ uniformly converge to

\begin{equation*}
  z_1(w) = w^2; \ z_2(w) = e^{-i\theta}w\sqrt{rw^5 + c}; \ z_3(w) =  
  e^{i\theta}\frac{cw^5 + r}{rw^5 + c}\sqrt{rw^5 + c};\\
\end{equation*}
$$
  z_0(w) = \frac{z_2^2(w)}{z_1(w)} 
  = e^{-2i\theta}(rw^5 + c).
  $$
  
Hence using the $(\x:\y:\z)$ coordinates of $\CP(1,1,4)$ we have \vspace{-0.1cm}

\begin{equation*}
  \x(w) = e^{-i\theta}\sqrt{rw^5 + c}; \ \ \y(w) = w; \ \ \z(w) =  
  e^{i\theta}\sqrt{rw^5 + c}\frac{cw^5 + r}{rw^5 + c}.
\end{equation*}

In general, for each $I$, the holomorphic discs uniformly converge to discs
$u^{\theta}_{I}$ given by
\vspace{-0.1cm}

\begin{equation*} 
  \x(w) = e^{-i\theta}\sqrt{rw^5 + c} \underset{j\neq0}{\prod_{j\notin I}}\tau_j(w); \ \y(w) = w; \ \z(w) =  
  e^{i\theta}\sqrt{rw^5 + c}\prod_{j\in I }\tau_j(w).
\end{equation*}

Note that none of these discs pass through the singular point $(0:0:1)$ of 
$\CP(1,1,4)$. 

As before, we have extra automorphisms of the disc, given by $w \mapsto e^{ik\frac{2\pi}{5}}w$
 that don't change \eqref{Psi6} and we need to quotient out by this action
 of $\Z/5\Z$. We get that $k \in \Z_5$ acts on $u^{\theta}_{I}$ as follows: 
 
 \begin{center}
 \begin{tabular}{|c||c|c|c|c|c|c|}
   \hline
   $|I|$ &  0 & 1 & 2 & 3 & 4 & 5 \\
\hline
   $u^{\theta}_{I}\mapsto$
    & $u^{\theta +k\frac{2\pi}{5}}_{\emptyset}$
    & $u^{\theta +2k\frac{2\pi}{5}}_{I -k}$
    & $u^{\theta +3k\frac{2\pi}{5}}_{I -k}$
    & $u^{\theta +4k\frac{2\pi}{5}}_{I -k}$
    & $u^{\theta}_{I -k}$
    & $u^{\theta +k\frac{2\pi}{5}}_{I}$\\
   \hline
 \end{tabular} 
 \end{center}\vspace{-0.1cm}
 where $I - k = \{l \in \{1, \dots, 5\}: l \equiv j -k \mod 5, \ j \in I\}$.

We also note that, for fixed $I$, varying $\theta \in [0,2\pi]$ and looking at
the boundary of the discs, the 2-cycle swept by $\del u_I^{\theta}$ is $[\del
u_I^{\theta}] = \pm 5[T^c_{r,0}]$. Therefore, in the case $|I| = 0$ or $5$, after
quotienting by $\Z/5\Z$, the algebraic count is $\pm 1$. In the cases $|I| = 1$
or $4$, the action of $\Z/5\Z$ permutes the indices, so the moduli space of
holomorphic discs is given by $\{u^{\theta}_I; \theta \in [0,2\pi]\}$, where $I
= \{1\}$, respectively $I= \{2,3,4,5\}$, hence the algebraic count is $\pm 5$.
Similarly for $|I| = 2$ or $3$, the action of $\Z/5\Z$ permutes the indices, so
the moduli space of holomorphic discs is given by $\{u^{\theta}_I; \theta \in
[0,2\pi]\}\cup\{u^{\theta}_{I'}; \theta \in [0,2\pi]\}$, where $I = \{1,2\}$ and $I'
=\{1,3\}$, respectively $I= \{3,4,5\}$, $I'= \{2,4,5\}$, hence the algebraic
count is $\pm 10$.

\end{proof}

\begin{lemma}\label{x0xt}
Assuming regularity, each of the above families of discs in $X_0$
has a corresponding family in $X_t$, for a sufficiently small $t$.  
\end{lemma}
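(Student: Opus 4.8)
The plan is to prove this by a parametrized implicit-function-theorem (continuation) argument for the family $\{X_t\}_{t\ge 0}$, exploiting that all of the discs in question stay in a fixed compact set away from the orbifold point of $X_0\cong\CP(1,1,4)$.

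First I would fix a compact set $N\subset\CP(1,1,4)\setminus\{(0:0:1)\}$ containing $T^c_{r,0}$ together with the images of all the discs $u^\theta_I$ of Proposition \ref{limDiscs}. The total space $\mathcal{X}=\{([z_0:z_1:z_2:z_3],t):z_0z_1-(1-t)z_2^2-tz_3=0\}$ is smooth over a neighbourhood of $N\times\{0\}$, so for $t$ small one obtains a smooth family of embeddings $\phi_t:N\hookrightarrow X_t$ with $\phi_0=\mathrm{id}_N$ (for instance by flowing along a lift of $\partial_t$). Pulling back the integrable complex structures gives a smooth family $J_t=\phi_t^*J_{X_t}$ on $N$ with $J_0=J_{\CP(1,1,4)}$. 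Since $T^c_{r,0}$ is cut out by the $t$-independent equations $|z_2z_3/z_1^3-c|=r$ and $|z_2/z_1|=|z_3/z_1^2|$, the tori assemble into a submanifold of $\mathcal{X}$, so $L_t:=\phi_t^{-1}(T^c_{r,0}\subset X_t)$ is a smooth family of tori in $N$, Lagrangian for the family of K\"ahler forms $\phi_t^*\omega$, with $L_0=T^c_{r,0}$.

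Next, for each class $\gamma=2H-5\beta+k\alpha$, $-1\le k\le 4$, I would form the parametrized moduli space $\widetilde{\mathcal{M}}=\{(t,u):u\in\mathcal{M}(L_t,\gamma;J_t),\ \mathrm{im}\,u\subset N\}$, whose slice over $t=0$ is the moduli space appearing in the proof of Proposition \ref{limDiscs}---a compact $1$-manifold once residual reparametrizations are divided out, the orbifold structure of $\CP(1,1,4)$ being irrelevant since the discs avoid $(0:0:1)$. Invoking the regularity proven in section \ref{reg}, the linearized Cauchy--Riemann operator is surjective at every disc in the $t=0$ slice; as that slice is compact, the implicit function theorem realizes $\widetilde{\mathcal{M}}$ near it as a smooth manifold with boundary whose projection to the $t$-interval is a submersion. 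Hence for all sufficiently small $t>0$ the slice $\mathcal{M}(L_t,\gamma;J_t)$ is a compact $1$-manifold diffeomorphic to the $t=0$ one, giving the asserted families of discs in $X_t$, and following the evaluation map across this cobordism shows the algebraic counts are preserved.

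The main work will be checking that nothing escapes this description---that every Maslov index $2$ disc in class $\gamma$ on $X_t$ (small $t$) is already accounted for, so that the counts of Proposition \ref{limDiscs} transfer exactly rather than only as a lower bound. Here I would use Lemma \ref{asymp}, which holds for all small $t$, not just $t=0$, and constrains any such disc to the form \eqref{disc6fam} with $b,\nu,\eta_0=\ot$ and the other $\eta_j$ within $\ot$ of the fifth roots of $-r/c$; such a disc lies in $N$ and is $\ot$-close to a $t=0$ disc, so it belongs to $\widetilde{\mathcal{M}}$ and no extra components or energy escape can occur. Sphere bubbling is ruled out because $c_1(T\CP^2)>0$ would force the residual disc to have Maslov index $\le 0$, while $T^c_{r,0}$ bounds no nonconstant holomorphic disc of Maslov index $\le 0$. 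The genuine obstacle is that $X_0$ is an orbifold whereas $X_t$ is smooth for $t>0$, so the two sides must be realized as slices of one smooth Fredholm problem; this is legitimate precisely because all the relevant discs live over the smooth locus of $\mathcal{X}$, which is the reason the compact set $N$ can be chosen disjoint from $(0:0:1)$ in the first place.
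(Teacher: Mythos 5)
Your proposal is correct and is essentially the paper's argument: both deduce the persistence of the $t=0$ families from their regularity via a parametrized implicit function theorem, using that all the relevant discs stay away from the orbifold point so that the two sides sit inside one smooth Fredholm problem. The only difference is packaging --- the paper works directly in the complex $3$-fold $\X=\{z_0z_1-(1-t)z_2^2-tz_3=0\}$ with the totally real boundary condition $\Lag$, using the maximum principle on the $t$-coordinate to confine discs to the fibers $X_t$ and the holomorphic splitting $u^*T\X\cong u^*TX_0\oplus\C$ to promote regularity in $X_0$ to regularity in $\X$, whereas you trivialize the family over a compact set $N$ and run the same implicit-function-theorem argument for a family $(J_t,L_t)$ on a fixed manifold.
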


\begin{proof}
We consider the 3-dimensional 
complex hypersurface $\X$ inside $\C \times (\CP(1,1,1,2) \setminus (0:0:0:1))$ defined by the equation
\begin{equation}
  \X : z_0z_1 - (1 - t)z_2^2 - tz_3 = 0
\end{equation}
containing 
\begin{equation}
 \Lag  =  \left\{ (t,(z_0:z_1:z_2:z_3)) ; t \in \R \ \ \text{and} \ \ \left|\frac{z_2z_3}{z_1^3} - c \right| = r ; 
\left|\frac{z_2}{z_1}\right|^2 = \left|\frac{z_3}{z_1^2}\right|^2  \right\}   
\end{equation}
as a totally real submanifold. 

Then we consider $\M(\X,\Lag)$ the moduli space of Maslov index 2 holomorphic 
discs in $\X$ with boundary on $\Lag$. By applying the maximum principle to the 
projection on the first factor, we see that such holomorphic discs lie inside the fibers 
$X_t$, for $t \in \R$. Let's consider discs that stay away from the singular point in 
$X_0$, such as those computed above.

Assuming the discs above are regular in $X_0$ implies they are regular as discs
in $\X$. This follows from the splitting $u^*T\X = u^*TX_0 \oplus \C$ and the
$\bar{\del}$ operator being surjective onto 1-forms with values in $u^*TX_0$,
by the assumed regularity, and onto 1-forms with values in $\C$, by regularity
of holomorphic discs in $\C$ with boundary in $\R$. 

Hence $\M(\X,\Lag)$ is smooth near the solutions for $t = 0$ given above and the
map $\M(\X,\Lag) \rightarrow \R$, which takes a disc in the fiber $X_t$ to $t$,
is regular at 0. Therefore for a small $t$, all the Maslov index 2 holomorphic
discs in $X_0$ computed above deform to holomorphic discs in $X_t$. 
\end{proof} 

The regularity of
the discs above is proven in Theorem \ref{reg6fam}. 

The families of discs in the classes $2H -
5\beta +k\alpha$, given by $\eqref{disc6fam}$ have been shown to converge uniformly to
the corresponding ones in $\CP(1,1,4)$ given by \eqref{discP114}. Smoothness
of the moduli space $\M(\X,\Lag)$ near the families of discs given by \eqref{discP114}
in $X_0$ guarantees that each family $\{u^{\theta}_I; \theta \in
[0,2\pi]\}$ has a unique family in $X_t$ converging to it, for all $t$ sufficiently small. 
Hence the counts of Maslov index 2 holomorphic discs in the classes $2H -
5\beta +k\alpha$ for $X_t$ are the same as the ones computed in $X_0$. 
This finishes the proof of Theorem \ref{6fam}. 
\end{proof}

\subsection{Regularity}\label{reg}
In order to prove regularity, we consider the following two lemmas.

\begin{lemma}\label{lr1}
  Let $u_\theta$ be a one parameter family of Maslov index 2 holomorphic 
  discs in a K\"ahler 4 dimensional manifold $X$ with boundary on a Lagrangian $L$. Set 
  $u = u_0$ and $V = \left.\frac{\del}{\del \theta} u_\theta \right|_{\theta = 0}$
  a vector field along $u$, tangent to $TL$ along the boundary of $u$.
  
 If $V$ is nowhere tangent to $u(\D)$ and $u: \D \rightarrow X$ is an
 immersion, then $u$ is regular.
\end{lemma}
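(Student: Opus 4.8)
Lemma \ref{lr1} asserts regularity of a Maslov index 2 holomorphic disc $u$ in a Kähler surface, given a one-parameter family $u_\theta$ through it whose variation field $V$ is nowhere tangent to the image and given that $u$ is an immersion. Let me plan the proof.

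The plan is to use the standard fact that for a disc of Maslov index $2$ in a $4$-manifold, the relevant $\bar\partial$-operator on sections of $u^*TX$ with totally real boundary condition $u|_{\partial\D}^*TL$ has Fredholm index $= n + \mu(u) = 2 + 2 = 4$ (where $n = \dim_{\C} X = 2$), so the expected dimension of the moduli space is $n + \mu - 3 = 1$ after quotienting by $\mathrm{Aut}(\D)$, or index $4$ before quotienting. Regularity means the linearization $D_u\bar\partial$ is surjective, equivalently (since the index is $4$) that $\dim \ker D_u\bar\partial = 4$, equivalently $\mathrm{coker}\, D_u\bar\partial = 0$. Since $u$ is an immersion, we get a holomorphic splitting $u^*TX \cong T\D \oplus N$, where $N$ is the normal line bundle, as bundles over $\D$ with totally real boundary conditions $u^*TL = T(\partial\D) \oplus (N\cap TL)$ along $\partial\D$. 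The operator $D_u\bar\partial$ respects a filtration compatible with this splitting up to a compact/lower-order term; the standard argument (Hofer–Lizan–Sikorav in the closed case, its boundary analogue here) is that the operator is \emph{complex-linear} up to zeroth order, and on a Riemann surface such an operator can be conjugated to respect the holomorphic splitting, so surjectivity reduces to surjectivity of the induced operators on $T\D$ and on $N$.

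First I would record the Riemann–Roch computation: the tangent summand $T\D$ with its boundary condition contributes a $\bar\partial$-operator of index $3$ (Maslov $2$, which is automatically surjective — its kernel is the $3$-dimensional space of infinitesimal automorphisms of the disc), and the normal summand $N$ with boundary condition $N\cap TL$ must then have index $1$, i.e. Maslov index $0$. A degree-zero (Maslov $0$) $\bar\partial$-operator on a line bundle over the disc with totally real boundary condition is surjective iff it has no cokernel; by Riemann–Roch its kernel has real dimension $1$ and cokernel $0$ — provided we can rule out "jumps." The point where $V$ enters: $V$ is a holomorphic section of $u^*TX$ (being the variation of a family of holomorphic discs, it is in $\ker D_u\bar\partial$ if the family consists of actual holomorphic discs — more precisely, its $(0,1)$-part vanishes), it satisfies the boundary condition (tangent to $TL$ along $\partial u$), and by hypothesis it is \emph{nowhere tangent to $u(\D)$}, hence its projection to $N$ is a nowhere-vanishing holomorphic section of $N$ with the totally real boundary condition. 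I would then invoke the automatic-regularity criterion: a $\bar\partial$-operator on a complex line bundle over the disc with totally real boundary condition that admits a nowhere-vanishing section in its kernel is surjective (equivalently, a holomorphic line bundle pair of Maslov index $0$ carrying a nowhere-zero holomorphic section has no higher cohomology). Combining surjectivity on $T\D$ and on $N$ with the splitting argument yields surjectivity of $D_u\bar\partial$, i.e. regularity of $u$.

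The main obstacle, and the step requiring genuine care rather than citation, is justifying that the splitting $u^*TX = T\D \oplus N$ is compatible with the linearized operator in the way needed — i.e. making precise that $D_u\bar\partial$ is upper-triangular with respect to this splitting up to a compact perturbation and that upper-triangularity plus surjectivity of the diagonal blocks gives surjectivity. In the Kähler setting the linearization at a holomorphic map is complex-linear modulo zeroth order terms (the torsion/curvature correction terms are zeroth order), and a first-order complex-linear operator on sections of a holomorphic vector bundle over a Riemann surface preserves the Dolbeault filtration coming from a holomorphic sub-bundle; the only subtlety is that $u$ immersed guarantees $T\D \hookrightarrow u^*TX$ is a holomorphic sub-bundle \emph{embedding} so that $N$ is genuinely a holomorphic line bundle and the boundary conditions split correspondingly. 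I would therefore spend the bulk of the argument setting this up carefully, reducing to the two line-bundle computations, and then the conclusion is immediate; I expect the details of the normal-bundle automatic regularity and the boundary splitting to be where a referee would want precision, so I would cite the boundary analogue of Hofer–Lizan–Sikorav (e.g. as in work of Buhovsky or the appendix of papers on automatic transversality for discs) for the line-bundle statement and give the splitting argument explicitly.
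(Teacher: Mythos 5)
Your argument is correct and is essentially the paper's own proof: both use the immersion hypothesis together with the nowhere-tangent holomorphic variation field $V$ to split $u^*TX$ holomorphically as $T\D$ plus a trivial normal line bundle (trivialized by $V$) with correspondingly split totally real boundary conditions, reduce to the two line-bundle problems (index $3$ for the tangent summand, index $1$ for the $V$-trivialized normal summand), and conclude that the kernel has dimension $4=\mathrm{index}$, so the cokernel vanishes. The only difference is presentational: since $J$ is integrable and $V$ is itself a nowhere-vanishing holomorphic section, the splitting is genuinely holomorphic and the operator is block-diagonal, which disposes of the upper-triangularity/compatibility worry you spend most of your space on.
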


\begin{proof}
  As $u$ is an immersion, we can consider the splitting $u^*TX \cong T\D \oplus
  \LB$ as holomorphic vector bundles, where $\LB$ is the trivial line bundle
  generated by $V$. Also $u_{|{S^1}}^*TL \cong TS^1 \oplus Re(\LB)$, where
  $Re(\LB) = \text{span}_\R \{V\}$ and $S^1 \cong \del \D$.
  
 So a section $\zeta \in \Omega^0_{u_{|S^1}^*TL}(\D, u^*TX)$ of $u^*TX$ that
 takes values in $u_{|S^1}^*TL$ along the boundary splits as $\zeta_1 \oplus
 \zeta_2 \in \Omega^0_{TS^1}(\D, T\D) \oplus \Omega^0_{Re(\LB)}(\D, \LB)$. Since
 $J$ is an integrable complex structure, the kernel of the linearized operator
 $D_{\bar{\del}}$ is given by
 
 $$
 \{ \zeta \in \Omega^0_{u_{|S^1}^*TL}(\D, u^*TX) ; \bar{\del}\zeta = 0 \} 
 $$
 which is isomorphic to 
\begin{eqnarray}
   \{\zeta_1 \in \Omega^0_{TS^1}(\D, T\D); \bar{\del}\zeta_1 = 0\} &\oplus& \{\zeta_2 \in 
 \Omega^0_{Re(\LB)}(\D, \LB); \bar{\del}\zeta_2 = 0\} \nonumber \\ 
 \cong T_{Id}\text{Aut}(\D) &\oplus & \text{Hol}((\D,S^1), (\C,\R)) \nonumber
\end{eqnarray}
 
 The last term on the right comes from $\LB$ being trivial. $\text{Aut}(\D)$ is known to 
 be 3 dimensional, while $\text{Hol}((\D,S^1),(\C,\R))$ is the space of real-valued constant functions.
 Therefore,
 $$
 Dim\text{Ker} (D_{\bar{\del}}) = 4 = 2\cdot \chi(\D) + \mu(u^*TX, u_{|S^1}^*TL) = 
 \text{index} (D_{\bar{\del}} )
 $$
\end{proof}

The following lemma sets a sufficient condition for $V$, as given in the previous 
lemma, not to be tangent to $u(\D)$.

\begin{lemma}\label{lr2} 
  Let $u:\D \to X$ be a Maslov index 2 holomorphic disc in a K\"{a}hler 4-manifold $X$ with
  boundary on a Lagrangian $L$ such that $u_{S^1}: S^1 \rightarrow L$ is an immersion,
  and $V$ a holomorphic vector field on $X$ along $u$, tangent to $L$ at the boundary. If $V$ is
   not tangent to $u(\D)$ at the boundary, then $u$ is an immersion and $V$ is nowhere 
   tangent to $u(\D)$. 
  \end{lemma}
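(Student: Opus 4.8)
The plan is to work holomorphically in a neighborhood of a boundary point and use the hypothesis on $V$ to control the ramification of $u$. Since $V$ is holomorphic along $u$ (i.e. it extends to a holomorphic section of $u^*TX$ over $\D$, or is the restriction of an ambient holomorphic vector field), and $u$ is holomorphic, consider the holomorphic bundle map $\Phi: T\D \oplus \underline{\C} \to u^*TX$ given by $(v, \lambda) \mapsto du(v) + \lambda V$, where $\underline{\C}$ is generated by $V$. This is a map of holomorphic vector bundles of rank $2$ over $\D$. Its determinant is a holomorphic section $\det \Phi$ of the line bundle $\Lambda^2(u^*TX) \otimes (T^*\D \otimes \underline{\C}^*)$; the zero locus of $\det\Phi$ is exactly the set of points where $V$ is tangent to $u(\D)$ \emph{or} $u$ fails to be an immersion. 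The hypothesis is that $\det\Phi$ is nonzero at every boundary point of $\D$.

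The key step is a degree/boundary argument forcing $\det\Phi$ to be nowhere zero on all of $\D$. First I would compute the expected number of zeros of $\det\Phi$ (with multiplicity) in terms of $c_1$ of the relevant line bundles. We have $\deg(\Lambda^2 u^*TX) = \mu(u)/2 = 1$ using Maslov index $2$ and $\det$ identifying $\Lambda^2(TX)|_u$ with the anticanonical bundle, while $\deg T^*\D = 0$ over the disc; so $\det\Phi$, if it had no zeros, would be a trivialization, and its winding number along $S^1$ must be $0$. On the other hand, along the boundary, $du$ maps $TS^1$ into $TL$ and $V$ is tangent to $L$; since $u_{S^1}$ is an immersion, $du(\partial_\vartheta)$ is a nonzero section of $u_{S^1}^*TL$, and by hypothesis $V$ is transverse to $\text{span}(du(\partial_\vartheta))$ inside $u_{S^1}^*TL \subset u_{S^1}^*TX$ (transversality to $u(\D)$ at the boundary is the same as transversality to $du(T\D)$, which at a boundary point contains $du(\partial_\vartheta)$). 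This real transversality along the totally real boundary pins down the winding of $\det\Phi|_{S^1}$ to be $0$, matching the degree count, which means $\det\Phi$ has no interior zeros and extends to a nowhere-vanishing section. Hence $u$ is an immersion and $V$ is nowhere tangent to $u(\D)$, on all of $\D$.

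The main obstacle I anticipate is making the boundary winding-number computation fully rigorous: one must be careful about the precise identification of $\Lambda^2(u^*TX)$ with $u^*K_X^{-1}$, about the Maslov-versus-degree bookkeeping (the factor of $2$, and the contribution of the boundary Lagrangian frame), and about checking that ``$V$ transverse to $u(\D)$ at a boundary point'' really is equivalent to ``$\det\Phi \neq 0$ there'' even when $u$ might a priori be non-immersed at that boundary point — i.e. one needs that $du(T\D)$ at a boundary point is $2$-dimensional, which follows because $du(\partial_\vartheta) \neq 0$ (boundary immersion) together with $J$-invariance of the image of $du$, so $du(T\D)$ is automatically $2$-dimensional along $S^1$ regardless of interior behavior. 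Once that local fact is in hand, the degree argument closes the proof. An alternative to the winding-number bookkeeping, which I might use instead to avoid delicate index conventions, is to invoke Lemma \ref{lr1} directly: if I can show $V$ is nowhere tangent to $u(\D)$ and $u$ is an immersion, regularity follows; and both of the latter follow from $\det\Phi$ having no zeros, so the whole content is really the statement that a holomorphic section of a degree-one line bundle over $\D$ that is transverse (in the totally real sense dictated by the Lagrangian boundary condition) and nonzero on $S^1$ must be nonzero inside — a positivity-of-intersections / argument-principle statement for holomorphic sections with totally real boundary data.
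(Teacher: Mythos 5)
Your overall strategy---recasting the claim as the nonvanishing of a holomorphic determinant section with totally real boundary values and counting its zeros by a Maslov index---is the right one, and is essentially the paper's argument. But the decisive numerical step is wrong as written, and the patch you offer for it does not work. You compute the expected zero count of $\det\Phi$ from $\deg(\Lambda^2 u^*TX)=\mu(u)/2=1$ and ``$\deg T^*\D=0$ over the disc,'' arriving (implicitly) at a total of $1$. The second term is not $0$: over the disc every line bundle is trivial, so the only meaningful degree is the relative one fixed by the boundary condition, and the pair $(T\D,TS^1)$ has Maslov index $2$ (the infinitesimal rotation $iz\,\del_z$ is a holomorphic section tangent to $S^1$ along the boundary with exactly one zero). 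Hence the $T^*\D$ factor contributes $-1$, and the line-bundle pair in which $\det\Phi$ lives,
\[
\bigl(\Lambda^2(u^*TX)\otimes T^*\D\otimes\underline{\C}^*,\ \ \Lambda^2_{\R}(u^*TL)\otimes T^*S^1\otimes\underline{\R}^*\bigr),
\]
has Maslov index $2-2=0$, so the correct count of interior zeros is $0$, not $1$. With a count of $1$, one interior zero is permitted and the lemma does not follow. You try to repair this by asserting that ``real transversality along the totally real boundary pins down the winding of $\det\Phi|_{S^1}$ to be $0$''; that is not a valid inference. Nonvanishing of a section along $S^1$ with values in the real sub-bundle says nothing about its winding relative to a trivialization: that winding \emph{is} half the Maslov index of the real sub-bundle, i.e.\ exactly the computation you skipped (and which, done correctly, gives $0$ and closes the argument). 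The same slip recurs in your closing sentence, where the relevant line bundle is called ``degree-one''; a degree-one relative bundle would allow one zero and the statement there is false as phrased.

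Once the index is corrected the proof is fine: $\det\Phi$ is holomorphic, its interior zeros count positively, it is nonvanishing on $S^1$ (your observation that $du$ has complex rank $2$ along $S^1$ because $du(\del_\theta)\neq 0$ and $du$ is complex linear is correct and is needed here), and the total count is $0$; hence $\det\Phi$ never vanishes, which is the conclusion. For comparison, the paper runs the same argument in different clothing: rather than dividing by $T\D$, it multiplies by the section $\del_\theta$ of $(T\D,TS^1)$ and studies $W\wedge V$ with $W=du(\del_\theta)$, a holomorphic section of $\Lambda^2u^*TX$ whose single allowed zero ($\mu/2=1$) is already consumed by the forced zero of $W$ at the origin, so that any non-immersion point of $u$ or tangency point of $V$ would create a second positive zero and a contradiction. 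Your $\det\Phi$ and the paper's $W\wedge V$ differ exactly by the factor $\del_\theta$, which accounts for the discrepancy of $1$ in your count.
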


\begin{proof}
  
  Suppose that either $du(x) =0$ or $V$ is tangent to $u(\D)$ at a point
  $u(x)$; up to reparametrizing the disc we may assume that $x \neq 0$. Consider
  another holomorphic vector field $W = du(\frac{\del}{\del\theta})$ given by an
  infinitesimal rotation, which is tangent to $u(\D)$, has a zero at 0 and is
  also tangent to $L$ at the boundary. Then the Maslov index can be computed
  using $\det^2(W\wedge V)$, so the number of zeros of $\det(W\wedge
  V)$ is $\mu(u^*TX,u|_{\del \D}TL)/2 = 1$. But $W$ vanishes at $u(0)$ and
  either vanishes or is parallel, as a complex vector, to $V$ at $u(x)$. Since
  the zeros of $W\wedge V$ always occur with positive multiplicity, as the vector
  fields are holomorphic, we get a contradiction. 
   
   \end{proof}
  
 Now we are ready to prove 
  
\begin{theorem}\label{reg3fam}
 The holomorphic discs representing the classes $\beta$ and $H - 2\beta + m\alpha$ in $X_t$
 computed on proposition \ref{bdisc} and theorem \ref{3fam} are regular, for small $t$.
\end{theorem}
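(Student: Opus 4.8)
The plan is to read the regularity of these discs off from the two criteria just proved, Lemmas \ref{lr1} and \ref{lr2}. For each disc $u$ in question --- the $S^1$-family in class $\beta$ of Proposition \ref{bdisc}, and the families in classes $H-2\beta+m\alpha$, $m\in\{0,1,2\}$, of Theorem \ref{3fam} (for $m=-1$ there is nothing to check) --- I will exhibit a holomorphic vector field $V$ along $u$ that is tangent to $L=T^c_{r,0}$ along $\del\D$ and nowhere tangent to $u(\D)$ along $\del\D$. Lemma \ref{lr2}, applied in the K\"ahler $4$-manifold $X_t\cong(\CP^2,\omega)$, then gives that $u$ is an immersion with $V$ nowhere tangent to $u(\D)$, and Lemma \ref{lr1} gives regularity. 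I take $V=\frac{\del}{\del\theta}u_\theta|_{\theta=0}$, the velocity of the one-parameter family through $u$, with $\theta$ the parameter coming from the circle action $e^{i\theta}\cdot(\xi,z)=(e^{i\theta}\xi,e^{-i\theta}z)$. The one subtlety in defining $V$ is that the discs \eqref{discform}, \eqref{disc6fam} meet $\{y=0\}$ at $w=0$, where that action has a pole; but the families extend analytically across $w=0$ precisely because of condition \eqref{cond} (this is what finiteness of $z_0(0)$ amounts to), so $V$ is an honest pole-free holomorphic section of $u^*TX_t$ over all of $\D$.

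Tangency of $V$ to $L$ along $\del\D$ is immediate, since every $u_\theta$ has boundary on $T^c_{r,0}$ and the action preserves $T^c_{r,0}$ (it preserves both $f$ and $\mu_\omega$). For the non-tangency to $u(\D)$, set $\Psi=f\circ u$ and compare $V$ with the vector field $V_{\mathrm{act}}$ generated by the circle action: along $\del\D$ the disc stays away from $\{y=0\}$ and from $(1:0:0)$, so there $V_{\mathrm{act}}$ is pole-free, equals $(i\xi,-iz)$ in the $(\xi,z)$-chart hence is nonzero (as $\xi z\neq 0$ on $T^c_{r,0}$), and lies in $\ker df$ because $f$ is invariant. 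Now $d\Psi$ does not vanish on $\del\D$: for $\beta$ this is clear from $\Psi(w)=rw+c$, and for $H-2\beta+m\alpha$ one has, using \eqref{Psi-c} and Lemma \ref{h0a} ($a=\ot$), that $\frac{d}{ds}\arg\frac{\Psi(e^{is})-c}{r}=\frac{1-|a|^2}{|e^{is}-a|^2}-3=-2+\ot\neq 0$ for small $t$ --- this is the only place smallness of $t$ enters, and it also shows $u_{S^1}$ is an immersion. Hence $df_{u(w)}$ has rank $1$ and $T_{u(w)}u(\D)\cap\ker df_{u(w)}=0$ for $w\in\del\D$, so the nonzero vector $V_{\mathrm{act}}\in\ker df_{u(w)}$ is not tangent to $u(\D)$ there. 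Finally the circle action is nontrivial on the one-dimensional moduli space --- for the $\beta$-family it is literally translation in $\theta$, while for $H-2\beta+m\alpha$ its orbits lie in the class $\del\alpha$, which (using $\del H=0$, so that $\{\del\alpha,\del\beta\}$ is a basis of $H_1(T^c_{r,0})$) is not proportional to the boundary class $-2\del\beta+m\del\alpha$ of these discs. Therefore along $\del\D$ one has $V_{\mathrm{act}}=c_0V+du(\xi_0)$ with $c_0\neq 0$ and $\xi_0$ a vector field on $\D$; since $du(\xi_0)$ is tangent to $u(\D)$, the vector $V=c_0^{-1}(V_{\mathrm{act}}-du(\xi_0))$ is not tangent to $u(\D)$ along $\del\D$ either.

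The main obstacle is exactly this bookkeeping near $w=0$: because $V_{\mathrm{act}}$ itself has a pole there for the $H-2\beta+m\alpha$ discs, one cannot simply feed $V_{\mathrm{act}}$ into Lemmas \ref{lr1}--\ref{lr2}; one must run them with the genuinely defined family velocity $V$ (regular at $w=0$), using the comparison with $V_{\mathrm{act}}$ only on $\del\D$, where both are regular. Everything else --- that $u$ and $u_{S^1}$ are immersions, the dichotomy $T_pu(\D)\cap\ker df_p=0$ at boundary points, and that the reparametrization constant $c_0$ is nonzero --- is routine once $d\Psi\neq 0$ on $\del\D$ is in hand.
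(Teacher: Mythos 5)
Your overall strategy coincides with the paper's: feed the family velocity $V=\frac{\del}{\del\theta}u_\theta$ into Lemmas \ref{lr1} and \ref{lr2} and reduce everything to non-tangency of $V$ to $u(\D)$ along $\del\D$; and your explicit boundary computation $\frac{d}{ds}\arg\frac{\Psi(e^{is})-c}{r}=-2+\ot$ is a nice concrete substitute for part of the paper's limit argument. The gap is the decomposition $V_{\mathrm{act}}=c_0V+du(\xi_0)$ along $\del\D$. That identity is what you would get if the circle action acted on the one-dimensional moduli space $\M(T^c_{r,0},H-2\beta+m\alpha)$, but it does not: rotating a disc of the form \eqref{discform} by $e^{is}$ multiplies $z_3(0)/z_2^2(0)$ by $e^{3is}$ and destroys condition \eqref{cond}, so the rotated map, while still a holomorphic disc with boundary on $T^c_{r,0}$, passes through $(1:0:0)$ and lies in a different, higher Maslov index, class. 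Infinitesimally this is exactly the statement you already noted, that $V_{\mathrm{act}}\circ u$ has a pole at $w=0$; hence it is \emph{not} an element of $\ker D\bar{\del}_u$ and cannot be written as (family velocity) $+$ (infinitesimal reparametrization). Restricting the identity to $\del\D$ does not rescue it: both sides are boundary values of holomorphic sections of $u^*T\CP^2$ on an annulus, so equality on $\del\D$ would force equality on $\D\setminus\{0\}$, contradicting the pole on the left and regularity on the right. What is actually true is $V=V_{\mathrm{act}}\circ u+a_1'(\theta)\,\del_a u^\theta_a$, where $\del_a u^\theta_a$ is a genuine deformation (it moves the point $u(a)\in D_3$, it is not of the form $du(\xi_0)$) and carries the compensating pole at $w=0$.

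To close the gap you must control this correction term rather than absorb it into $du(\xi_0)$. Either estimate it directly --- $a_1'(\theta)=\ot$ since $a\sim K^{-1/4}$ with $K\propto e^{-6i\theta}t^{-2}$, together with a uniform bound on $\del_a u^\theta_a$ near $\del\D$ --- so that $V=V_{\mathrm{act}}+\ot$ there and non-tangency persists for small $t$; or do what the paper does: use the uniform convergence of $u^\theta_{a_1(\theta)}$ near $\del\D$ as $t\to0$ to the explicit family in which $a$ and the $\eta_j$ are $\theta$-independent, so that in the limit $V$ literally becomes the nonvanishing fiberwise vector $(0,-iz_2,iz_3)$, and conclude by continuity. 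Either route requires the asymptotics of Lemma \ref{h0a} for the moduli themselves, not only for $\Psi$. (For the class $\beta$ your argument is complete as written, since there the family genuinely is the orbit of the circle action and no pole occurs.)
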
  

\begin{proof}
  By Lemmas \ref{lr1}, \ref{lr2}, we only need to notice that for each of the holomorphic discs 
  $u_I^\theta$ considered, the vector field $V(w) = \frac{\del}{\del \theta}u_I^\theta(w)$
is not tangent to $u_I^\theta(\del\D)$. We note that in the limit $t = 0$, we have
 that  $u_I^\theta$ uniformly converge, in a compact neighborhood of the boundary,
  to a holomorphic disc given by: 
 $$ z_1(w) = w; \ \ z_2(w) = e^{-i\theta}\sqrt{rw^2 + c} \underset{j\neq0}{\prod_{j\notin I}}\tau_{\eta_j}(w);
 \ \ z_3(w) = e^{i\theta}\sqrt{rw^2 + c} {\prod_{j\in I}}\tau_{\eta_j}(w)$$
 where $\eta_1 = i\sqrt{r/c}$, $\eta_2 = -i\sqrt{r/c}$. So we see that in the 
 limit $t =0$, $V(w) = \frac{\del}{\del \theta}u_I^\theta(w)$ is parallel to the fibers
 of $f(z_0:z_1:z_2:z_3) = \frac{z_2z_3}{z_1^3}$ restricted to $X_0$ and nowhere vanishing.
 Therefore, $V(w)$ is not tangent to $u_I^\theta(\del\D)$ for $t =0$, and by continuity
 the same holds for small $t$. 
  
\end{proof}

\begin{theorem}\label{reg6fam}
 The holomorphic discs in $X_0$ computed in Theorem \ref{6fam} 
 are regular. By Lemma \ref{x0xt}, for small $t$, the corresponding
 holomorphic discs in the classes $2H - 5\beta + m\alpha$ in $X_t$
 are also regular.
\end{theorem}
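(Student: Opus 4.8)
The plan is to run the argument from the proof of Theorem~\ref{reg3fam}, reducing regularity to the geometric criterion packaged in Lemmas~\ref{lr1} and~\ref{lr2}, and then to upgrade from $X_0$ to $X_t$ via Lemma~\ref{x0xt}. First I work inside $X_0 \cong \CP(1,1,4)$ with its orbifold point $(0:0:1)$ removed: this is a smooth K\"ahler $4$-manifold, and by Proposition~\ref{limDiscs} every disc $u_I^\theta$ of~\eqref{discP114} has image in it. Fix $I \subset \{1,\dots,5\}$ and $\theta \in \R$, put $u = u_I^0$ and $V = \left.\frac{\del}{\del\theta} u_I^\theta\right|_{\theta=0}$. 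Since $u_I^\theta = e^{i\theta}\cdot u$ for the holomorphic circle action $e^{i\theta}\cdot(\x:1:\z) = (e^{-i\theta}\x : 1 : e^{i\theta}\z)$ on $\CP(1,1,4)$, the field $V$ is the restriction to $u$ of the generator of that action; hence $V$ is holomorphic, it is tangent to $L = T^c_{r,0}$ along $\del\D$ (the torus is a union of orbits), and — because $f_0(\x:1:\z) = \x\z$ is $S^1$-invariant — it is everywhere tangent to the fibres of $f_0$. Moreover $V$ vanishes nowhere on $L$: the fixed loci of the action are $\{\x=0\}$, $\{\z=0\}$ and the orbifold point, all avoided by $T^c_{r,0}$, on which $\x\z \neq 0$.

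The crux is to check that $V$ is not tangent to $u(\D)$ along the boundary. For this I use that $f_0\circ u_I^\theta(w) = \Psi(w) = c + r\,w^{-5}$, independently of $I$ and $\theta$, as recorded in the proof of Proposition~\ref{limDiscs}; its derivative $-5r\,w^{-6}$ is nonvanishing on $\overline{\D}\setminus\{0\}$, in particular on $\del\D$. Thus $du$ is injective at every boundary point (so $u|_{\del\D}$ is an immersion), and $(df_0)_{u(w_0)}$ restricts to an injective $\C$-linear map on $du_{w_0}(\C)$ for $w_0 \in \del\D$; since $V(u(w_0)) \neq 0$ lies in $\ker (df_0)_{u(w_0)}$, it cannot lie in $du_{w_0}(\C)$. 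Lemma~\ref{lr2} then gives that $u$ is an immersion and $V$ is nowhere tangent to $u(\D)$, and Lemma~\ref{lr1} yields regularity of $u$ in $X_0$ — for every $I$ and $\theta$.

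For the passage to $X_t$ I would quote the splitting argument from the proof of Lemma~\ref{x0xt}: via $u^*T\X = u^*TX_0 \oplus \C$ together with the automatic regularity of a constant disc $(\D,S^1)\to(\C,\R)$, regularity of $u_I^\theta$ in $X_0$ is equivalent to its regularity as a disc in the total space $\X$, so $\M(\X,\Lag)$ is a smooth manifold near the $t=0$ locus. By openness of regularity the deformed discs in $X_t$ for small $t$ — nearby points of $\M(\X,\Lag)$ — are regular in $\X$, hence, by the same splitting applied to $X_t \subset \X$, regular as discs in $X_t$ in the classes $2H - 5\beta + m\alpha$. The only step that really needs care is the one already supplied by Proposition~\ref{limDiscs}, namely that no $u_I^\theta$ meets the orbifold point, so that Lemmas~\ref{lr1} and~\ref{lr2}, stated for smooth K\"ahler $4$-manifolds, genuinely apply; beyond that the argument is a near-verbatim copy of the proof of Theorem~\ref{reg3fam} with $w^2$ replaced by $w^5$.
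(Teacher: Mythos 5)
Your proof is correct and follows essentially the same route as the paper: take $V=\partial_\theta u_I^\theta$, the generator of the $S^1$-action on $\CP(1,1,4)$, observe it is nonvanishing on $T^c_{r,0}$ and tangent to the fibres of $f_0(\x,\z)=\x\z$, conclude it is not tangent to $u_I^\theta(\D)$ along the boundary, and invoke Lemmas \ref{lr1} and \ref{lr2}, transferring to $X_t$ via the splitting in Lemma \ref{x0xt}. Your write-up is in fact slightly more careful than the paper's, which leaves implicit the transversality of the disc to the fibres along $\del\D$ (your computation $\Psi'(w)=-5rw^{-6}\neq 0$ on $\del\D$) that is needed to pass from ``$V$ is fibrewise and nonvanishing'' to ``$V$ is not tangent to $u(\D)$''.
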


\begin{proof}
  Similar to the other cases, for each considered holomorphic disc $u_\theta$, we have the 
 vector field $V(w) = \frac{\del}{\del \theta}u_\theta(w) = (i\x(w), -i\z(w))$ in coordinates $(\x,\z)$, 
 for $\y=1$ on $\CP(1,1,4)$ along the boundary. These vectors are not tangent to $u_\theta(\D)$
 along the boundary, since they are nonvanishing and parallel to the fibers of  $f(\x,\z) = \x\z$.
 Hence, by lemmas \ref{lr1}, \ref{lr2}, these discs are regular.
\end{proof}

\subsection{Orientation}

The choice of orientation of the moduli space of holomorphic discs is determined
by a choice of spin-structure on the Lagrangian; see \cite{CHO} section 5. In
this section we choose a spin structure on our Lagrangian $T(1,4,25)$ torus and argue that, under
the choice of orientations made in \cite{FOOO}, see also
section 7 of \cite{CHO}, the evaluation map from each of the moduli spaces of
Maslov index 2 holomorphic discs considered in this section to the $T(1,4,25)$ torus
is orientation preserving. We use the same definition of spin-structure given by 
C. Cho in section 6 of \cite{CHO}:

\begin{definition}
  A \emph{spin structure} on an oriented vector bundle $E$ over a manifold $M$ 
  is a homotopy class of a trivialization of $E$ over the 1-skeleton of $M$ that 
  can be extended over the 2-skeleton.
\end{definition}

In case of surfaces, it's enough to consider a stable trivialization of the
tangent bundle. We see that $\del \alpha$ and $\del \beta$ form a basis of
$H_1(T^c_{r,0}, \Z) $ and hence they induce a trivialization of the tangent bundle
of $T^c_{r,0}$ oriented as $\{\del \alpha, \del \beta\}$. 

The orientation of the moduli space at a disc $u: (\D, \del \D) \rightarrow
(X^{2n},L^n)$ is then given by the orientation of the index bundle of the linearized
operator $D\bar{\del}_u$ that is induced by the chosen trivialization of the tangent
bundle $TL$ along $\del \D$, as described in 
\cite{FOOO}.

The rough idea is that we extend the trivialization of the tangent bundle of the
Lagrangian to a neighborhood of $\del \D$, then take a concentric circle
contained in it, and pinch it to a point $O \in \D$, the part of the disc inside
the circle becoming a $\CP^1$. The trivialization of $TL$ along the pinched
neighborhood gives a trivialization of its complexification $TX$. This way,
considering the isomorphisms given by the trivializations, the linearized 
operator is homotopic to a $\bar{\del}$ operator on $\D\cup\CP^1$, whose kernel 
consists of pairs $(\xi_0,\xi_1)$ where: $\xi_0$ is
a holomorphic section of the trivial bundle $\C^n$ over
the disc, with boundary on the trivial subbundle $\R^n$, i.e, a constant maps into 
$\R^n$; and $\xi_1$ is a holomorphic section of the bundle induced by $u^*TX$ over
 $\CP^1$, which we denote by $TX|_{\CP^1}$. These
sections must match at $O \in \D$ and the `south pole' $S \in \CP^1$. In other
words, Fukaya, Oh, Ohta, Ono show that the index of the linearized operator (seen as a virtual
vector space Ker$D\bar{\del}_u$ $-$ CoKer$D\bar{\del}_u$) is isomorphic to the
kernel of the homomorphism:

\begin{equation}\label{Orie}
  (\xi_0, \xi_1) \in Hol(\D,\del\D: \C^n, \R^n) \times Hol (\CP^1, TX|_{\CP^1}) \rightarrow 
  \xi_0(O) -\xi_1(S) \in  \C^n \cong TX|_S
\end{equation}

Now the kernel can be oriented by orienting $\R^n \cong Hol(\D,\del\D: \C^n, \R^n)$ (which
is essentially the trivialization of the tangent space of the Lagrangian), since
$Hol (\CP^1, TX|_{\CP^1})$ and $\C^n$ carry complex orientations. For a
detailed account of what we just discussed, see Chapter of
\cite{FOOO} Part II, also Proposition 5.2 in \cite{CHO}.

Denote by $\tilde{\M}(\gamma)$ the space of holomorphic discs on $\CP^2$
with boundary on $T^c_{r,0}$ in the class $\gamma$, not quotiented out by
Aut$(\D)$. By the same argument as in section 8 of \cite{CHO},  the factor $\R^n \cong
Hol(\D,\del\D: \C^n, \R^n)$ in \eqref{Orie}
corresponds to the subspace of $T_u\tilde{\M}(\gamma)$ given by the deformations of $u$
which correspond to translations
along the boundary of $T^c_{r,0}$, i.e., generated by $V =
\frac{\del}{\del \theta}u_\theta$ and by infinitesimal rotations in Aut$(\D)$. 
This
way, we orient the moduli space of discs accordingly with our chosen orientation
$\{\del \alpha, \del \beta\}$. In particular, $\M(\beta)$, which consists of 
one-parameter family of discs $u_\theta$ described in Proposition
\ref{bdisc}, is oriented in the positive direction of $\theta$, since 
$\frac{\del u_\theta}{\del\theta}$ and the tangent vector to the boundary of
$u_\theta$ form a positive oriented basis of $TT^c_{r,0}$;
while the other moduli spaces $\M(H - 2\beta +m\alpha)$ and $\M(2H - 5\beta
+k\alpha)$ are oriented in the negative direction of the parameter $\theta$, since 
in these cases $\frac{\del u_\theta}{\del\theta}$ and the tangent vector to the boundary of
$u_\theta$ form a negative oriented basis.

\begin{proposition}
  The evaluation maps from $\M_1(\beta)$, $\M_1(H - 2\beta +m\alpha)$ and $\M_1(2H - 5\beta
+k\alpha)$ to $T^c_{r,0}$ are all orientation preserving.
\end{proposition}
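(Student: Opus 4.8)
The plan is to prove that $ev\colon\M_1(\gamma)\to T^c_{r,0}$ is a local diffeomorphism with positive Jacobian for each of the relevant classes $\gamma\in\{\beta,\ H-2\beta+m\alpha,\ 2H-5\beta+k\alpha\}$, so that $ev_\ast[\M_1(\gamma)]=+|n_\gamma|\,[T^c_{r,0}]$. First I would fix local coordinates on $\M_1(\gamma)$: near a generic disc $u_\theta$ carrying a boundary marked point at $w\in\del\D$, the moduli space is coordinatized by the family parameter $\theta$ together with $w\in\del\D$, and $ev(\theta,w)=u_\theta(w)$, so $D(ev)$ sends $\del_\theta\mapsto V(w):=\del_\theta u_\theta(w)$ and $\del_w\mapsto\tau(w)$, the oriented tangent vector of $\del u_\theta$ at $u_\theta(w)$ — which is a positive multiple of the value at $w$ of the infinitesimal rotation field $W$ appearing in the orientation discussion. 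By Lemma \ref{lr2}, using the regularity established in Theorems \ref{reg3fam} and \ref{reg6fam}, $V(w)$ and $\tau(w)$ are linearly independent in $T_{u_\theta(w)}T^c_{r,0}$, so $ev$ is a local diffeomorphism and only its sign is at stake.

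Next I would unwind the orientation convention of \cite{FOOO}, \cite{CHO} recalled above. The only genuinely chosen orientation entering the construction is that of the factor $\R^2\cong\mathrm{Hol}(\D,\del\D\colon\C^2,\R^2)$ in \eqref{Orie}, which is identified with the span of $\{V,W\}$ inside $T_u\tilde\M(\gamma)$ and oriented via the trivialization of $TT^c_{r,0}$ by $\{\del\alpha,\del\beta\}$; since the $\CP^1$-factor and $\C^n$ in \eqref{Orie} carry complex orientations, the induced orientation on $\M_1(\gamma)$ makes $ev$ orientation preserving exactly when the orientation sign $\epsilon_\gamma\in\{\pm1\}$ of $\M(\gamma)$ (written $\epsilon_\gamma\,\del_\theta$) agrees with the orientation of the ordered basis $\{V(w),\tau(w)\}$ of $T_{u_\theta(w)}T^c_{r,0}$ relative to $\{\del\alpha,\del\beta\}$.

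Finally I would verify that these two signs always coincide. The orientation computation recorded in this subsection already gives $\epsilon_\beta=+1$, while $\epsilon_\gamma=-1$ for $\gamma=H-2\beta+m\alpha$ and $\gamma=2H-5\beta+k\alpha$. On the other hand $V$ is tangent to the $S^1$-orbit, hence points along $\del\alpha$: for $\gamma=\beta$ the boundary $\del u_\theta$ represents $\del\beta$, so $\{V(w),\tau(w)\}$ is proportional to $\{\del\alpha,\del\beta\}$ and positively oriented; for the remaining classes $\del u_\theta$ represents a positive multiple of $\del\alpha-\del\beta$, so $\{V(w),\tau(w)\}$ is proportional to $\{\del\alpha,\del\alpha-\del\beta\}$ and negatively oriented. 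Thus $\epsilon_\gamma$ matches the basis orientation in every case, $\det D(ev)>0$, and all three evaluation maps are orientation preserving; in particular the algebraic counts come out with the positive signs $2,4,2$ and $1,5,10,10,5,1$ predicted by $W_{T(1,4,25)}$ in \eqref{WChe2}. The main obstacle is the bookkeeping in the second step: reconciling the sign conventions of \cite{FOOO} and \cite{CHO} — their dependence on dimension, Maslov index, the number of boundary marked points, and the pinching/gluing procedure — with the explicit families $u_\theta$, so that the stated criterion carries the correct overall sign. Once that is pinned down, the third step is a one-line determinant check.
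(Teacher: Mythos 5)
Your argument is correct and is essentially the paper's: the paper simply refers to Proposition 8.2 of \cite{CHO}, whose proof is exactly the computation you reconstruct — the tangent space of $\M_1(\gamma)$ at $(u_\theta,w)$ is spanned by $\del_\theta$ and the marked-point direction, $D(ev)$ carries these to $V(w)$ and $\tau(w)$, and the convention of \cite{FOOO}, \cite{CHO} orients the $\{V,W\}$-factor of the index bundle by pulling back $\{\del\alpha,\del\beta\}$ through evaluation, so $ev$ is orientation preserving essentially by construction once $V(w)$ and $\tau(w)$ are independent (your appeal to Lemma \ref{lr2} together with Theorems \ref{reg3fam} and \ref{reg6fam}). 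One small inaccuracy in your last step: the boundaries of the classes $H-2\beta+m\alpha$ and $2H-5\beta+k\alpha$ are $m\,\del\alpha-2\,\del\beta$ and $k\,\del\alpha-5\,\del\beta$, which are not in general positive multiples of $\del\alpha-\del\beta$; however, since the $\del\beta$-coefficient is negative in every case, the determinant of $\{V,\tau\}$ against $\{\del\alpha,\del\beta\}$ is still negative and your sign conclusion is unaffected.
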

 
Here the subscript 1 refers to the moduli space with one marked point at the
boundary. The proof of the proposition above follows from the same argument as
in Proposition 8.2 in \cite{CHO}. 
\newpage
As a corollary of all we have done in this section, we get 
\begin{theorem}
  In the region corresponding to $T(1,4,25)$ tori, the mirror superpotential is given by 
  \eqref{WChe2}: 
  
  \begin{equation*}
    W_{T(1,4,25)} = u + 2\frac{e^{-\Lambda }}{u^2}(1 + w)^2 + 
 \frac{e^{-2\Lambda }}{u^5w}(1 + w)^5
  \end{equation*}
\end{theorem}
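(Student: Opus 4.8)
The plan is to assemble the results of this section directly into the definition \eqref{m0} of the superpotential. Recall that
$$W_{T(1,4,25)} = m_0(T^c_{r,0},\nabla) = \sum_{\mu(\gamma)=2} n_\gamma(T^c_{r,0})\, z_\gamma,$$
with $z_\gamma = \exp(-\int_\gamma\omega)\,\text{hol}_\nabla(\del\gamma)$ and the sum over $\gamma\in\pi_2(\CP^2,T^c_{r,0})$ of Maslov index $2$. By Lemma \ref{lemHomClass} the only classes that can contribute are $\beta$, the classes $H-2\beta+m\alpha$ with $-1\le m\le 2$, and $2H-5\beta+k\alpha$ with $-2\le k\le 4$; every other term vanishes identically. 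Set $u=z_\beta$ and $w=z_\alpha$ (the holomorphic coordinates associated with the classes $\beta$, $\alpha$ as in Section \ref{HomClass}), and note $z_H=\exp(-\int_{[\CP^1]}\omega)=e^{-\Lambda}$ since $\del H=0$. Because $\alpha,\beta,H$ form a basis of $\pi_2(\CP^2,T^c_{r,0})$ and $\gamma\mapsto z_\gamma$ is multiplicative (a linear exponential in $\gamma$ times a character of $\del\gamma$), one gets $z_{H-2\beta+m\alpha}=e^{-\Lambda}u^{-2}w^m$ and $z_{2H-5\beta+k\alpha}=e^{-2\Lambda}u^{-5}w^k$.

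Next I would read off the coefficients $n_\gamma$ from the work above. Proposition \ref{bdisc} gives a single family of discs in class $\beta$; Theorem \ref{3fam} gives, up to sign, counts $0,2,4,2$ in the classes $H-2\beta+m\alpha$ for $m=-1,0,1,2$; and Theorem \ref{6fam} gives, up to sign, counts $0,1,5,10,10,5,1$ in the classes $2H-5\beta+k\alpha$ for $k=-2,\dots,4$. Regularity, which is what makes each $n_\gamma$ a well-defined degree, is provided by Theorems \ref{reg3fam} and \ref{reg6fam}, so every relevant moduli space is a smooth compact $1$-manifold with a well-defined evaluation map to $T^c_{r,0}$. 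The signs are then pinned down by the orientation analysis preceding this theorem: with the spin structure on $T^c_{r,0}$ induced by $\{\del\alpha,\del\beta\}$ and the conventions of \cite{FOOO,CHO}, all these evaluation maps are orientation preserving, so every $n_\gamma$ equals $+1$ times its absolute value, namely $n_\beta=1$, $n_{H-2\beta+m\alpha}$ equal to $2,4,2$ for $m=0,1,2$ (and $0$ for $m=-1$), and $n_{2H-5\beta+k\alpha}$ equal to $1,5,10,10,5,1$ for $k=-1,\dots,4$ (and $0$ for $k=-2$).

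Substituting and grouping terms then gives
$$W=u+e^{-\Lambda}u^{-2}\bigl(2+4w+2w^2\bigr)+e^{-2\Lambda}u^{-5}\bigl(w^{-1}+5+10w+10w^2+5w^3+w^4\bigr),$$
and recognizing $2+4w+2w^2=2(1+w)^2$ and $w^{-1}+5+10w+10w^2+5w^3+w^4=w^{-1}(1+w)^5$ recovers exactly \eqref{WChe2}. There is no real obstacle left: this is a bookkeeping corollary of the preceding subsections. The one point worth stressing is that the clean factorizations $(1+w)^2$ and $(1+w)^5$ hinge on all the $n_\gamma$ sharing the same sign — this is precisely the orientation-preserving statement — and it is exactly this simultaneous sign consistency that matches the rigorously computed superpotential with the wall-crossing prediction \eqref{WChe2}.
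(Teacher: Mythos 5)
Your proposal is correct and follows exactly the route the paper takes: the theorem is stated there as a direct corollary of the section's results, assembled precisely as you do — the class restriction of Lemma \ref{lemHomClass}, the counts from Proposition \ref{bdisc} and Theorems \ref{3fam} and \ref{6fam}, regularity from Theorems \ref{reg3fam} and \ref{reg6fam}, and the sign consistency from the orientation analysis. Your observation that the clean factorizations $(1+w)^2$ and $(1+w)^5$ hinge on all counts carrying the same sign is exactly the point the orientation subsection is there to settle.
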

 
\section{The monotone torus} \label{MonTor}

  In this section we show that we can modify our symplectic form in a
  neighborhood of $D_5$ to a new one for which $T^c_{r,0}$ is Lagrangian
  monotone. Recall that a Lagrangian $L$ in a symplectic manifold $(X,\omega)$
  is called monotone if there exists a constant $M_L$ such that for any disc $u$
  in $\pi_2(X,L)$ satisfies $$ \int u^*\omega = M_L\mu_L(u) $$ where $\mu_L$ is
  the Maslov class.
  
  Recall the relative homotopy classes $H = [\CP^1]$, $\beta$ and $\alpha$
  defined after Proposition \ref{bdisc}, with Maslov indices $\mu(H) = 6$,
   $\mu(\beta) = 2$ and $\mu(\alpha) = 0$. A disc in the
  class $\alpha$ is given by the Lefschetz thimble over the interval $[0, c -r]$
  with respect to the symplectic fibration $f$, so $\int_\alpha \omega = 0$. We
  see that $L=T^c_{r,0}$ satisfies the monotonicity condition if and only if
  $[\omega]\cdot \beta = \int_\beta \omega = \Lambda/3$, where $\Lambda =
  \int_{H}\omega$. 
  
  We could try to compute $\int_\beta \omega$ ($= \int_\beta \tilde{\omega}$), which
  depends on our choice of $c$ and $r < c$. As $r \to 0$, $[\omega]\cdot \beta$
  converges to $0$. We could take then a very large value for $c$ and $r$ very
  close to $c$. Nonetheless, a careful computation shows that the symplectic
  area $[\omega]\cdot \beta$ remains smaller than $\Lambda/3$. Taking then
  another approach, we look at table \ref{HomTab} and see that $D_5$
  intersects $\beta$ in 2 points and $H$ in 5 points. We can then build 
  a 2-form $\sigma$ supported in a neighborhood of $D_5$, so that the ratio
  $[\sigma]\cdot \beta /[\sigma]\cdot H = 2/5 > 1/3$. By adding a large enough
  multiple of $\sigma$ to $\omega$, we get a K\"ahler form $\hat{\omega}$ for which
  $[\hat{\omega}]\cdot[\beta] = \Lambda /3$. 
  
  \begin{proposition}\label{monform}
   There is a K\"{a}hler form $\hat{\omega}$ for which $T^c_{r,0}$ is Lagrangian
  monotone. Moreover, $\hat{\omega}$ can be chosen to agree with $\omega$ away from a 
  neighborhood of $D_5$ that is disjoint from $T^c_{r,0}$.
  \end{proposition}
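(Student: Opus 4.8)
The plan is to obtain $\hat\omega$ from $\omega$ by \emph{inflating} along the quintic $D_5$. First, observe that both the Maslov index $\mu$ and, for any closed $2$-form $\tau$ that vanishes on a neighborhood of $T^c_{r,0}$, the period $\gamma\mapsto\int_\gamma\tau$ are homomorphisms on $\pi_2(\CP^2,T^c_{r,0})$. Since $\{\alpha,\beta,H\}$ is a basis of this group, a K\"ahler form $\hat\omega$ agreeing with $\omega$ near $T^c_{r,0}$ will make $T^c_{r,0}$ monotone precisely when $\int_\gamma\hat\omega/\mu(\gamma)$ is the same for $\gamma=\beta$ and $\gamma=H$; the class $\alpha$ imposes no condition because $\mu(\alpha)=0$ and $\int_\alpha\hat\omega$ will stay equal to $\int_\alpha\omega=0$ (the thimble of Proposition \ref{bdisc}). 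As $\mu(\beta)=2$ and $\mu(H)=6$, the required condition is exactly $\int_\beta\hat\omega=\tfrac{1}{3}\int_H\hat\omega$.

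Next I would produce a closed, nonnegative $(1,1)$-form $\sigma$ on $\CP^2$ that is supported in a tubular neighborhood $U$ of $D_5$ and Poincar\'e dual to $[D_5]$. Since $D_5\cap T^c_{r,0}=\emptyset$ (proof of Lemma \ref{HomTab}) and both sets are compact, $U$ may be taken disjoint from $T^c_{r,0}$. For the construction one uses the standard localized Thom (``inflation'') form: identify $U$ with a disk subbundle of the normal bundle $\nu_{D_5}$, choose a Hermitian metric and compatible connection, and set $\sigma=\tfrac{1}{2\pi}\,d\!\big(\rho(|v|^2)\,\vartheta\big)$, where $v$ is the fiber coordinate, $\vartheta$ is the angular connection $1$-form, and $\rho$ is a cutoff running from $-1$ at the zero section to $0$ near $\partial U$. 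This $\sigma$ is closed and represents $PD[D_5]$; moreover, since on $\CP^2$ every algebraic curve has positive self-intersection, $\nu_{D_5}$ (resp.\ the normal bundle of each component of $D_5$) is positive, and with the curvature chosen positive $\sigma$ can be arranged to be a nonnegative $(1,1)$-form. If $D_5$ is not smooth, one carries this out on the smooth locus and glues in a standard local model near the finitely many singular points (or simply checks that $D_5$ is smooth for the relevant $c,r,t$). By Lemma \ref{HomTab}, $\int_\beta\sigma=[D_5]\cdot\beta=2$, $\int_H\sigma=[D_5]\cdot H=5$ and $\int_\alpha\sigma=[D_5]\cdot\alpha=0$.

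Finally, set $\hat\omega=\omega+k\sigma$. As $\sigma$ is a closed nonnegative $(1,1)$-form and $\omega$ is K\"ahler, $\hat\omega$ is a closed positive $(1,1)$-form, hence K\"ahler, for every $k\ge 0$, and $\hat\omega=\omega$ outside $U$. Writing $a:=\int_\beta\omega$ (the positive area of the disc of Proposition \ref{bdisc}) and $\Lambda:=\int_H\omega$, the estimate alluded to in the text gives $a<\Lambda/3$; choosing $k:=\Lambda-3a>0$ yields $\int_\beta\hat\omega=a+2k$ and $\int_H\hat\omega=\Lambda+5k$, and a direct check gives $a+2k=\tfrac{1}{3}(\Lambda+5k)$. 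Together with $\int_\alpha\hat\omega=0$, this shows $T^c_{r,0}$ is $\hat\omega$-monotone with $M_{T^c_{r,0}}=\tfrac{1}{6}(\Lambda+5k)$, proving Proposition \ref{monform}. The main obstacle is the middle step: producing $\sigma$ that is at once closed and in the class $PD[D_5]$, supported in a small neighborhood of $D_5$ disjoint from $T^c_{r,0}$, \emph{and} a nonnegative $(1,1)$-form (so that positivity of $\omega+k\sigma$ comes for free), including the extra bookkeeping if $D_5$ happens to be singular.
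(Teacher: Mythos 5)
Your overall strategy is the paper's: inflate along the quintic $D_5$, use the intersection numbers $[D_5]\cdot\alpha=0$, $[D_5]\cdot\beta=2$, $[D_5]\cdot H=5$ together with $2/5>1/3$, and solve a linear equation for the inflation parameter. Your arithmetic is correct (with $k=\Lambda-3a$ one indeed gets $a+2k=\tfrac13(\Lambda+5k)$), and you correctly identify that the hypothesis $\int_\beta\omega<\Lambda/3$ is needed; the paper secures it by taking $r$ small. The gap is the one you flag yourself: you never actually produce the form $\sigma$, and the route you sketch does not obviously work. The localized Thom form $\tfrac{1}{2\pi}d(\rho(|v|^2)\vartheta)$ is in general neither of type $(1,1)$ nor nonnegative, and ``positivity of the normal bundle'' does not by itself upgrade it to a nonnegative $(1,1)$-form -- to arrange that you would have to choose a Hermitian metric with suitable curvature and run a potential-theoretic argument, i.e.\ essentially carry out the construction you are deferring. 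Moreover $D_5$ degenerates to the double curve $(\x\z-c\y^5)^2=0$ as $t\to 0$ and is not obviously smooth for the relevant parameters, so ``glue in a local model at the singular points'' is not yet an argument. Since producing $\sigma$ is the entire content of the proposition beyond bookkeeping, this is a genuine gap.

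The paper fills it by \emph{not} insisting that $\sigma$ be nonnegative. It sets $\sigma=\frac{i}{2}\del\bar{\del}\log\left(|s_5|^2+\varepsilon\,\chi(|s_5|^2)\,(|x|^2+|y|^2+|z|^2)^5\right)$, where $s_5$ is the defining polynomial of $D_5$ and $\chi$ is a cutoff equal to $1$ near $D_5$ and $0$ outside a neighborhood $\mathcal{N}$ disjoint from $T^c_{r,0}$. This is manifestly a closed $(1,1)$-form, vanishes outside $\mathcal{N}$ because $\del\bar{\del}\log|s_5|^2=0$ where $s_5\neq 0$, and makes sense whether or not $D_5$ is smooth. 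A Stokes computation shows $[\sigma]=5[\omega_{FS}]$ independently of $\varepsilon$ and $\chi$; letting $\varepsilon\to 0$ localizes the relative class of $\sigma$ onto $D_5$, which is what justifies computing $\int_\alpha\sigma$, $\int_\beta\sigma$, $\int_H\sigma$ by intersection numbers with $D_5$. Crucially, the order of quantifiers is: fix $K$ first from this cohomological computation, and only then shrink $\varepsilon$ so that $\omega+K\sigma$ is nondegenerate, hence K\"ahler. If you want to keep your Thom-form language you must either prove the nonnegativity and $(1,1)$ claims (including for singular $D_5$), or adopt this ``fix $K$, then shrink $\varepsilon$'' scheme in which positivity of $\hat{\omega}$ is recovered at the end rather than built into $\sigma$.
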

  
  \begin{proof}
Take a small enough value of $r$, for which it is straightforward to see that
 $[\omega]\cdot \beta < \Lambda/3$. In order to make $T^c_{r,0}$ monotone
we perform a K\"ahler inflation in a neighborhood of the quintic $D_5$ (see section
\ref{HomClass}) to achieve $[\hat{\omega}]\cdot \beta = 
\int_{[\CP^1]}\hat{\omega}/3$, keeping $[\hat{\omega}]\cdot \alpha = 0$. 

Take a small neighborhood $\mathcal{N}$ of $D_5 = \{s_5 = 0\}$ not intersecting
$T^c_{r,0}$, where $s_5 = x\xi^2 - 2cy^2z\xi + c^2y^5$. Take a cutoff function
$\chi$ such that $\chi(|s_5|^2)$ is equal to $1$ in a neighborhood of $D_5$ and
is equal to $0$ in the complement of $\mathcal{N}$. We then define $\hat{\omega}
= \omega + K\sigma$ for 

\begin{equation*}
 \sigma = \frac{i}{2}\del\bar{\del} \log \left(|s_5|^2 +
\varepsilon \chi(|s_5|^2)(|x|^2 + |y|^2 + |z|^2)^5 \right) 
\end{equation*}
where $K$ and $\varepsilon$ are constants to be specified. We use the fact that
 $\del\bar{\del} \log(|f|^2) = 0$ for a holomorphic function $f$, to note that
 the expression for $\sigma$ is the same for the homogeneous coordinates
 $(1:\frac{y}{x}:\frac{z}{x})$, $(\frac{x}{y}:1:\frac{z}{y})$ and
 $(\frac{x}{z}:\frac{y}{z}:1)$, therefore $\sigma$ defines a 2-form on $\CP^2$,
 and also to note that $\sigma = \del\bar{\del} \log(|s_5|^2) = 0$ outside
 $\mathcal{N}$, so $T^c_{r,0}$ is Lagrangian with respect to $\hat{\omega}$. 
 
  \begin{lemma}
   $[\sigma] = 5[\omega_{FS}]$ is independent of $\varepsilon$ and the cutoff 
   function $\chi$.
  \end{lemma} 
 
 \begin{proof}[Proof of Lemma]
   To determine the cohomology class of $\sigma$, it is enough to compute $\int_{[\CP^1]}
\sigma$. For this we consider $[\CP^1] = \{x = 0\}$, and write $\sigma=
\frac{1}{4}dd^c\text{log}\psi_j$, where 
$$
\psi_1 = \frac{|s_5|^2 +
\varepsilon \chi(|s_5|^2)(|x|^2 + |y|^2 + |z|^2)^5}{|y|^{10}},
$$
$$
\psi_2 = \frac{|s_5|^2 +
\varepsilon \chi(|s_5|^2)(|x|^2 + |y|^2 + |z|^2)^5}{|z|^{10}},
$$ are
homogeneous functions respectively defined on $\{y \neq 0\}$, $\{z \neq 0\}$,
such that

\begin{equation}
\frac{\psi_1}{\psi_2} = \frac{|z|^{10}}{|y|^{10}}
\end{equation}

We then divide $[\CP^1] = \{x = 0\}$ into two hemispheres $H_+$, $H_-$,  
contained in $\{y \neq 0\}$, $\{z \neq 0\}$, respectively, to compute

\begin{eqnarray}
\int_{[\CP^1]} \omega & = & \frac{1}{4}\int_{H_+} dd^c\text{log}\psi_1+ 
\frac{1}{4}\int_{H_-} dd^c\text{log}\psi_2 = \frac{1}{4}\int_{\del H_+} d^c\text{log}\psi_1 
+ \frac{1}{4}\int_{\del H_-} d^c\text{log}\psi_2 \nonumber \\
                                   & = &\frac{1}{4}\int_{\del H_+} d^c\text{log}\psi_1 - d^c\text{log}\psi_2 
= \frac{1}{4}\int_{\del H_+} d^c\text{log}\left(\frac{|z|^{10}}{|y|^{10}}\right) = \frac{5}{4}
\int_{\del H_+} d^c\text{log}\left(\frac{|z|^2}{|y|^2}\right) \nonumber \\
\end{eqnarray}
which by comparison with the same calculation for $\omega_{FS}$ is $5$ times the
area of $[\CP^1]$ with respect to the Fubini-Study form $\omega_{FS}$.
\end{proof}

In particular, taking $\varepsilon \to 0$ we get that $\sigma$ converges to a
distribution supported at $D_5 =\{s_5 = 0\}$. Now considering $\alpha$, $\beta$,
$H = [\CP^1]$ as cycles in $H_2(\CP^2, \CP^2 \setminus \mathcal{N})$ and
$[\sigma] \in H^2(\CP^2, \CP^2 \setminus \mathcal{N})$ we see that their
$\sigma$-areas are a constant $\pi$ times their intersection number with 
$D_5$, i.e.,
$\int_{\alpha} \sigma = \pi\alpha \cdot [D_5] = 0$, $\int_{\beta} \sigma = \pi
\beta \cdot [D_5] = 2\pi$ and $\int_{H} \sigma = \pi [\CP^1] \cdot [D_5] = 5\pi$.  

Then, since the ratio between the $\sigma$-area of $\beta$ and $H$ is $2/5 >
1/3$, we can choose a constant $K$, so that $[\hat{\omega}]
\cdot \beta = [\hat{\omega}] \cdot H/3$. Given this value of $K$, we can choose $\varepsilon$
small enough to ensure that $\hat{\omega}$ is nondegenerate and hence a
K\"{a}hler form for which $T^c_{r,0}$ is monotone Lagrangian. 
\end{proof}
  
  \subsection{The monotone $T(1,4,25)$ torus is exotic}
  
  We are now going to prove that the count of Maslov index 2 holomorphic discs
  is an invariant of monotone Lagrangian submanifolds. We will see that it
  suffices to show it is an invariant under deformation of the almost complex
  structure. Let $L$ be a monotone Lagrangian submanifold of a symplectic
  manifold $(X, \omega)$ and $J_s$, $s \in [0,1]$ a path of almost complex
  structures such that $(L,J_0)$, $(L,J_1)$ are regular, i.e., for $k = 0 , 1$,
  Maslov index 2 $J_k$-holomorphic discs are regular. Note that since $L$ is
  monotone there are no $J_s$-holomorphic discs of nonpositive Maslov index for
  any $s \in [0,1]$. Since $L$ is orientable, Maslov indexes are even, so the
  minimum Maslov index is 2. Therefore, there is no bubbling for Maslov index 2
  $J_s$-holomorphic discs bounded by $L$, when we vary $s$.
 
Consider then, for $\beta \in \pi_2(X,L)$, $\mu(\beta) = 2$, the moduli spaces
$\M(\beta, J_s)$ of $J_s$-holomorphic discs representing the class $\beta$
, modulo reparametrization.  
We choose
the path $J_s$ generically so that the moduli space 
$$
 \tilde{\M}(\beta) =
\bigsqcup_{s = 0}^{1} \M(\beta,J_s) 
$$ 
is a smooth manifold, with 

\begin{equation*}
\del \tilde{\M}(\beta) = \M(\beta,J_0) \cup \M(\beta,J_1).
 \end{equation*}

  \begin{lemma}\label{lemMon}
    If $L$ is oriented and monotone, the classes and algebraic count of Maslov index 2 $J$-holomorphic 
    discs with boundary on $L$ are independent of $J$, as long as $(L,J)$ is regular.
  \end{lemma}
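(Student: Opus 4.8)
The plan is a standard parametrized moduli space (cobordism) argument, building directly on the no-bubbling property just established for Maslov index $2$ discs. First I would connect the two regular almost complex structures $J_0$ and $J_1$ by a path $(J_s)_{s\in[0,1]}$ compatible with $\omega$, chosen generically relative to the fixed endpoints so that for every relative class $\beta\in\pi_2(X,L)$ with $\mu(\beta)=2$ the associated parametrized Cauchy--Riemann problem is transversely cut out; such paths are dense, and since $(L,J_0)$ and $(L,J_1)$ are already regular no perturbation of the endpoints is needed. Then, with one boundary marked point added, $\tilde{\M}_1(\beta):=\bigsqcup_{s\in[0,1]}\M_1(\beta,J_s)$ is a smooth manifold of dimension $n+\mu(\beta)-1=n+1$ (here $n=\dim L=2$), with boundary $\M_1(\beta,J_1)\,\sqcup\,\bigl(-\M_1(\beta,J_0)\bigr)$, the sign recording the reversed boundary orientation; the orientations are induced from the chosen spin structure on $L$ by the conventions of \cite{FOOO} (see also \cite{CHO}) already used above.

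Next I would check that $\tilde{\M}_1(\beta)$ is compact. By Gromov compactness a non-convergent sequence would limit to a stable disc with sphere and/or disc bubbles; but $L$ is orientable so its minimal Maslov number is at least $2$, $L$ is monotone, and every non-constant holomorphic sphere in $\CP^2$ has strictly positive first Chern number, so the principal component of such a limit would carry Maslov index $\leq 0$, which is impossible. This is precisely the no-bubbling statement recorded just before the lemma, so $\tilde{\M}_1(\beta)$ is compact. Moreover, every Maslov index $2$ disc has the single fixed energy $2M_L$, so the same compactness shows $\M(\beta,J_s)\neq\emptyset$ (for some $s$) for only finitely many classes $\beta$ with $\mu(\beta)=2$; hence there are only finitely many classes to track.

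Finally, the evaluation map at the marked point extends to a smooth $ev\colon\tilde{\M}_1(\beta)\to L$, so $\tilde{\M}_1(\beta)$ is a compact oriented bordism between $\M_1(\beta,J_0)$ and $\M_1(\beta,J_1)$ over which $ev$ is defined; since the degree of a map to the closed connected oriented $n$-manifold $L$ is a bordism invariant, $(ev_{J_0})_{\ast}[\M_1(\beta,J_0)]=(ev_{J_1})_{\ast}[\M_1(\beta,J_1)]$ in $H_n(L)$, i.e.\ $n_\beta(L,J_0)=n_\beta(L,J_1)$. Thus every coefficient $n_\beta(L)$ is independent of $J$, and so is the finite set of classes with $n_\beta(L)\neq 0$; since the symplectic areas and holonomies entering $z_\beta$ do not involve $J$, the whole superpotential is a $J$-independent invariant. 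I expect the only genuinely delicate points to be the compactness/no-bubbling step and the coherent orientation of the bordism with the stated boundary; both are essentially supplied here by monotonicity, orientability of $L$, positivity of $c_1$ on spheres in $\CP^2$, and the orientation scheme of the previous subsection, so the remaining argument is short, and transversality for the generic path is routine.
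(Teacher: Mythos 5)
Your overall strategy is exactly the paper's: connect $J_0$ to $J_1$ by a generic path, use monotonicity plus orientability of $L$ (minimal Maslov number $2$, hence no index-$0$ or negative-index configurations) to rule out bubbling, and read off the invariance of the counts from the resulting compact oriented cobordism $\tilde{\M}(\beta)=\bigsqcup_{s}\M(\beta,J_s)$. The compactness, orientation, and bordism-invariance-of-degree steps are all fine and match the paper.

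There is, however, one genuine gap: you assert that a generic path $(J_s)$ makes the parametrized problem ``transversely cut out'' and that ``such paths are dense,'' but the standard genericity theorems only give this for \emph{somewhere injective} discs — perturbing $J$ alone cannot achieve transversality for multiply covered curves. So before invoking genericity you must show that every Maslov index $2$ $J_s$-holomorphic disc with boundary on $L$ is somewhere injective. This is in fact the main content of the paper's proof: since $L$ is oriented and monotone, Maslov index $2$ discs have minimal area among all non-constant holomorphic discs, and by the decomposition theorem of Lazzarini \cite{Laz} and Kwon--Oh \cite{KOh} the image of any holomorphic disc contains the image of a somewhere injective one; minimality of area then forces a Maslov index $2$ disc to coincide with its underlying somewhere injective disc. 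Without this step (or an appeal to abstract perturbations, which would change the framework), the claim that the parametrized moduli space is a smooth manifold with the stated boundary is unjustified. A minor additional remark: your exclusion of sphere bubbles cites positivity of $c_1$ on spheres in $\CP^2$, whereas the lemma is stated for a general $(X,\omega)$; the correct general argument is that monotonicity of $L$ forces $c_1>0$ on any non-constant holomorphic sphere, so no separate hypothesis on $X$ is needed.
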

  
  \begin{proof} Since $L$ is oriented, there is no $J$-holomorphic discs with
  odd Maslov index and for $L$ monotone, the Maslov index of a disc is
  proportional to its area. Hence, Maslov index 2 $J$-holomorphic discs are the
  ones with minimal area, for any almost complex structure $J$. L. Lazzarini
  (\cite{Laz}) and Kwon-Oh (\cite{KOh}) proved that for any $J$-holomorphic disc
  $u$ with $u(\del\D) \subset L$, there is a somewhere injective $J$-holomorphic
  disc $v$, with $v(\D) \subset u(\D)$. Therefore, by minimality of area, Maslov
  index 2 discs are somewhere injective. By connectedness of the space of
  compatible almost complex structures we can consider $J_s$, $s \in [0,1]$ a
  generic path of almost complex structures such that $(L,J_0)$, $(L,J_1)$ are
  regular as above. The 
  result follows immediately from the cobordism $\tilde{\M}(\beta)$
  between $\M(\beta,J_0)$ and $\M(\beta,J_1)$. 
  \end{proof}
  
  \begin{theorem}
    If $L_0$ and $L_1$ are symplectomorphic monotone Lagrangian submanifolds
    of a symplectic manifold $(X, \omega)$, with an almost complex structure $J$
    so that $(L_0, J)$ and $(L_1,J)$ are regular, then algebraic counts of Maslov index 2 
    $J$-holomorphic discs, and in particular the numbers of different classes 
    bounding such discs, are the same.
  \end{theorem}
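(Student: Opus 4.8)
The plan is to reduce the statement to Lemma \ref{lemMon} by a transport-of-structure argument. First I would fix a symplectomorphism $\phi$ of $(X,\omega)$ with $\phi(L_0) = L_1$, and set $J' = \phi_*J$; since $\phi^*\omega = \omega$, the almost complex structure $J'$ is $\omega$-compatible (resp.\ $\omega$-tame) whenever $J$ is, and it is tautologically the case that $(L_1,J')$ is regular. Because $\phi$ is a symplectomorphism it intertwines the two $\bar\partial$-operators and their linearizations, so it carries $J$-holomorphic discs with boundary on $L_0$ bijectively onto $J'$-holomorphic discs with boundary on $L_1$; along the way it induces an isomorphism $\phi_* : \pi_2(X,L_0) \to \pi_2(X,L_1)$ preserving Maslov index and symplectic area, a diffeomorphism $\M(\beta,J) \cong \M(\phi_*\beta,J')$ of moduli spaces, and — after transporting to $L_1$ the spin structure chosen on $L_0$, as in the orientation discussion above and in \cite{FOOO, CHO} — an identification of those moduli spaces intertwining the evaluation maps and the chosen orientations. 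Hence for every class $n_\beta(L_0,J) = n_{\phi_*\beta}(L_1,J')$, and the set of classes bounding Maslov index $2$ discs is carried across by $\phi_*$.

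Next I would observe that $(L_1,J')$ and $(L_1,J)$ are two regular almost complex structures for the \emph{same} monotone Lagrangian $L_1$. The space of $\omega$-compatible (or merely $\omega$-tame) almost complex structures is connected, so $J'$ and $J$ may be joined by a generic path, and Lemma \ref{lemMon} applies verbatim to $L_1$: the classes carrying Maslov index $2$ holomorphic discs and the corresponding algebraic counts agree for $(L_1,J')$ and for $(L_1,J)$. Combining this with the previous paragraph yields, class by class under $\phi_*$, that the counts for $(L_0,J)$ and $(L_1,J)$ coincide; in particular the number of relative classes bounding Maslov index $2$ discs is the same, which is the assertion.

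The one point needing care, rather than being purely formal, is the orientation bookkeeping: the signed count $n_\beta$ depends on the choice of spin structure, so the equality of \emph{signed} counts should be read with the spin structure on $L_1$ taken to be $\phi_*$ of the one on $L_0$ (equivalently, the honest invariants are the unsigned counts and the number of classes). Granting the construction of orientations on moduli spaces of discs recalled above and in \cite{FOOO, CHO}, the required naturality of $\phi$ with respect to those orientations is immediate from the definitions, so I do not expect real difficulty there. Everything else — the absence of bubbling for Maslov index $2$ discs, somewhere-injectivity via \cite{Laz, KOh}, regularity of the endpoints, and the cobordism $\tilde{\M}(\beta)$ — is already supplied by Lemma \ref{lemMon} and its proof.
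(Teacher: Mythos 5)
Your proposal is correct and is essentially the paper's own argument: the paper likewise fixes a symplectomorphism carrying one torus to the other and applies Lemma \ref{lemMon} to the pair of regular almost complex structures $J$ and $\phi_*J$ on a single Lagrangian (the paper phrases it with $\phi(L_1)=L_0$ and $L=L_0$, which is the mirror image of your setup). The extra detail you supply on naturality of Maslov index, moduli spaces, and transported spin structures is a correct elaboration of what the paper leaves implicit.
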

  
  \begin{proof}
    Let $\phi: X \rightarrow X$ be a symplectomorphism with $\phi(L_1) = L_0$. 
    Apply Lemma \ref{lemMon} with $L = L_0$, $J_0 = J$, $J_1 = \phi_*J$.
  \end{proof}
  
 \begin{corollary}
   The monotone $T(1,4,25)$ torus is not symplectomorphic to either the 
  monotone Chekanov torus or the monotone Clifford torus.
 \end{corollary}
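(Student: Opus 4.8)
The plan is to read off, from the disc counts of Section~\ref{CHD} together with the invariance result just proved, that the three monotone tori under comparison bound holomorphic discs of Maslov index $2$ in \emph{different} numbers of relative homotopy classes. First I would transport the monotone $T(1,4,25)$ torus, which a priori lives in $(\CP^2,\hat\omega)$, into $(\CP^2,\omega_{FS})$ by the Moser symplectomorphism supplied by Proposition~\ref{monform}; being a symplectomorphism it carries a regular almost complex structure to a regular one and identifies the corresponding moduli spaces of holomorphic discs, hence it preserves the number of relative classes carrying Maslov index $2$ discs with nonzero algebraic count. It therefore suffices to compare this number for the three tori sitting inside $(\CP^2,\omega_{FS})$.

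By Proposition~\ref{bdisc} and Theorems~\ref{3fam} and~\ref{6fam}, together with the regularity established in Section~\ref{reg}, the monotone $T(1,4,25)$ torus bounds, for the standard complex structure $J_0$ (which is compatible with $\hat\omega=\omega+K\sigma$ since $\sigma$ is of the form $\frac{i}{2}\del\bar{\del}(\,\cdots)$ and $\hat\omega$ coincides with $\omega$ near $T^c_{r,0}$), holomorphic discs of Maslov index $2$ in exactly the $10$ classes $\beta$, $H-2\beta+m\alpha$ with $m\in\{0,1,2\}$, and $2H-5\beta+k\alpha$ with $k\in\{-1,0,1,2,3,4\}$; these are pairwise distinct because $\{\alpha,\beta,H\}$ is a basis of $\pi_2(\CP^2,T^c_{r,0})$ (Section~\ref{HomClass}), and each carries nonzero algebraic count (respectively $1$; $2,4,2$; and $1,5,10,10,5,1$, up to sign). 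By contrast, the monotone Clifford torus is regular for the Fubini--Study complex structure and bounds discs in the $3$ classes visible in the superpotential~\eqref{exWCLIF}, and the monotone Chekanov torus is regular and bounds discs in the $4$ classes visible in~\eqref{exWCHE}, as recalled in Section~\ref{MMS}.

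Finally, by Lemma~\ref{lemMon} the count $10$ (resp.\ $3$, resp.\ $4$) does not depend on the regular almost complex structure used to compute it, hence is an invariant of each monotone torus, and a symplectomorphism between two of them would force these invariants to agree. Since $10\neq 3$ and $10\neq 4$, the monotone $T(1,4,25)$ torus is symplectomorphic to neither the monotone Clifford torus nor the monotone Chekanov torus, which is the assertion. The only point requiring genuine care is the verification that $J_0$ is a \emph{regular} almost complex structure for the \emph{monotone} torus: this holds because the $J_0$-holomorphic discs of Maslov index $2$, together with their linearized operators, are the same for $\omega$ and for $\hat\omega$ --- both being K\"ahler for $J_0$ --- while monotonicity with respect to $\hat\omega$ ensures these are precisely the discs of minimal area, so that Lemma~\ref{lemMon} applies with no bubbling.
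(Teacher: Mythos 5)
Your argument is correct and is essentially the paper's own: the corollary follows from the preceding theorem (symplectomorphic monotone Lagrangians with regular $J$ have the same classes and algebraic counts of Maslov index 2 discs, via Lemma \ref{lemMon}) together with the counts $10$ for $T(1,4,25)$, $4$ for Chekanov, and $3$ for Clifford. Your added care about transporting the torus to $(\CP^2,\omega_{FS})$ and about regularity of $J_0$ for $\hat\omega$ only makes explicit what the paper leaves implicit.
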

 
 \begin{remark}

We can try to find an exotic torus in $\C^2$ by considering the $T(1,4,25)$ torus in
affine charts. If we restrict to the coordinate charts $\{y \neq 0\}$ or $\{z
\neq 0\}$, only the discs in the class $\beta$ remain. Hence we cannot
distinguish the $T(1,4,25)$ torus, considered in the charts $\{y \neq 0\}$ or $\{z
\neq 0\}$, from the usual Chekanov torus, which also bounds a single family of
holomorphic discs in $\C^2$. In the $\{x \neq 0\}$ coordinate chart, another
family of holomorphic discs in the class $2H -5\beta -\alpha$ remains present,
besides the one in the class $\beta$. This can be checked directly or just by
observing that the intersection numbers of the complex line $\{x = 0\}$ with
$H$, $\beta$, $\alpha$ are 1 , 0 and 2, respectively. Therefore our methods
cannot distinguish the $T(1,4,25)$ torus, considered in the chart $\{x \neq 0\}$,
from the usual Clifford torus in $\C^2$, which also bounds two families of
Maslov index 2 holomorphic discs in $\C^2$, whose boundaries also generate the
first homology group of the torus. 

\end{remark}
 
 \subsection{Floer Homology and non-displaceability}
   
 The modern way to show that a Lagrangian submanifold $L$ of a symplectic
 manifold $X$ is non-displaceable by Hamiltonian diffeomorphisms is to prove
 that its Floer homology $HF(L,L)$ is non-zero. The version of Floer Homology we
 use in this section to prove that $HF(T^c_{r,0}) \neq 0$ (for some choice of
 local system) is the Pearl Homology, introduced by Oh in \cite{Oh}. Here we
 will follow the definitions and notation similar to the ones given in \cite{BC,
 BC2}.

  \begin{figure}[h!] 

\begin{center}
\scalebox{0.5}{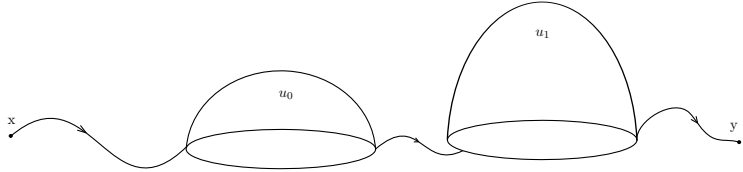}
 \end{center}
 \caption{A trajectory contributing to the differential of the pearl complex}
\label{figPearl}
\end{figure}
   
  Let $(X,\omega)$ be a symplectic manifold, $J$ a generic almost complex
  structure compatible with $\omega$, and $L$ a monotone Lagrangian submanifold,
  with monotonicity constant $M_L$. We also choose a $\C^*$ local system over L
  (we don't need to use the Novikov ring because $L$ is monotone, so the area
  of holomorphic discs is proportional to the Maslov index) and a spin structure
  to orient the appropriate moduli spaces of holomorphic discs.
  To define the pearl complex we fix a Morse function $f: L \to \R$ and a metric
  $\rho$ so that $(f, \rho)$ is Morse-Smale and we denote the gradient flow by
  $\gamma$. 
 
  The pearl
  complex $\mathcal{C}(L;f,\rho,J) = (\C[q,q^{-1}]\langle Crit(f) \rangle, d)$ is
  generated by the critical points of $f$, and the differential counts 
  configurations consisting of gradient flow lines of $\gamma$
  together with $J$-holomorphic discs as illustrated in Figure \ref{figPearl}.
  More precisely, for $|x| = ind_f(x)$, $|y| = ind_f(y)$, $B \in \pi_2(M,L)$,
  $\delta(x,y,B) = |x| - |y| - 1 + \mu(B)$:
  
\begin{equation}
    dx = \sum_{|y| = |x| - 1} \#
  \mathcal{P}(x,y,0)\cdot y \ + \sum_{\underset{B \neq 0}{\delta(x,y,B) = 0,}} (-1)^{|y|} \#
  \mathcal{P}(x,y,B)\text{hol}_{\nabla}(\del B)q^{\mu(B)}\cdot y
\end{equation}

  where $\nabla$ is the chosen
  local system and $\mathcal{P}(x,y,B)$ is the the moduli space of
  ``pearly trajectories", whose elements are gradient flow lines of $f$ from $x$
  to $y$ when $B = 0$, and otherwise tuples $(u_0,t_1,u_1,\cdots, t_k,
  u_k)$, $k \in \Z_{\geq 0}$ so that:
   
   \begin{itemize}
     \item[i.] For $0 \leq j \leq k$, $u_j$ is a non-constant $J$-holomorphic disc with boundary on 
     $L$, up to reparametrization by an automorphism of the disc fixing $\pm 1$.
     \item[ii.] $\sum_j  [u_j] = B$.
     \item[iii.] For $1 \leq j \leq k$, $t_j \in (0, +\infty)$, and
     $\gamma_{t_j}(u_{j-1}(1)) = u_j(-1)$.
     \item[iv.] $\gamma_{-\infty}(u_0(-1)) = x$, $\gamma_{+\infty}(u_k(1)) = y$. 
   \end{itemize}
 
%quotiented by the reparametrization group, which basically consists of moving the 
%holomorphic discs along the gradient flow line, and possibly merging
%discs that bubbles off a higher Maslov index disc ($\mu >0$).
The choice of spin structure on $L$ gives an 
 orientation for the moduli space of holomorphic discs, and together with the 
 orientation of the ascending and descending manifolds of each critical point of 
 $f$, one can get a coherent orientation for $\mathcal{P}(x,y,B)$. For a detailed
 account of how to orient the space of pearly trajectories, see appendix A.2.1 of \cite{BC2}. 
 
 We have a filtration given by the index of the critical point. For simplicity
 we write $\mathcal{C}_*(L)$ for $\mathcal{C}_*(L;f,\rho,J)$. Note that 
 $d = \sum_{j \geq 0} \delta_{2j}$, where $\delta_0: \mathcal{C}_*(L) \to 
 \mathcal{C}_{* -1}(L)$ is the Morse differential of the function $f$ and
  $\delta_{2j}: \mathcal{C}_*(L) \to \mathcal{C}_{* -1 + 2j}(L)$ considers 
  only configurations for which the total Maslov index is $2j$. This gives
  a spectral sequence (the Oh spectral sequence), converging to the Pearl homology, for which the 
  second page is the singular homology of $L$ with coefficients in $\C[q,q^{-1}]$.

 Let $\Lag$ be the space of $\C^*$ local systems in $L$ and consider
 the `superpotential' function $W: \Lag \to \C[q,q^{-1}]$, 
     
    \begin{equation} \label{m0_2}
   W(\nabla)  =  \sum_{\beta, \mu(\beta) = 2} n_{\beta} q^2
\text{hol}_{\nabla}(\del \beta) = \sum_{\beta, \mu(\beta) = 2} n_{\beta} z_\beta(\nabla) 
\end{equation}

where $z_\beta(\nabla) = q^{\frac{\int_{\beta} \omega}{M_L}} 
\text{hol}_{\nabla}(\del \beta) = q^2\text{hol}_{\nabla}(\del \beta)$ and
$n_\beta$ is the count of Maslov index two $J$-holomorphic discs bounded by
$L$ in the class $\beta$. 

Assume also that the inclusion map $H_1(L) \rightarrow H_1(X)$ is trivial, so we have that
the ring of regular functions on the algebraic torus $\Lag \cong \hom (H_1(L),\C^*)$ is generated by
the coordinates $z_j = z_{\beta_j}(\nabla)$ for relative classes $\beta_j$ such that 
$\del \beta_j$ generates $H_1(L)$.

The following result is the analogue of Proposition 11.1 of \cite{CHOOH} (see 
also section 12 of \cite{CHOOH}) in the pearly setting:

  \begin{proposition}
   Let $f$ be a perfect Morse function. Denote by $p$ the index 0 critical point,  
    by $q_1, \cdots, q_k$ the index 1 critical points, by $\Gamma_1, \cdots, 
    \Gamma_k$, the closure of the stable manifold of $q_1, \cdots, 
    q_k$, respectively,
    and by $\gamma_1, \cdots, \gamma_k$ the closure of the respective 
    unstable manifolds.
    Set $z_j = z_{\gamma_j}$. 
    Then 
    \begin{equation*}
     \delta_2 (p) = \sum_j \pm z_j\frac{\del W}{\del z_j} q_j 
    \end{equation*}
  \end{proposition}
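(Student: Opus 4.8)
The plan is to compute the coefficient of each index-1 critical point $q_j$ in $\delta_2(p)$ by unwinding the definition of the pearl differential. Since $f$ is a perfect Morse function, the Morse differential $\delta_0$ vanishes, so $\delta_2(p)$ is the first nontrivial contribution, and it counts pearly trajectories from $p$ to $q_j$ of total Maslov index $2$, which by the index formula $\delta(p,q_j,B) = 0$ forces exactly one $J$-holomorphic disc $u_0$ in a class $\beta$ with $\mu(\beta) = 2$, with a gradient flow line from $p$ into $u_0(-1)$ and a gradient flow line from $u_0(1)$ into $q_j$. Because $p$ is the minimum, its unstable manifold is all of $L$, so the first condition is vacuous (generically $u_0(-1)$ lies in the unstable manifold of $p$), and the configuration is governed by the requirement that $u_0(1)$ flows into $q_j$, i.e. $u_0(1)$ lies on the closure $\Gamma_j$ of the stable manifold of $q_j$.

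First I would fix $\beta$ with $\mu(\beta) = 2$ and identify the contribution of $\beta$ to the coefficient of $q_j$ as a signed count of pairs (a $J$-holomorphic disc $u_0$ in class $\beta$ up to reparametrization fixing $\pm 1$, together with the incidence $u_0(1) \in \Gamma_j$), weighted by $\mathrm{hol}_\nabla(\del\beta) q^2 = z_\beta(\nabla)$. Now $\Gamma_j$ is a codimension-1 cycle in $L$, and its homology class is Poincar\'e dual (over $L$) to the class of $\gamma_j$, the unstable manifold of $q_j$, whose closure represents a generator of $H_1(L)$; concretely, for a perfect Morse function the $\gamma_j$ form a basis of $H_1(L,\Z)$ and $z_j = z_{\gamma_j}$. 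The number of intersection points of the boundary circle $\del u_0$ with the cycle $\Gamma_j$, counted with sign, equals the intersection pairing $[\del\beta]\cdot[\gamma_j]$ in $L$. Passing a boundary marked point over $\Gamma_j$ and using that the evaluation map $ev_1\colon \M_1(\beta) \to L$ has degree $n_\beta$ onto $[L]$ (the orientation statement established in the Orientation subsection, together with $ev_*[\M(\beta)] = n_\beta[L]$), the count of the incidence configurations in class $\beta$ equals $n_\beta$ times $[\del\beta]\cdot[\gamma_j]$.

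Summing over all $\beta$ with $\mu(\beta) = 2$, the coefficient of $q_j$ becomes $\sum_{\beta} n_\beta \, ([\del\beta]\cdot[\gamma_j]) \, z_\beta(\nabla)$, up to an overall sign from the coherent orientation of the pearl space (the $(-1)^{|y|}$ in the differential and the orientation conventions of \cite{BC2} Appendix A.2.1). The final step is the bookkeeping identity: writing $z_\beta = \prod_k z_k^{a_k}$ with $a_k = [\del\beta]\cdot[\gamma_k]$ read off in the dual basis (here using the trivial inclusion $H_1(L)\to H_1(X)$, so $W$ is a genuine Laurent polynomial in the $z_k$), one has $z_j \frac{\del}{\del z_j} z_\beta = a_j z_\beta = ([\del\beta]\cdot[\gamma_j]) z_\beta$, hence $\sum_\beta n_\beta ([\del\beta]\cdot[\gamma_j]) z_\beta = z_j \frac{\del W}{\del z_j}$. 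Therefore $\delta_2(p) = \sum_j \pm z_j \frac{\del W}{\del z_j} q_j$, as claimed. The main obstacle I anticipate is the sign: one must check that the coherent orientation on $\mathcal{P}(p,q_j,\beta)$ induced by the spin structure and the orientations of the (un)stable manifolds matches the sign of the topological intersection number $[\del\beta]\cdot[\gamma_j]$ times $n_\beta$, and tracking this carefully is what forces the $\pm$ in the statement rather than a clean $+$; this is handled exactly as in Proposition 11.1 of \cite{CHOOH}, to which I would appeal for the detailed sign comparison.
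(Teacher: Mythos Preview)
Your overall strategy matches the paper's, but there is a real gap in the identification of the pearly trajectories. You assert that because $p$ is the minimum, its unstable manifold is all of $L$ and so the constraint at $u(-1)$ is vacuous. In the pearl complex the flow $\gamma$ is the \emph{negative} gradient flow (otherwise the Morse part of $d$ would not decrease index), and under this flow the unstable manifold of the index-$0$ point is $\{p\}$ itself; the condition $\gamma_{-\infty}(u(-1))=p$ therefore forces $u(-1)=p$. This is not cosmetic: with your ``vacuous'' reading the space of discs (mod $\mathrm{Aut}$ fixing $\pm1$) satisfying only $u(1)\in\Gamma_j$ is $2$-dimensional, so there is no finite signed count to speak of. The paper's count is obtained exactly from the point constraint $u(-1)=p$: the discs through $p$ contribute the factor $n_\beta$ (via $ev_*[\mathcal{M}_1(\beta)]=n_\beta[L]$), and for each such disc the boundary circle meets $\Gamma_j$ in $[\partial\beta]\cdot[\Gamma_j]$ signed points, giving $n_\beta\cdot([\partial\beta]\cdot[\Gamma_j])$.

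There is also a slip in the intersection number: you write $[\partial\beta]\cdot[\gamma_j]$, but the incidence is with the stable manifold $\Gamma_j$, so the relevant number is $[\partial\beta]\cdot[\Gamma_j]$. The cycles $\{\gamma_i\}$ and $\{\Gamma_j\}$ form \emph{dual} bases of $H_1(L)$ (with $[\gamma_i]\cdot[\Gamma_j]=\delta_{ij}$), hence the coefficient $a_j$ in $[\partial\beta]=\sum_j a_j[\gamma_j]$ equals $[\partial\beta]\cdot[\Gamma_j]$, not $[\partial\beta]\cdot[\gamma_j]$. With these two corrections your derivation of $z_j\,\partial W/\partial z_j$ from $\sum_\beta n_\beta\,a_j\,z_\beta$ is exactly the paper's argument.
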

  
  In particular $\delta_2(p) = 0$ precisely for the local systems corresponding to the critical points of $W$.
  
  \begin{proof} ({\it up to choice of orientations})
    We note that the only possible pearly trajectories contributing to the coefficient of 
    $q_j$ in $\delta_2(p)$ consist of a holomorphic discs $u$ with
    $u(-1) = p$ together with a flow line from $u(1)$ ending in $q_j$, i.e., $u(1) \in \Gamma_j$.
  Hence, 
    \begin{equation}\label{delta2}
       \delta_2(p) = \sum_{\beta, \mu(\beta) = 2} \pm n_{\beta} z_\beta(\nabla)
     \sum_j ([\del \beta]\cdot [\Gamma_j])  q_j 
    \end{equation}
   
     Since $[\gamma_1], \cdots, [\gamma_k]$ form a basis for $H_1(L)$, we can write
     $[\del \beta] = \sum_j a_j [\gamma_j]$, where $a_j = [\del \beta]\cdot 
     [\Gamma_j]$. So $z_\beta(\nabla)$ is a constant multiple of $\prod_j 
     z_j^{a_j}$, therefore \eqref{delta2} gives precisely  $\delta_2(p) = \sum_j \pm z_j\frac{\del W}{\del z_j} q_j$.
  \end{proof}
  
  %\begin{remark}
    %It can be shown that actually $\delta_2 = 0$ for critical points of $W$. 
   %\end{remark}
 \begin{corollary}
 Consider the monotone $T(1,4,25)$ torus $T^c_{r,0}$, endowed with the standard spin structure 
   and local system $\nabla$ such that $\text{hol}_{\nabla}(\del \beta) = \frac{9}{4} 
   e^{k\frac{2\pi}{3}i}$, for some $k \in \Z$, and $\text{hol}_{\nabla}(\del \alpha) 
   = \frac{1}{8}$, where $\alpha$ and $\beta$ are as defined in section 
   \ref{HomClass}. Then the Floer homology $HF(T^c_{r,0}, \nabla)$ is non-zero. Therefore 
    $T^c_{r,0}$ is non-displaceable.
 \end{corollary}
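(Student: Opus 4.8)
The plan is to exhibit each of the local systems $\nabla$ in the statement as a critical point of the mirror superpotential $W_{T(1,4,25)}$ of \eqref{WChe2}, and then to deduce $HF(T^c_{r,0},\nabla)\neq 0$ from the pearl complex together with the Proposition above. First note that the discs produced in Proposition~\ref{bdisc} and Theorems~\ref{3fam} and~\ref{6fam} are honest $J$-holomorphic discs for the integrable complex structure and were shown to be regular in Theorems~\ref{reg3fam} and~\ref{reg6fam}; hence neither these discs nor their algebraic counts change when $\omega$ is replaced by the monotone K\"ahler form $\hat\omega$ of Proposition~\ref{monform} (which is K\"ahler for the same $J$), and by Lemma~\ref{lemMon} these counts agree with the ones computed from a generic $J$. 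Since $T^c_{r,0}$ is $\hat\omega$-monotone and orientable with $\mu(\beta)=2$, $\mu(\alpha)=0$, all Maslov index $2$ classes have the same $\hat\omega$-area; so, writing $a=\mathrm{hol}_\nabla(\del\beta)$ and $b=\mathrm{hol}_\nabla(\del\alpha)$, the superpotential becomes
\[
  W(\nabla)\;=\;q^{2}\left(\,a\;+\;\frac{2(1+b)^{2}}{a^{2}}\;+\;\frac{(1+b)^{5}}{a^{5}b}\,\right).
\]

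I would then compute the critical points of $W$ on $\Lag\cong\hom(H_1(T^c_{r,0}),\C^{*})$ (using that $\{\del\alpha,\del\beta\}$ is a basis of $H_1$; the overall factor $q^{2}$ and the relabeling $z_\beta=q^{2}a$, $z_\alpha=b$ are irrelevant for this). From $\partial_bW=0$, assuming $1+b\neq 0$, one gets $a^{3}=(1+b)^{3}(1-4b)/(4b^{2})$; substituting into $\partial_aW=0$ collapses, after simplification, to $(1-4b)^{2}=16b^{2}$, hence $b=\tfrac{1}{8}$, and then $a^{3}=729/64$, i.e.\ $a=\tfrac{9}{4}\,e^{2\pi i k/3}$ for $k\in\Z$ (the degenerate cases $b=0$, $1+b=0$, $1-4b=0$ give no solutions). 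Thus the critical locus of $W$ is exactly the set of local systems in the statement. By the preceding Proposition, for a perfect Morse function $f$ with index-$0$ critical point $p$ and index-$1$ critical points $q_1,q_2$ one has $\delta_2(p)=\sum_j\pm\,z_j\,\tfrac{\del W}{\del z_j}\,q_j$, which vanishes precisely at the critical points of $W$, in particular at our $\nabla$.

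To finish, observe that $\delta_0=0$ since $f$ is perfect, while $\delta_{2j}=0$ for $j\geq 2$ since, $\dim T^c_{r,0}$ being $2$, every moduli space $\mathcal{P}(x,y,B)$ with $\mu(B)=2j\geq 4$ has expected dimension $|x|-|y|-1+2j\geq 1$ and thus does not contribute to $d$. Hence $dp=\delta_2(p)=0$, so $p$ is a cycle; and $p$ is not a boundary, because the only way a degree-$0$ element can lie in the image of $d$ is as $\delta_0$ of a degree-$1$ element (there being none of degree $-1$), and $\delta_0=0$. Therefore $[p]\neq0$ in $HF(T^c_{r,0},\nabla)$, so this Floer homology is non-zero, and the non-displaceability of $T^c_{r,0}$ by Hamiltonian diffeomorphisms follows from the standard fact that a Lagrangian with non-vanishing Floer homology cannot be displaced. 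The one point that needs genuine care --- bookkeeping, not a real obstacle --- is matching $W$ above to the exact counts \emph{and} signs of the discs from Section~\ref{CHD}: the orientation computation there is precisely what forces all coefficients positive, and one must also check that the inflation to $\hat\omega$ leaves the disc count unchanged.
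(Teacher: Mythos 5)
Your proposal is correct and follows essentially the same route as the paper: identify the given local system as the critical locus of $W_{T(1,4,25)}$ (your computation $b=\tfrac18$, $a^3=\tfrac{729}{64}$ matches), invoke the preceding Proposition to get $\delta_2(p)=0$ there, and use $\dim T^c_{r,0}=2$ to kill the higher $\delta_{2j}$. You are in fact slightly more careful than the paper in spelling out why $p$ is not a boundary and why the disc counts from Section \ref{CHD} persist for the monotone form $\hat\omega$ and a generic $J$; these points are left implicit in the paper's one-line conclusion.
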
 
 
 \begin{proof}
   Since $T^c_{r,0}$ has dimension 2, all the boundary maps $\delta_{2j}$ are 
   zero for $j \geq 2$. Hence the pearl homology $HF(T^c_{r,0}, \nabla)$ is the homology of 
   $(H_*(T^c_{r,0})\otimes \C[q,q^{-1}], \delta_2)$. Writing $u = z_\beta$ and $w = z_\alpha$, the `superpotential' 
 is given by
   \begin{equation*}
    W_{T(1,4,25)} = u + 2\frac{q^{6}}{u^2}(1 + w)^2 + 
 \frac{q^{12}}{u^5w}(1 + w)^5
  \end{equation*}
  
  The result follows from computing the critical points of $W_{T(1,4,25)}$
 which are $w = \frac{1}{8}$, $u = \frac{9}{4} e^{k\frac{2\pi}{3}i}q^2$.
   \end{proof}
  
  \begin{remark}
   It can be shown that in fact for any monotone Lagrangian torus $\delta_2 = 0$ for the local systems
$\nabla$ which are critical points of $W$, so $HF(T^c_{r,0}, \nabla) \cong 
    H_*(T^c_{r,0})\otimes \C[q,q^{-1}]$.
  \end{remark}
  
  \section{Prediction for $\CP^1\times\CP^1$} \label{TorCP1xCP1}

In this section we apply the same techniques of sections \ref{Pred}, \ref{ExoTor} 
to predict the existence of an exotic monotone torus in $\CP^1 \times \CP^1$
 bounding 9 families of Maslov index 2 holomorphic discs, hence
 not symplectomorphic to the Clifford or Chekanov ones. 

We use coordinates $((x : w), (y : z))$ on $\CP^1 \times \CP^1$. We smooth two
corners of the divisor $\{xwyz = 0\}$. First considering the fibration given
by $f_0((x : w), (y : z)) = \frac{xy}{wz}$ and $D_2 =\{\xi = 0\}$, for $\xi =
\frac{xy - wz}{2t}$ and $t$ a positive real number, we smooth the corner of $\{xy
= 0\}$ to get a new divisor $D_2 \cup \{wz=0\}$. Then using $f = \frac{z \xi}{w
y^2}$, we smooth the corner of $D_2 \cup \{z=0\}$ and get a new 
anticanonical divisor $D = f^{-1}(c) \cup \{w = 0\}$, for $c$ a positive real
number. Then we consider a similar singular Lagrangian torus fibration on the
complement of $D$. In particular we define the $T(1,2,9)$ (see remark \ref{rmk}) type torus:

\begin{definition}
Given $c> r >0$ and $\lambda \in \mathbb{R}$, 
\begin{equation}
  T^c_{r,\lambda}  = \left\{ ((x : w), (y : z)) ; \left|\frac{z\xi}{wy^2} - c \right| = r ;
 \left|\frac{z}{y}\right|^2 - \left|\frac{\xi}{wy}\right|^2 = \lambda \right\},  
\end{equation}
\end{definition}
which is Lagrangian for a symplectic form similar to \eqref{sympform}. 

\begin{remark} \label{rmk} The role of the parameter $t$ in the definition of
$\xi$ is less obvious than in the case of $\CP^2$, since here it amounts to a
single rescaling. However as in the case of $\CP^2$, its presence is motivated
by considerations about degenerations. More precisely the choice of $\xi =
\frac{xy - wz}{2t}$ is based on a degeneration of $\CP^1 \times \CP^1$ to
$\CP(1,1,2)$, which can be embedded inside $\CP^3$; see Proposition 3.1 of
\cite{DA09}. Applying two nodal trades to the standard polytope of $\CP^1 \times
\CP^1$ and redrawing the almost toric diagram as in Figure \ref{Amt}, we see that
the Chekanov torus corresponds to the central fiber of $\CP(1,1,2)$, and the 
torus  fiber obtained by lengthening both cuts to pass through the central fiber of 
$\CP^1 \times \CP^1$ corresponds to the central fiber of $\CP(1,2,9)$, 
therefore we denote it by $T(1,2,9)$.
\end{remark}

We then proceed as in section \ref{Pred} to predict the number of families of
Maslov index 2 holomorphic discs this torus should bound, at least for some
values of $t$, $c$ and $r$.

It is known that the Clifford torus bounds four families of Maslov index 2
holomorphic discs in the classes $\beta_1$, $\beta_2$, $H_1 -\beta_1$, $H_2
-\beta_2$, where $\beta_1= [\D\times\{1\}]$ and $\beta_2 = [\{1\}\times\D]$ seen
in the coordinate chart $y= 1$, $w=1$ and $H_{1} = [\CP^1]\times\{pt\}$ and $H_2
= \{pt\}\times [\CP^1]$. On the almost toric fibration illustrated on Figure
\ref{figCP1xCP1}, it is located in the top chamber and has superpotential given
by 

\begin{equation}
 W_{Clif} = z_1 + z_2 + \frac{e^{-A}}{z_1} +
\frac{e^{-B}}{z_2}, 
\end{equation}

where $z_1$ , $z_2$ are the coordinates associated with $\beta_1$, $\beta_2$, 
$A = \int_{[\CP^1]\times\{pt\}} \omega$, $B = \int_{\{pt\}\times [\CP^1]} 
\omega$. (For a monotone symplectic form $A = B$.)

The first wall-crossing towards the Chekanov type tori gives rise to the change of coordinates 
$z_1 = v_1(1 + \w)$, $z_2 = v_2(1 + \w)^{-1}$, where $\w = e^{-A}/z_1z_2 = e^{-A}/v_1v_2$.
Hence the superpotential becomes

\begin{equation}
  W_{Che} = v_2 + v_1(1 + \w) + e^{-B}\frac{(1 + \w)}{v_2} = v_1 + v_2 + \frac{e^{-A}}{v_2} 
  + \frac{e^{-B}}{v_2} + \frac{e^{-A -B}}{v_1v_2^2}.
\end{equation}

\begin{figure}[h!]
\begin{center}
\scalebox{0.8}{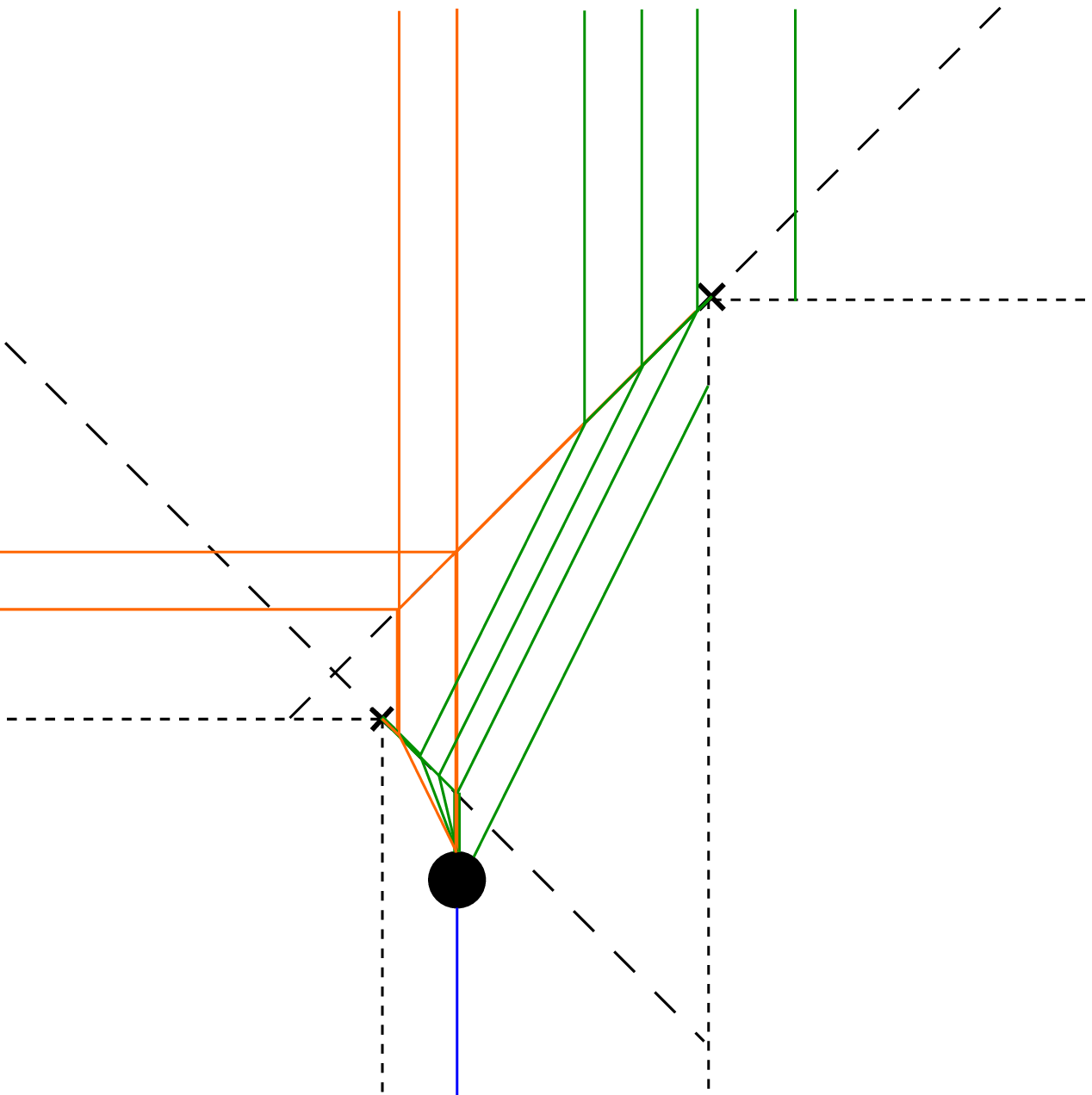}
 \end{center}
 \caption{A $T(1,2,9)$ type torus in $\CP^1 \times \CP^1$
 bounding 9 families of Maslov index 2 holomorphic discs. 
 The superpotential is given by $ W_{T(1,2,9)} = u_2 +  \frac{e^{-A}}{u_2} + \frac{e^{-B}}{u_2} +
  \frac{e^{-A}u_1}{u_2^2} + \frac{e^{-B}u_1}{u_2^2}
 + \frac{e^{-A - B}}{u_1u_2^2} + 3\frac{e^{-A - B}}{u_2^3}  + 3\frac{e^{-A - B}u_1}{u_2^4} + 
 \frac{e^{-A - B}u_1^2}{u_2^5}$.}
\label{figCP1xCP1}
\end{figure}

Crossing now the other wall towards the $T(1,2,9)$ type tori, we get the change 
of coordinates $u_1 = v_1(1 + w)$, $u_2 = v_2(1 + w)$, $w = v_1/v_2 = 
u_1/u_2$.
\newpage
The superpotential is then given by

\begin{eqnarray}
  W_{T(1,2,9)} & = & u_2 + (e^{-A} + e^{-B})\frac{1 + w}{u_2} + e^{-A - B}\frac{(1 + w)^3}{u_1u_2^2} \nonumber \\
  & = & u_2  + \frac{e^{-A}}{u_2} + \frac{e^{-B}}{u_2} +
   \frac{e^{-A}u_1}{u_2^2} + \frac{e^{-B}u_1}{u_2^2} \nonumber \\ 
 & & +  \frac{e^{-A - B}}{u_1u_2^2} + 3\frac{e^{-A - B}}{u_2^3}  + 3\frac{e^{-A - B}u_1}{u_2^4} + 
 \frac{e^{-A - B}u_1^2}{u_2^5}. 
\end{eqnarray}

 \subsection{The homology classes} \label{HomClassP1xP1}
 
 We consider the torus $T^c_{r,0}$. Using notation similar to that in section \ref{CHD}, let's call $\beta$
 the class of the Maslov index 2 holomorphic disc lying on the conic $z = \xi$
 that projects into the region $|f - c| \leq r$, and $\alpha$ the Lefschetz 
 thimble  associated to the critical point of $f$ at the origin lying above the 
 segment $[0, c - r]$ (oriented to intersect positively with $\{z = 0\}$). 
 
 As before we use positivity of intersection with some complex curves to restrict 
 the homology classes. 
 
 \begin{lemma}\label{HomTabP1xP1} 
For fixed $c$ and $r$, for $t$ sufficiently small, the intersection number of the
 classes $\alpha$, $\beta$, $H_1$ and $H_2$ with the varieties $\{x =0\}$,
 $\{y =0\}$, $\{w = 0\}$, $\{z =0\}$, $D_3 = f^{-1}(c) \cup \{((0:1),(1:0)),
 ((1:0),(1:0))\}$, $D_2 = \{\xi =0\}$, $D'_3 = \{x\xi - cw^2y = 0\}$, $D_6 = D_3
 \cup D'_3$ (all of them disjoint from $T^c_{r,0}$) and their Maslov indeces $\mu$, 
 are as giving in the table below:
 
 \begin{center}
 \begin{tabular}{ | c || c | c | c | c | c | c | c | c || c |}
  
\hline
    Class & $x = 0$ & $y = 0$ & $w = 0$ & $z = 0$ & $D_2$ & $D_3$ & $D'_3$ & $D_6$ & $\mu$  \\ 
   \hline 
    $\alpha$ & 1 & 0 & 0 & 1  & -1 & 0 & 0 & 0 &  0 \\ \hline
    $\beta$   & 0 & 0 & 0 & 0  &  0 & 1  & 1 & 2 & 2 \\   \hline
      $H_1$   & 1 & 0 & 1 &  0  &  1 & 1 & 2 & 3 & 4 \\   \hline
      $H_2$     & 0 & 1 & 0 &  1  &  1 & 2 & 1 & 3 & 4 \\   \hline
  \end{tabular}
\end{center}
\end{lemma}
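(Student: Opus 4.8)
The plan is to follow the proof of Lemma \ref{HomTab} almost verbatim, separating the entries of the table into three kinds. First, the columns $\{x=0\}$, $\{y=0\}$, $\{w=0\}$, $\{z=0\}$, $D_2$, $D_3$ against the two generators $H_1=[\CP^1]\times\{pt\}$ and $H_2=\{pt\}\times[\CP^1]$ are pure bidegree arithmetic, since $\CP^1\times\CP^1$ has Picard rank two. Working in the chart $w=y=1$, where $\xi=(x-z)/2t$, one has: $\{x=0\},\{w=0\}$ are in class $H_2$, $\{y=0\},\{z=0\}$ in class $H_1$, $D_2=\{\xi=0\}$ is a $(1,1)$-curve, $D_3=\overline{f^{-1}(c)}=\overline{\{z\xi-cwy^2=0\}}$ a $(1,2)$-curve, $D'_3=\{x\xi-cw^2y=0\}$ a $(2,1)$-curve, and hence $D_6=D_3\cup D'_3$ a $(3,3)$-curve; the $H_1$ and $H_2$ rows follow at once. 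The Maslov-index column then comes from Lemma \ref{MI} applied to the anticanonical divisor $D=D_3\cup\{w=0\}$ (class $(2,2)$): $\mu(\gamma)=2\,\gamma\cdot D=2(\gamma\cdot D_3+\gamma\cdot\{w=0\})$.

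Second, the entries for $\alpha$ and $\beta$ against $\{x=0\},\dots,D_3$ are computed from explicit parametrizations, exactly as in Lemma \ref{HomTab}. The thimble $\alpha$, in the chart $w=y=1$, is $\{z=\rho e^{i\psi},\ \xi=\rho e^{-i\psi},\ x=z+2t\xi\}$ with $f=z\xi=\rho^2\in[0,c-r]$; its boundary lies on $T^c_{r,0}\subset\{wy\neq0\}$, and near its apex it is the totally real plane $\{\xi=\bar z\}$. Thus $\alpha$ meets $\{z=0\}$, $\{x=0\}$ and $D_2=\{\xi=0\}$ only at the apex, transversally, and a short local orientation computation — normalizing the orientation of $\alpha$ so that $\alpha\cdot\{z=0\}=+1$ — yields the signs $+1$, $+1$, $-1$ in the table; $\alpha$ meets neither $\{y=0\}$, $\{w=0\}$, nor $D_3$ (on its affine part $f\in[0,c-r]$ avoids $c$, while the two points compactifying $f^{-1}(c)$ lie on $\{w=0\}$). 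The disc representing $\beta$, obtained exactly as in Proposition \ref{bdisc}, lies on the conic $\{z=\xi\}$ with $x=(1+2t)z$ and $f=z^2$ running over $\{|f-c|\le r\}$; it avoids $\{x=0\},\{y=0\},\{w=0\},\{z=0\},D_2$ (where $z$, $\xi$, or $x$ vanishes, forcing $f=0\notin\{|f-c|\le r\}$) and meets $D_3$ transversally once at $f=c$.

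Third — and this is the only step that takes real work — are the columns $D'_3$ and $D_6$: one must check that $D'_3$ is disjoint from $T^c_{r,0}$ and from $\alpha$ (so the intersection numbers are topologically well defined) and then compute $\beta\cdot D'_3$. This is the precise analogue of the $D_5$ argument in Lemma \ref{HomTab}. I would use that $T^c_{r,0}$, $\alpha$ and the $\beta$-disc all lie in the one-parameter family of conics $\mathcal{C}_\theta=\{z=e^{i\theta}\xi\}$; on $\mathcal{C}_\theta$, writing everything in terms of $f=z\xi$ gives $x=\xi(e^{i\theta}+2t)$ and hence $x\xi=f(1+2te^{-i\theta})$, so $D'_3\cap\mathcal{C}_\theta$ is the single point with $f=c/(1+2te^{-i\theta})$, which satisfies $|f-c|\le 2tc/(1-2t)<r$ once $t$ is small relative to $c$ and $r$. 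Since $|f-c|=r$ on $T^c_{r,0}$ and $f\in[0,c-r]$ on $\alpha$, this forces $D'_3\cap T^c_{r,0}=D'_3\cap\alpha=\emptyset$; imposing the same constraint on the $\beta$-disc (where in addition $\mathrm{Re}\,z>0$) leaves one point, counted positively, so $\beta\cdot D'_3=1$ and $\beta\cdot D_6=\beta\cdot D_3+\beta\cdot D'_3=2$. The main obstacle is thus localized entirely in this last estimate — choosing $t$ small enough that all of $D'_3$ lands strictly inside the disc cut out by the wall — together with the orientation bookkeeping behind the single negative entry $\alpha\cdot D_2=-1$; everything else is bidegree arithmetic and Lemma \ref{MI}.
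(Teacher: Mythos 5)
Your proposal is correct and follows essentially the same route as the paper: bidegree/Bezout arithmetic for the $H_1,H_2$ rows and the Maslov column, explicit parametrizations of the thimble and the $\beta$-disc for the coordinate divisors, $D_2$ and $D_3$, and the family of conics $\mathcal{C}_\theta=\{z=e^{i\theta}\xi\}$ to locate $\{x=0\}\cap\mathcal{C}_\theta$ and to show $D'_3\cap\mathcal{C}_\theta$ sits at $f=c/(1+2te^{-i\theta})$, inside $|f-c|<r$ for small $t$. The only (immaterial) differences are that you check $\alpha\cdot\{x=0\}$ directly on the thimble rather than via $\mathcal{C}_\theta$, and you supply the quantitative bound $|f-c|\le 2tc/(1-2t)$ that the paper leaves implicit.
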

 
 \begin{proof}
  
The intersection numbers of $H_1$ and  $H_2$ with the given complex curves are computed 
using Bezout's theorem, and the Maslov index is twice the intersection number with the anticanonical
divisor $D = D_3 \cup \{w = 0\}$.

  By construction, the intersection of $\alpha$ with $\{z =0\}$ is one, with
  $D_2$ is negative one and with $D_3$, $\{y =0\}$ and $\{w =0\}$ it is clearly zero,
  as well as the intersection of $\beta$ and $T^c_{r,0}$ with $\{ywz =0\}$ and $D_2$.
  Also clear is the intersection of $\beta$ with $D_3$.
  
  To understand the intersection of the torus $T^c_{r,0}$, $\alpha$ and $\beta$
  with $\{x =0\}$ and $D'_3$, we look at the family of conics $\mathcal{C} = \{z
  = e^{i\theta}\xi; \theta \in [0, 2\pi]\}$ containing $T^c_{r,0}$, the thimble representing
  the class $\alpha$ and holomorphic discs representing the class $\beta$,
  similar to the ones in Proposition \eqref{eqD5} (for instance one where $z =
  \xi$, and $Re(z) > 0$). We use the coordinate chart $y =1$, $w =1$.
  
  For $\{x =0\} \cap \mathcal{C}$, we have $z = e^{i \theta}\xi = -e^{i\theta} 
 z/2t $, so the intersection is only at $z =0$, for $t$ small enough, and 
  with same sign as for $\{z =0\}$, since $x$ is a multiple of $z$ along 
  $\mathcal{C}$.
  
  For $D'_3 \cap \mathcal{C}$, we note that $x = z(2te^{-i\theta} + 1)$ along   
  $\mathcal{C}$. Considering $f = z\xi$, we have
  
  \begin{equation*}
   0 = x\xi -c = z(2te^{-i\theta} + 1)\xi - c = f(2te^{-i\theta} + 1) - c
  \end{equation*}
  
   So, $f = \frac{c}{(2te^{-i\theta} + 1)}$, so we see that for $t$ very small,      
 $D'_3 \cap \mathcal{C}$ intersects in a circle projecting via $f$ inside the 
 region $|f - c| < r$, therefore $D'_3$ intersects $T^c_{r,0}$, $\alpha$ and 
  $\beta$ respectively at $0$, $0$, and $1$ point (counting positively as $D'_3$ 
  and our representative of the $\beta$ class are complex curves). 
 \end{proof}
 \begin{remark}
   $D_6$ was found by considering the degeneration of $\CP^1\times\CP^1$ to
   $\CP(1,1,2)$ in a similar manner as in section \ref{HomClass} $D_5$ was
   found using the degeneration of $\CP^2$ to $\CP(1,1,4)$. Here it turns out
   that $D_6 = D_3 \cup D'_3$. 
   \end{remark}
   
   \begin{lemma} \label{lemHomClassP1xP1}
The only classes in $\pi_2(\CP^1\times\CP^1, T_{r,0})$ which may contain holomorphic discs of 
Maslov index 2 are $\beta$, $H_1 - \beta$, $H_2 - \beta$, $H_1 - \beta + \alpha$, 
 $H_2 - \beta + \alpha$ and $H_1 + H_2 -3\beta+ k\alpha$, $-1 \leq k \leq 2$.
\end{lemma}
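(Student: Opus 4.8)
The plan is to follow the proof of Lemma \ref{lemHomClass} almost verbatim, with the single hyperplane class $H$ of $\CP^2$ replaced by the two ruling classes $H_1,H_2$ of $\CP^1\times\CP^1$, and with the intersection data supplied by Lemma \ref{HomTabP1xP1}.

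First I would record that $\{\alpha,\beta,H_1,H_2\}$ is a $\Z$-basis of $\pi_2(\CP^1\times\CP^1,T^c_{r,0})$: the long exact sequence of the pair gives a short exact sequence $0\to\pi_2(\CP^1\times\CP^1)\to\pi_2(\CP^1\times\CP^1,T^c_{r,0})\to H_1(T^c_{r,0})\to 0$, so this group is free of rank $4$; the classes $H_1,H_2$ span the image of $\pi_2$, while $\partial\alpha,\partial\beta$ generate $H_1(T^c_{r,0})$ by construction (this is the analogue of the basis statement made after Proposition \ref{bdisc} in the $\CP^2$ case). Writing a general class as $\gamma=a\alpha+b\beta+cH_1+dH_2$ with $a,b,c,d\in\Z$, and reading the Maslov indices $\mu(\alpha)=0$, $\mu(\beta)=2$, $\mu(H_1)=\mu(H_2)=4$ off the last column of Lemma \ref{HomTabP1xP1}, the requirement $\mu(\gamma)=2$ becomes $b+2c+2d=1$, i.e.\ $b=1-2c-2d$, so that $\gamma=\beta+a\alpha+c(H_1-2\beta)+d(H_2-2\beta)$.

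The heart of the argument is positivity of intersections. Each of the complex curves $\{x=0\}$, $\{y=0\}$, $\{w=0\}$, $\{z=0\}$, $D_2$, $D_3$, $D_3'$ is disjoint from $T^c_{r,0}$ by Lemma \ref{HomTabP1xP1}, so a non-constant $J$-holomorphic disc $u$ with $\partial u\subset T^c_{r,0}$ cannot have its image contained in any of them (otherwise $\partial u$ would meet that curve), whence $[u]\cdot C\ge 0$ for each such $C$. Substituting $b=1-2c-2d$ into the intersection numbers from the table gives $[\gamma]\cdot\{w=0\}=c\ge 0$, $[\gamma]\cdot\{y=0\}=d\ge 0$, $[\gamma]\cdot D_3=1-c\ge 0$ and $[\gamma]\cdot D_3'=1-d\ge 0$, hence $c,d\in\{0,1\}$; moreover $[\gamma]\cdot\{x=0\}=a+c\ge 0$, $[\gamma]\cdot\{z=0\}=a+d\ge 0$ and $[\gamma]\cdot D_2=c+d-a\ge 0$, so $-\min(c,d)\le a\le c+d$. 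Running through the four cases $(c,d)\in\{(0,0),(1,0),(0,1),(1,1)\}$: for $(0,0)$ one is forced to $a=0$, giving $\beta$; for $(1,0)$ one gets $a\in\{0,1\}$, giving $H_1-\beta$ and $H_1-\beta+\alpha$; for $(0,1)$ one symmetrically gets $H_2-\beta$ and $H_2-\beta+\alpha$; and for $(1,1)$ one gets $a\in\{-1,0,1,2\}$, giving $H_1+H_2-3\beta+k\alpha$ with $-1\le k\le 2$. This is exactly the asserted list.

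I do not expect a serious obstacle once Lemma \ref{HomTabP1xP1} is in hand; the real content is that the curves $D_3$ and $D_3'$ (not merely the toric coordinate divisors) are available and disjoint from $T^c_{r,0}$ — without them one would only obtain $c,d\ge 0$ and $a\le c+d$, hence infinitely many candidate classes. The one point to handle with care, as in the $\CP^2$ case, is that $D_6=D_3\cup D_3'$ has self-intersections at its two singular points, which is precisely why one uses the smooth components $D_3$ and $D_3'$ separately when invoking positivity of intersection; everything else is the bookkeeping of tabulating $[\gamma]\cdot C$.
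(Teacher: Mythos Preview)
Your proof is correct and follows essentially the same approach as the paper: writing a Maslov index $2$ class as $\beta + a\alpha + c(H_1-2\beta) + d(H_2-2\beta)$ and then reading off the inequalities $c,d\in\{0,1\}$ and $-\min(c,d)\le a\le c+d$ from positivity of intersection with the curves in Lemma \ref{HomTabP1xP1}. You give slightly more detail (the basis argument via the long exact sequence and the explicit case enumeration), but the argument is the same.
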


\begin{proof}

Maslov index $2$ classes must be of the form $\beta + k\alpha + m(H_1 -
2\beta) + n(H_2 - 2\beta)$. Considering positivity of intersections with complex 
curves the proof follows from the inequalities for $k$, $m$ and $n$ given by the 
table:
 \begin{center}
 \begin{tabular}{ | c || c | c | c | c | c | c | c |}
  
\hline
Curve & $x = 0$ & $y = 0$ & $w = 0$ & $z = 0$ & $D_2$ & $D_3$ & $D'_3$  \\ 
   \hline 
Inequality & $-m \leq k$ & $0 \leq n$ & $0 \leq m$ & $-n\leq k$  & $k \leq m + n$ & $m \leq 1$ & $n \leq 1$ \\ \hline
  \end{tabular}
\end{center}

\end{proof}
 
 \subsection{The monotone torus}
 
 In order to make $T^c_{r,0}$ a monotone Lagrangian torus, we deform our
 symplectic form using K\"{a}hler inflation in neighborhoods of complex curves
 that don't intersect $T^c_{r,0}$,in a similar way as we did in section
 \ref{MonTor}. First one can inflate along $\{y = 0\}$ or $\{w = 0\}$ to get a
 monotone K\"{a}hler form $\tilde{\omega}$ for $\CP^1 \times \CP^1$, i.e.,
 $\int_{H_1}\tilde{\omega} = \int_{H_2}\tilde{\omega}$, for which
 $\int_{\alpha}\tilde{\omega} = 0$ . In order for $T^c_{r,0}$ to be monotone, we
 need a K\"{a}hler form $\hat{\omega}$, satisfying the same conditions as
 $\tilde{\omega}$ plus $\int_{H_2}\hat{\omega} = 2\int_{\beta}\hat{\omega}$.
 Noting that the intersection numbers of $D_6$ with $\alpha$, $\beta$, $H_1$,
 $H_2$ are 0 ,2, 3 and 3, respectively, we can get $\hat{\omega}$ by adding a
 specific multiple of a 2-form supported on a neighborhood of $D_6$ to
 $\tilde{\omega}$ as in Proposition \ref{monform}, so as to satisfy $\int_{\alpha}\hat{\omega} = 0$
 and $\int_{H_1}\hat{\omega} = \int_{H_2}\hat{\omega} = 2\int_{\beta}\hat{\omega}$. 
 The last equality can be achieved because the ratio between the intersection numbers 
 $[D_6]\cdot H_1 = [D_6]\cdot H_2$ and $[D_6]\cdot \beta$  is 2/3 which is greater 
 than 1/2. 
 
 Therefore one only need to compute the expected Maslov index 2 holomorphic discs
 in the classes $\beta$, $H_1 - \beta$, $H_2 - \beta$, $H_1 - \beta + \alpha$, 
 $H_2 - \beta + \alpha$ and $H_1 + H_2 -3\beta+ k\alpha$, $-1 \leq k \leq 2$ to 
 prove:
\begin{conjecture}
  There is a monotone $T(1,2,9)$ torus, of the form $T^c_{r,0}$, in $\CP^1\times\CP^1$, 
  bounding 9 families of Maslov index 2 holomorphic discs, that is not symplectomorphic to the 
  monotone Chekanov torus nor to the monotone Clifford torus.
\end{conjecture}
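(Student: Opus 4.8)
The plan is to transcribe, essentially step for step, the construction of \S\ref{ExoTor}--\S\ref{MonTor}, replacing the degeneration $\CP^2\rightsquigarrow\CP(1,1,4)$ by the degeneration $\CP^1\times\CP^1\rightsquigarrow\CP(1,1,2)$ of Remark \ref{rmk}. First one makes the geometry explicit: take a one-parameter family $X_t$, $t\in[0,1]$, with $X_t\cong\CP^1\times\CP^1$ for $t>0$ and $X_0\cong\CP(1,1,2)$ (Proposition 3.1 of \cite{DA09}; cf. Remark \ref{rmk}), carrying the rational map $f_t((x:w),(y:z))=\frac{z\xi}{wy^2}$ with $\xi=\frac{xy-wz}{2t}$, compatible with the $S^1$-action $e^{i\theta}\cdot(z,\xi)=(e^{-i\theta}z,e^{i\theta}\xi)$ and becoming a holomorphic map on $\CP(1,1,2)$ at $t=0$. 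Then, exactly as in Proposition \ref{Sform}, replace the product form by a Kähler form $\omega$ which on the $f_t$-preimage of a large disc about $0$ equals $\frac{i}{4}\del\bar\del\log(1+|z/y|^2+|\xi/(wy)|^2)$ — so that the $S^1$-moment map is $\mu_\omega=2\frac{|z/y|^2-|\xi/(wy)|^2}{1+|z/y|^2+|\xi/(wy)|^2}$ — interpolated by a cutoff to a genuine Kähler form on all of $\CP^1\times\CP^1$; computing $\int_{H_1}\omega$ and $\int_{H_2}\omega$ shows $[\omega]$ is the monotone class, so Moser's theorem makes $(\CP^1\times\CP^1,\omega)$ standard. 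This realizes the tori $T^c_{r,\lambda}$ of \S\ref{TorCP1xCP1}, and the computation after their Definition — $\iota_\vartheta\Omega=i\,d\log(\xi z-c)$ with $\Omega=\frac{d\xi\wedge dz}{\xi z-c}$, poles on $D=f_t^{-1}(c)\cup\{w=0\}$ — shows they are special Lagrangian, so Lemma \ref{MI} computes Maslov indices by intersection with $D$.

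With the candidate classes already restricted by Lemma \ref{lemHomClassP1xP1}, one computes the holomorphic discs class by class, reusing \S\ref{CHD}. For $\beta$ the analogue of Proposition \ref{bdisc} produces a single $S^1$-family over $|w-c|\le r$ on the conic $\{z=e^{i\theta}\xi\}$, with signed count $\pm1$. For $H_1-\beta$, $H_2-\beta$, $H_1-\beta+\alpha$ and $H_2-\beta+\alpha$ — the analogues of the $H-2\beta+m\alpha$ families of Theorem \ref{3fam} — one writes $\Psi=f\circ u$, reads off its pole and zero orders from Lemma \ref{HomTabP1xP1}, pins down each disc by finitely many Blaschke parameters subject to one constraint equation coming from the behavior of $(z,\xi)$ along $\{w=0\}$ (respectively $\{y=0\}$), and runs the asymptotic analysis in $t$ of Lemma \ref{h0a} to get exactly one $S^1$-family with count $\pm1$ in each of these four classes. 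For $H_1+H_2-3\beta+k\alpha$, $-1\le k\le2$ — the analogues of the $2H-5\beta+k\alpha$ families of Theorem \ref{6fam} and the hard case — the disc meets $\{w=0\}$ once and $\{y=0\}$ once, so $\Psi$ has one pole of order $1$ and one of order $2$, hence an extra Blaschke parameter $\nu$ in addition to $\theta$ and the zeros $\eta_j$ of $\Psi$; one reruns Lemma \ref{asymp} with Claims \ref{hnubounded}--\ref{nuto1}, factoring the relevant auxiliary polynomial $\Xi(w)$, showing $h(0)$ and $h(\nu)$ are bounded away from $0$, deducing $\bar b\zeta\to1$, that $|\nu|$ stays bounded away from $1$ and hence $\nu=\ot$, and that the $\eta_j$ ($j\ne0$) converge to a root-of-$r/c$ multiple of the cube roots of $-1$, so the discs converge uniformly as $t\to0$ to explicit discs in $\CP(1,1,2)$ avoiding the orbifold point (analogue of Proposition \ref{limDiscs}); counting those limit discs, after quotienting by the residual $\Z/3\Z$ reparametrization $w\mapsto e^{2\pi ik/3}w$, gives algebraic counts $1,3,3,1$ for $k=-1,0,1,2$ (the coefficients of $(1+w)^3$) and $0$ for $k=-2$, and the analogue of Lemma \ref{x0xt} — the maximum principle on the total space $\X$ plus the splitting $u^*T\X=u^*TX_0\oplus\C$ — transports these counts to $X_t$ for small $t$. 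Regularity throughout is Lemmas \ref{lr1}--\ref{lr2} applied to $V=\del_\theta u_\theta$, parallel to the fibers of $f$ and non-vanishing on $\del\D$; and the orientation computation of \S\ref{CHD}, for the standard spin structure on $T^c_{r,0}$ given by the basis $\{\del\alpha,\del\beta\}$, goes through verbatim to show every evaluation map $\M_1(\cdot)\to T^c_{r,0}$ is orientation preserving. Hence $T^c_{r,0}$ bounds precisely $9$ families of Maslov index $2$ discs, with multiplicities matching the superpotential $W_{T(1,2,9)}$.

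It remains to arrange monotonicity and conclude. Following Proposition \ref{monform}, inflate first along $\{y=0\}$ (disjoint from $T^c_{r,0}$, with zero intersection with $\alpha$ and $\beta$) to obtain a Kähler form on $\CP^1\times\CP^1$ with $\int_{H_1}=\int_{H_2}$ and $\int_\alpha=0$; since $\int_\beta$ can be made $<\tfrac12\int_{H_i}$ by taking $r$ small, and since $[D_6]\cdot\alpha=0$, $[D_6]\cdot\beta=2$, $[D_6]\cdot H_1=[D_6]\cdot H_2=3$ with $2/3>1/2$, adding a suitable multiple of a closed positive $(1,1)$-form supported near $D_6=D_3\cup D'_3$ and disjoint from $T^c_{r,0}$ (built from $\del\bar\del\log$ of a regularized defining function of $D_6$) yields a Kähler form $\hat\omega$, still symplectomorphic to the monotone form on $\CP^1\times\CP^1$, with $\int_\alpha\hat\omega=0$ and $\int_{H_1}\hat\omega=\int_{H_2}\hat\omega=2\int_\beta\hat\omega$ — precisely the monotonicity condition for $T^c_{r,0}$. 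Lemma \ref{lemMon} then makes the algebraic count $9$ a symplectomorphism invariant of the monotone $T^c_{r,0}$; since the monotone Clifford torus in $\CP^1\times\CP^1$ bounds $4$ families and the monotone Chekanov torus bounds $5$, $T^c_{r,0}$ is symplectomorphic to neither, proving the conjecture.

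The main obstacle is the computation of the families in the classes $H_1+H_2-3\beta+k\alpha$: controlling the extra Blaschke parameter $\nu$ as $t\to0$ (showing $|\nu|$ does not approach $1$, and then $\nu=\ot$) and identifying the limit discs in $\CP(1,1,2)$ requires a careful reworking of Lemma \ref{asymp} and Claims \ref{hnubounded}, \ref{nuto1}, whose pole/zero bookkeeping for $\Psi$ and whose auxiliary polynomial $\Xi(w)$ differ from the $\CP^2$ case; by contrast the remaining steps are a routine, if lengthy, transcription of \S\ref{ExoTor}--\S\ref{MonTor}.
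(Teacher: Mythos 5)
The first thing to say is that the paper does not prove this statement: it is stated as a \emph{conjecture}, and the text immediately preceding it says explicitly that ``one only need to compute the expected Maslov index 2 holomorphic discs in the classes $\beta$, $H_1-\beta$, \dots, $H_1+H_2-3\beta+k\alpha$'' to prove it. What the paper actually supplies is exactly the scaffolding you reproduce correctly: the wall-crossing prediction of $W_{T(1,2,9)}$, the definition of $T^c_{r,\lambda}$ via $f=\frac{z\xi}{wy^2}$, the intersection table and the restriction of the candidate classes (Lemmas \ref{HomTabP1xP1} and \ref{lemHomClassP1xP1}), and the inflation along $D_6$ producing the monotone form. Your proposal is the natural completion the author had in mind, and your combinatorics are consistent with the predicted superpotential (residual $\Z/3\Z$ symmetry, counts $1,3,3,1$ from the coefficients of $(1+w)^3$, counts $1$ for the five remaining classes). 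But the actual content of a proof — the analogues of Theorems \ref{3fam} and \ref{6fam}, i.e.\ the classification of the discs class by class, the asymptotic control of the Blaschke parameters as $t\to 0$, the identification of the limit discs in $\CP(1,1,2)$, and regularity — is asserted by analogy rather than carried out. Since this is precisely the part the paper declines to claim, ``rerun Lemma \ref{asymp} and Claims \ref{hnubounded}--\ref{nuto1}'' is a research plan, not a proof, and you acknowledge as much in your final paragraph.

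Beyond the global gap, note that the transcription is not verbatim, and one piece of your bookkeeping does not transfer. For every class other than $\beta$ the intersection with $D_3$ vanishes (e.g.\ $(H_1+H_2-3\beta+k\alpha)\cdot D_3=1+2-3=0$), so $\frac{\Psi-c}{r}$ is $e^{i\phi}$ over a Blaschke product with \emph{no} zero in the numerator, and $\Psi$ maps $\D$ onto the complement of $|w-c|<r$; there is no parameter $a$ or $b$, and the auxiliary polynomial $\Xi$ has a different shape. For the classes $H_1+H_2-3\beta+k\alpha$ the function $\Psi$ has exactly three zeros (since $(1+k)+(2-k)=3$), and in the limit $\nu\to 0$ they satisfy $cw^3+re^{i\phi}=0$: all three converge to cube roots of $-re^{i\phi}/c$ and none collapses to the origin. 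So the pattern ``$\eta_0=\ot$, $\eta_j\to(\text{roots})$ for $j\neq 0$, hence $0\notin I$'' from Lemmas \ref{h0a} and \ref{asymp} has no analogue here — consistent with the fact that all four values $k=-1,\dots,2$ are expected to be realized and only $k=-2$ (already excluded by Lemma \ref{lemHomClassP1xP1}) is empty, but it means the asymptotic lemmas must be re-derived rather than relabeled. Until that analysis is actually done, the statement remains, as in the paper, a conjecture.
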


\end{document}